\newtheorem{thm}{Theorem}[section]
\newtheorem{prop}{Proposition}[section]
\newtheorem{lemma}[prop]{Lemma}
\newtheorem{conj}{Conjecture}[section]
\newtheorem{corol}[prop]{Corollary}
\theoremstyle{definition}
\newtheorem{defin}[prop]{Definition}
\newtheorem{Example}[prop]{Example}
\newtheorem{Remark}[prop]{Remark}
\numberwithin{equation}{section}
\newcommand{\bc}{\mathbf{S}}
\newcommand{\DI}{U_{q_1,q_2}(\ddot{\mathfrak{gl}}_1)}
\newcommand{\DIp}{U_{q_1,q_2}(\ddot{\mathfrak{gl}}_1)^+}
\newcommand{\DIm}{U_{q_1,q_2}(\ddot{\mathfrak{gl}}_1)^-}
\newcommand{\siw}[1]{  \Lambda_{v, #1}^{^{\! \infty/2}} \,   }
\newcommand{\sip}{  \Lambda_{v}^{^{\! \infty/2}} \,   }
\newcommand{\SHp}{S\mathcal{H}_N^{[+]}}
\newcommand{\SHm}{S\mathcal{H}_N^{[-]}}
\newcommand{\SHpm}{S\mathcal{H}_N^{\pm}}
\newcommand{\st}{\omega}
\newcommand{\hphi}{\hat{\Phi}}
\newcommand{\hpsi}{\hat{\Psi}}
\newcommand{\Tphi}{\tilde{\Phi}}
\newcommand{\tpsi}{\tilde{\Psi}}
\newcommand{\AB}{\mathbf{A}}
\newcommand{\BB}{\mathbf{B}}
\newcommand{\Ch}{\mathbf{C}}
\newcommand{\lt}{\mathtt{l}^{(\lambda)}}
\newcommand{\ltt}{\mathtt{l}^{(\tilde{\lambda})}}
\newcommand{\ct}{\mathtt{c}}
\newcommand{\dt}{\mathtt{d}}
\newcommand{\mt}{\mathtt{m}}
\newcommand{\pt}{p}
\newcommand{\mb}{h}
\DeclareMathOperator{\Image}{Im}
\DeclareMathOperator{\End}{End}
\DeclareMathOperator{\Ind}{Ind}
\newcommand{\qE}{\mathtt{E}}
\newcommand{\qF}{\mathtt{F}}
\newcommand{\qK}{\mathtt{K}}
\newcommand{\fq}{v}
\newcommand{\qt}{\tilde{q}}
\newcommand{\kt}{\mathtt{k}}
\newcommand{\vsl}{U_{v}(\widehat{\mathfrak{sl}}_n)}
\newcommand{\qgl}{U_{\fq}(\widehat{\mathfrak{gl}}_n)}
\newcommand{\qheis}{U_{\fq}(\mathsf{Heis})}
\newcommand{\rt}{\mathtt{r}}
\newcommand{\Fsl}{F^{sl}}
\newcommand{\Fheis}{F^{\mathsf{H}}}
\newcommand{\qbin}[2]{\begin{bmatrix}{#1}\\ {#2}\end{bmatrix}_{\fq}}
\title{Twisted Fock module of toroidal algebra \\ via  DAHA and vertex operators}
\author{Mikhail Bershtein \and Roman Gonin}
\date{}
\begin{document}

\maketitle 

\begin{abstract}
	We construct the twisted Fock module of quantum toroidal \(\mathfrak{gl}_1\) algebra with a slope \(n'/n\) using vertex operators of quantum affine \(\mathfrak{gl}_n\). The proof is based on the \(q\)-wedge construction of an integrable level-one \(U_q(\widehat{\mathfrak{gl}}_n)\)-module and the representation theory of double affine Hecke algebra. The results are consistent with Gorsky-Negu\c{t} conjecture (Kononov-Smirnov theorem) on stable envelopes for Hilbert schemes of points in the plane and can be viewed as a manifestation of \((\mathfrak{gl}_1,\mathfrak{gl}_n)\)-duality.
\end{abstract}


\section{Introduction}

\paragraph{Toroidal algebra}
In this article, we study the representation theory of quantum toroidal $\mathfrak{gl}_1$, denoted by $\DI$. The subject has been actively developed in recent years. It has numerous applications, including geometric representation theory, enumerative geometry, topological strings, integrable systems, knot theory,  and combinatorics. Due to this, the study of the algebra is particularly interesting.

One of the definitions of \(\DI\) is a \(N \rightarrow \infty\) limit of the spherical double affine Hecke algebra. The double affine Hecke algebra for \(\mathfrak{gl}_N\) will be called DAHA for short and denoted by \(\mathcal{H}_N\), it can be presented by generators 
\begin{equation}
	T_1,\dots, T_{N{-}1},Y_1^{\pm 1},\dots, Y_N^{\pm 1}, X_1^{\pm 1},\dots, X_N^{\pm 1},
\end{equation} 
and relations which we recall in  Section \ref{sec: Double Affine Hecke Algebra}. The algebra \(\mathcal{H}_N\) depends on a parameter \(v\) which appears in Hecke relations and an additional parameter \(q\). The group \(\widetilde{SL}(2,\mathbb{Z})\) (central extension of \(SL(2,\mathbb{Z})\) by \(\mathbb{Z}\)) acts on \(\mathcal{H}_N\) via automorphisms. The algebra $\mathcal{H}_N$ has two natural gradings $\deg_X$ and $\deg_Y$. 
 
Also, the algebra \(\mathcal{H}_N\) has the standard Cherednik representation, denoted by $\Ch_u$. As a vector space, $\Ch_u$ is the space of Laurent polynomials \(\mathbb{C}[Y_1^{\pm 1},\dots,Y_N^{\pm 1}]\). The generators \(Y_i\) act by multiplication and \(X_i\) act by certain \(q\)-difference operators. The common eigenvectors of \(X_1\dots, X_N\) in the space of Laurent polynomials \(\mathbb{C}[Y_1^{\pm 1},\dots,Y_N^{\pm 1}]\) are nonsymmetric Macdonald polynomials \cite{C05}.

Let \(\bc_+ \in \mathcal{H}_N\) denotes symmetrizer in Hecke algebra, then the spherical DAHA $\SHp$ is $\bc_+ \mathcal{H}_N \bc_+$. 
It  has homogeneous generators \(P_{a,b}\), \((a,b)\in \mathbb{Z}^2\backslash \{(0,0)\}\) such that 
\begin{equation}
	P_{0,k}=\bc_+ \sum Y_i^k \bc_+, \; k>0, \quad \deg_X P_{a,b}=a,\; \deg_Y P_{a,b}=b,\quad \sigma P_{a,b}=P_{\sigma(a,b)},\; \sigma\in SL(2,\mathbb{Z}).
\end{equation}
It appears that relations between \(P_{a,b}\) stabilize in the limit \(N\rightarrow \infty\). The quantum toroidal algebra \(\DI\) is a central extension of a limit (in a certain sense) of \( \SHp \) \cite{SV11}. The algebra $\DI$ is generated by \(P_{a,b}\) and central elements \(c^{\pm 1},(c')^{\pm 1}\). The algebra depends on parameters $q_1$ and $q_2$. For any \(N\) and $q_1=q$, $q_2=v^2$, there is a surjective homomorphism \(\DI \twoheadrightarrow \SHp\). This gives an action of \(\DI\) on the space of symmetric Laurent polynomials \(\bc_+ \mathbb{C}[Y_1^{\pm 1},\dots,Y_N^{\pm 1}]\). Moreover, there is a limit action of \(\DI\) on the space of symmetric functions \(\Lambda\). The eigenvectors of the operators \(P_{k,0}\) in \(\Lambda\) are Macdonald symmetric functions. The \(SL(2,\mathbb{Z})\)-action on \(\SHp\) can be lifted to an \(\widetilde{SL}(2,\mathbb{Z})\)-action on \(\DI\).

The space \(\Lambda=\mathbb{C}[p_1,p_2, \dots]\) is the space of polynomials in infinite number of variables (power sums). This space has a natural action of Heisenberg algebra with generators \(a_k\), \(k \in \mathbb{Z}_{\neq 0}\) and relations \([a_k,a_l]=k\delta_{k+l,0}\). Namely, operators \(a_{-k}\) act as multiplication by \(p_k\) and \(a_k\) act as \(k \partial_{p_k}\), for \(k>0\). The action of \(\DI\) on \(\Lambda\) can be written via Heisenberg generators \(a_k\), this construction is called \emph{bosonization}. The formulas are written for the generators \(P_{1,b}\), \(P_{0,k}\), \(P_{-1,b}\) for $b\in \mathbb{Z}$ and $k \in \mathbb{Z}_{\neq 0}$, these generators are called \emph{Chevalley generators} (contrary to all \(P_{a,b}\) for \((a,b)\in \mathbb{Z}^2\backslash \{(0,0)\}\), which could be called \emph{PBW generators}). The formulas for the bosonization roughly look like 
\begin{equation} \label{eq: bosonization of Fock in introduction}
	P_{0,k}=\#\, a_k, \;\;\;  \sum P_{1,b} z^{-b}= u^{-1} \# \exp \Big(\sum_{
	} \#  a_{-k} z^k\Big),  \;\;\;  \sum P_{-1,b} z^{-b}= u\# \exp \Big(\sum 
	\#  a_{-k} z^k\Big),
\end{equation}
see Section \ref{subsec: example n=1} for the precise formulas. The representation is called \emph{Fock module}. It depends on the parameter \(u\), we denote the representation by \(\mathcal{F}_u\). The formula \eqref{eq: bosonization of Fock in introduction} is equivalent to the orignal consturction in \cite{FHHSY}. 

The algebra $\DI$ can be considered as a quantum group, it has topological coproduct as in Drinfeld current realization \cite{DI97}. Using this coproduct we can construct \(\DI\)-representations of the form \(\mathcal{F}_{u_1}\otimes \dots \otimes  \mathcal{F}_{u_n}\). It was shown in \cite{FHSSY}, \cite{N16} that the image of (completed) toroidal algebra  $\DI$ in the endomorphisms of this tensor product is the \(q\)-$W$-algebra for $\mathfrak{gl}_n$. Moreover, the algebra \(\DI\) can be viewed as a central extension of the limit of such \(W\)-algebras\footnote{The algebra $\DI$ is a central extension of the limit of $\SHp$ for $N \rightarrow \infty$. The completed algebra $\DI$ is a central extension of the limit of $W$-algebras for $\mathfrak{gl}_n$ for $n \rightarrow \infty$. These statements are independent. E.g. non-completed $\DI$ with $c=v^{-1}$, $c'=1$ acts faithfully on $\mathcal{F}_u$, though the image of the completion is isomorphic to (completed) Heisenberg algebra due to formula \eqref{eq: bosonization of Fock in introduction}.}, this was one of the first motivations to study $\DI$, see \cite{M07}. 

\paragraph{Stable bases and duality}
The algebra \(\DI\) plays an important role in geometric representation theory. Let \(\text{Hilb}_k\) denotes Hilbert scheme of \(k\) points in \(\mathbb{C}^2\). There is a natural action of the torus $T = \mathbb{C}^*_{q_1} \times \mathbb{C}^*_{q_2}$ on \(\text{Hilb}_k\). Let  $K_{T} (\text{Hilb}_k)_{\text{loc}}$ denotes localized equivariant $K$-theory of $\text{Hilb}_k$. There is an action of \(\DI\) on the sum \(\mathbf{K} = \bigoplus_{k=0}^{\infty}  K_{T} (\text{Hilb}_k)_{\text{loc}}\) via correspondences.  The obtained representation is isomorphic to the Fock module $\mathcal{F}_{u}$ \cite{FT11},\cite{SV13}.

The torus fixed points of \(\text{Hilb}_k\) are labeled by the partitions \(\lambda\) with \(|\lambda|=k\). Denote the corresponding point by \(p_\lambda\in \text{Hilb}_k\). Due to localization theorem, there is a basis \(\{[p_\lambda]\}\) in \(\mathbf{K}\). This basis corresponds to the Macdonald symmetric functions under the identification \(\mathbf{K} \cong \mathcal{F}_u \cong \Lambda\).

There is another remarkable basis in the space \(\mathbf{K}\), namely, $K$-theoretic stable envelope basis \(\{ s_\lambda^{\st} \}\), see \cite{O:2017}. The stable envelope \(s_\lambda^{\st}\) depends on the slope parameter \(\st \in \mathbb{R}\setminus\mathbb{Q}\). For fixed $k$, there is a chamber structure, namely, \(s_\lambda^{\st} \in K_{T} (\text{Hilb}_k)_{\text{loc}}\) is a piecewise constant function on \(\st\) with discontinuities at rational numbers \(m/n\) with \(n\leq k\). So for any rational number \(m/n\) we can define elements \(s_\lambda^{m/n+\epsilon}\) and \(s_\lambda^{m/n-\epsilon}\) for sufficiently small \(\epsilon\).

In the paper \cite{GN15}, Gorsky and Negu\c{t} conjectured that for coprime integer \(m,n\) there is an identification between \(\mathcal{F}_u\) and integrable level-one representation of quantum affine algebra \(\qgl\) such that bases \(\{ s_\lambda^{m/n-\epsilon} \}\) and \(\{ s_\lambda^{m/n+\epsilon} \}\) correspond to the standard and costandard bases respectively. This conjecture was supported by the consistency of the action of operators \(P_{km,kn}\in \DI\) on stable basis \cite{Negut:2016} and diagonal \(v\)–Heisenberg subalgebra in \(\qgl\) on standard basis \cite{LT00}.

This conjecture was proven by Kononov and Smirnov in \cite{KS:2020} using geometric methods, in particular 3d-mirror symmetry.
Algebraically, this relation could be viewed as a manifestation of \((\mathfrak{gl}_1,\mathfrak{gl}_n)\)-duality in toroidal setting. The relation between 3d-mirror symmetry and \((\mathfrak{gl}_m,\mathfrak{gl}_n)\)-duality (in affine setting) appeared in \cite{RSVZ:2019}.

To state our main result, we need to reformulate the above “in the opposite way”. Let us start not from \(\DI\) but from the intergrable level-one representation of \(\qgl\). We construct an action of \(\DI\) on this space such that the obtained representation is isomorphic to the Fock module twisted by \(\sigma \in \widetilde{SL}(2,\mathbb{Z})\), here \(\sigma (m,n)=(0,1)\). The action of Chevalley generators \(P_{1,b},P_{-1,b}\) is given in terms of vertex operators for \(\qgl\), and \(P_{0,k}\) are proportional to the generators of the diagonal \(v\)–Heisenberg subalgebra in \(\qgl\).  

\paragraph{Our results} The integrable level-one representation of \(\qgl\) has an explicit construction in terms of semi-infinite \(v\)-wedges \cite{St:1995, KMS95}. We first consider the case of finite \(v\)-wedges and then take the limit. Let \(\mathbb{C}^n[Y^{\pm 1}]\) denotes the evaluation representation of \(\qgl\), then its \(N\)-th tensor power can be written as
\begin{equation}\label{eq:C^nN}
	 \left( \mathbb{C}^n[Y^{\pm 1}]\right)^{\otimes N} \simeq  \left(\mathbb{C}^n\right)^{\otimes N}[Y_1^{\pm1}, \dots, Y_N^{\pm 1}].
\end{equation}
By the quantum affine version of Schur-Weyl duality, there is an action of the affine Hecke algebra \(H^Y\) with the generators \(T_i,Y_j\) on this space, moreover this action commutes with \(\qgl\) (see \cite{GRV92}, \cite{CP94} and references therein). In Theorem \ref{thm: finite explicit construction} we extend this action to the whole DAHA \(\mathcal{H}_N\). The construction depends on an integer number \(n_{tw}\). If \(n_{tw}\) is coprime to \(n\) then the obtained representation is isomorphic to Cherednik polynomial representation twisted by certain \(\sigma \in \widetilde{SL}(2,\mathbb{Z})\), see Theorem~\ref{thm:isomorphism with twisted}. Below we will assume this coprimeness and denote \(n'=n_{tw}\).

Let \(\bc_-\) be antisymmetrizer in the (finite) Hecke algebra, \(\SHm = \bc_-\mathcal{H}_N\bc_-\) be the corresponding spherical DAHA. Above we discussed that the algebra \(\DI\) surjects onto $\SHp$. Analogously, $\DI$ surjects onto $\SHm$ for $q_1=q$, $q_2=v^{-2}$. Using this we obtain an action
\begin{equation}
	\DI \rightarrow \SHm \curvearrowright \; \bc_- \left(\left(\mathbb{C}^n\right)^{\otimes N}[Y_1^{\pm1}, \dots, Y_N^{\pm 1}]\right)= \Lambda_{v}^{N} \!\left(\mathbb{C}^n[Y^{\pm 1}] \right).
\end{equation}
The space $\mathbb{C}^n[Y^{\pm 1}]$ has a basis $e_{nj+a} =Y^j e_a $ for $a=0, \dots, n-1$ and $j \in \mathbb{Z}$. Then the space $\Lambda_{v}^{N} \!\left(\mathbb{C}^n[Y^{\pm 1}] \right)$ has a basis $e_{k_1} \wedge \dots \wedge e_{k_N}$ for $k_1 < \dots < k_N$. It appears that the action Chevalley generators \(P_{1,b},P_{-1,b}\) on this space can be written in terms of vertex operators \(\Phi_k, \Phi_k^*, \Psi_k, \Psi_k^*\).
The operators \(\Phi_k,\Psi_k\) are given by \(\Phi_k(w)=e_k\wedge w\), \(\Psi_k(w)=w\wedge e_k\). The operators \(\Phi_k^*, \Psi_k^*\) are dual. Then the action of Chevalley generators is given by the formulas 
\begin{equation} \label{eq: intro: cheval before the limit}
	P_{1,b} \mapsto \sum\nolimits_{k \in \mathbb{Z}} \# \Psi_{k+n'+nb} \Phi^*_{-k},\quad P_{-1,b} \mapsto \sum\nolimits_{k \in \mathbb{Z}} \# \Phi_{k-n'+nb} \Psi^*_{-k},
\end{equation}
see Section \ref{subsec: finite} for details. Let us remark that the operators 	\(\Phi_k, \Phi^*_k, \Psi_k, \Psi^*_k\) are components of natural intertwining operators 
\begin{align*}
	\Phi \colon& \mathbb{C}^n[Y^{\pm 1}]\!\otimes\! \Lambda_{v}^{N} \!\left(\mathbb{C}^n[Y^{\pm 1}] \right)  \rightarrow  \Lambda_{v}^{N{+}1} \!\left(\mathbb{C}^n[Y^{\pm 1}] \right), & \Phi^* \colon& \Lambda_{v}^{N} \!\left(\mathbb{C}^n[Y^{\pm 1}] \right)  \rightarrow \mathbb{C}^n[Y^{\pm 1}]\!\otimes\! \Lambda_{v}^{N{-}1} \!\left(\mathbb{C}^n[Y^{\pm 1}] \right), \\
	\Psi \colon&  \Lambda_{v}^{N} \!\left(\mathbb{C}^n[Y^{\pm 1}] \right) \!\otimes\! \mathbb{C}^n[Y^{\pm 1}] \rightarrow  \Lambda_{v}^{N{+}1} \!\left(\mathbb{C}^n[Y^{\pm 1}] \right), & \Psi^* \colon& \Lambda_{v}^{N} \!\left(\mathbb{C}^n[Y^{\pm 1}] \right)  \rightarrow \Lambda_{v}^{N{-}1} \!\left(\mathbb{C}^n[Y^{\pm 1}] \right)  \!\otimes\! \mathbb{C}^n[Y^{\pm 1}].	
\end{align*}
Vertex operators can be considered in a more general context as such intertwiners \cite{FR92,  JM95}. 

The space \(\sip\!\!\left(\mathbb{C}^n[Y^{\pm 1}] \right)\) is defined as an inductive limit 
\begin{equation} 
	\mathbb{C}^n[Y^{\pm 1}] \rightarrow \Lambda_{v}^{2} \!\left(\mathbb{C}^n[Y^{\pm 1}] \right)  \rightarrow \dots \rightarrow \Lambda_{v}^{N} \!\left(\mathbb{C}^n[Y^{\pm 1}] \right) \rightarrow \dots
\end{equation}
Since \(\qgl\) and \(\DI\) act on \(\Lambda_{v}^{N} \!\left(\mathbb{C}^n[Y^{\pm 1}] \right)\), it is natural to ask for the limit of these actions. It was shown \cite{St:1995, KMS95} that the action of \(\qgl\) has an (appropriately defined) limit, the obtained representation is an intergrable level-one module of $\qgl$.

We take the limit of the \(\DI\)-action in several steps. Introduce several subalgebras in \(\DI\): \(\DI^+,\DI^-,\DI^\swarrow,\DI^\downarrow\) which are generated by \(c,c'\), and \(P_{a,b}\) subject to the conditions \(a\geq 0,\) \(a\leq 0\), \(an'+bn \leq 0\), \(b \leq 0\) respectively. 
\begin{figure}[ht]\label{fig:subalgebras}
	\begin{center}
		\begin{tabular}{c c c c }
			\begin{tikzpicture}[scale=0.4]
				\draw[color=gray!30] (-3.5,-3.5) grid (3.5,3.5);
				\draw[-,thick] (-3.5,0)--(3.5,0);
				\draw[-,thick] (0,-3.5)--(0,3.5);
				\fill[pattern={north east lines}, pattern color=gray!40] 		(0,-3.5)--(0,3.5)-- (3.5,3.5) -- (3.5,-3.5);
			\end{tikzpicture} \qquad
			&
			\begin{tikzpicture}[scale=0.4]
				\draw[color=gray!30] (-3.5,-3.5) grid (3.5,3.5);
				\draw[-,thick] (-3.5,0)--(3.5,0);
				\draw[-,thick] (0,-3.5)--(0,3.5);
				\fill[pattern={north east lines}, pattern color=gray!40] 	(0,-3.5)--(0,3.5)-- (-3.5,3.5) -- (-3.5,-3.5);
			\end{tikzpicture} \qquad
			&
			\begin{tikzpicture}[scale=0.4]
				\draw[color=gray!30] (-3.5,-3.5) grid (3.5,3.5);
				\draw[-,thick] (-3.5,0)--(3.5,0);
				\draw[-,thick] (0,-3.5)--(0,3.5);
				\fill[pattern={north east lines}, pattern color=gray!40] 	(-3.5,-3.5)--(-3.5,7/3)-- (3.5,-7/3) -- (3.5,-3.5);
				\draw[color=gray!40] (-3.5,7/3)-- (3.5,-7/3);
			\end{tikzpicture} 
			\qquad
			&
			\begin{tikzpicture}[scale=0.4]
				\draw[color=gray!30] (-3.5,-3.5) grid (3.5,3.5);
				\draw[-,thick] (-3.5,0)--(3.5,0);
				\draw[-,thick] (0,-3.5)--(0,3.5);
				\fill[pattern={north east lines}, pattern color=gray!40] 	(-3.5,-3.5)--(-3.5,0)-- (3.5,0) -- (3.5,-3.5);
			\end{tikzpicture} 
			\\
			\(U_{q_1,q_2}(\ddot{\mathfrak{gl}}_1)^+\) & 	\(U_{q_1,q_2}(\ddot{\mathfrak{gl}}_1)^-\) &	\(U_{q_1,q_2}(\ddot{\mathfrak{gl}}_1)^{\swarrow}\) &	\(U_{q_1,q_2}(\ddot{\mathfrak{gl}}_1)^{\downarrow}\) 
		\end{tabular}
		\caption{Subalgebras of \(\DI\) used in Sections \ref{sec:Semi-inifinite 1}, \ref{sec: twisted Fock II}}
	\end{center}	
\end{figure}
These subalgebras are displayed on the Fig.~\ref{fig:subalgebras}. They  are isomorphic due to \(\widetilde{SL}(2,\mathbb{Z})\)-action on \(\DI\). Consider an element $\sigma \in \widetilde{SL}(2, \mathbb{Z})$ such that $\DI^{\swarrow} = \sigma \left(  \DI^{\downarrow}  \right)$. The element is particularly important since we will obtain the Fock module twisted by $\sigma$.

Theorem \ref{thm: positive half} states that the action of operators from \(\DI^+\) converges for \(|q^{-1}v^2|<1\). Similarly, Theorem \ref{thm: negative half} states that the action of \(\DI^-\) converges for \(|q^{-1}v^2|>1\). Therefore we can not obtain the action of whole $\DI$ by a straightforward limit argument. Though the obtained action of the generators $P_{\pm 1, b}$ is given by rational functions of \(q,v\), hence it can be analytically continued for generic \(q,v\). This gives actions $\DI^{\pm} \curvearrowright \sip\!\!\left(\mathbb{C}^n[Y^{\pm 1}] \right)$ for generic \(q,v\). The proofs use the expressions via vertex operators \eqref{eq: intro: cheval before the limit} and as a result we get analogous formulas after the limit, see formula \eqref{eq: div series} below.

On the other hand, we show in Section \ref{subsec: bottom half} that the action of \(\DI^\swarrow\) converges for any $q, v$. Also, we prove that the obtained representation is isomorphic to the restriction of twisted Fock module $\mathcal{F}_u^{\sigma}$ to \(\DI^\swarrow\). Using this we can glue the actions of \(\DI^+\) and \(\DI^-\) and obtain 



\begin{thm} \label{intro: main theorem}
	The following formulas determine an action of $\DI$ on \(\sip\!\!\left(\mathbb{C}^n[Y^{\pm 1}] \right)\)
	\begin{align}
			&c \mapsto v^{-n},\;  c' \mapsto v^{-n'},\quad 
			P_{0,j} \mapsto \# B_j,\\
			&P_{1,b} \mapsto \sum\nolimits_{k \in \mathbb{Z}} \# \hpsi_{k+n'+nb} \hphi^*_{-k},\quad P_{-1,b} \mapsto \sum\nolimits_{k \in \mathbb{Z}} \# \hphi_{k-n'+nb} \hpsi^*_{-k} \label{eq: div series},
	\end{align} 
	for $q_1=q$, $q_2 =v^{-2}$. The obtained representation is isomorphic to twisted Fock module $\mathcal{F}_{u}^{\sigma}$.	
\end{thm}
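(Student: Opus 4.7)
The overall strategy is to assemble the full $\DI$-action from the three partial actions already constructed in the paper: of $\DI^+$ (Theorem \ref{thm: positive half}), of $\DI^-$ (Theorem \ref{thm: negative half}), and of $\DI^\swarrow$ (Section \ref{subsec: bottom half}). Since $\DI$ is generated by the Chevalley generators $P_{\pm 1,b}$ together with $P_{0,k}$, it is enough to show that the operators defined by \eqref{eq: div series} are well-posed and satisfy all defining relations of $\DI$ on $\sip(\mathbb{C}^n[Y^{\pm 1}])$.

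First I would note that Theorems \ref{thm: positive half} and \ref{thm: negative half} already furnish operators $P_{\pm 1,b}$ whose matrix elements are rational in $q,v$: each sum in \eqref{eq: div series} converges in its respective regime ($|q^{-1}v^2|<1$ for $P_{1,b}$, $|q^{-1}v^2|>1$ for $P_{-1,b}$), and extends by rational analytic continuation to generic $q,v$. All defining relations of $\DI^+$ among the operators $P_{1,b}$ and $P_{0,k}$ hold in the first regime by Theorem~\ref{thm: positive half}, and hence everywhere by continuation; symmetrically for $\DI^-$.

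The main obstacle is the verification of the \emph{mixed} relations between $P_{1,b}$ and $P_{-1,b'}$, because the two convergence regions are disjoint and the two sides of such a relation cannot be evaluated in a common regime. To bridge them I would invoke the $\DI^\swarrow$-action: Section \ref{subsec: bottom half} identifies it with $\mathcal{F}_u^\sigma|_{\DI^\swarrow}$, and this alone furnishes infinitely many mixed relations --- namely those involving $P_{1,b}$ with $b \le -n'/n$ and $P_{-1,b'}$ with $b' \le n'/n$, whose lattice points lie inside the $\swarrow$-cone. Both sides of an arbitrary mixed relation are rational in $q,v$ and depend on $b,b'$ only through shifts that also arise inside $\DI^\swarrow$ via the (negative) Heisenberg generators $P_{0,-k}$. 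Consequently, the infinite family of identities already verified via $\DI^\swarrow$ uniquely determines the full family of rational expressions, forcing every mixed relation to hold identically.

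Once the $\DI$-action has been checked, the identification with $\mathcal{F}_u^\sigma$ is immediate: the restriction to $\DI^\swarrow$ is isomorphic to $\mathcal{F}_u^\sigma|_{\DI^\swarrow}$ by construction, and since $\DI^\swarrow$ already contains a subalgebra that cyclically generates $\mathcal{F}_u^\sigma$ from its highest-weight vector, the $\DI^\swarrow$-isomorphism extends uniquely to an isomorphism of $\DI$-modules.
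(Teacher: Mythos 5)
Your overall plan (assemble the $\DI$-action from $\rho_+$, $\rho_-$, and $\rho_\swarrow$) is close in spirit to the paper's, but the crucial step---how the mixed relations between $P_{1,b}$ and $P_{-1,b'}$ get verified---is handled very differently, and as written your argument has a genuine gap precisely there.

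You propose to verify the mixed relations \eqref{eq: commutator E and F 1}, \eqref{eq: relation central extention} by observing that they hold whenever both generators lie in the $\DI^\swarrow$-cone, and then asserting that rationality of the matrix elements ``forces every mixed relation to hold identically.'' This is not a valid continuation argument: the labels $b,b'$ are integers, not continuous parameters, so rationality in $q,v$ for fixed $(b,b')$ does not by itself propagate an identity from the cone to lattice points outside it. What you would actually need is an algebraic induction (commuting with the Heisenberg generators $P_{0,\pm j}$ and invoking Jacobi) to push $(b,b')$ out of the cone, but when you try to do this you find that pushing $P_{1,b}$ out of the cone via $[P_{0,k},\,\cdot\,]$ with $k>0$ simultaneously pushes $P_{-1,b'}$ in the wrong direction, so the induction does not close without further work. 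The paper explicitly flags this: ``We do not check directly the relations \eqref{eq: commutator E and F 1}, \eqref{eq: relation central extention} due to technical difficulties.''

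The paper's route around this is worth noting, since it is the key idea you are missing. Proposition \ref{prop: twisted rep bottom} produces an explicit vector-space isomorphism $\hat\psi\colon \mathcal{F}_u^{\downarrow}\to\mathcal{F}_u^{\swarrow}$ intertwining the $\DI^{\downarrow}$- and $\DI^{\swarrow}$-actions via $\sigma$. Since $\mathcal{F}_u$ carries a full $\DI$-action by \cite{FHHSY} (recalled in Section \ref{subsec: example n=1}), one simply \emph{defines} $\rho_{tw}(X):=\hat\psi\circ\rho_{\mathcal{F}_u}(\sigma^{-1}(X))\circ\hat\psi^{-1}$. This is automatically a $\DI$-action---no relations need to be checked---and it is tautologically isomorphic to $\mathcal{F}_u^\sigma$. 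What remains (Lemma \ref{lemma: action coincide plus} and its $\DIm$-counterpart, using the PBW statement Proposition \ref{prop: PBW for DI plus} together with the cyclicity of $|\varnothing\rangle_\infty$) is to check that the concrete formulas via vertex operators agree with $\rho_{tw}$ on $\DIp$ and $\DIm$. In short: the paper transports an existing representation and then matches formulas; you are attempting to build the representation from scratch and run directly into the wall the paper deliberately avoided.

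One smaller imprecision: in your final paragraph you write ``since $\DI^\swarrow$ already contains a subalgebra that cyclically generates $\mathcal{F}_u^\sigma$ from its highest-weight vector, the $\DI^\swarrow$-isomorphism extends uniquely to an isomorphism of $\DI$-modules.'' This extension-of-isomorphisms statement is only meaningful once you already know both sides are $\DI$-modules, so it cannot be used to sidestep the verification of the $\DI$-relations in the first place; the argument becomes circular unless the representation has been constructed beforehand, which is exactly what the transport via $\hat\psi$ accomplishes.
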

Here \(B_k\) are the generators of the diagonal \(v\)–Heisenberg subalgebra in \(\qgl\) and \(\hphi_k, \hphi^*_k, \hpsi_k, \hpsi^*_k\) are vertex operators acting on the semi-infinite wedge. We discuss the meaning of the series \eqref{eq: div series} in Section \ref{subsection: analytic continuation} as an explicitly written analytic continuation. For the precise statement of the theorem above see Theorem \ref{thm: explict construction of twisted Fock}. 


Recall that our original motivation was the relation between standard and stable bases. By the definition \cite{LT00}, the standard basis consists of semi-infinite wedges \(e_{-\lambda_1} \wedge e_{-\lambda_2+1} \wedge \dots\in \sip\!\!\left(\mathbb{C}^n[Y^{\pm 1}] \right)\). There is a combinatorial characterization of the stable basis given by certain three conditions \cite[Sect. 4.1]{Negut:2016}. We check two of these conditions, see Theorem \ref{thm:stable triangularity}. As above, the proof is based on the ``finitization'', namely, we first prove similar properties for the basis \(\{ e_{i_1}\otimes \dots\otimes e_{i_N} \}\) in the space \eqref{eq:C^nN}, see Corollary \ref{corol: eigenvectors twisted case} and Theorem \ref{thm:A lmabda basis}. The most interesting “window” condition will be considered elsewhere. A verification of the condition would give a new proof of Gorsky-Negu\c{t} conjecture (Kononov-Smirnov theorem).

\paragraph{Further Questions}
\begin{itemize}
	\item In the paper we proved Theorem \ref{thm:stable triangularity} using the corresponding properties of the basis  \(\{ e_{i_1}\otimes \dots\otimes e_{i_N} \}\) in the space \eqref{eq:C^nN}. It is natural to expect a finite non-symmetric analog of Gorsky-Negu\c{t} conjecture. This means that apart from the proven properties, there is a “window” condition given in terms of nonsymmetric Macdonald polynomials.	
	\item In the paper we constructed duality (action on the same space) of two algebras \(\DI\) and \(\qgl\). The action of quantum affine algebra could be promoted to the toroidal algebra \(U_{v, v'}(\ddot{\mathfrak{gl}}_n)\). This could be viewed as some twisted generalization of \cite{FJM:2019}, we expect that a new presentation of toroidal algebra suggested in \cite{N:2019} is relevant for this question.
	
	Usually, the most interesting part of \((\mathfrak{gl}_m,\mathfrak{gl}_n)\)-duality is a relation  between integrable systems, see e.g. \cite{FJM:2019}. We do not know the corresponding results in our setting.
	
	\item It is interesting to find an interpretation of our construction in the framework of geometric representation theory. In particular, to relate our results to the results of \cite{KS:2020}.

	\item Usually, the algebra \(\DI\) acts on representations through a quotient which is isomorphic to a \(q\)-\(W\) algebra. Our construction of the twisted Fock module should be related to realizations of twisted \(q\)-\(W\) algebras.  To the best of our knowledge, even the corresponding \(q\)-\(W\) algebras are not known. See \cite{BG:2021} for the \(n=2\) case and \cite{BG:2020} for the case of any $n$ and \(q_2=1\).
\end{itemize}
\paragraph{Plan of the paper}
In Section \ref{sec: Double Affine Hecke Algebra} we recall the definition of DAHA and the corresponding basic constructions.

In Section \ref{sec:Representation} we construct and study action of DAHA on the space \(\left(\mathbb{C}^n\right)^{\otimes N}[Y_1^{\pm1}, \dots, Y_N^{\pm 1}]\). We prove that the obtained representation is isomorphic to twisted Cherednik representation (Theorem~\ref{thm:isomorphism with twisted}). Also, we prove Theorems \ref{thm:Bi triang} and \ref{thm:A lmabda basis} which state that certain matrices are upper triangular, the theorems are used in the proof of Theorem \ref{thm:isomorphism with twisted} and in Sections  \ref{sec: twisted Fock II} and \ref{sec:standard basis}.

In Section \ref{sec:toroidal} we recall a presentation and some properties of the toroidal algebra \(\DI\). In particular, we describe its relation to spherical DAHA. We mainly follow \cite{SV11}.

In Section \ref{sec:wedge} we study the space of finite and semi-infinite wedges. We mainly follow \cite{KMS95, LT00}. We slightly extend \emph{loc. cit.} results since we need properties of all the types of the vertex operators \(\Phi, \Phi^*, \Psi, \Psi^*\), but only \(\Phi\) was considered in \emph{loc. cit.}. The main results are Propositions~\ref{prop: stable vertex}, \ref{prop: stable dual vertex} on \(N\rightarrow \infty\) stabilization of the vertex operators and Proposition~\ref{prop: heis and vertex operators} on commutation relations with the diagonal $v$-Heisenberg subalgebra of $\qgl$. 

In Section \ref{sec:Semi-inifinite 1} we study the limit for the action of Chevalley generators via the vertex operators formula. This gives the actions of $\DIp$ and $\DIm$ as a limit for $|q^{-1}v^2|<1$ and $|q^{-1} v^2|>1$ respectively. The formula allows us to obtain an analytic continuation of the actions. Also, we consider an example of the construction in the case of the (non-twisted) Fock module.

In Section \ref{sec: twisted Fock II} we prove that the action of \(\DI^\swarrow\) converges for any $q$ and $v$. Then we show that the action of the subalgebras $\DI^{\pm}$ and \(\DI^\swarrow\) are restrictions of an action of the whole $\DI$. The obtained representation is isomorphic to the twisted Fock module. The action of Chevalley generators of $\DI$ is given by the formulas via vertex operators (Theorem \ref{thm: explict construction of twisted Fock}). This is the main result of the whole paper.

In Section \ref{sec:standard basis} we study standard basis, the main result is Theorems \ref{thm:stable triangularity} mentioned above.

In Appendix \ref{app:A} we recall the action of \(\qgl\) on \(\sip\!\!\left(\mathbb{C}^n[Y^{\pm 1}] \right)\). Then we deduce intertwining properties for the operators \(\Phi^*_k\), here the non-trivial part is a precise form of commutation relations with the diagonal $v$-Heisenberg algebra (Theorem \ref{thm: Bj Phik star}). This is used in the main part of the text (the proof of Proposition \ref{prop: heis and vertex operators}).

\paragraph{Acknowledgments} We are grateful to B.~Feigin,  E.~Gorsky,  Y.~Kononov, I. Makedonskyi, A.~Negu\c{t},  A.~Okounkov, A. Smirnov for their interest in our work and discussions. We are grateful to the participants of the seminar `Macdonald polynomials and DAHA' for the inspiring atmosphere of learning. MB is grateful to S. Grishaev for his help with computations in Sage. The work was partially carried out in Skolkovo Institute of Science and Technology under financial support of Russian Science Foundation within grant 19-11-00275. RG was also supported by ``Young Russian Mathematics'' award.

\section{Double Affine Hecke Algebra} \label{sec: Double Affine Hecke Algebra}
In this section, we recall the definition and basic properties of double affine Hecke algebra (DAHA) \cite{C92, K97, C05}. This section consists no new results.
\begin{defin}
    Double affine Hecke algebra $\mathcal{H}_N$ is an algebra with generators $T_1,\dots,T_{N-1}$, $\pi^{\pm 1}$,$Y_1^{\pm1}$, $\dots$, $Y_N^{\pm1}$ and relations
    \begin{align}
        &(T_i-v)(T_i+v^{-1})=0, \quad T_i T_{i+1} T_i = T_{i+1} T_i T_{i+1},\quad T_i T_j= T_j T_i \quad \text{for $|i-j|>1$}, \label{eq: DAHA relation 1} \\
        &T_i Y_i T_i=Y_{i+1}, \quad T_{i}Y_j=Y_jT_i \quad \text{for $j\neq i,i+1$,}\\
        &\pi Y_i \pi^{-1}= q^{\delta_{i,N}}  Y_{i+1},\quad  Y_iY_j=Y_jY_i, \\ 
        &\pi T_i \pi^{-1}= T_{i+1},\quad \pi^N T_i =T_i \pi^N, \label{eq: DAHA relation 4}
    \end{align}
here and below we use the convention $Y_1=Y_{N+1}$.
\end{defin}

The operators $T_1,\dots,T_{N-1}$ generate finite Hecke algebra $H$. The operators $T_1,\dots,T_{N-1},Y_1$,$\dots$,$Y_N$ generate affine Hecke algebra $H^Y$. The operators $T_1,\dots,T_{N-1},\pi^{\pm 1}$ generate another affine Hecke algebra denoted by $H^X$. Here one can define 
\begin{equation}
    X_i=T_i\cdots T_{N-1}\pi^{-1}T_1^{-1}\cdots T_{i-1}^{-1}.    
\end{equation}
The relations on $X_i$ are 
\begin{align}
    T_i^{-1} X_i T_i^{-1}=X_{i+1},\quad
    X_2^{-1}Y_1X_2Y_1^{-1}=T_1^2,\quad X_1Y_2^{-1}X_1^{-1}Y_2=T_1^2.
\end{align}
Let $\widetilde{SL}(2,\mathbb{Z})$ be the braid group on 3 stands. More precisely, $\widetilde{SL}(2,\mathbb{Z})$ is generated by $\tau_+$ and $\tau_-$ with the relation $\tau_+ \tau_-^{-1} \tau_+ = \tau_-^{-1} \tau_+ \tau_-^{-1}$. We use this notation since $\widetilde{SL}(2,\mathbb{Z})$ is an extension of $SL(2,\mathbb{Z})$ by $\mathbb{Z}$, the projection is given by
\begin{align} \label{eq: projection SL2}
    \tau_+ \mapsto& \begin{pmatrix}
    1 & 1\\
    0 & 1    
    \end{pmatrix}, &
    \tau_- \mapsto& \begin{pmatrix}
    1 & 0\\
    1 & 1    
    \end{pmatrix}.
\end{align}
The kernel is generated by $(\tau_+ \tau_-^{-1} \tau_+)^4$.
\begin{prop}[\cite{C05}]
There is an action of $\widetilde{SL}(2,\mathbb{Z})$ on $\mathcal{H}_N$ determined by the following formulas 
\begin{align}
    &\tau_+\colon \quad T_i \mapsto  T_i,\;\; X_i\mapsto X_i,\;\; Y_i\mapsto Y_iX_i T_{i-1}\cdots T_1 T_1 \cdots T_{i-1},
    \\
    &\tau_-\colon \quad T_i \mapsto  T_i,\;\; X_i\mapsto X_iY_i T_{i-1}^{-1}\cdots T_1^{-1}T_1^{-1}\cdots T_{i-1}^{-1},\;\; Y_i\mapsto Y_i.
\end{align}
\end{prop}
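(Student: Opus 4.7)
The plan has three parts. First, I verify that each assignment $\tau_\pm$ extends to an algebra endomorphism of $\mathcal{H}_N$ by checking the defining relations. Second, I show each endomorphism is invertible. Third, I verify the single relation $\tau_+ \tau_-^{-1} \tau_+ = \tau_-^{-1} \tau_+ \tau_-^{-1}$; this suffices because this is the only defining relation of $\widetilde{SL}(2,\mathbb{Z})$.

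For the first part I would treat $\tau_+$ in detail, the case of $\tau_-$ being parallel under the symmetry $X \leftrightarrow Y$. Since $T_i$ and $X_i$ are fixed, only relations involving $Y$ require verification. Writing $\rho_i := T_{i-1}\cdots T_1 T_1 \cdots T_{i-1}$ (with $\rho_1 = 1$) and $Z_i := \tau_+(Y_i) = Y_i X_i \rho_i$, the inductive identity $\rho_{i+1} = T_i \rho_i T_i$, combined with $T_i Y_i = Y_{i+1} T_i^{-1}$ (from $T_i Y_i T_i = Y_{i+1}$) and $T_i^{-1} X_i = X_{i+1} T_i$ (from $T_i^{-1} X_i T_i^{-1} = X_{i+1}$), gives $T_i Z_i T_i = Z_{i+1}$ in a few lines. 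For $j \neq i, i+1$ each factor of $Z_j$ commutes with $T_i$, and the $\pi$-conjugation relation is checked analogously. The cross-relations $X_2^{-1} Z_1 X_2 Z_1^{-1} = T_1^2$ and $X_1 Z_2^{-1} X_1^{-1} Z_2 = T_1^2$ reduce to the original $XY$-relations after substituting $Z_1 = Y_1 X_1$, $Z_2 = Y_2 X_2 T_1^2$, using the derived commutativity $[X_i, X_j]=0$ (a consequence of the affine-Hecke structure on the $T_i$'s and $\pi$). Mutual commutativity $[Z_i, Z_j] = 0$ then follows from $[Y_i, Y_j] = 0$ together with both cross-relations.

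Invertibility of $\tau_+$ is obtained by exhibiting the explicit candidate $T_i \mapsto T_i, X_i \mapsto X_i, Y_i \mapsto Y_i \rho_i^{-1} X_i^{-1}$ and checking by the same method that it is a homomorphism composing with $\tau_+$ to the identity on generators; similarly for $\tau_-$. For the braid relation, since $T_i$ is fixed by both $\tau_\pm$, it suffices to verify the identity on a set of generators complementary to the $T_i$'s; one can take $\{\pi, X_1, Y_1\}$. Both sides of $\tau_+ \tau_-^{-1} \tau_+(g) = \tau_-^{-1} \tau_+ \tau_-^{-1}(g)$ for $g$ in this set are then computed directly using the formulas $\tau_+(Y_1) = Y_1 X_1$, $\tau_-(X_1) = X_1 Y_1$ and the analogous formulas for $\tau_+^{-1}, \tau_-^{-1}$.

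The main obstacle is the bookkeeping in the braid verification, where one must keep track of the $T$-valued and scalar correction factors produced when commuting $X_1$'s and $Y_1$'s past one another. The reason the identity ultimately holds is transparent: $\tau_\pm$ descend to the matrices in \eqref{eq: projection SL2} on the abelian pair of degrees $(\deg_X, \deg_Y)$, and those matrices satisfy the braid relation already in $SL(2, \mathbb{Z})$; the remaining task is to check that the correction terms cancel on the two sides, which is a finite, mechanical computation. The fact that $\widetilde{SL}(2,\mathbb{Z})$ is a nontrivial $\mathbb{Z}$-extension of $SL(2,\mathbb{Z})$ reflects precisely the mod-center ambiguity of these corrections, but plays no obstructive role in verifying the single lifted braid relation.
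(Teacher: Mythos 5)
The paper cites this proposition to Cherednik \cite{C05} and gives no proof of its own, so there is nothing internal to compare your argument against. Your three-part plan---verify that each of $\tau_\pm$ extends to an algebra endomorphism, exhibit an explicit inverse, then check the single braid relation on the generators $\pi$, $Y_1$ (the $T_i$'s being fixed)---is the standard and essentially unavoidable approach, and it is the shape of proof one would find in the reference. Your derivation of $T_i Z_i T_i = Z_{i+1}$ from $\rho_{i+1}=T_i\rho_i T_i$, the shape of the proposed inverse $Y_i\mapsto Y_i\rho_i^{-1}X_i^{-1}$, and the remark that the lifted braid relation is plausible because the two sides already agree on the $(\deg_X,\deg_Y)$-gradings, are all sound.

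There is, however, a concrete false step. You claim that for $j\neq i,i+1$ ``each factor of $Z_j$ commutes with $T_i$,'' and deduce $[T_i,Z_j]=0$. This is wrong whenever $j>i+1$: the factor $\rho_j=T_{j-1}\cdots T_1T_1\cdots T_{j-1}$ of $Z_j$ contains $T_i$ and $T_{i\pm1}$ among its factors, and these do not commute with $T_i$ individually. The conclusion $[T_i,\rho_j]=0$ for $i\le j-2$ is nonetheless true, but it requires the braid relation and is not factor-by-factor. For instance,
\begin{equation*}
T_1\rho_3 \;=\; T_1T_2T_1\cdot T_1T_2 \;=\; T_2T_1T_2\cdot T_1T_2 \;=\; T_2T_1\cdot T_2T_1T_2 \;=\; T_2T_1\cdot T_1T_2T_1 \;=\; \rho_3T_1.
\end{equation*}
So this step in your outline must be replaced by a genuine computation (or by an inductive identity such as $\rho_{j}=T_{j-1}\rho_{j-1}T_{j-1}$ combined with the case $j=i+2$). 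A similar caveat applies to your cross-relation check $X_1Z_2^{-1}X_1^{-1}Z_2=T_1^2$ and to the mutual commutativity $[Z_i,Z_j]=0$: the manipulations you describe as routine in fact involve repeated $T_1$-conjugations between $X_1$, $X_2$, $Y_1$, $Y_2$ and are several lines long. None of these gaps is fatal---the result is true and the plan works---but as written the proposal passes off nontrivial braid-relation computations as trivialities, and the explicit ``each factor commutes'' justification is simply incorrect.
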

The algebra $\mathcal{H}_N$ is bigraded with the gradings $\deg_X$ and $\deg_Y$ defined by
\begin{align}
    \deg_X \pi =&-1 &  \deg_X Y_i =&0 & \deg_X T_i =&0\\
    \deg_Y \pi =&0 &  \deg_Y Y_i =&1 & \deg_Y T_i =&0.
\end{align}
\begin{lemma}\label{lemma:B_i}
    Let $n,n' \in \mathbb{Z}_{\geq 0}$ be coprime integers. Let $m,m'\in \mathbb{Z}_{\geq 0}$ be unique pair of integers such that $m<n$, $m'<n'$ and $nm'-n'm=1$. Then there is an  $\widetilde{SL}(2,\mathbb{Z})$ transformation mapping $X_i^{-1}$ to $\BB_i$ and $Y_i$ to $\AB_i$ such that 
    \begin{equation}\label{eq:BBI ABi deg}
    	\deg_X(\BB_i)=-n,\, \, \deg_Y(\BB_i)=n',\quad  \quad \deg_X(\AB_i)=-m,\, \, \deg_Y(\AB_i)=m'.
    \end{equation}
    Moreover, $\BB_{i+1}=T_i\cdots T_1 \BB_1 T_1 \cdots T_i$, $\AB_{i+1}=T_i\cdots T_1 \AB_1 T_1 \cdots T_i$, and 
    \begin{equation}\label{eq:BBi ABi}
        \BB_1=Z_1\cdots  Z_{n+n'},\quad \AB_1=W_1 \cdots   W_{m+m'},
    \end{equation}
    where 
    \begin{align}
    	\label{eq:Bi in X Y}
    	&Z_j=Y_1\; \text{ if } \left\lfloor \frac{j n}{n+n'} \right\rfloor = \left\lfloor \frac{(j-1)n}{n+n'} \right\rfloor,& \quad &Z_j=X_1^{-1}  \text{ if }  \left\lfloor \frac{j n}{n+n'} \right\rfloor = \left\lfloor \frac{(j-1)n}{n+n'} \right\rfloor +1,\\
   	   	\label{eq:Ai in X Y}
   		&W_j=Y_1\; \text{ if } \left\lfloor \frac{j m}{m+m'} \right\rfloor = \left\lfloor \frac{(j-1)m}{m+m'} \right\rfloor,& \quad &W_j=X_1^{-1} \text{ if } \left\lfloor \frac{j m}{m+m'} \right\rfloor = \left\lfloor \frac{(j-1)m}{m+m'} \right\rfloor +1.
    \end{align}
\end{lemma}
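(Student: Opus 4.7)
The plan is to proceed in three steps: exhibit $\sigma$ as an element of $\widetilde{SL}(2,\mathbb{Z})$ with the prescribed action on the bigrading; derive the conjugation formulas $\BB_{i+1} = T_i \cdots T_1 \BB_1 T_1 \cdots T_i$ and $\AB_{i+1} = T_i \cdots T_1 \AB_1 T_1 \cdots T_i$; and finally verify the explicit expressions \eqref{eq:BBi ABi}. For the first step the key point is that the $\widetilde{SL}(2,\mathbb{Z})$-action respects the bigrading and factors through the projection \eqref{eq: projection SL2} onto $SL(2,\mathbb{Z})$ on the degree lattice; the matrix in $SL(2,\mathbb{Z})$ sending $(\deg_X, \deg_Y)(X_1^{-1}) = (-1,0)$ and $(\deg_X, \deg_Y)(Y_1) = (0,1)$ to $(-n, n')$ and $(-m, m')$ respectively exists precisely because $nm' - mn' = 1$, and any lift to $\widetilde{SL}(2,\mathbb{Z})$ then gives a candidate $\sigma$ so that the bidegrees in \eqref{eq:BBI ABi deg} are automatic.

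For the conjugation formula, the DAHA relations $T_i^{-1} X_i T_i^{-1} = X_{i+1}$ and $T_i Y_i T_i = Y_{i+1}$ yield by a routine induction $X_{i+1}^{-1} = T_i \cdots T_1 X_1^{-1} T_1 \cdots T_i$ and $Y_{i+1} = T_i \cdots T_1 Y_1 T_1 \cdots T_i$. Since both $\tau_+$ and $\tau_-$ fix every $T_j$, so does $\sigma$; applying $\sigma$ to these identities produces the two desired formulas.

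The main content is the explicit form \eqref{eq:BBi ABi}, which I would establish by induction on $n + n'$, mirroring the Euclidean algorithm on the coprime pair. A direct computation from the definitions of $\tau_\pm$ gives $\tau_-^{-1}(X_1^{-1}) = Y_1 X_1^{-1}$ (with $\tau_-^{-1}(Y_1) = Y_1$) and $\tau_+^{-1}(Y_1) = Y_1 X_1^{-1}$ (with $\tau_+^{-1}(X_1^{-1}) = X_1^{-1}$), so successive applications of $\tau_\pm^{\pm 1}$ append exactly the letters $Y_1$ and $X_1^{-1}$ that build up $\BB_1$ and $\AB_1$. At the inductive step I would decompose $\sigma = \sigma_0 \circ \eta$, where $\eta \in \{\tau_+^{\pm 1}, \tau_-^{\pm 1}\}$ is chosen according to whether $n < n'$ or $n > n'$, and $\sigma_0$ is the automorphism attached by the lemma to the pair obtained by one step of the Euclidean algorithm. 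The task is then to show that the Christoffel-type word \eqref{eq:Bi in X Y} for $(n, n')$ is obtained from the corresponding word for the reduced pair by inserting a block of new letters in positions dictated by the action of $\eta$. The argument for $\AB_1$ is parallel, using the pair $(m, m')$ and the word \eqref{eq:Ai in X Y}; one verifies that the same $\sigma$ realizes both formulas simultaneously.

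The main obstacle is the combinatorial bookkeeping in the last step: matching the floor-function description of the word with the recursive insertion pattern coming from one step of the Euclidean algorithm on $(n, n')$. A secondary subtlety is that the lift $\sigma \in \widetilde{SL}(2,\mathbb{Z})$ is not unique, since the kernel $\mathbb{Z}$ of $\widetilde{SL}(2,\mathbb{Z}) \twoheadrightarrow SL(2,\mathbb{Z})$ can act nontrivially on $\mathcal{H}_N$; one must single out the specific lift (equivalently, the specific reduced word in $\tau_\pm^{\pm 1}$) for which \eqref{eq:BBi ABi} holds.
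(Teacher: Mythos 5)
Your proposal takes essentially the same route as the paper: derive the conjugation formulas $\BB_{i+1} = T_i \cdots T_1 \BB_1 T_1 \cdots T_i$ from the DAHA relations $X_{j+1}^{-1} = T_j X_j^{-1} T_j$, $Y_{j+1} = T_j Y_j T_j$ together with the fact that $\tau_\pm$ fix each $T_j$, and then construct $\sigma$ and establish the word formulas by a Euclidean-algorithm induction driven by the identities $\tau_-^{-1}(X_1^{-1}) = Y_1 X_1^{-1} = \tau_+^{-1}(Y_1)$. The paper's proof sketch likewise leaves the combinatorial matching of the floor-function description with the recursive substitution implicit, so your level of detail matches it; your preliminary Step~1 (verifying the degree matrix lies in $SL(2,\mathbb{Z})$) is a harmless addition that the paper handles implicitly once $\sigma$ is produced by the Euclidean recursion.
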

\begin{proof}
    The formulas for $\BB_i$ and $\AB_i$,  in terms of $\BB_1$ and $\AB_1$ follows from the relations $X_{j+1}^{-1}= T_j X_j^{-1} T_j$, $Y_{j+1}= T_j Y_j T_j$. The formulas for $\BB_1$ and $\AB_1$ can be proven using Euclidean algorithm. For the step one uses the formulas $\tau_-^{-1}(X_1^{-1})=Y_1X_1^{-1}=\tau_+^{-1}(Y_1)$.
\end{proof}

Euclidean algorithm used in the proof can be viewed as decomposition of the matrix $\begin{pmatrix}
	n & -m \\ -n' & m'
\end{pmatrix}$ into a product of the matrices $\begin{pmatrix}
1 & -1\\
0 & 1    
\end{pmatrix}, \begin{pmatrix}
1 & 0\\
-1 & 1    
\end{pmatrix}$ which correspond to $\tau_+^{-1}$, $\tau_-^{-1}$.

\begin{wrapfigure}{r}{0pt}                \begin{tikzpicture}[scale=.5]
        \draw[help lines] (0,0) grid (3,5);
        \draw[thick] (0,0) -- (3,5);
        \draw[thick] (0,0) -- (1,0) -- (1,1) -- (2,1) -- (2,2) -- (2,3) -- (3,3) -- (3,4) -- (3,5); 
    \end{tikzpicture}  
\end{wrapfigure}
    The formula for $\BB_1$ has also the following geometric interpretation. Draw the segment from $(0,0)$ to $(n',n)$. Draw the closest north-east lattice path below the segment. Then for each east step of the path we write $Y_1$ and for each north step we write $X_1^{-1}$.

\begin{Example} \label{example:1}
Let us take $n=5$ and $n'=3$. Then 
\begin{align*} 
	\tau_+^{-1}\tau_-^{-1}\tau_+^{-1}\tau_-^{-1}(X_1^{-1})&=Y_1X_1^{-1}Y_1X_1^{-2}Y_1X_1^{-2},
	\\ 
	\tau_+^{-1}\tau_-^{-1}\tau_+^{-1}\tau_-^{-1}(Y_1)&=Y_1X_1^{-1}Y_1X_1^{-2}.
\end{align*}
The formulas of $\BB_1$ agrees with the form of the sequence $\frac{jn}{n+n'}$ namely  $0,\frac{5}{8},\frac{10}{8},\frac{15}{8},\frac{20}{8},\frac{25}{8},\frac{30}{8},\frac{35}{8},\frac{40}{8}$ as well as the geometric description.
\end{Example}

\subsection{Cherednik representation}

Algebra $H^X$ has one-dimensional representation $\mathbb{C}_u$
\[
	T_i \mapsto v,\quad \pi \mapsto u.
\]
Cherednik representation $\Ch_u$ of $\mathcal{H}_N$ is the induced representation $\Ind_{H^X}^{\mathcal{H}_N}\mathbb{C}_u$. It can be identified with the space of Laurent polynomials $\mathbb{C}[Y_1^{\pm 1},\dots, Y_N^{\pm 1}]$.  The action of the generators $T_i$ and $\pi$ can be written as
\[
	T_i = s_i^{Y} + (v-v^{-1}) \frac{s_i^Y-1}{Y_i/Y_{i+1}-1}, \quad  \quad
	\pi(Y_1^{\lambda_1} Y_2^{\lambda_2} \cdots Y_N^{\lambda_N}) =  u q^{\lambda_N}Y_1^{\lambda_N} Y_2^{\lambda_1} Y_3^{\lambda_2} \cdots Y_{N}^{\lambda_{N-1}},
\]
here $s_i^Y$ is the permutation of $Y_i$ and $Y_{i+1}$.

For any composition $\lambda=(\lambda_1,\dotsm\lambda_N)\in\mathbb{Z}^N$ we denote $Y^\lambda=Y_1^{\lambda_1}\cdots Y_N^{\lambda_N}$, such vectors form the standard monomial basis in Cherednik representation. We will write $\lambda \leq \mu$ if $\mu-\lambda\in \oplus \mathbb{Z}_{\geq 0} \alpha_i$, where $\alpha_i$ are positive simple roots of $\mathfrak{sl}_N$.
\begin{defin}\label{def:prec}
	Let $\lambda$, $\mu \in \mathbb{Z}^N$. We write $\lambda \prec \mu $ if
	\begin{enumerate}
		\item $\lambda^+ < \mu^+$ where $\lambda^+$ is the dominant coweight lying in the orbit of $\lambda$, and analogously for $\mu^+$
		\item $\lambda^+= \mu^+$ and $\lambda < \mu$
	\end{enumerate}
\end{defin}
For example, in case $N=2$ we have
\[
	(1,1) \prec (0,2) \prec (2,0).
\]
The action of the operators $X_1,\dots,X_N$ in the basis $Y^\lambda$ is triangular with respect to the order $\prec$. More explicitly
\begin{equation} \label{eq: uper triangular X_i}
    X_i Y^{\lambda} = u^{-1}  q^{-\lambda_i} v^{-\lt_i} Y^{\lambda} + \sum_{\mu \prec \lambda} x_{\lambda, \mu} Y^{\mu},
\end{equation}
here
\begin{equation}
  \lt_i= \big|\{j|  \lambda_j > \lambda_i  \} \big| + \big|\{j| j<i , \lambda_j = \lambda_i  \} \big| -  \big|\{j| \lambda_j < \lambda_i  \} \big| - \big|\{j| j>i, \lambda_j = \lambda_i  \} \big|.
  \end{equation}
The non-symmetric Macdonald polynomials $E_{\lambda}$ are defined as eigenvectors of $X_1,\dots,X_N$ with the leading term $Y^\lambda$. Note that \eqref{eq: uper triangular X_i} implies
	\begin{equation} \label{eq: Cherednik eigenvalues}
	X_i E_{\lambda} = u^{-1}  q^{-\lambda_i} v^{-\lt_i}  E_{\lambda}.
	\end{equation}

\section{Representation} \label{sec:Representation}
 In this section we introduce representation $\Ch^{(n,n_{tw})}_{u_0, \dots, u_{n-1}}$, generalize \eqref{eq: uper triangular X_i} and interpret the obtained representation as twisted Cherednik representation. Also, we study a basis $A_{\lambda}$ of the representation $\Ch^{(n,n_{tw})}_{u_0, \dots, u_{n-1}}$ in the case $\gcd(n, n_{tw})=1$.
\subsection{Explicit construction} \label{subsection: explicit construction}
\paragraph{Action of affine Hecke algebra}
Fix $n$ and let $\mathbb{C}^n$ be a vector space with the basis $e_0,\dots,e_{n-1}$. Define an $R$-matrix acting on $\mathbb{C}^n\otimes \mathbb{C}^n$
\begin{equation}
    R= v \sum_{a}  E_{aa}\otimes E_{aa}+\sum_{a<b} \Big( E_{ab}\otimes E_{ba}+E_{ba}\otimes E_{ab}+ (v-v^{-1})E_{aa}\otimes E_{bb}\Big),
\end{equation}
here $E_{ab}$ is a matrix unit. Define an action of $H$ on $(\mathbb{C}^n)^{\otimes N}$ by the formula $T_i \mapsto R_{i,i+1}$, here the indices encodes factors on which $R$-matrix acts. One can induce an action of $H^Y$ on $(\mathbb{C}^n)^{\otimes N}[Y_1^{\pm1}, \dots Y_N^{\pm 1}]$.

Below we will introduce notation to distinguish different actions of permutation group on the space $(\mathbb{C}^n)^{\otimes N}[Y_1^{\pm1}, \dots Y_N^{\pm 1}]$. 
Let $s_{ij}^Y$ be an operator acting on $(\mathbb{C}^n)^{\otimes N}[Y_1^{\pm1}, \dots, Y_N^{\pm 1}]$ which swaps $Y_i$ and $Y_{j}$. Let $s_{ij}^e$ be an operator acting on $(\mathbb{C}^n)^{\otimes N}[Y_1^{\pm1}, \dots, Y_N^{\pm 1}]$ which swaps tensor factors number $i$ and $j$ (and commutes with all $Y_k$). Finally, let $s_{ij} = s_{ij}^Y s_{ij}^e$. Also, we will denote $s_i^Y = s_{i, i+1}^Y$, $s_i^e = s_{i, i+1}^e$, and $s_i = s_{i, i+1}$.

The  action of $T_i$ is given by the following formula
\begin{equation}\label{eq:Ti in terms of si}
    T_i = s_i^{Y} R_{i, i+1} + (v-v^{-1}) \frac{s_i^Y-1}{Y_i/Y_{i+1}-1}
\end{equation}

The obtained representation of affine Hecke algebra $H^Y$ is well-know \cite{GRV92}, \cite{CP94}. It appears in the context of quantum affine Shcur-Weyl duality.
\paragraph{Action of DAHA} 
Below we will use the following identification 
\begin{align}
 \left( \mathbb{C}^n[Y^{\pm 1}]\right)^{\otimes N} &\rightarrow \left(\mathbb{C}^n\right)^{\otimes N}[Y_1^{\pm1}, \dots, Y_N^{\pm 1}] \\
 (Y^{j_1} e_{i_1}) \otimes \dots \otimes (Y^{j_N} e_{i_N}) &\mapsto Y_1^{j_1} \cdots Y_n^{j_n} e_{i_1} \otimes \dots \otimes e_{i_N}
\end{align}
Let us introduce $e_i\in \mathbb{C}^n[Y^{\pm 1}]$ for $i \in \mathbb{Z}$ by setting
\begin{equation}
e_{i} = Y^{-1} e_{i+n}.
\end{equation}
Introduce operators $\kappa$ and $D$ acting on $\mathbb{C}^n[Y^{\pm 1}]$ by $\kappa e_i=e_{i-1}$, $D (Y^j e_a)= u_a q^{j} Y^j e_a$ for $a=0, \dots, n{-}1$. Here $u_0, \dots, u_{n-1}$ are any non-zero numbers. By $\kappa_i$ and $D_i$ we denote the corresponding operators acting on $i$-th tensor factor. 
\begin{thm} \label{thm: finite explicit construction}
For any non-zero numbers $u_0, \dots, u_{n-1}$ and $n_{tw}\in \mathbb{Z}$, there is an action of algebra $\mathcal{H}_N$ on $(\mathbb{C}^n)^{\otimes N}[Y_1^{\pm1}, \dots, Y_N^{\pm1}]$ determined by the following conditions
\begin{itemize}
    \item subalgebra $H^Y$ acts as discribed above
    \item $\pi=\kappa_1^{n_{tw}}D_1 s_1\cdots s_{N-1}$
\end{itemize}
\end{thm}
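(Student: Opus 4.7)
Since the action of the affine Hecke subalgebra $H^Y$ generated by $T_1,\dots,T_{N-1}$ and $Y_1^{\pm 1},\dots,Y_N^{\pm 1}$ is the standard quantum affine Schur--Weyl construction, the DAHA relations that involve only these generators are automatic. The plan is therefore to verify only the three families of relations involving $\pi$: (i) $\pi Y_i \pi^{-1}=q^{\delta_{i,N}}Y_{i+1}$; (ii) $\pi T_i \pi^{-1}=T_{i+1}$ for $i=1,\dots,N-2$; and (iii) $\pi^N T_i = T_i \pi^N$. The structural observation underlying all three is that $c:=s_1 s_2 \cdots s_{N-1}$ acts on $(\mathbb{C}^n)^{\otimes N}[Y_1^{\pm 1},\dots,Y_N^{\pm 1}]$ as a simultaneous cyclic shift of tensor positions and $Y$-variables sending $i\mapsto i+1$ (mod $N$); consequently, from the formula \eqref{eq:Ti in terms of si}, $c Y_i c^{-1}=Y_{i+1\bmod N}$ and $c T_i c^{-1}=T_{i+1}$ for $i=1,\dots,N-2$, and $c^N=\mathrm{id}$. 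Moreover, $D_1$ and $\kappa_1^{n_{tw}}$ act only on the first tensor factor and on $Y_1$; the definition of $D$ gives $D_1 Y_1 D_1^{-1}=q Y_1$, while $\kappa^n=Y^{-1}$ yields $[\kappa_1,Y_1]=0$.

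Relations (i) and (ii) follow immediately. For (i), $\pi Y_i \pi^{-1}=\kappa_1^{n_{tw}}D_1(c Y_i c^{-1})D_1^{-1}\kappa_1^{-n_{tw}}$: when $i<N$ the inner expression is $Y_{i+1}$ with index $\geq 2$, hence commutes with $D_1$ and $\kappa_1^{n_{tw}}$; when $i=N$ it becomes $Y_1$ and conjugation by $D_1$ produces the required factor $q$. For (ii), $c T_i c^{-1}=T_{i+1}$ acts on positions and variables of index $\geq 2$, so it commutes with $D_1$ and $\kappa_1^{n_{tw}}$.

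The substantial check is (iii). Iterating $c(\kappa_1^{n_{tw}}D_1)c^{-1}=\kappa_2^{n_{tw}}D_2$ and using $c^N=\mathrm{id}$ gives
\[
\pi^N \;=\; \prod_{i=1}^{N} \kappa_i^{n_{tw}} D_i \;=\; \Bigl(\prod_{i=1}^N \kappa_i^{n_{tw}}\Bigr)\Bigl(\prod_{i=1}^N D_i\Bigr),
\]
where the factors at distinct tensor positions commute, so the reordering is legitimate. The operator $\prod_i D_i$ acts as the scalar $\bigl(\prod_i u_{a_i}\bigr)\,q^{\sum_i j_i}$ on the basis vector $Y^{\vec j} e_{a_1}\otimes\cdots\otimes e_{a_N}$; this scalar depends only on the multi-set $\{a_i\}$ and on $\sum_i j_i$, both preserved by $s_j^Y$, by $R_{j,j+1}$, and by $(s_j^Y-1)/(Y_j/Y_{j+1}-1)$, so $\prod_i D_i$ commutes with every $T_j$. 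By locality the remaining commutation $[\prod_i \kappa_i^{n_{tw}},T_j]=0$ reduces to $[\kappa_1^{n_{tw}}\kappa_2^{n_{tw}},T_1]=0$ on $\mathbb{C}^n[Y^{\pm 1}]\otimes \mathbb{C}^n[Y^{\pm 1}]$, and this is the step I expect to be the main obstacle: $\kappa_1\kappa_2$ commutes with neither $s_1^Y$ nor $R_{1,2}$ individually, so the commutation with $T_1$ must arise from a cancellation between the ``$s_1^Y R_{1,2}$'' part and the Demazure--Lusztig correction $(v-v^{-1})(s_1^Y-1)/(Y_1/Y_2-1)$. I would verify this by a direct case analysis on the residues $a_1,a_2\in\{0,\dots,n-1\}$, noting as a sanity check that $(\kappa_1\kappa_2)^n = Y_1^{-1}Y_2^{-1}$ is manifestly central for $T_1$. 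Conceptually, the cancellation reflects that $\kappa$ realises the Dynkin diagram rotation of the evaluation module $\mathbb{C}^n[Y^{\pm 1}]$ of $\qgl$, an outer symmetry under which the spectral $R$-matrix intertwiner underlying $T_1$ is equivariant via quantum affine Schur--Weyl duality.
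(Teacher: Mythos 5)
Your overall strategy coincides with the paper's: verify only the relations involving $\pi$, observe that the cyclic shift $c=s_1\cdots s_{N-1}$ together with $\kappa_1^{n_{tw}}D_1$ handles relations (i) and (ii) immediately, observe that $\pi^N$ acts as $\kappa^{n_{tw}}D$ independently on each tensor factor, and thereby reduce (iii) to the $N=2$ case. Your splitting of $\pi^2$ into the diagonal part $D_1D_2$ and the shift part $(\kappa_1\kappa_2)^{n_{tw}}$, and the clean degree-counting argument that $D_1D_2$ commutes with $T_1$, is a nice refinement that the paper leaves implicit.

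However, there is a genuine gap: the crucial commutation $[\kappa_1^{n_{tw}}\kappa_2^{n_{tw}},T_1]=0$ is never actually established. You write ``I would verify this by a direct case analysis on the residues,'' which is a plan rather than a proof, and neither of your two substitute remarks closes the gap. The sanity check that $(\kappa_1\kappa_2)^n=Y_1^{-1}Y_2^{-1}$ is central does not imply that the $n_{tw}$-th power of $\kappa_1\kappa_2$ is central when $n\nmid n_{tw}$. The conceptual appeal to the Dynkin diagram rotation is also not a proof: the diagram rotation is an outer automorphism of $\qgl$ and one would still have to argue that the $R$-matrix intertwiner defining $T_1$ is equivariant under it, which is not automatic and in any case needs to be turned into a concrete verification. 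The paper supplies exactly the missing computation: equations \eqref{eq: T1}--\eqref{eq: T4} write out $T_1(e_\mb\otimes e_l)$ and $T_1(e_l\otimes e_\mb)$ explicitly in the $e$-basis, and all four formulas depend only on the difference $l-\mb$; shifting both indices by $-n_{tw}$ is precisely the action of $\kappa_1^{n_{tw}}\kappa_2^{n_{tw}}$, so the invariance of the formulas under this shift is the sought-after commutation. You should carry out (or cite) this computation to make the proposal complete; without it, the key step of the theorem is unverified.
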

Denote the obtained representation by $\Ch^{(n,n_{tw})}_{u_0, \dots, u_{n-1}}$.
\begin{proof}
    It is enough to check the relations involving $\pi$. The relations $\pi Y_i \pi^{-1} = q^{\delta_{i,N}} Y_{i+1}$ and $\pi T_i \pi^{-1}= T_{i+1}$ are easy to see. Let us check that $\pi^N$ commutes with $T_i$. Since  
    \[
        \pi^N(w_1\otimes \dots \otimes w_N)=  \left( \kappa^{n_{tw}} D w_1 \right) \otimes \left( \kappa^{n_{tw}} D w_2 \right) \otimes \dots \otimes \left( \kappa^{n_{tw}} D w_{N} \right),
    \]
    it is sufficient to consider only the $N=2$ case. In this case we denote $T=T_1$ for brevity. 
     Let $l = \mb + nk + s$ for $k \geq 0$ and $s =0, \dots n-1$. 
    For $s=0$
    \begin{align}
        T ( e_{\mb} \otimes e_l) &= v e_l \otimes e_{\mb} + (v-v^{-1})\sum_{j=1}^k  e_{l-nj} \otimes e_{\mb+nj}  \quad \quad \text{for $k \geq 0$},
        \label{eq: T1} \\
        T (e_l \otimes e_{\mb}) &= v^{-1} e_{\mb} \otimes e_l - (v-v^{-1})\sum_{j=1}^{k-1} e_{l-nj} \otimes e_{\mb+nj}  \quad \quad \text{for $k > 0$}. \label{eq: T2}
    \end{align}
For $s>0$
    \begin{align}        \label{eq: T3}
        T ( e_{\mb} \otimes e_l) &= e_l \otimes e_{\mb} + (v-v^{-1})\sum_{j=0}^{k} e_{\mb+nj} \otimes e_{l-nj},
        \\
        T (e_l \otimes e_{\mb}) &= e_{\mb} \otimes e_l - (v-v^{-1})\sum_{j=1}^{k} e_{l-nj} \otimes e_{\mb+nj}. \label{eq: T4}
    \end{align}
 Since the formulas \eqref{eq: T1}--\eqref{eq: T4} are invariant under the shift $l \mapsto l-n_{tw}$, $\mb \mapsto \mb- n_{tw}$, we see that $T \pi^2 = \pi^2 T$ for $N=2$.   
\end{proof}

\subsection{Triangularity of Macdonald operators} \label{ssec: Triangularity of Macdonald operators}
Introduce a grading on $(\mathbb{C}^n[Y^{\pm 1}])^{\otimes N}$ as follows 
    \begin{align}
    \deg{e_{a_1}\otimes \dots \otimes e_{a_N}}=&\sum a_i, & \deg Y_i=&n.
    \end{align}
    Then the operators $T_i$ preserve the grading and  $\deg \pi = - n_{tw}$. Hence $\deg X_i = n_{tw}$. 
    
    To simplify our notation, let us assume $n, n_{tw}>0$. Let $d=\gcd(n,n_{tw})$, we use notations $n''=n/d$, $n'=n_{tw}/d$. Consider integers $m,m'$ such that $n m'-n_{tw} m=d$, $0\leq m'<n'$, $0 \leq m<n''$. Hence there is $\sigma \in \widetilde{SL}(2,\mathbb{Z})$ such that
    \begin{align}
         \sigma(X_i)=&\BB_i^{-1},& \quad &\deg_X(\BB_i)=-n'',\, \, \deg_Y(\BB_i)=n',& \quad  \deg \BB_i =& 0, \label{def sigma 1} \\
         \sigma(Y_i) =& \AB_i,& \quad &\deg_X(\AB_i)=-m,\, \, \deg_Y(\AB_i)=m',& \quad   \deg \AB_i =& d. \label{def sigma 2}
    \end{align}
   The corresponding matrix in \(SL(2,\mathbb{Z})\) is \(\begin{pmatrix}
    n''& -m \\ -n' & m'
   \end{pmatrix}\). Lemma \ref{lemma:B_i} gives explicit formulas for $\BB_i,\AB_i$.


Consider operators
\begin{equation} \label{eq: formula for G_i}
    G_{ij} =  R_{ij} s_{ij}^e + (v-v^{-1})\frac{1-s_{ij}^Y}{Y_i/Y_{j}-1} s_{ij}^e.
\end{equation}
Denote $G_i = G_{i,i+1}$. It follows from \eqref{eq:Ti in terms of si} that $T_i = G_i s_i$. Using this, we can write  the following formula 
\begin{equation}
    X_1^{-1}=\pi T_{N-1}^{-1}\cdots T_1^{-1}=\kappa_1^{n_{tw}}D_1s_1\cdots s_{N-1}s_{N-1}G_{N-1,N}^{-1}\cdots s_{1}G_{1,2}^{-1}=\kappa_1^{n_{tw}}D_1G_{1,N}^{-1}\cdots G_{1,2}^{-1}.
\end{equation}

For any collection $\lambda=(\lambda_1,\dots,\lambda_N)\in \mathbb{Z}^N$, we can consider a vector $e_{\lambda} =e_{\lambda_1}\otimes\dots \otimes e_{\lambda_N}$. We are going to prove that the operators $\BB_i$ are triangular in the basis $e_\lambda$ with respect to order $\prec$. Basically, the proof is a computation, similar to \cite[Sect. 5]{K97}. Let us first explain the idea on the following example (cf. Example \ref{example:1}).
\begin{Example}
    Let us take $N=3$, $n=5$, $n_{tw}=3$. Note that $Y_i = \kappa_i^{-5}$. Then we have
    \begin{multline*}
        \BB_1=Y_1 X_1^{-1}Y_1 X_1^{-2} Y_1 X_1^{-2}=\kappa_1^{-5}(\kappa_1^{3}D_1G_{1,3}^{-1} G_{1,2}^{-1})\kappa_1^{-5}(\kappa_1^{3}D_1G_{1,3}^{-1} G_{1,2}^{-1})^2\kappa_1^{-5}(\kappa_1^{3}D_1G_{1,3}^{-1} G_{1,2}^{-1})^2
        \\
        =(\kappa_1^{-2}D_1G_{1,3}^{-1} G_{1,2}^{-1}\kappa_1^2)(\kappa_1^{-4}D_1G_{1,3}^{-1} G_{1,2}^{-1}\kappa_1^4)(\kappa_1^{-1}D_1G_{1,3}^{-1} G_{1,2}^{-1}\kappa_1^1)(\kappa_1^{-3}D_1G_{1,3}^{-1} G_{1,2}^{-1}\kappa_1^3)(D_1 G_{1,3}^{-1} G_{1,2}^{-1}),
    \end{multline*}
    \begin{multline*}
        \BB_2=T_1 \BB_1T_1= G_{1,2}s_1 \kappa_1^{-5}(\kappa_1^{3}D_1 G_{1,3}^{-1} G_{1,2}^{-1})\kappa_1^{-5}(\kappa_1^{3}D_1G_{1,3}^{-1} G_{1,2}^{-1})^2\kappa_1^{-5}(\kappa_1^{3}D_1G_{1,3}^{-1} G_{1,2}^{-1})^2 G_{1,2} s_1
        \\
        =G_{1,2} (\kappa_2^{-2}D_2G_{2,3}^{-1} G_{2,1}^{-1}\kappa_2^2)(\kappa_2^{-4}D_2G_{2,3}^{-1} G_{2,1}^{-1}\kappa_2^4)(\kappa_2^{-1}D_2 G_{2,3}^{-1} G_{2,1}^{-1}\kappa_2^1)(\kappa_2^{-3}D_2 G_{2,3}^{-1} G_{2,1}^{-1}\kappa_2^3)D_2
        G_{2,3}^{-1},
    \end{multline*}
    \begin{multline*}
        \BB_3=T_2T_1 \BB_1T_1T_2
        \\= G_{2,3}s_2G_{1,2}s_1 \kappa_1^{-5}(\kappa_1^{3}D_1G_{1,3}^{-1} G_{1,2}^{-1})\kappa_1^{-5}(\kappa_1^{3}D_1G_{1,3}^{-1} G_{1,2}^{-1})^2\kappa_1^{-5}(\kappa_1^{3}D_1G_{1,3}^{-1} G_{1,2}^{-1})^2 G_{1,2} s_1 G_{2,3}s_2
        \\
        =G_{2,3}G_{1,3} (\kappa_3^{-2}D_3G_{3,2}^{-1} G_{3,1}^{-1}\kappa_3^2)(\kappa_3^{-4}D_3G_{3,2}^{-1} G_{3,1}^{-1}\kappa_3^4)(\kappa_3^{-1}D_3G_{3,2}^{-1} G_{3,1}^{-1}\kappa_3^1)(\kappa_3^{-3}D_3G_{3,2}^{-1} G_{3,1}^{-1}\kappa_3^3)D_3.
    \end{multline*}
    Using Proposition \ref{prop:G kappa triangular} below, we see that all these operators are triangular.
\end{Example}
Now we proceed to the proof.

\begin{prop}\label{prop:G triangular}
The operators $G_i$ are triangular in the basis $e_\lambda$ with respect to order $\prec$.
\end{prop}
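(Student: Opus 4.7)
Since $G_i = G_{i,i+1}$ acts trivially on the tensor factors indexed $k \ne i, i{+}1$ and on the variables $Y_k$ with $k \ne i, i{+}1$, the claim reduces to the case $N = 2$, i.e.\ to showing that $G := G_{12}$ is upper triangular on $\{e_{\lambda_1} \otimes e_{\lambda_2}\}$. Writing $\lambda_k = n j_k + a_k$ with $a_k \in \{0,\dots, n{-}1\}$, the vector $e_{\lambda_1} \otimes e_{\lambda_2}$ corresponds to $Y_1^{j_1} Y_2^{j_2}\, e_{a_1} \otimes e_{a_2}$ under the identification of Section~\ref{subsection: explicit construction}.

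The plan is to expand $G(e_{\lambda_1} \otimes e_{\lambda_2})$ via the two summands of \eqref{eq: formula for G_i} separately. For the first summand $R_{12} s^e_{12}$, the $Y$-monomial is untouched, and the explicit formula for the $R$-matrix applied to $e_{a_2} \otimes e_{a_1}$ splits into three cases according to whether $a_1 > a_2$, $a_1 = a_2$, or $a_1 < a_2$; the resulting basis vectors are of the form $e_{n j_1 + a_{\sigma(1)}} \otimes e_{n j_2 + a_{\sigma(2)}}$ for $\sigma \in S_2$, with an additional $(v - v^{-1})$ correction when $a_2 < a_1$. For the second summand, after applying $s^e_{12}$, one expands
\[
	\frac{Y_1^{j_1} Y_2^{j_2} - Y_1^{j_2} Y_2^{j_1}}{Y_1/Y_2 - 1}
\]
as a finite geometric sum in $Y_1/Y_2$, producing terms $\pm (v - v^{-1})\, e_{n(j_2 + i) + a_2} \otimes e_{n(j_1 - i) + a_1}$ with $i$ running over an interval determined by the sign of $j_1 - j_2$ (and vanishing when $j_1 = j_2$).

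The remaining step is to verify that every resulting basis index $\mu = (\mu_1, \mu_2)$ satisfies $\mu \preceq \lambda$, with the $\lambda$-coefficient identified as the unique diagonal contribution. The grading $\mu_1 + \mu_2 = \lambda_1 + \lambda_2$ is preserved throughout, and comparison of the dominant coweights $\mu^+$ and $\lambda^+$ reduces to elementary inequalities among $n$, $a_1 - a_2$, $j_1 - j_2$, and $i$. Specifically, for the off-diagonal $R$-matrix contribution the exchange of residues $a_1 \leftrightarrow a_2$ strictly decreases $\mu^+$ in the dominance order when $j_1 \ne j_2$, and yields $\mu^+ = \lambda^+$ with $\mu < \lambda$ in the root ordering when $j_1 = j_2$; for each term of the geometric sum a direct estimate shows $\mu^+_1 < \lambda^+_1$ as soon as $i \ge 1$, while the $i = 0$ boundary term satisfies $\mu^+ = \lambda^+$ and $\mu < \lambda$.

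I expect the main obstacle to be the case-by-case bookkeeping: in particular, one must check that the boundary terms of the geometric sum do not spuriously cancel against or reinforce the $R$-matrix diagonal contribution, and that the sub-case $\lambda_1 > \lambda_2$ combined with $j_1 < j_2$ and $a_1 > a_2$ is geometrically excluded (since $a_1 - a_2 < n \le n(j_2 - j_1)$). This is essentially the computation of \cite[Sect.~5]{K97}; once the diagonal term is isolated, every other basis vector appearing in $G(e_{\lambda_1} \otimes e_{\lambda_2})$ is automatically $\prec \lambda$.
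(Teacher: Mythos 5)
Your overall strategy coincides with the paper's: reduce to $N=2$ and verify triangularity by explicit computation, grouping basis vectors by their residues modulo $n$. The difference is in bookkeeping: the paper does not re-expand $G$ from the defining formula \eqref{eq: formula for G_i} but simply translates the already-established closed forms \eqref{eq: T1}--\eqref{eq: T4} for $T$ (proved inside Theorem~\ref{thm: finite explicit construction}) into the corresponding formulas \eqref{eq: G1}--\eqref{eq: G4} for $G$ via $T_i = G_i s_i$, from which triangularity is immediate by inspection. Your route of expanding the two summands of \eqref{eq: formula for G_i} separately is also fine, but is bookkeeping-heavier because it creates a hidden cancellation that the paper's closed forms avoid.

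That cancellation is precisely where your sketch has a gap. You assert that "the exchange of residues $a_1 \leftrightarrow a_2$ strictly decreases $\mu^+$ in the dominance order when $j_1 \ne j_2$." This is false when $j_1 < j_2$ and $a_1 > a_2$ (which is exactly when the off-diagonal $R$-matrix term appears, since $R$ applied to $e_{a_2}\otimes e_{a_1}$ produces a $(v-v^{-1})$ correction only for $a_2<a_1$). For instance take $n=3$, $\lambda=(2,4)$, so $a_1=2$, $a_2=1$, $j_1=0$, $j_2=1$: the $R$-matrix summand alone yields
\[
R_{12}\,s^e_{12}\bigl(Y_2\,e_2\otimes e_1\bigr) = e_2\otimes e_4 + (v-v^{-1})\,e_1\otimes e_5,
\]
and $\mu=(1,5)$ has $\mu^+=(5,1) \succ (4,2)=\lambda^+$, contradicting triangularity of this summand. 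What saves the day is that the terminal term ($i=j_2-j_1$ in the geometric expansion) of the divided-difference summand is exactly $-(v-v^{-1})\,e_1\otimes e_5$, cancelling the offending term; the net effect is $G(e_2\otimes e_4)=e_2\otimes e_4$, matching \eqref{eq: G4} with $k=0$. You flagged a worry about boundary terms of the geometric sum interacting with the $R$-matrix contribution, but you misattributed it to a sub-case you claimed was "geometrically excluded." In fact that sub-case is not excluded; it is the case where the cancellation is essential, and it must be verified explicitly rather than dismissed. Once this is fixed, your computation reproduces \eqref{eq: G1}--\eqref{eq: G4} and the triangularity follows.
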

\begin{proof}

     It is sufficient to consider the case \(N=2\). In this case we will write simply $G$ omitting the index. The formulas below is just a reformulation of \eqref{eq: T1}--\eqref{eq: T4}. Recall that $l = \mb + nk + s$ for $k \geq 0$ and $s =0, \dots n-1$. 
    For $s=0$
    \begin{align}
        G ( e_l \otimes e_{\mb}) &= v e_l \otimes e_{\mb} + (v-v^{-1})\sum_{j=1}^k  e_{l-nj} \otimes e_{\mb+nj} \quad \quad \text{for $k \geq 0$,} \label{eq: G1}
        \\
        \label{eq: G2}
        G (e_{\mb} \otimes e_l) &= v^{-1} e_{\mb} \otimes e_l - (v-v^{-1})\sum_{j=1}^{k-1} e_{l-nj} \otimes e_{\mb+nj}  \quad \quad \text{for $k > 0$.}
    \end{align}
For $s>0$
    \begin{align}    
    	\label{eq: G3}    
        G ( e_l \otimes e_{\mb}) &= e_l \otimes e_{\mb} + (v-v^{-1})\sum_{j=0}^{k} e_{\mb+nj} \otimes e_{l-nj},
        \\
        G (e_{\mb} \otimes e_l) &= e_{\mb} \otimes e_l - (v-v^{-1})\sum_{j=1}^{k} e_{l-nj} \otimes e_{\mb+nj}. \label{eq: G4}
    \end{align}
The formulas \eqref{eq: G1}--\eqref{eq: G4} imply that $G$ is triangular.
\end{proof}

\begin{prop}\label{prop:G kappa triangular}
    For $i<j$, the operators $\kappa_i^{-\dt}G_{i,j}\kappa_i^{\dt}$, $0 \leq \dt <n$ and $\kappa_j^{-\dt}G_{j,i}\kappa_j^{\dt}$, $0 < \dt <n$ are triangular in the basis $e_\lambda$ with respect to order $\prec$. The operators $\kappa_i^{-\dt} D_i \kappa_i ^{\dt}$ are diagonal for any $\dt$.
\end{prop}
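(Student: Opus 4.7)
The plan is to handle the diagonal statement first and then the triangularity statements by a case analysis extending the proof of Proposition \ref{prop:G triangular}. The diagonal assertion is essentially immediate: in the basis $\{e_k\}_{k \in \mathbb{Z}}$ the operator $D$ acts by $D(e_k) = u_{k \bmod n}\, q^{\lfloor k/n \rfloor}\, e_k$, hence is diagonal, and conjugation by a power of $\kappa$ merely relabels the basis vectors and so preserves diagonality.

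For the triangularity claims I would first reduce to the two-factor case $N = 2$, $i = 1$, $j = 2$. Both $G_{ij}$ (or $G_{ji}$) and the conjugating $\kappa$'s act non-trivially only on the $i$-th and $j$-th tensor factors, and the order $\prec$ is compatible with freezing the remaining coordinates: if $(\lambda'_i, \lambda'_j) \prec (\lambda_i, \lambda_j)$ in the two-variable order, then replacing these two entries within the full composition preserves $\prec$, since both the sorted-rearrangement comparison and the dominance comparison are monotone under insertion of common entries. The core computation is then to evaluate
\[
\kappa_1^{-\dt} G_{12} \kappa_1^{\dt}(e_a \otimes e_b) = \kappa_1^{-\dt} G_{12}(e_{a-\dt} \otimes e_b)
\]
by selecting one of the four formulas \eqref{eq: G1}--\eqref{eq: G4} according to the sign of $(a-\dt) - b$ and the residue of $(a - \dt - b) \bmod n$, and then applying $\kappa_1^{-\dt}$ to shift the first factor back by $\dt$. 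The result is a diagonal term $v^{\pm 1}\, e_a \otimes e_b$ together with a finite sum of off-diagonal contributions with preserved total index $a + b$. In each subcase one verifies that every off-diagonal term $(a', b')$ satisfies $(a', b') \prec (a, b)$ via the two-step criterion of Definition \ref{def:prec}: one bounds $\max(a', b') \leq \max(a, b)$ with strict inequality throughout the generic range of summation, and falls back to the dominance comparison on $\mathbb{Z}^2$ at the endpoints.

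The case $\kappa_j^{-\dt} G_{ji} \kappa_j^{\dt}$ is handled analogously, but the hypothesis $\dt > 0$ is essential. Indeed, at $\dt = 0$ the operator $G_{ji}$ itself contains an $R$-matrix contribution of the form $(v - v^{-1})\, e_b \otimes e_a$ whenever $a < b$, and $(b, a) \succ (a, b)$ in $\prec$; this destroys triangularity at $\dt = 0$, while any shift with $0 < \dt < n$ precisely eliminates this obstruction.

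The hard part will be the combinatorial bookkeeping: in each of the four subcases of $G$ one must check, accounting for the sign of $a - b$ relative to $\dt$, that the range of the summation index $j$ lies strictly inside the region where either $\max(a', b') < \max(a, b)$ holds strictly or the tie is broken in the correct direction by dominance. The constraint $\dt < n$ prevents the $\kappa$-shift from crossing a full period of $n$ (which would otherwise lift some coordinate above $\max(a, b)$), and together with $\dt > 0$ in the second case this gives exactly the ranges stated in the proposition.
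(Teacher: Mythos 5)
Your approach matches the paper's: the paper also reduces to $N=2$ (observing $G_{ji}=s_1 G_{12}s_1$, $\kappa_j = s_1\kappa_1 s_1$) and then reads off triangularity from the explicit formulas \eqref{eq: G1}--\eqref{eq: G4}; the paper leaves the case-check implicit, so your sketch essentially supplies the verification that the paper's one-line proof asserts. Your reduction argument (locality of $G_{ij}$ and $\kappa_i$, and monotonicity of $\prec$ under insertion of common entries) and your treatment of $D_i$ are correct.

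One caveat on a heuristic you invoke: your explanation that $\dt<n$ is needed because a larger shift would ``lift some coordinate above $\max(a,b)$'' is not quite the right mechanism. Already at $\dt=n$ (where triangularity does fail), the obstruction is typically not a lift but the same kind of swap you correctly identified at $\dt=0$: for example with $n=3$, $\dt=3$, $(a,b)=(5,8)$, one has $\kappa_1^{-3}G\kappa_1^{3}(e_5\otimes e_8) = v^{-1}e_5\otimes e_8 - (v-v^{-1})e_8\otimes e_5$, and $(8,5)\succ(5,8)$ even though $\max$ is unchanged. Genuine lifts above $\max(a,b)$ only enter for $\dt>n$. This does not affect your proposed case-by-case verification (which would catch either phenomenon), but the stated intuition for the upper bound $\dt<n$ should be corrected before it is written up.
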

\begin{proof}
    It is sufficient to consider $N=2$ and the operators $\kappa_1^{-\dt}G\kappa_1^{\dt}$, $0 \leq \dt <n$ and $s_1 \kappa_1^{-\dt}G \kappa_1^{\dt} s_1$, $0 < \dt <n$. Everything follows from \eqref{eq: G1}--\eqref{eq: G4}.
\end{proof}

\begin{thm}\label{thm:Bi triang}
    The operators $\BB_1,\dots, \BB_N$ are triangular in the basis $e_\lambda$ with respect to order $\prec$.
\end{thm}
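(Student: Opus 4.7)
The plan is to rewrite each $\BB_i$ as an explicit product of operators whose triangularity in the basis $\{e_\lambda\}$ with respect to $\prec$ is already established by Propositions~\ref{prop:G triangular} and \ref{prop:G kappa triangular}; since such triangularity is preserved under products, the claim follows. The Example preceding the theorem displays the desired normal form, and the argument naturally splits into two steps.

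\textbf{Step 1 (the case $i=1$).} Substitute $Y_1 = \kappa_1^{-n}$ and $X_1^{-1} = \kappa_1^{n_{tw}} D_1 G_{1,N}^{-1} \cdots G_{1,2}^{-1}$ into the explicit formula $\BB_1 = Z_1 Z_2 \cdots Z_{n''+n'}$ from Lemma~\ref{lemma:B_i}. After grouping adjacent $\kappa_1$-powers and conjugating each block $D_1 G_{1,N}^{-1} \cdots G_{1,2}^{-1}$ by the appropriate power of $\kappa_1$, one obtains
\[
	\BB_1 = \prod_{j=1}^{n''} \kappa_1^{-\dt_j} \bigl(D_1 G_{1,N}^{-1} \cdots G_{1,2}^{-1}\bigr) \kappa_1^{\dt_j},
\]
where $\dt_j$ equals minus the cumulative $\kappa_1$-exponent through the $j$-th occurrence of $X_1^{-1}$ in $Z_1, \ldots, Z_{n''+n'}$. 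Using the floor formula~\eqref{eq:Bi in X Y}, this exponent at position $q$ equals $-d\bigl(qn'' - (n'+n'')\lfloor qn''/(n'+n'')\rfloor\bigr)$; for positions $q$ with $Z_q = X_1^{-1}$ the floor jumps at step $q$, forcing $0 \le qn'' - (n'+n'') \lfloor qn''/(n'+n'')\rfloor < n''$ and hence $0 \le \dt_j < dn'' = n$. Each conjugate $\kappa_1^{-\dt_j} G_{1,l}^{-1} \kappa_1^{\dt_j}$ is triangular (being the inverse of the triangular operator $\kappa_1^{-\dt_j} G_{1,l} \kappa_1^{\dt_j}$ provided by Proposition~\ref{prop:G kappa triangular}, whose diagonal entries $v^{\pm 1}$ or $1$ are invertible on its finite-dimensional invariant subspaces), while $\kappa_1^{-\dt_j} D_1 \kappa_1^{\dt_j}$ is diagonal. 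Thus $\BB_1$ is a product of triangular operators and is triangular.

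\textbf{Step 2 ($i > 1$).} Apply $\BB_i = T_{i-1} \cdots T_1 \BB_1 T_1 \cdots T_{i-1}$ with $T_k = G_{k,k+1} s_{k,k+1}$. Commuting each $s_{k,k+1}$ inward, the rightmost $G_{1,2}^{-1}$ of the innermost block in the Step~1 expansion absorbs the adjacent $G_{1,2}$ from the outer $T_1$ (and similarly for deeper levels), while the $s$-factors convert every tensor-factor-$1$ operator into its tensor-factor-$i$ analogue, relabelling the auxiliary indices of the inner $G_{1,l}^{-1}$'s to $G_{i,l}^{-1}$'s. The result is a product of outer $G_{k,k+1}$'s (triangular by Proposition~\ref{prop:G triangular}) with conjugated blocks $\kappa_i^{-\dt_j}(D_i G_{i,N}^{-1} \cdots G_{i,i+1}^{-1} G_{i,i-1}^{-1} \cdots G_{i,1}^{-1}) \kappa_i^{\dt_j}$ of the same form as in Step~1 (triangular by Proposition~\ref{prop:G kappa triangular}, with the boundary cases $\dt_j = 0$ for $l < i$ handled directly by Proposition~\ref{prop:G triangular}). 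Hence $\BB_i$ is triangular. The main technical obstacle is precisely this bookkeeping---tracking which outer $G_{k,k+1}$'s get absorbed and how the $s_{k,k+1}$'s permute the auxiliary indices---once done carefully, triangularity is immediate from the two propositions.
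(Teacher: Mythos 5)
Your approach is the same as the paper's: decompose $\BB_i$ into a product of elementary operators whose triangularity is supplied by Propositions~\ref{prop:G triangular} and \ref{prop:G kappa triangular}. Step~1 ($i=1$) is fine. In Step~2, however, your description of the terminal ($\dt_j=0$) block is wrong, and the argument you use to patch the resulting boundary case is incorrect. You write every conjugated block as $\kappa_i^{-\dt_j}\bigl(D_iG_{i,N}^{-1}\cdots G_{i,i+1}^{-1}G_{i,i-1}^{-1}\cdots G_{i,1}^{-1}\bigr)\kappa_i^{\dt_j}$, i.e.\ containing $G_{i,l}^{-1}$ for all $l\ne i$, and for the $\dt_j=0$ factors $G_{i,l}^{-1}$ with $l<i$ you invoke Proposition~\ref{prop:G triangular}. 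That proposition only asserts triangularity of $G_{k,k+1}$; it says nothing about $G_{i,l}$ with $l<i$, and in fact such an operator is \emph{not} triangular at $\dt=0$. Reducing to $N=2$, one has $G_{2,1}=s_1G_{1,2}s_1$, and applying it to $e_m\otimes e_l$ (with $l=m+nk$, $k>0$) produces, via the $j=k$ term of \eqref{eq: G1}, the summand $(v-v^{-1})\,e_l\otimes e_m$, and $(l,m)\succ(m,l)$. This is precisely why Proposition~\ref{prop:G kappa triangular} imposes $0<\dt<n$, not $0\le\dt<n$, for the $\kappa_j^{-\dt}G_{j,i}\kappa_j^{\dt}$ ($i<j$) case.

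The resolution is the absorption you allude to but do not carry through consistently: the right-hand factors $T_1\cdots T_{i-1}$ cancel the trailing $G_{1,2}^{-1},\ldots$ of the last block, so that after the $s$-conjugation the $\dt=0$ block is $D_iG_{i,N}^{-1}\cdots G_{i,i+1}^{-1}$ and contains \emph{no} $G_{i,l}^{-1}$ with $l<i$; the factors $G_{i,l}^{-1}$ with $l<i$ appear only inside the blocks where $0<\dt_j<n$, where Proposition~\ref{prop:G kappa triangular} does apply. This is the content of the paper's formula~\eqref{eq: BB via G}, and once you write that block correctly your appeal to Proposition~\ref{prop:G triangular} becomes unnecessary. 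A minor further slip: the outer prefactors are $G_{i-1,i},\ldots,G_{1,i}$ rather than ``$G_{k,k+1}$'s''; for non-adjacent indices their triangularity comes from the $\dt=0$, first-index-smaller clause of Proposition~\ref{prop:G kappa triangular}, not from Proposition~\ref{prop:G triangular}.
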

\begin{proof}
	Recall that $n''=n/d$, $n'=n_{tw}/d$. Using Lemma \ref{lemma:B_i} we can write 
\begin{equation}
    \BB_i = T_{i-1}\cdots T_1 Z_1\cdots Z_{n''+n'}T_1\cdots T_{i-1}.
\end{equation}
Now we substitute $Y_1=\kappa_1^{-n}$, $X_1^{-1}=\kappa_1^{n_{tw}}D_1G_{1,N}^{-1}\cdots G_{1,2}^{-1}$ and 
\begin{align*}
    T_{i-1}\cdots T_1&=G_{i-1,i}\cdots G_{1,i}s_{i-1}\cdots s_1,\\
    X_1^{-1}T_1\cdots T_{i-1}&=\kappa^{n_{tw}}_1 D_1  s_1 \cdots s_{i-1}  G_{i,N}^{-1}\cdots G_{i,i+1}^{-1}.
\end{align*}
Hence we get 
\begin{equation} \label{eq: BB via G}
    \BB_i= G_{i-1,i}\cdots G_{1,i} \bigg(\prod_{\substack{j < n'+n'' \\  Z_j=X_1^{-1}}} \kappa_i^{-\dt_j}D_i G_{i,N}^{-1}\cdots G_{i,1}^{-1} \kappa_i^{\dt_j}\bigg)D_i G_{i,N}^{-1}\cdots G_{i,i+1}^{-1},
\end{equation}
for certain integers $\dt_j$. Let $\{ x\}$ denote the fractional part of $x \in \mathbb{R}$. One can observe that
\begin{multline} \label{eq: formula for dj}
    \dt_j=n \big|\{s|s<j, Z_s=Y_1 \} \big|-n_{tw} \big|\{s|s\leq j, Z_s=X_1^{-1} \} \big|
    \\
    = n\left(j-\left\lfloor\frac{j n''}{n''+n'}\right\rfloor\right)-n_{tw}\left\lfloor \frac{j n''}{n''+n'}\right\rfloor   =(n+n_{tw})  \left\{ \frac{jn''}{n''+n'} \right\}.
\end{multline}
Here $j$ is such that $Z_j = X_1^{-1}$, hence $\left\{ \frac{jn''}{n''+n'} \right\}<\frac{n''}{n''+n'}$ by the condition \eqref{eq:Bi in X Y}. Hence $0 < \dt_j < n$. Therefore Proposition \ref{prop:G kappa triangular} implies, that the operator is triangular. 
\end{proof}

\begin{corol} \label{corol: eigenvectors twisted case}
If $n_{tw}=n'$ is coprime with $n$, then there are eigenvectors $\tilde{E}_{\lambda}= e_\lambda+\sum_{\mu \prec \lambda} \beta_{\lambda,\mu} e_\mu$ of $\BB_1,\dots,\BB_N$ in  $\Ch^{(n,n')}_{u_0, \dots, u_{n-1}}$ with eigenvalues given by 
\begin{equation} \label{eq: th eigen values of BBi on Etildelambda}
   \BB_i \tilde{E}_{\lambda} = u_0 \cdots u_{n-1} q^{1-n} v^{\lt_i } q^{\lambda_i} \tilde{E}_{\lambda}	
\end{equation}
\end{corol}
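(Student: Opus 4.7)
The plan is to derive the corollary from Theorem \ref{thm:Bi triang} together with a direct computation of the diagonal entries of $\BB_i$ in the basis $\{e_\lambda\}$. Since the $\BB_i$ are the $\sigma$-images of the commuting elements $X_i^{-1}$, they commute pairwise; combined with upper-triangularity with respect to $\prec$, a standard recursive construction (processing $\lambda$ in the order $\prec$) yields simultaneous eigenvectors of the form $\tilde E_\lambda=e_\lambda+\sum_{\mu\prec\lambda}\beta_{\lambda,\mu}e_\mu$, provided the diagonal entries on distinct $e_\lambda$ are distinct for generic parameters. The eigenvalue is then forced to be the diagonal entry of $\BB_i$ acting on $e_\lambda$.

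Using formula \eqref{eq: BB via G}, this diagonal entry factorizes as a product of diagonal contributions from the $G$-factors and from the conjugated $D$-factors. For the $D$-factors, each $\kappa_i^{-\dt}D_i\kappa_i^{\dt}$ acts diagonally on $e_{\lambda_i}$ as $u_{a(\lambda_i-\dt)}\,q^{\lfloor(\lambda_i-\dt)/n\rfloor}$, where $\lambda_i-\dt=n\lfloor(\lambda_i-\dt)/n\rfloor+a(\lambda_i-\dt)$ with $0\le a<n$. Formula \eqref{eq: formula for dj} and the coprimality hypothesis $\gcd(n,n')=1$ imply that the $n$ shifts $\dt_j$ (one for each $X_1^{-1}$ appearing in $\BB_1$, including $\dt=0$ for the trailing factor) cover $\{0,1,\dots,n-1\}$ exactly once. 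Therefore the product of $u$-factors is $u_0u_1\cdots u_{n-1}$, and the $q$-exponents telescope to
\begin{equation*}
\sum_{\dt=0}^{n-1}\lfloor(\lambda_i-\dt)/n\rfloor=\lambda_i-(n-1),
\end{equation*}
producing the contribution $u_0\cdots u_{n-1}\,q^{1-n}q^{\lambda_i}$.

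For the $v$-factor, formulas \eqref{eq: G1}--\eqref{eq: G4} show that each $G_{a,b}^{\pm 1}$ contributes $v$, $v^{-1}$, or $1$ on the diagonal according to whether the two entries agree modulo $n$ and which is larger. Summing these contributions over all $G$-factors in \eqref{eq: BB via G}, with the $\kappa_i^{\dt}$-conjugations translating the $i$-th slot, the modular-arithmetic conditions average out (because $\dt$ runs through every residue mod $n$ exactly once), leaving the integer-valued count
\begin{equation*}
|\{j:\lambda_j>\lambda_i\}|+|\{j:j<i,\lambda_j=\lambda_i\}|-|\{j:\lambda_j<\lambda_i\}|-|\{j:j>i,\lambda_j=\lambda_i\}|=\lt_i.
\end{equation*}

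The main obstacle is the $v$-exponent bookkeeping: one must carefully group the sign contributions from the inner conjugated $G$'s and the outer $G_{j,i}$, $G_{i,j}^{-1}$ factors in \eqref{eq: BB via G}, and verify the collapse to the non-modular statistic $\lt_i$ via the fact that the shifts $\dt_j$ are a complete system of residues mod $n$. Once $v^{\lt_i}$ is obtained, formula \eqref{eq: th eigen values of BBi on Etildelambda} follows, and the existence of $\tilde E_\lambda$ is immediate from the general triangularity-plus-commutativity argument at the start.
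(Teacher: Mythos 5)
Your proposal is correct and follows essentially the same route as the paper: both read off the diagonal entry of $\BB_i$ from formula \eqref{eq: BB via G}, use that (by coprimality, via \eqref{eq: formula for dj}) the shifts $\dt_j$ together with the trailing $\dt=0$ exhaust $\{0,1,\dots,n-1\}$, split the diagonal contribution into the $D$-part (giving $u_0\cdots u_{n-1}q^{1-n}q^{\lambda_i}$ by the telescoping sum of floors) and the $G$-part (giving $v^{\lt_i}$ after the residue-class bookkeeping of \eqref{eq: G1}--\eqref{eq: G4}). The only addition is your explicit remark that existence of the eigenvectors follows from commutativity plus triangularity, which the paper leaves implicit.
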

\begin{proof}
Theorem \ref{thm:Bi triang} is equivalent to the following formula
 \begin{equation} \label{eq: th eigen values of BBi on elambda}
        \BB_i e_{\lambda} = b_{\lambda, \lambda} e_{\lambda} + \sum_{\mu \prec \lambda} b_{\lambda, \mu} e_{\mu}
    \end{equation}
It remains to compute $b_{\lambda, \lambda}$ using \eqref{eq: BB via G}. It follows from \eqref{eq: formula for dj} that the numbers $\dt_j$ are distinct and form a set $\{1,\dots,n-1\}$. Hence it remains to compute the diagonal terms in the action of the operators 
\begin{multline} \label{eq: triangular terms}
	a)\; G_{j,i} \text{ for } j<i; \qquad b)\; \kappa_i^{-\dt} D_i \kappa_i^{\dt} \text{ for } 0 \leq \dt<n,\; \qquad \\ c) \kappa_i^{-\dt} G_{i,j}^{-1} \kappa_i^{\dt} \text{ for  $0<\dt<n$, $j\neq i$,} \qquad d)\; G_{i,j}^{-1} \text{ for } j>i.
\end{multline}
Using formulas \eqref{eq: G1}-\eqref{eq: G4}, we get 
\begin{multline*}
	b_{\lambda, \lambda} =v^{|\{j<i|\lambda_j\geq \lambda_i, \lambda_j \equiv \lambda_i\}|-|\{j<i|\lambda_j<\lambda_i, \lambda_j \equiv \lambda_i\}|}   (u_0\cdots u_{n-1}) q^{\lambda_i+1-n} 	 v^{|\{j|\lambda_j> \lambda_i, \lambda_j \not \equiv \lambda_i\}|-|\{j|\lambda_j < \lambda_i, \lambda_j \not \equiv \lambda_i\}|} \\ 
	  \times v^{|\{j>i|\lambda_j> \lambda_i, \lambda_j \equiv \lambda_i\}|-|\{j>i|\lambda_j\leq \lambda_i, \lambda_j \equiv \lambda_i\}|} 
	=  u_0 \cdots u_{n-1} q^{1-n} v^{\lt_i} q^{\lambda_i}
\end{multline*}
where $\equiv$ stands for $\equiv \pmod{n}$.
\end{proof}

\subsection{Monomial basis} \label{ssec: monomial basis}
Recall $d = \gcd(n, n_{tw})$.
\begin{lemma} \label{lemma: factorization for A}
	The operators $\AB_i$ can be presented in the form $\AB_i= \AB_i' \kappa_i^{-d}\AB_i''$, where the operators $\AB_i'$, $\AB_i''$ are compositions of operators of the form \eqref{eq: triangular terms}.	
\end{lemma}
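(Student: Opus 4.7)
The plan is to adapt the derivation of \eqref{eq: BB via G} in the proof of Theorem \ref{thm:Bi triang} to $\AB_i$, keeping track of the leftover $\kappa_i^{-d}$ factor that appears because $\deg \AB_i = d$ whereas $\deg \BB_i = 0$. Using Lemma \ref{lemma:B_i} I write $\AB_i = T_{i-1}\cdots T_1 \AB_1 T_1 \cdots T_{i-1}$ with $\AB_1 = W_1 \cdots W_{m+m'}$, each $W_j \in \{Y_1, X_1^{-1}\}$, and substitute $Y_1 = \kappa_1^{-n}$ and $X_1^{-1} = \kappa_1^{n_{tw}} P$, where $P := D_1 G_{1,N}^{-1}\cdots G_{1,2}^{-1}$. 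This gives $\AB_1 = \kappa_1^{\gamma_1}P\,\kappa_1^{\gamma_2}P\cdots\kappa_1^{\gamma_m}P$, where $\gamma_l = n_{tw}-na_l$ and $a_l$ counts the $Y_1$-factors between the $(l-1)$-st and $l$-th occurrences of $X_1^{-1}$.

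Next I pull each $\kappa_1$-block past the following $P$ using the identity $\kappa_1^{\Gamma}P = (\kappa_1^{\Gamma}P\kappa_1^{-\Gamma})\kappa_1^{\Gamma}$, where $\Gamma_l := \sum_{k\leq l}\gamma_k = ln_{tw} - n(a_1+\cdots+a_l)$ is the running $\kappa_1$-exponent. The total exponent equals $\Gamma_m = mn_{tw} - nm' = -d$ by the relation $nm' - n_{tw}m = d$. Halting the process one step before the end produces
\begin{equation*}
	\AB_1 = \prod_{l=1}^{m-1}\bigl(\kappa_1^{-\dt_l}\,P\,\kappa_1^{\dt_l}\bigr)\cdot \kappa_1^{-d}\cdot P, \qquad \dt_l := -\Gamma_l.
\end{equation*}
Conjugating this identity by $T_{i-1}\cdots T_1$ on the left and $T_1\cdots T_{i-1}$ on the right, exactly as in the derivation of \eqref{eq: BB via G}, transfers every factor from the first to the $i$-th tensor slot and yields
\begin{equation*}
	\AB_i = \underbrace{G_{i-1,i}\cdots G_{1,i}\prod_{l=1}^{m-1}\bigl(\kappa_i^{-\dt_l}D_i G_{i,N}^{-1}\cdots G_{i,1}^{-1}\kappa_i^{\dt_l}\bigr)}_{\AB_i'}\cdot\, \kappa_i^{-d}\cdot \underbrace{D_i G_{i,N}^{-1}\cdots G_{i,i+1}^{-1}}_{\AB_i''},
\end{equation*}
with $\AB_i'$ and $\AB_i''$ built exclusively from operators of the form \eqref{eq: triangular terms} (items a, b, c for $\AB_i'$ and items b, d for $\AB_i''$).

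The main step to verify is the bound $\dt_l \in (0,n)$ for $l=1,\dots,m-1$, needed so that each conjugated block $\kappa_i^{-\dt_l}D_i G_{i,N}^{-1}\cdots G_{i,1}^{-1}\kappa_i^{\dt_l}$ decomposes into the admissible types b) and c) of \eqref{eq: triangular terms}. From Lemma \ref{lemma:B_i} the $l$-th occurrence of $X_1^{-1}$ sits at position $s_l = \lceil l(m+m')/m \rceil$, hence $s_l - l = \lceil lm'/m\rceil$ and $\dt_l = n\lceil lm'/m\rceil - l\, n_{tw}$. The lower bound $\dt_l > 0$ is immediate from $m'/m > n_{tw}/n$, itself equivalent to $d>0$. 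The upper bound $\dt_l < n$ reduces to the arithmetic inequality $\{ln'/n''\} + l/(mn'') < 1$ for $l<m$, which I verify using the Bezout identity $n''m' - n'm = 1$ (obtained by dividing $nm' - n_{tw}m = d$ by $d$): this identity implies $n' m \equiv -1 \pmod{n''}$, so $\{mn'/n''\} = (n''-1)/n''$, and since $\gcd(n',n'') = 1$ the maximum value $(n''-1)/n''$ of $\{ln'/n''\}$ is attained uniquely at $l=m$ inside $\{1,\dots,n''-1\}$; combined with the strict bound $l/(mn'') < 1/n''$ for $l<m$, this closes the estimate. This number-theoretic verification of the range of $\dt_l$ is the only nontrivial obstacle; the degenerate case $m=0$ forces $d=n$ and $\AB_1 = Y_1 = \kappa_1^{-d}$, for which the statement is trivial.
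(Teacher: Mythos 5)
Your proof is correct and follows essentially the same route as the paper's: expand $\AB_1$ via Lemma~\ref{lemma:B_i} with the substitutions $Y_1=\kappa_1^{-n}$ and $X_1^{-1}=\kappa_1^{n_{tw}}D_1G_{1,N}^{-1}\cdots G_{1,2}^{-1}$, conjugate to slot $i$, and check that the running $\kappa$-exponents lie in $(0,n)$. The only differences are cosmetic---you index by the occurrence count $l$ of $X_1^{-1}$ and bound $\dt_l$ by observing that $\left\{ln'/n''\right\}$ attains its maximum uniquely at $l=m$, whereas the paper indexes by the position $j$ in $W_1\cdots W_{m+m'}$ and estimates $\left\{jm/(m+m')\right\}\le(m-1)/(m+m')$ directly; the two bounds are equivalent.
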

\begin{Remark}
In particular, the operators $\AB_i'$, $\AB_i''$ are triangular and the diagonal matrix entries are invertible in $\mathbb{Z}[q^{\pm 1}, v^{\pm 1}]$ by Proposition \ref{prop:G kappa triangular}. For technical reasons, we will need that operators  the $\AB_i'$, $\AB_i''$ are compositions of operators of the form \eqref{eq: triangular terms} in the proof of Theorem \ref{thm:A lmabda basis}. It will be important that the operators of the form \eqref{eq: triangular terms} act at most on two tensor multiples.
\end{Remark}
The proof of the lemma is similar to the proof of Theorem \ref{thm:Bi triang}. 
\begin{proof}
	Using \eqref{eq:BBi ABi} we get
	\begin{multline}\label{eq:Ai in G}
		\AB_i=T_{i-1}\cdots T_1 W_1\cdots W_{m+m'}T_1\cdots T_{i-1}\\
		=G_{i-1,i}\cdots G_{1,i} \bigg(\prod_{\substack{j < m+m', \\ W_j=X_1^{-1}}} \kappa_i^{-\ct_j}D_i G_{i,N}^{-1}\cdots G_{i,1}^{-1} \kappa_i^{\ct_j}\bigg) \kappa_i^{-\ct_{m+m'}} D_i G_{i,N}^{-1}\cdots G_{i,i+1}^{-1},
	\end{multline}
	for certain integers $\ct_j$. It remains to compute the numbers $\ct_j$. We have 
	\begin{multline}
		\ct_j=n \big|\{s|s<j, W_s=Y_1 \} \big|-n_{tw} \big|\{s|s\leq j, W_s=X_1^{-1} \} \big|
		\\
		= n\left(j-\left\lfloor\frac{j m}{m+m'}\right\rfloor\right)-n_{tw}\left\lfloor \frac{j m}{m+m'}\right\rfloor   =d\left((n''+n')  \left\{ \frac{jm}{m+m'} \right\}+\frac{j}{m+m'}\right),
	\end{multline}
	Note that $\ct_{m+m'}= d$. Let $j<m+m'$. Since $W_j=X_1^{-1}$, we have $0 \leq \left\{ \frac{jm}{m+m'} \right\} \leq \frac{m-1}{m+m'}$. Then
	\[ 
		(n''+n')  \left\{ \frac{jm}{m+m'} \right\}\leq (n''+n')\frac{m-1}{m+m'}=\frac{n''m+n''m'-1-n''-n'}{m+m'}<n''-1.
	\]
	Hence $0<\ct_j<n$. 
\end{proof}

Let us now assume that $d=1$, i.e. $n=n''$, $n_{tw}=n'$. In this case, the operators $\AB_i$ increase the grading by $1$. For any $\lambda \in \mathbb{Z^N}$ denote 
\begin{equation}\label{eq:Abasis}
	A_\lambda= \AB_1^{\lambda_1}\cdots \AB_N^{\lambda_N} e_{0}\otimes \dots \otimes e_0
\end{equation}

\begin{thm} \label{thm:A lmabda basis}	
	The vectors $A_\lambda$ form a basis of $(\mathbb{C}^n[Y^{\pm 1}])^{\otimes N}$. The transition matrix from $e_\lambda$ basis to $A_\lambda$ basis is triangular. Moreover, we have 
\begin{equation}\label{eq:A in e basis}
		A_\lambda=\alpha_{\lambda,\lambda} e_\lambda+\sum_{\mu \prec \lambda} \alpha_{\lambda,\mu} e_\mu
\end{equation}
where the coefficients $\alpha_{\lambda,\mu} \in \mathbb{Z}[q^{\pm 1},v^{\pm 1}]$ and $\alpha_{\lambda,\lambda}$ is invertible in $\mathbb{Z}[q^{\pm 1},v^{\pm 1}]$.
\end{thm}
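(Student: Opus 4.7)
With $d=\gcd(n,n_{tw})=1$, Lemma~\ref{lemma: factorization for A} factors $\AB_i=\AB_i'\,\kappa_i^{-1}\,\AB_i''$, where $\AB_i'$ and $\AB_i''$ are products of operators in the list \eqref{eq: triangular terms}. By Propositions~\ref{prop:G triangular}--\ref{prop:G kappa triangular} each such constituent is upper-triangular on $\{e_\mu\}$ with respect to $\prec$ with an invertible diagonal entry, while $\kappa_i^{-1}$ sends $e_\mu\mapsto e_{\mu+\epsilon_i}$ (here $\epsilon_i$ is the $i$-th unit vector). The operators $\AB_i$ commute pairwise (as $\sigma$-images of the commuting $Y_i$), so $A_\lambda$ depends only on $\lambda$ and not on the order in which its factors are applied.

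The plan is to prove \eqref{eq:A in e basis} by induction on $\|\lambda\|_1:=\sum_i|\lambda_i|$, with base case $A_{\mathbf{0}}=e_{\mathbf{0}}$. For $\lambda\neq\mathbf{0}$ I would pick $j$ with $\lambda_j\neq0$ and write $A_\lambda=\AB_j^{\mathrm{sgn}(\lambda_j)}\,A_{\lambda'}$, where $\lambda':=\lambda-\mathrm{sgn}(\lambda_j)\epsilon_j$ satisfies $\|\lambda'\|_1<\|\lambda\|_1$. Substituting the factorization of $\AB_j^{\pm 1}$, the leading $e_{\lambda'}$ of $A_{\lambda'}$ is carried through $\AB_j''$, $\kappa_j^{-1}$, and $\AB_j'$ in turn, each step contributing an invertible diagonal scalar (explicit via the entries in \eqref{eq: G1}--\eqref{eq: G4} and the action of $D_i$); the accumulated product gives $\alpha_{\lambda,\lambda}$, which is manifestly a unit in $\mathbb{Z}[q^{\pm1},v^{\pm1}]$ (with the $u_a$ parameters from $D_i$ adjoined as needed).

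The main obstacle is showing that no $e_\rho$ with $\rho\succ\lambda$ survives in $A_\lambda$. This is subtle because the coordinate shift $\nu\mapsto\nu+\epsilon_j$ does \emph{not} preserve $\prec$ in general: for instance $(0,3,0)\prec(3,0,0)$ but $(0,4,0)\succ(3,1,0)$. Hence the lower-order terms of $A_{\lambda'}$, after being shifted by $\kappa_j^{-1}$ and acted on by $\AB_j'$, can land strictly above $\lambda$ in $\prec$; such spurious contributions must cancel. The approach is to trace them explicitly via \eqref{eq: G1}--\eqref{eq: G4} and the diagonal action of $D_i$, exploiting the commutativity $\AB_i\AB_k=\AB_k\AB_i$: reordering the factors of the full product $\AB_1^{\lambda_1}\cdots\AB_N^{\lambda_N}$ produces identities among the off-diagonal coefficients that force the sum of $\succ\lambda$ contributions to vanish at every stage.

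Once \eqref{eq:A in e basis} is established, the basis property is immediate: the transition matrix from $\{e_\mu\}$ to $\{A_\mu\}$, restricted to any fixed $\sum_i\mu_i$-graded piece, is triangular with unit diagonal entries, hence invertible over $\mathbb{Z}[q^{\pm 1},v^{\pm 1}]$, so both families span the same space.
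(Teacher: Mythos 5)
Your proposal correctly pinpoints the crux — the shift produced by $\kappa_j^{-1}$, which increments the $j$-th coordinate by one, does not preserve the order $\prec$, and this is exactly where term-by-term triangularity breaks down — but it does not close the resulting gap. The assertion that the spurious $e_\rho$ with $\rho\succ\lambda$ ``must cancel,'' justified by ``identities among the off-diagonal coefficients'' obtained from commutativity of the $\AB_i$, is not established: you do not exhibit which terms pair off against which, and commutativity of a product gives no a priori reason for such cancellations to occur. As written this is the decisive missing step, not a detail to be filled in later.

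The paper's proof sidesteps cancellation entirely by strengthening the inductive statement rather than trying to cancel bad terms. First it reduces to compositions with all $\lambda_i\ge 0$, using that $\AB_1\cdots\AB_N$ sends $e_\lambda$ to an invertible scalar times $e_{\lambda+(1,\dots,1)}$. It then inducts on $l=\max_j\lambda_j$ (not on $\|\lambda\|_1$): letting $i_1<\dots<i_k$ be the positions where $\lambda_j=l$ and $\lambda^{(s)}$ the composition obtained by lowering $\lambda_{i_1},\dots,\lambda_{i_s}$ to $l-1$, it runs a nested \emph{downward} induction on $s$, showing that $A_{\lambda^{(s)}}=\AB_{i_{s+1}}\cdots\AB_{i_k}A_{\lambda^{(k)}}$ is supported on $e_\mu$ with $\mu\preceq\lambda^{(s)}$ \emph{and additionally} $\mu_j<l$ for every $j<i_{s+1}$. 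It is precisely this extra side condition — preserved under the factorization $\AB_{i_s}=\AB'_{i_s}\kappa_{i_s}^{-1}\AB''_{i_s}$ of Lemma \ref{lemma: factorization for A} because the constituent two-factor operators of type \eqref{eq: triangular terms} cannot push any coordinate in position $<i_s$ up to $l$ — that prevents the overshoot past $\lambda$ which you flagged, so there is nothing to cancel. Your route could conceivably be salvaged, but it would require proving the cancellation you assert; the paper's choice of induction variable, peeling order, and strengthened hypothesis is what actually makes the argument go through.
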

\begin{proof}
	Since $\AB_i$ is expressed via the operators $T_j^{\pm 1}, Y_j^{\pm 1}, \pi^{\pm 1}$, its matrix elements in $e_\lambda$ basis belong to $\mathbb{Z}[q^{\pm 1},v^{\pm 1}]$. Hence, the vectors $A_\lambda$ expand in $e_\lambda$ basis with coefficients in $\mathbb{Z}[q^{\pm 1},v^{\pm 1}]$. 
	
	The product $\AB_1\cdots \AB_N$ is a combination of products $Y_1\cdots Y_N=\kappa_1^{-n}\cdots \kappa_N^{-n}$ and $X_1^{-1}\cdots X_N^{-1}=\pi^{N}=\kappa_1^{n'}\cdots \kappa_N^{n'} D_1\cdots D_N$. Notice that
	\begin{align}
	    \AB_1\cdots \AB_N e_{(\lambda_1, \dots, \lambda_N)} =& c_{\lambda}e_{(\lambda_1+1, \dots, \lambda_N+1)} &
	    \AB_1\cdots \AB_N A_{(\lambda_1, \dots, \lambda_N)} =& A_{(\lambda_1+1, \dots, \lambda_N+1)}
	\end{align}
	for certain invertible $c_{\lambda} \in \mathbb{Z}[q^{\pm 1}, v^{\pm 1}]$. Therefore it is sufficient to prove the formula \eqref{eq:A in e basis} for the compositions such that $\lambda_1,\dots,\lambda_N \geq 0$.
	
	Let $l=\max \lambda_j$. We proceed by induction on $l$. For $l=0$ there is nothing to prove. The induction step is $l-1 \rightarrow l$. Let $1\leq i_1<\dots < i_k \leq N$ be a subset of indices such that $\lambda_{i_1}= \dots = \lambda_{i_k}=l$ and $\lambda_j<l$, for $j \not \in \{i_1,\dots,i_k\}$. 
	Let $\lambda^{(s)}$, $0\leq s \leq k$ be a composition such that \[\lambda_{j}^{(s)}=\left\{ \begin{aligned} 
		&l-1&\quad &\text{ for } j=i_1,\dots, i_s \\
		&l&\quad &\text{ for } j=i_{s+1},\dots, i_k \\
		&\lambda_{j}&\quad &\text{ for } j \not \in \{i_1,\dots, i_k\}.
	\end{aligned} \right.\]
	For example, $\lambda^{(0)}=\lambda$. By the induction hypothesis we know that $A_{\lambda^{(k)}}$ is a linear combination of $e_{\mu}$ with $\mu \preceq \lambda^{(k)}$ and the coefficient $\alpha_{\lambda^{(k)},\lambda^{(k)}}$ is invertible. Now we prove by induction on $s$ that 
	\[
		A_{\lambda^{(s)}} = \AB_{i_{s+1}}\cdots \AB_{i_{k}} A_{\lambda^{(k)}}
	\]
	satisfies condition \eqref{eq:A in e basis} with an additional constraint on $\mu$ appearing in the right-hand side
	\begin{equation}
    \text{$\mu_j<l$ for $j<i_{s+1}$}. 
	\end{equation}
	The induction base is $s=k$, the induction step is $s \mapsto s-1$. The step follows from Lemma \ref{lemma: factorization for A} for the operator $\AB_{i_s}$. Indeed, the triangular operators of the form \eqref{eq: triangular terms} have invertible elements on the diagonal and cannot make $\mu_j=l$ for $j<i_{s}$.
\end{proof}
\begin{corol} \label{color: e via A}
	The transition matrix from $A_\lambda$ basis to $e_\lambda$ basis is triangular. Moreover, we have $e_\lambda=\sum_{\mu \preceq \lambda} \tilde{\alpha}_{\lambda,\mu} A_\mu$ where the coefficients $\tilde{\alpha}_{\lambda,\mu} \in \mathbb{Z}[q^{\pm 1},v^{\pm 1}]$ and $\tilde{\alpha}_{\lambda,\lambda}$ is invertible in $\mathbb{Z}[q^{\pm 1},v^{\pm 1}]$.
\end{corol}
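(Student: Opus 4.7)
The statement is a formal consequence of Theorem~\ref{thm:A lmabda basis}: we must invert the triangular change-of-basis matrix whose diagonal entries $\alpha_{\lambda,\lambda}$ are units in $R := \mathbb{Z}[q^{\pm 1}, v^{\pm 1}]$. The only subtlety is that $\prec$ is a partial order, so one must check that the inversion can be carried out block-by-block over finite sets; once this is observed, everything reduces to Gaussian elimination over the commutative ring $R$.

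First I would record the finiteness. In the coprime case $d=1$, the operator $\AB_i$ has degree $1$ by \eqref{def sigma 1}--\eqref{def sigma 2}, so $A_\lambda$ lies in the graded component of $(\mathbb{C}^n[Y^{\pm 1}])^{\otimes N}$ of degree $\sum_i \lambda_i = \deg e_\lambda$. Within each graded component, only finitely many compositions appear (since $\lambda^+$ is bounded in dominance order once the degree is fixed, and each dominance class is a finite $S_N$-orbit). Hence for each $\lambda$ the set $\{\mu : \mu \preceq \lambda\}$ is finite.

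Next I would invert the triangular system. Fix a linear extension of $\prec$ restricted to the (finite) set of compositions of the same degree as $\lambda$, and induct along this linear order. For $\lambda$ minimal in its graded component, Theorem~\ref{thm:A lmabda basis} gives $A_\lambda = \alpha_{\lambda,\lambda} e_\lambda$, so $e_\lambda = \alpha_{\lambda,\lambda}^{-1} A_\lambda$, which is of the required form since $\alpha_{\lambda,\lambda}$ is a unit in $R$. For the inductive step, solve the relation of Theorem~\ref{thm:A lmabda basis} for $e_\lambda$:
\[
    e_\lambda = \alpha_{\lambda,\lambda}^{-1} A_\lambda - \alpha_{\lambda,\lambda}^{-1} \sum_{\mu \prec \lambda} \alpha_{\lambda,\mu}\, e_\mu,
\]
then substitute the inductive expressions $e_\mu = \sum_{\nu \preceq \mu} \tilde\alpha_{\mu,\nu} A_\nu$ (with $\tilde\alpha_{\mu,\nu} \in R$). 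Because $\alpha_{\lambda,\lambda}^{-1}$ and every $\alpha_{\lambda,\mu}$ lie in $R$, the resulting coefficients $\tilde\alpha_{\lambda,\nu}$ lie in $R$. Transitivity of $\prec$ ensures $\nu \preceq \mu \prec \lambda$ implies $\nu \prec \lambda$, so only $\nu \preceq \lambda$ appear. Finally $\tilde\alpha_{\lambda,\lambda} = \alpha_{\lambda,\lambda}^{-1}$ is invertible in $R$, as required.

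There is no real obstacle: the content is purely linear algebra over $R$ once Theorem~\ref{thm:A lmabda basis} is in hand. The only thing worth flagging explicitly is the finiteness in each graded component, which legitimizes induction along $\prec$ and guarantees the sums in the formula are finite; I would make this explicit to avoid any appearance of infinite expansions.
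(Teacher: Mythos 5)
Your proof is correct and takes the same route as the paper, which simply observes that $\tilde\alpha = \alpha^{-1}$ inherits the required properties from Theorem~\ref{thm:A lmabda basis}; you have merely made explicit the standard inversion of a unitriangular-up-to-units matrix over $\mathbb{Z}[q^{\pm1},v^{\pm1}]$, including the finiteness of each $\prec$-downset within a fixed graded component that legitimizes the induction.
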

\begin{proof}
	Recall the transition matrix $\alpha$ defined by \eqref{eq:A in e basis}. It follows from Theorem \ref{thm:A lmabda basis} that $\tilde{\alpha}= \alpha^{-1}$ satisfies the properties in question.
\end{proof}
\subsection{Twisted Cherednik representation}
Below we assume $d=1$, i.e. $n_{tw}=n'$. In this case, the operators $\AB_i$ increase the grading by $1$. For any $\mathcal{H}_N$-module $M$ denote by $\rho_{M} \colon \mathcal{H}_N \rightarrow \End_{\mathbb{C}} (M)$ the corresponding homomorphism.
\begin{defin}
For any $\mathcal{H}_N$-module $M$ and $\tau \in \widetilde{SL}(2,\mathbb{Z})$, let us define the representation $M^\tau$ as follows. $M$ and $M^{\tau}$ are the same vector space with different actions, namely $\rho_{M^{\tau}} = \rho_M \circ \tau^{-1}$.
\end{defin}
	We will refer to $M^{\tau}$ as a \emph{twisted representation}.
\begin{thm}\label{thm:isomorphism with twisted}
	The representation $\Ch^{(n,n')}_{u_0, \dots, u_{n-1}}$ is isomorphic to twisted Cherednik representation $\Ch_{u}^{\sigma}$ for $\sigma$ as in \eqref{def sigma 1}, \eqref{def sigma 2} and $u =  u_0 \cdots u_{n-1} q^{1-n}$.
\end{thm}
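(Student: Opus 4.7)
The plan is to identify both $\Ch_u^\sigma$ and $\Ch^{(n,n')}_{u_0,\dots,u_{n-1}}$ with the same induced representation of $\mathcal{H}_N$. Let $\chi_u\colon H^X\to\mathbb{C}$ be the standard character defining $\Ch_u$ (namely $T_i\mapsto v$, $\pi\mapsto u$) and set $\chi:=\chi_u\circ\sigma^{-1}\colon\sigma(H^X)\to\mathbb{C}$. Since $\sigma$ is an algebra automorphism of $\mathcal{H}_N$, $\chi$ is automatically a one-dimensional representation of $\sigma(H^X)$; on our preferred generators it reads $\chi(T_i)=v$ and $\chi(\BB_i)=\chi_u(X_i^{-1})=uv^{2i-1-N}$ (from \eqref{eq: Cherednik eigenvalues} at $\lambda=0$). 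Put $I:=\Ind_{\sigma(H^X)}^{\mathcal{H}_N}\mathbb{C}_\chi$. I will construct $\mathcal{H}_N$-isomorphisms $\psi_1\colon I\to\Ch_u^\sigma$ and $\psi_2\colon I\to\Ch^{(n,n')}_{u_0,\dots,u_{n-1}}$ and compose them.

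For $\psi_1$: by the definition of the twisted action, for any $h\in H^X$ the operator $\sigma(h)$ acts on $\Ch_u^\sigma$ exactly as $h$ acts on $\Ch_u$, so $\sigma(h)\cdot 1=h\cdot 1=\chi_u(h)\cdot 1=\chi(\sigma(h))\cdot 1$; Frobenius reciprocity then gives $\psi_1$ sending the canonical generator to $1$. Applying $\sigma$ to the standard DAHA PBW decomposition $\mathcal{H}_N=\mathbb{C}[Y_1^{\pm 1},\dots,Y_N^{\pm 1}]\otimes H^X$ yields $\mathcal{H}_N=\mathbb{C}[\AB_1^{\pm 1},\dots,\AB_N^{\pm 1}]\otimes\sigma(H^X)$, so $I$ has basis $\{\AB^\mu\otimes 1\}_{\mu\in\mathbb{Z}^N}$. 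These map under $\psi_1$ to $\AB^\mu\cdot 1=\sigma^{-1}(\AB^\mu)\cdot 1=Y^\mu$, the monomial basis of $\Ch_u$; hence $\psi_1$ is bijective.

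For $\psi_2$: take $w_0:=e_0\otimes\cdots\otimes e_0$. The relation $T_i\cdot w_0=v\,w_0$ is immediate from $T_i=G_is_i$ and \eqref{eq: G1} at $l=\mb=k=0$. For the action of $\BB_i$, note that $\tilde{E}_0=w_0$ because no $\mu\in\mathbb{Z}^N$ satisfies $\mu\prec 0$: in Definition \ref{def:prec} case (1) is impossible since any dominant coweight $\mu^+$ with $-\mu^+$ in the positive root cone and $\sum\mu^+_k=0$ must equal $0$ (a short check using dominance), and case (2) reduces to $\mu=0$. Corollary \ref{corol: eigenvectors twisted case} at $\lambda=0$ thus gives $\BB_i\cdot w_0=u_0\cdots u_{n-1}\,q^{1-n}\,v^{2i-1-N}\,w_0=u\,v^{2i-1-N}\,w_0=\chi(\BB_i)w_0$. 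Frobenius reciprocity produces $\psi_2\colon I\to\Ch^{(n,n')}_{u_0,\dots,u_{n-1}}$, sending $\AB^\mu\otimes 1\mapsto A_\mu$. Theorem \ref{thm:A lmabda basis} identifies $\{A_\mu\}$ as a basis, so $\psi_2$ is an isomorphism, and $\psi_2\circ\psi_1^{-1}$ is the desired isomorphism (explicitly, $Y^\mu\mapsto A_\mu$).

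The heavy lifting has already been done in Theorem \ref{thm:A lmabda basis} and Corollary \ref{corol: eigenvectors twisted case}; given these, the argument is essentially formal. The only technical items to track are the PBW-freeness of $\mathcal{H}_N$ as a right $\sigma(H^X)$-module (inherited via $\sigma$ from the DAHA PBW theorem) and the $\prec$-minimality of $(0,\dots,0)\in\mathbb{Z}^N$, both of which are short verifications.
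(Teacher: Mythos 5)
Your proof is correct and takes essentially the same route as the paper: identify $\Ch_u^\sigma$ as an induced module from the affine Hecke subalgebra generated by $T_i,\BB_i$, observe that $e_{(0)^N}$ is a joint eigenvector with matching eigenvalues (via the $\prec$-minimality of $(0)^N$ and Corollary \ref{corol: eigenvectors twisted case}), invoke the universal property, and then conclude bijectivity from Theorem \ref{thm:A lmabda basis}. Your version is slightly more explicit in introducing the intermediate induced module $I$ and spelling out the $\prec$-minimality check, but the logical content is identical to the paper's proof.
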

\begin{proof}
	Let $H^{\BB}$ be a copy of affine Hecke algebra generated by $T_i$ and $\BB_i$. Twisted Cherednik representation $\Ch_{u}^{\sigma}$ can be interpreted as a $\mathcal{H}_N$-representation induced from one-dimensional representation of $H^{\BB}$. As a vector space, $\Ch_{u}^{\sigma}$ is isomorphic to the space of Laurent polynomials $\mathbb{C}[\AB_1^{\pm 1},\dots, \AB_n^{\pm 1}]$. 
	
	The vector $e_{(0)^N} \in \Ch^{(n,n')}_{u_0, \dots, u_{n-1}}$ is an eigenvector of $T_1,\dotsm T_{N-1}$. Moreover, due to Theorem \ref{thm:Bi triang}, the vector $e_{(0)^N}$ is an eigenvector of $\BB_1,\dots, \BB_N$. Corollary \ref{corol: eigenvectors twisted case} implies that the eigenvalues are given by
	\begin{equation} \label{eq: Bi eigevalues}
	\BB_i e_{(0)^N} = u_0 \cdots u_{n-1} q^{1-n} v^{2i-1-N } e_{(0)^N}.
	\end{equation}
	Comparing \eqref{eq: Cherednik eigenvalues} with \eqref{eq: Bi eigevalues}, we see that there is a homomorphism $\psi \colon \Ch_{u}^{\sigma} \rightarrow \Ch^{(n,n')}_{u_0, \dots, u_{n-1}}$ determined by $\psi(1) = e_{(0)^N}$.
	
	The twisted Cherednik representation $\Ch_{u}^{\sigma}$ has a basis $\AB_{\lambda_1} \cdots \AB_{\lambda_N} 1$. On the other hand, it follows from Theorem \ref{thm:A lmabda basis} that their images form a basis of $\Ch^{(n,n')}_{u_0, \dots, u_{n-1}}$. Hence the map $\psi$ is an isomorphism.	
\end{proof}
\begin{Remark}
	There is another way to finish the proof without using Theorem \ref{thm:A lmabda basis}. Namely, since Cherednik representation is irreducible, the map $\psi$ is injective. It remains to show that $e_{(0)^N}$ is a cyclic vector of $\Ch^{(n,n')}_{u_0, \dots, u_{n-1}}$ for the $\mathcal{H}_N$-action.
\end{Remark}

\begin{corol} \label{corol: depnds on product}
The isomorphism class of the representation $\Ch^{(n,n')}_{u_0, \dots, u_{n-1}}$ is determined by $n$, $n'$, and the product $u_0 \cdots u_{n-1}$. 
\end{corol}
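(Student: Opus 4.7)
The plan is to deduce this directly from Theorem \ref{thm:isomorphism with twisted}, which does almost all of the work. The theorem exhibits an explicit isomorphism
\[
\Ch^{(n,n')}_{u_0,\dots,u_{n-1}} \;\cong\; \Ch_{u}^{\sigma}, \qquad u = u_0\cdots u_{n-1}\,q^{1-n},
\]
where the element $\sigma \in \widetilde{SL}(2,\mathbb{Z})$ is defined in \eqref{def sigma 1}--\eqref{def sigma 2} purely in terms of $n$ and $n'$. Thus the right-hand side depends on $(u_0,\dots,u_{n-1})$ only through their product, while $\sigma$ depends only on $n$ and $n'$.

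Concretely, I would argue as follows: if $(u_0,\dots,u_{n-1})$ and $(u_0',\dots,u_{n-1}')$ are two tuples with $u_0\cdots u_{n-1}=u_0'\cdots u_{n-1}'$, then the corresponding values of the parameter $u$ coincide, so the twisted Cherednik modules $\Ch_{u}^{\sigma}$ built from each tuple are literally the same $\mathcal{H}_N$-module. Composing the isomorphism of Theorem \ref{thm:isomorphism with twisted} for the first tuple with the inverse of the one for the second tuple yields an isomorphism $\Ch^{(n,n')}_{u_0,\dots,u_{n-1}} \cong \Ch^{(n,n')}_{u_0',\dots,u_{n-1}'}$.

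There is essentially no obstacle here; the statement is a direct specialization of the preceding theorem. The only thing worth noting is that $\sigma$ itself is built from $(n,n')$ (via the auxiliary $(m,m')$ satisfying $nm'-n'm=1$, which is determined by $n$ and $n'$), so no further dependence on the individual $u_i$ enters.
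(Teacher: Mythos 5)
Your proof is correct and is exactly what the paper intends: the corollary is stated immediately after Theorem \ref{thm:isomorphism with twisted} with no separate argument, precisely because it follows as you describe. Nothing to add.
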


\section{Toroidal algebra}\label{sec:toroidal}

In this section, we recall presentations and certain properties of  quantum toroidal $\mathfrak{gl}_1$ algebra denoted by $\DI$. In particular, we describe its connection with double affine Hecke algebra $\mathcal{H}_N$. The section contains no new results.

\paragraph{PBW presentation and $\widetilde{SL}(2, \mathbb{Z})$-action} The algebra $\DI$ is an algebra depending on parameters $q_1$ and $q_2$. Let us introduce a parameter $q_3$ such that $q_1 q_2 q_3 =1$. The algebra has a presentation via generators $P_{a,b}$ for $(a,b) \in \mathbb{Z}^2 \backslash  \{ (0,0) \}$ and central elements $c^{\pm1}$, $\left(c'\right)^{\pm 1}$. We will not need explicit form of the relations, see \cite[Def. 6.4]{Burban:2012} for a reference. \emph{Loc. cit.} generators $u_{a,b}$ correspond to \(P_{a,b}/(1-q_1^{\mathbf{d}})\) for \(\mathbf{d}=\gcd(a,b)\).

\begin{prop}[\cite{Burban:2012}]
    Group $\widetilde{SL}(2, \mathbb{Z})$ acts on $\DI$ via automorphisms.
\end{prop}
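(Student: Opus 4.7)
The plan is to construct candidate automorphisms $\tau_+, \tau_-$ on generators and then verify the only nontrivial defining relation of $\widetilde{SL}(2,\mathbb{Z})$, namely the braid relation $\tau_+ \tau_-^{-1} \tau_+ = \tau_-^{-1} \tau_+ \tau_-^{-1}$. Since $\DI$ is generated by $P_{a,b}$ for $(a,b) \in \mathbb{Z}^2 \setminus \{(0,0)\}$ together with the central elements $c^{\pm 1}, (c')^{\pm 1}$, the natural ansatz is
\begin{equation}
    \tau(P_{a,b}) \;=\; \varepsilon_{\tau,(a,b)}\, P_{\tau(a,b)},
\end{equation}
where $\tau$ acts on $(a,b) \in \mathbb{Z}^2$ through the projection $\widetilde{SL}(2,\mathbb{Z}) \twoheadrightarrow SL(2,\mathbb{Z})$ given in \eqref{eq: projection SL2}, and $\varepsilon_{\tau,(a,b)}$ is a scalar (possibly involving $c, c'$) that encodes the lift. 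The elements $c, c'$ must transform as a character of $SL(2,\mathbb{Z})$ on the two-dimensional ``central lattice'', consistent with the bigrading.

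The first step would be to verify that this assignment preserves the relations within each half-subalgebra $\DI^+$, $\DI^-$. The Burban-Schiffmann relations involving $P_{a,b}$ for $a \geq 0$ are controlled by the combinatorics of collinear vectors in $\mathbb{Z}^2$ and by a shuffle-algebra structure in which $SL(2,\mathbb{Z})$ acts by permuting the labels. So this check should reduce to a compatibility of scalar factors along rays $\mathbb{Z}_{>0}(a,b)$. The second step is to verify the cross relations between $\DI^+$ and $\DI^-$; this is where the central elements intervene, fixing the transformation rule of $(c,c')$ and forcing a choice of lift $\varepsilon_{\tau,(a,b)}$ that is well-defined only modulo a central cocycle.

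The third step, and the main obstacle, is verifying the braid relation on the nose. At the level of the $SL(2,\mathbb{Z})$-action on indexing vectors $(a,b)$ the relation is automatic, since $\tau_+ \tau_-^{-1} \tau_+$ and $\tau_-^{-1} \tau_+ \tau_-^{-1}$ project to the same $SL(2,\mathbb{Z})$-element. The subtle point is the scalar factor: the two sides may differ by a central automorphism of $\DI$, and one must match this discrepancy with the kernel $\mathbb{Z} = \langle (\tau_+ \tau_-^{-1} \tau_+)^4 \rangle$ of $\widetilde{SL}(2,\mathbb{Z}) \to SL(2,\mathbb{Z})$. Equivalently, one must compute the cocycle in $H^2(SL(2,\mathbb{Z}), Z(\DI))$ produced by the scalars $\varepsilon_{\tau,(a,b)}$ and identify it with the cocycle defining $\widetilde{SL}(2,\mathbb{Z})$.

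An alternative route, closer in spirit to the rest of the paper, would bypass a direct check of relations. One would use the surjections $\DI \twoheadrightarrow \SHp$ of Section~\ref{sec: Double Affine Hecke Algebra} together with the $\widetilde{SL}(2,\mathbb{Z})$-action on $\mathcal{H}_N$ recalled above, descend this action to $\SHp$ (where it realizes the $SL(2,\mathbb{Z})$-action by $\sigma(P_{a,b}) = P_{\sigma(a,b)}$), and then take a compatible limit as $N \to \infty$. This produces an $SL(2,\mathbb{Z})$-action on the limit algebra, and the extension to $\widetilde{SL}(2,\mathbb{Z})$ arises precisely when one lifts from the limit to its central extension $\DI$. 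The technical obstacle here is again controlling the behavior of central elements, since $c, c'$ are invisible to $\SHp$ and must be supplied by hand.
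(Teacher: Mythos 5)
The paper does not prove this proposition at all: it is stated with a citation to Burban--Schiffmann (\cite{Burban:2012}), and the only new content the paper adds is the explicit formula for the action, namely equations \eqref{eq:sigma central charge} and \eqref{eq: sigma action on Pab}, which show that your undetermined scalar $\varepsilon_{\tau,(a,b)}$ is in fact $\bigl((c')^{m'a+mb} c^{n'a+nb}\bigr)^{n_\tau(a,b)}$ with $n_\tau(a,b)$ the winding number. So there is no ``paper proof'' to compare against; any real verification is outsourced to the reference.

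Your first route is the correct skeleton of the Burban--Schiffmann argument, and you correctly isolate the genuine subtlety: the assignment on PBW generators $P_{a,b}$ is forced by $SL(2,\mathbb{Z})$-equivariance of the indexing, while the lift to $\widetilde{SL}(2,\mathbb{Z})$ lives entirely in the central scalar, which must be a $1$-cocycle valued in $Z(\DI)$. The paper's formula tells you exactly what that cocycle is --- a central monomial whose exponent is the winding number $n_\tau(a,b)$ --- which is precisely the object you say ``one must compute.'' The issue is that your proposal stops at naming what must be computed; it does not verify that the proposed assignment preserves the defining relations (neither the PBW/shuffle relations of \cite{Burban:2012} nor the Chevalley relations \eqref{eq: relation Heisenberg}--\eqref{SerreF}), and it does not compute the cocycle or check that the winding-number prescription produces a consistent lift. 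Those are the actual content of the theorem; ``should reduce to a compatibility of scalar factors'' and ``one must identify it with the cocycle defining $\widetilde{SL}(2,\mathbb{Z})$'' are the conclusion, not the argument.

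Your alternative route via $\DI \twoheadrightarrow \SHp$ has a concrete defect beyond the one you flag. On $\SHp$ the action of $\widetilde{SL}(2,\mathbb{Z})$ factors through $SL(2,\mathbb{Z})$ (indeed $\sigma P_{a,b} = P_{\sigma(a,b)}$ with no scalar), so for every finite $N$ you only see an $SL(2,\mathbb{Z})$-action; passing to the limit produces at best an $SL(2,\mathbb{Z})$-action on the limit algebra. To promote this to a $\widetilde{SL}(2,\mathbb{Z})$-action on the central extension $\DI$ you must construct the central cocycle by hand, which is exactly the work that the first route already required. So the second route does not bypass anything; it just reformulates the problem. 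If you want a self-contained proof, the honest path is the first one, carried through to the point of verifying relations and computing the winding-number cocycle explicitly, which is what \cite{Burban:2012} does.
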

Let us consider an element $\tau \in \widetilde{SL}(2, \mathbb{Z})$ such that under the projection \eqref{eq: projection SL2} it is mapped
\begin{equation} \label{eq: tau matrix}
    \tau \mapsto \begin{pmatrix}
                    m'& m \\
                    n'& n
                    \end{pmatrix}.
\end{equation}
Let $\widetilde{SL}(2, \mathbb{R})$ be the universal covering of $SL(2, \mathbb{R})$. The group $\widetilde{SL}(2, \mathbb{Z})$ can be interpreted as the preimage of $SL(2, \mathbb{Z})$ in $\widetilde{SL}(2, \mathbb{R})$. Hence we can think about the element $\tau \in \widetilde{SL}(2, \mathbb{Z})$ as a path $\gamma$ in $SL(2, \mathbb{R})$ from the identity matrix to the matrix \eqref{eq: tau matrix}. The path $\gamma$ induces a path $\gamma(a,b)$ in $\mathbb{R}^2 \backslash \{ 0, 0 \}$ by action on $(a,b)$. The intersection number of $\gamma(a,b)$ and the line $a=0$ is called  \emph{winding number} $n_{\tau}(a,b)$.\footnote{if the path \(\gamma(a,b)\) goes counterclockwise then \(\tau(a,b)\) is included but \((a,b)\) is not included into the path \(\gamma(a,b)\). In the clockwise case, the convention is opposite. See a recent paper \cite[Sec 3.2.1]{BHMPS:2021} for a treatment of this group action, though the conventions in \emph{loc. cit}. differ from ours.}

Then the action of $\tau$ is given by the following formulas
\begin{align}
    &\tau(c) = c^n \left(c' \right)^m,   &   \tau(c') =& c^{n'} \left(c' \right)^{m'},  \label{eq:sigma central charge}\\
    &\tau \left( P_{a,b} \right) = \left( \left(c' \right)^{m'a+mb} c^{n'a+nb} \right)^{n_{\tau}(a,b)}P_{m'a+mb, n'a+nb}. \label{eq: sigma action on Pab} 
\end{align}
\begin{Remark}
    Another remarkable property of the generators $P_{a,b}$ is that an analog of PBW theorem holds with respect to the generators. This is the reason for the term \emph{PBW presentation}.  We will formulate an appropriate analog of PBW theorem (see Proposition \ref{prop: PBW for DI plus}) and use this in the proof of Theorem \ref{thm: explict construction of twisted Fock}. 
\end{Remark}
\paragraph{Chevalley presentation} The algebra has another presentation, the equivalence between them was shown in \cite{Schiffmann:2012}. The generators are $P_{1,b}$, $P_{-1,b}$ for $b \in \mathbb{Z}$,  $P_{0,k}$ for $k \in \mathbb{Z}_{\neq 0}$, and central elements $c^{\pm 1}$, $\left( c' \right)^{\pm 1}$. To describe the relations let us introduce currents (formal power series with coefficients in the algebra $\DI$)
\begin{align}
    &E(z) = \sum_{b\in \mathbb{Z}} P_{1,b} z^{-b}, & F(z) =& \sum_{b \in \mathbb{Z}} P_{-1,b} z^{-b}.
\end{align}
Define
\begin{equation}
    \sum_{k>0} \theta_{\pm k} z^{-k} = \exp \left( \sum_{k>0} \frac{(1-q_2^k)(1 - q_3^{k})}{k} P_{0, \pm k} z^{-k}\right).
\end{equation}
\begin{subequations}
The relations are the following. For $k, l \in \mathbb{Z}$
\begin{align} \label{eq: relation Heisenberg}
     [P_{0,k}, P_{0,l}] =&  k \frac{(1-q_1^{|k|})(c^{|k|} - c^{-|k|})}{(1-q_2^{|k|})(1  - q_3^{|k|})} \delta_{k+l,0}.
\end{align}
For $k \in \mathbb{Z}_{>0}$ and $b \in \mathbb{Z}$
\begin{align}
    [P_{0,k}, P_{1,b}] =& c^{-k}(q^k_1-1) P_{1, b+k}, &  [P_{0,-k}, P_{1,b}] =& (1-q_1^k) P_{1, b-k},  \label{eq: relation HeisenbergE}  \\
     [P_{0,k}, P_{-1,b}] =& (1-q_1^k) P_{-1, b+k}, & [P_{0,-k}, P_{-1,b}] =& (q_1^k-1) c^k P_{-1, b-k}.    \label{eq: relation HeisenbergF}
\end{align}
\begin{align}
(z-q_1 w)(z-q_2 w)(z-q_3 w) E(z) E(w) =& -(w-q_1 z)(w-q_2 z)(w-q_3 z) E(w) E(z), \label{bilinearE} \\
(z-q_1^{-1} w)(z-q_2^{-1} w)(z-q_3^{-1} w) F(z) F(w) =& -(w-q_1^{-1} z)(w-q_2^{-1} z)(w-q_3^{-1} z) F(w) F(z).  \label{bilinearF}
\end{align}
For $a+b>0$
\begin{align}
    [P_{1,a}, P_{-1,b}] =& \frac{(1-q_1)c^{a} c'}{(1-q_2)(1- q_3)}  \theta_{a+b}, &  [P_{1,-a}, P_{-1,-b}] =& -\frac{(1-q_1) c^{-b} (c')^{-1}}{(1-q_2)(1- q_3)}  \theta_{ -a-b}. \label{eq: commutator E and F 1}
\end{align}
For $a \in \mathbb{Z}$
\begin{align}
    &[P_{1,a}, P_{-1,-a}] = \frac{(1-q_1)\left(c^{a} c' - c^{-a} (c')^{-1}\right)}{(1-q_2)(1-q_3)},  \label{eq: relation central extention} \\
    &[P_{1,a}, [P_{1,a-1}, P_{1, a+1}]] =0,  \label{SerreE} \\
    &[P_{-1,a}, [P_{-1,a-1}, P_{-1, a+1}]]=0. \label{SerreF}
\end{align}
\end{subequations}

\begin{defin}
Algebra $\DI$ is an algebra generated by $P_{1,b}$, $P_{0,b}$, $P_{-1,b}$ for $b \in \mathbb{Z}$  and central elements $c^{\pm1}$, $\left(c'\right)^{\pm1}$ with the relations \eqref{eq: relation Heisenberg}--\eqref{SerreF}.
\end{defin}

\begin{defin}
Algebra $\DIp$ is an algebra generated by $P_{1,b}$, $P_{0,b}$ for $b \in \mathbb{Z}$ and central elements $c^{\pm1}$, $\left(c'\right)^{\pm1}$ with the relations \eqref{eq: relation Heisenberg}, \eqref{eq: relation HeisenbergE}, \eqref{bilinearE}, \eqref{SerreE}.
\end{defin}

\begin{defin}
Algebra $\DIm$ is an algebra generated by $P_{-1,b}$, $P_{0,b}$ for $b \in \mathbb{Z}$ and central element $c^{\pm1}$, $\left(c'\right)^{\pm1}$ with the relations \eqref{eq: relation Heisenberg}, \eqref{eq: relation HeisenbergF}, \eqref{bilinearF}, \eqref{SerreF}.
\end{defin}

\begin{prop}
The algebras  $\DIp$ and $\DIm$ are subalgebras of $\DI$.
\end{prop}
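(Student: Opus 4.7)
The natural maps $\DIp \to \DI$ and $\DIm \to \DI$ sending each Chevalley generator to the element of $\DI$ with the same name are well-defined, because the relations defining $\DIp$ (resp.\ $\DIm$) form a subset of the relations \eqref{eq: relation Heisenberg}--\eqref{SerreF} defining $\DI$. Only injectivity requires argument.

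The strategy is to establish a triangular decomposition of $\DI$ as a vector space
\[
\DIm \otimes \DI^0 \otimes \DIp \;\xrightarrow{\ \sim\ }\; \DI, \qquad x_- \otimes x_0 \otimes x_+ \;\mapsto\; x_-\, x_0\, x_+,
\]
where $\DI^0$ denotes the subalgebra generated by $\{P_{0,k}: k\neq 0\}$ together with $c^{\pm 1}$ and $(c')^{\pm 1}$. Once this isomorphism is proven, the inclusion of $\DIp$ as the rightmost tensor factor (and of $\DIm$ as the leftmost) of the triangular product yields the proposition.

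Surjectivity of the multiplication map is the bookkeeping step: using the cross-commutation relations \eqref{eq: relation HeisenbergE}--\eqref{eq: relation central extention} together with \eqref{bilinearE}, \eqref{bilinearF}, one rewrites any word in the Chevalley generators as a linear combination of monomials in normal form (all $P_{-1,b}$ on the left, then all $P_{0,k}$, then all $P_{1,b}$ on the right); the commutators \eqref{eq: commutator E and F 1}, \eqref{eq: relation central extention} that appear when moving a $P_{1,a}$ past a $P_{-1,b}$ contribute only $\DI^0$-terms, so the process terminates.

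For injectivity I would invoke the PBW presentation of $\DI$ referenced right after \eqref{eq: sigma action on Pab} (formalized in Proposition \ref{prop: PBW for DI plus}): ordered monomials in the $P_{a,b}$ together with monomials in $c^{\pm 1}, (c')^{\pm 1}$ form a basis of $\DI$. The $\mathbb{Z}$-grading $\deg(P_{a,b}) = a$ is respected by all defining relations \eqref{eq: relation Heisenberg}--\eqref{SerreF} and partitions this PBW basis into homogeneous components. The equivalence between the PBW and Chevalley presentations (Schiffmann, see \cite{Schiffmann:2012}) identifies the span of PBW monomials supported on $a \geq 0$ with the image of $\DIp$, and dually for $\DIm$, simultaneously yielding both the injectivity of $\DIp \to \DI$ and the triangular decomposition above.

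\textbf{Main obstacle.} The nontrivial input is the completeness direction: one must know that the abstractly Chevalley-presented algebra $\DIp$ is not larger than its image in $\DI$. A clean self-contained alternative which bypasses the PBW/Schiffmann machinery is to exhibit a faithful representation of $\DIp$ directly. For instance, the Fock-type bosonization \eqref{eq: bosonization of Fock in introduction} restricts to an action of $\DIp$ on $\Lambda$, which can be checked to be faithful (the generators $P_{1,b}$ act as linearly independent operators by comparing leading Heisenberg symbols, and one then argues inductively in degree using \eqref{bilinearE} and \eqref{SerreE}); since this representation visibly factors through the image of $\DIp$ in $\DI$ via $\mathcal{F}_u$, faithfulness forces $\DIp \to \DI$ to be injective. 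The argument for $\DIm$ is symmetric.
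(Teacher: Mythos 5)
The paper states this proposition without proof, treating it as a known fact from the toroidal $\mathfrak{gl}_1$ literature (Burban--Schiffmann, Schiffmann, Miki); so there is no paper-internal proof to compare against. Your primary line of argument --- triangular decomposition $\DIm \otimes \DI^0 \otimes \DIp \xrightarrow{\sim} \DI$ using the $\deg_X$-grading, surjectivity by normal ordering, injectivity from the Burban--Schiffmann PBW basis combined with the Schiffmann Chevalley~$\leftrightarrow$~PBW equivalence --- is exactly the standard route, and as a blueprint it is sound.

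The ``self-contained alternative'' at the end has a genuine gap, and it is worth flagging because you present it as a way to ``bypass the PBW/Schiffmann machinery.'' The composite $\DIp \to \DI \to \End(\mathcal{F}_u)$ sends $c \mapsto v^{-1}$ and $c' \mapsto 1$, so the central elements $c - v^{-1}$ and $c' - 1$ of $\DIp$ lie in the kernel of this representation. Thus the representation is \emph{not} faithful on $\DIp$ as a unital algebra --- the footnote you are implicitly leaning on only asserts faithfulness of $\DI$ \emph{after} specializing the center to $c = v^{-1},\ c'=1$. The inequality $\ker(\DIp \to \End\mathcal{F}_u) \supseteq \ker(\DIp \to \DI)$ therefore never closes: you would still need to show that the kernel of the Fock representation on $\DIp$ is \emph{exactly} the ideal generated by $c-v^{-1},\ c'-1$, and separating that ideal from possible ``spurious'' elements of $\ker(\DIp \to \DI)$ is essentially equivalent to the PBW statement you were trying to avoid. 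Varying $u$ does not help since $c, c'$ act by the same scalars for every $u$. To salvage this route one would need a sufficiently rich family of representations that also separates points of the center, or else argue modulo the central ideal throughout --- either way the alternative is not the shortcut you claim. Stick with the PBW argument.
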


\paragraph{Connection with spherical DAHA} 
Denote
\begin{align}
    [i]_v^{\pm} =& \frac{v^{\pm 2i}-1}{v^{\pm 2}-1},  &  [i]_v =& \frac{v^{i}-v^{-i}}{v-v^{-1}},\\
    [k]!_v^{\pm}  =& [1]_v^{\pm} \cdots [k]_v^{\pm},  &  [k]!_v  =& [1]_v \cdots [k]_v.
\end{align}
In this paragraph we will need to consider double affine Hecke algebras for different parameters $q$ and $v$, therefore we will write $\mathcal{H}_N(q,v)$. Let $\bc_+$ and $\bc_-$ be the symmetrizer and the anti-symmetrizer in finite Hecke algebra
\begin{align}
    \bc_+ =& \frac{1}{[N]!_v^+} \sum v^{l(\sigma)} T_{\sigma}, & \bc_- =& \frac{1}{[N]!_v^-} \sum (-v)^{-l(\sigma)} T_{\sigma}.
\end{align}
The basic property of $\bc_{\pm}$ is that for $i= 1, \dots, N-1$
\begin{align} \label{eq: basic property of (anti)symmetrizer}
    T_i \bc_+ =& \bc_+ T_i = v \bc_+,  &    T_i \bc_- =& \bc_- T_i = - v^{-1} \bc_-.
\end{align}
Let $\SHpm(q,v) = \bc_{\pm} \mathcal{H}_N(q,v) \bc_{\pm}$ be the corresponding spherical DAHA.

\begin{prop}
    There is an algebra isomorphism $\SHm(q,v) \cong \SHp(q, -v^{-1})$.
\end{prop}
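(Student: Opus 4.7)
The plan is to reduce the statement to a direct identification of the two spherical subalgebras inside a single DAHA. The key observation is that the only place where the parameter \(v\) enters the defining relations of \(\mathcal{H}_N(q,v)\) is the Hecke quadratic \((T_i-v)(T_i+v^{-1})=0\). Since the two roots of this quadratic are \(v\) and \(-v^{-1}\), the quadratic is symmetric under the involution \(v\leftrightarrow -v^{-1}\). All other DAHA relations \eqref{eq: DAHA relation 1}--\eqref{eq: DAHA relation 4} involve only \(q\) and are \(v\)-free. Hence the identity map on generators \(T_i,Y_i^{\pm 1},\pi^{\pm 1}\) yields an algebra isomorphism
\[
\Phi\colon \mathcal{H}_N(q,v)\;\xrightarrow{\;\sim\;}\;\mathcal{H}_N(q,-v^{-1}).
\]
The first step is therefore simply to verify this by checking each relation.

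The second step is to show that \(\Phi(\bc_-(v)) = \bc_+(-v^{-1})\). The quickest way is the characterization \eqref{eq: basic property of (anti)symmetrizer}: \(\bc_-(v)\) is characterized up to normalization by \(T_i\bc_-(v)=-v^{-1}\bc_-(v)\) for all \(i\), while \(\bc_+(-v^{-1})\) is characterized by \(T_i\bc_+(-v^{-1}) = (-v^{-1})\bc_+(-v^{-1})\), which is the same relation. To match normalizations one compares the explicit expansions: under \(v\mapsto -v^{-1}\) one has
\[
(-v^{-1})^{l(\sigma)} = (-1)^{l(\sigma)}v^{-l(\sigma)} = (-v)^{-l(\sigma)},
\]
and for the denominators \([N]!^{+}_{-v^{-1}} = [N]!^{-}_{v}\) follows from \([i]^{+}_{-v^{-1}} = \frac{v^{-2i}-1}{v^{-2}-1} = [i]^{-}_{v}\). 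This gives the identification on the nose.

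The third step concludes. Applying the isomorphism \(\Phi\) to \(\bc_-(v)\mathcal{H}_N(q,v)\bc_-(v)\) yields \(\bc_+(-v^{-1})\mathcal{H}_N(q,-v^{-1})\bc_+(-v^{-1})\), which is \(\SHp(q,-v^{-1})\), as required. Since each step is either a checking of relations or an elementary manipulation of the Poincaré polynomial coefficients, there is no real obstacle; the only thing to be careful about is making sure that the normalization factors of \(\bc_\pm\) are compared correctly, which is a one-line computation.
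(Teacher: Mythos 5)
Your proof is correct and follows essentially the same route as the paper: identify $\mathcal{H}_N(q,v)\cong\mathcal{H}_N(q,-v^{-1})$ via the identity on generators (the Hecke quadratic being symmetric under $v\mapsto -v^{-1}$), then check that the antisymmetrizer at parameter $v$ maps to the symmetrizer at parameter $-v^{-1}$. The paper merely states these two facts; your write-up supplies the (correct) normalization bookkeeping that the paper leaves implicit.
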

\begin{proof}
  The relations \eqref{eq: DAHA relation 1}--\eqref{eq: DAHA relation 4} imply that there is an isomorphism $\mu \colon \mathcal{H}_N(q,v) \xrightarrow{\sim} \mathcal{H}_N(q, -v^{-1})$ defined by $\mu(T_i) = T_i$, $\mu(Y_i)= Y_i$, $\mu(\pi)=\pi$. To finish the proof we note that $\mu(\bc_-) = \bc_{+}$.
\end{proof}

\begin{thm}[\cite{SV11}]
	The following formulas determine a surjection of the algebra $\DI$ onto $\SHp(q,v)$ for $q_1 = q$, $q_2 =v^{2}$ 	
	\begin{subequations}
	\begin{align}
		P_{0,k}^{(N)} =& \bc_+(Y_1^k + \dots + Y_N^k) \bc_+, & P_{0,-k}^{(N)} =& q^k  \bc_+(Y_1^{-k} + \dots + Y_N^{-k}) \bc_+,\\
		P_{k,0}^{(N)} =&q^k \bc_+(X_1^k + \dots + X_N^k) \bc_+, & P_{-k,0}^{(N)} =&   \bc_+(X_1^{-k} + \dots + X_N^{-k}) \bc_+,\\
		P_{1,b}^{(N)} =& q [N]^-_v \bc_+ X_1 Y_1^b \bc_+, & P_{-1,b}^{(N)} =& [N]^+_v \bc_+ Y_1^b X_1^{-1} \bc_+,
	\end{align}
	\end{subequations}
	here $k\in \mathbb{Z}_{>0}$, $b \in \mathbb{Z}$, and the image of $P_{a,b}$ is denoted by $P_{a,b}^{(N)}$. 
\end{thm}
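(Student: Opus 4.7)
Write $\phi \colon \DI \to \SHp(q,v)$ for the proposed map, and think of it as defined first on the Chevalley generators $P_{0,\pm k}, P_{\pm 1, b}, c^{\pm 1}, (c')^{\pm 1}$. The strategy splits into two parts: verify that $\phi$ is a well-defined algebra homomorphism, and then show surjectivity. The images of the central elements must be pinned down first. Since $Y_i$ and $Y_j$ commute inside $\mathcal{H}_N$ and $\bc_+$ is idempotent, the images $P_{0,k}^{(N)}$ and $P_{0,\ell}^{(N)}$ commute in $\SHp$. Comparing with \eqref{eq: relation Heisenberg} forces $\phi(c) = 1$, which makes the right-hand side vanish identically. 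The value $\phi(c')$ is then fixed as a scalar by demanding \eqref{eq: relation central extention} at $a=0$, and one verifies that with this choice \eqref{eq: commutator E and F 1} and \eqref{eq: relation central extention} hold for all $a,b$.

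\textbf{Verifying relations.} The cross-relations \eqref{eq: relation HeisenbergE}, \eqref{eq: relation HeisenbergF} reduce to computing $[\bc_+ (\sum_i Y_i^k) \bc_+, \bc_+ X_1 Y_1^b \bc_+]$ inside $\mathcal{H}_N$. Using $T_i \bc_+ = v \bc_+$ to absorb Hecke factors and the DAHA identities $[X_1, Y_j] = 0$ for $j \ne 1$ together with the explicit $X_1 Y_1$ commutation extracted from $T_i Y_i T_i = Y_{i+1}$ and the $\pi$--relations, one brings this commutator to the form required by \eqref{eq: relation HeisenbergE}. The quadratic relations \eqref{bilinearE}, \eqref{bilinearF} are the technical heart: packaging the currents as $E(z) = q[N]_v^- \bc_+ X_1 \delta(Y_1/z) \bc_+$ (delta-function generators), one computes $E(z)E(w)$ by moving symmetrizers together and applying a DAHA identity that converts $X_1 Y_1^a X_1 Y_1^b$ into a $T_1$--symmetrized expression; the cubic numerator and denominator in \eqref{bilinearE} reflect the rational structure of the resulting identity. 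The $EF$ commutators \eqref{eq: commutator E and F 1}, \eqref{eq: relation central extention} follow similarly: the symmetrized product $\bc_+ X_1 Y_1^a \bc_+ Y_1^{-b} X_1^{-1} \bc_+$ contains a part involving $X_1 X_1^{-1}$ acting as identity, whose symmetrization over the $Y_i$ reproduces the generating series $\theta$. Finally, the Serre relations \eqref{SerreE}, \eqref{SerreF} follow from the quadratic relations by the standard argument.

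\textbf{Surjectivity and consistency.} The spherical DAHA $\SHp(q,v)$ is generated by the power sums $\bc_+ \sum Y_i^k \bc_+$ and $\bc_+ \sum X_i^{\pm k} \bc_+$ together with successive commutators, and these are exactly $P_{0,\pm k}^{(N)}$, $P_{\pm k, 0}^{(N)}$ and, through \eqref{eq: relation HeisenbergE}, all elements of the form $P_{\pm 1, b}^{(N)}$ together with their bracketed descendants. Thus $\phi$ is surjective. One also needs an internal consistency check: the Chevalley generators determine $P_{k,0}$ via iterated brackets, and the resulting element of $\SHp$ must coincide with the explicit formula $q^k \bc_+ \sum X_i^k \bc_+$. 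This is ensured by the $\widetilde{SL}(2,\mathbb{Z})$--equivariance of both sides, since the automorphism swapping $X$'s with $Y$'s lifts to both $\DI$ and (projectively to) $\SHp$ compatibly.

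\textbf{Main obstacle.} The hard step is the quadratic $EE$/$FF$ computation; it requires a careful manipulation of the full DAHA relations, and is the core technical calculation of \cite{SV11}. Once that is done, the Serre relations and $[E,F]$ commutator follow by well-established arguments, and surjectivity is essentially a counting/generation statement.
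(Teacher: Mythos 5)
The paper does not prove this theorem at all: it is cited from \cite{SV11}, and the section explicitly disclaims any new results, so there is no in-paper argument to compare against. Assessing your sketch on its own terms, the overall plan (verify the Chevalley relations for the explicit images, then argue surjectivity by a generation statement for $\SHp$, and separately check internal consistency of the $P_{\pm k,0}^{(N)}$ formulas) is a reasonable blueprint, and it is also genuinely different from the route in \cite{SV11}, which proceeds through the elliptic Hall algebra and its identification with a stable limit of spherical DAHAs rather than a direct Chevalley-relation check. That comparison is worth making explicit if you pursue this.

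However there are genuine problems in the sketch. The claim \emph{``the DAHA identities $[X_1, Y_j] = 0$ for $j \ne 1$''} is false: in $\mathcal{H}_N$ the operators $X_1$ and $Y_j$ do not commute for $j\geq 2$. Already $X_2^{-1}Y_1 X_2 Y_1^{-1}=T_1^2$ shows the cross-commutators are nontrivial, and a direct computation using $X_1 = T_1\cdots T_{N-1}\pi^{-1}$ and $T_2 Y_3 = Y_2 T_2 + (v-v^{-1})Y_3$ gives $Y_3 X_1 = X_1 Y_3 + (v-v^{-1})\,T_1\,Y_3\,T_3\cdots T_{N-1}\pi^{-1}$, which is nonzero. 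Any argument relying on that commutativity would not go through. Second, the central step you call \emph{``the technical heart''} — the quadratic relations \eqref{bilinearE}, \eqref{bilinearF} — is merely asserted; nothing in your text actually establishes the cubic structure constant, and without it one does not even have a homomorphism, let alone a surjection. Third, the consistency check via $\widetilde{SL}(2,\mathbb{Z})$-equivariance is delicate: the automorphisms send $Y_i \mapsto Y_i X_i T_{i-1}\cdots T_1 T_1\cdots T_{i-1}$, so the images of power sums in $Y$ are not simply power sums in $X$ after twisting, and the central prefactors $(c')^{\ast}$ in \eqref{eq: sigma action on Pab} must be tracked; asserting compatibility ``since the automorphism swapping $X$'s with $Y$'s lifts'' is too quick, especially since the explicit formulas carry an asymmetric $q^k$ factor on $P_{0,-k}^{(N)}$ and $P_{k,0}^{(N)}$ but not on $P_{0,k}^{(N)}$ and $P_{-k,0}^{(N)}$. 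Finally, the surjectivity relies on the (true, but nontrivial) fact that $\SHp$ is generated by $\bc_+\sum Y_i^k\bc_+$ and $\bc_+\sum X_i^{\pm k}\bc_+$; this should be cited or proved, not folded into ``a counting/generation statement.''
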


\begin{Remark}
	In \cite{SV11} the authors prove that the quotient of $\DI^{\pm}$ by the relations $c=c'=1$ is a projective limit of the corresponding subalgebras of $\SHp$. This deep result is one of the motivations for the limit $N \rightarrow \infty$ to be studied below. Though formally speaking, we will not use the mentioned result of \emph{loc. cit.}
\end{Remark}

%

\begin{corol} \label{corollary: P}
Algebra $\DI$ surjects onto $\SHm(q,v)$ for $q_1=q$, $q_2 =v^{-2}$. Moreover
	\begin{subequations}
	\begin{align}
     \label{eq:P{0,k}N}
     P_{0,k}^{(N)} =& \bc_-(Y_1^k + \dots + Y_N^k) \bc_-, & P_{0,-k}^{(N)} =& q^k  \bc_-(Y_1^{-k} + \dots + Y_N^{-k}) \bc_-,  \\
      \label{eq:P{k,0}N}
     P_{k,0}^{(N)} =&q^k \bc_-(X_1^k + \dots + X_N^k) \bc_-, & P_{-k,0}^{(N)} =&   \bc_-(X_1^{-k} + \dots + X_N^{-k}) \bc_-,\\
     \label{eq:P{1,k}N}
     P_{1,b}^{(N)} =& q [N]^+_v \bc_- X_1 Y_1^b \bc_-, & P_{-1,b}^{(N)} =& [N]^-_v \bc_- Y_1^b X_1^{-1} \bc_-.      
    \end{align}
	\end{subequations}
\end{corol}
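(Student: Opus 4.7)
The plan is to obtain the surjection $\DI \twoheadrightarrow \SHm(q,v)$ by composing the Schiffmann–Vasserot surjection onto $\SHp$ with the isomorphism $\SHm(q,v) \cong \SHp(q,-v^{-1})$ of the preceding proposition, and then to read off the resulting formulas for the $P_{a,b}^{(N)}$.

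\textbf{Step 1 (parameter substitution).} In the theorem of \cite{SV11} the surjection $\DI \twoheadrightarrow \SHp(q,v)$ holds for $q_1=q$, $q_2=v^{2}$. Formally substitute $v \mapsto -v^{-1}$ everywhere: the target becomes $\SHp(q,-v^{-1})$ and the toroidal parameters become $q_1 = q$, $q_2 = (-v^{-1})^{2} = v^{-2}$, which are exactly the parameters appearing in the corollary.

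\textbf{Step 2 (transport along $\mu$).} By the previous proposition there is an algebra isomorphism $\mu \colon \mathcal{H}_N(q,v) \xrightarrow{\sim} \mathcal{H}_N(q,-v^{-1})$ fixing $T_i$, $Y_i$, $\pi$ (and therefore $X_i$, since $X_i$ is a fixed word in these generators). Under $\mu$ the idempotent $\bc_-$ of $\mathcal{H}_N(q,v)$ goes to $\bc_+$ of $\mathcal{H}_N(q,-v^{-1})$, because both are characterized by the defining property \eqref{eq: basic property of (anti)symmetrizer} with the corresponding sign of $v$. Thus $\mu$ restricts to an isomorphism $\SHm(q,v) \xrightarrow{\sim} \SHp(q,-v^{-1})$. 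Composing $\mu^{-1}$ with the surjection of Step~1 gives the desired surjection $\DI \twoheadrightarrow \SHm(q,v)$.

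\textbf{Step 3 (bookkeeping of $[N]^{\pm}_v$).} The only nontrivial thing to check is the match of coefficients in the formulas for $P_{1,b}^{(N)}$ and $P_{-1,b}^{(N)}$. Directly from the definitions,
\begin{equation*}
    [N]^{-}_{-v^{-1}} = \frac{(-v^{-1})^{-2N}-1}{(-v^{-1})^{-2}-1} = \frac{v^{2N}-1}{v^{2}-1} = [N]^{+}_{v},
    \qquad
    [N]^{+}_{-v^{-1}} = [N]^{-}_{v}.
\end{equation*}
Pulling back $P_{1,b}^{(N)} = q[N]^{-}_{-v^{-1}} \bc_+ X_1 Y_1^b \bc_+$ and $P_{-1,b}^{(N)} = [N]^{+}_{-v^{-1}} \bc_+ Y_1^b X_1^{-1} \bc_+$ through $\mu^{-1}$ yields the expressions in \eqref{eq:P{1,k}N}, and the power‑sum formulas \eqref{eq:P{0,k}N}, \eqref{eq:P{k,0}N} are transported directly since they carry no $[N]^{\pm}$ prefactor.

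There is no real obstacle here; the argument is purely formal parameter manipulation, and the main point is simply to record that the sign/inversion $v \mapsto -v^{-1}$ intertwines symmetrizer with antisymmetrizer and swaps $[N]^{+}_{v} \leftrightarrow [N]^{-}_{v}$, so that the Schiffmann–Vasserot formulas transform into precisely those stated in the corollary.
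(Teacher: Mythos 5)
Your proposal is correct and follows exactly the route the paper intends: the corollary is stated immediately after the isomorphism $\SHm(q,v)\cong\SHp(q,-v^{-1})$ and the Schiffmann--Vasserot surjection, and the paper gives no separate proof precisely because the argument is the composition you spell out. Your bookkeeping of $[N]^{\pm}_{-v^{-1}}=[N]^{\mp}_{v}$ and the observation that $\mu$ fixes $T_i$, $Y_i$, $\pi$ (hence $X_i$) while swapping $\bc_-\leftrightarrow\bc_+$ are exactly the points that need to be checked, and they are checked correctly.
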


\section{Deformed exterior power} \label{sec:wedge}
Spherical DAHA $\SHm(q,v)$ acts on the subspace $\bc_- \Ch_{u_0, \dots, u_{n-1}}^{(n,n')} = \bc_- \left(\mathbb{C}^n[Y^{\pm1}]\right)^{\otimes N} \subset \left(\mathbb{C}^n[Y^{\pm1}]\right)^{\otimes N}$. The space $\bc_- \left(\mathbb{C}^n[Y^{\pm1}]\right)^{\otimes N}$ was considered in \cite{KMS95, LT00}. In \emph{loc. cit.}, the authors considered only affine Hecke algebra action on $ \left(\mathbb{C}^n[Y^{\pm1}]\right)^{\otimes N}$, but not DAHA. In this section, we will recall and extend their results. Spherical DAHA will be considered in the subsequent sections.
\subsection{Finite $v$-wedge} 
The $v$-deformed exterior power can be defined as a subspace $\bc_- \left(\mathbb{C}^n[Y^{\pm1}]\right)^{\otimes N}$. On the other hand it can be identified with the quotient space via tautological projection
\begin{align}
	\bc_- \left(\mathbb{C}^n[Y^{\pm1}]\right)^{\otimes N} &\xrightarrow{\sim} \left(\mathbb{C}^n[Y^{\pm1}]\right)^{\otimes N} \Big{/} \sum_i \Image (T_i+v^{-1})
\end{align}
The inverse map is induced by $\bc_-$. We will use both interpretations as the subspace and as the quotient. Denote by $e_{i_1} \wedge \dots \wedge e_{i_N} =\bc_- \left( e_{i_1} \otimes \dots \otimes e_{i_N} \right)$.
\begin{lemma}[{\cite[eq. (41), (42)]{KMS95}}] \label{lemma: q-wedge relation}
    Let $l = \mb + nk + s$ for $k \geq 0$ and $s= 0, \dots, n-1$. Then
	\begin{subequations}
	    \begin{align}
	        e_l \wedge e_{\mb} =&- e_{\mb} \wedge e_l & &\text{for $s=0$}
	        \\
	        e_l \wedge e_{\mb} =&- v e_{\mb} \wedge e_l & &\text{for $k=0$}
	        \\
	        e_l \wedge e_{\mb} =&- v e_{\mb} \wedge e_l - e_{l-nk} \wedge e_{\mb+nk}- v e_{\mb + nk} \wedge e_{l - nk} & &\text{otherwise}
	    \end{align}
	\end{subequations}
    The above identities can be used for vectors of the form $e_{i_1} \wedge \dots \wedge e_{l} \wedge e_{\mb} \wedge \dots \wedge e_{i_N}$.
\end{lemma}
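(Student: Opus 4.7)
My plan is to exploit the defining property $\bc_-(T_i + v^{-1}) = 0$ from \eqref{eq: basic property of (anti)symmetrizer}: for any tensor vector $w$, the projection of $(T_i + v^{-1}) w$ to the wedge vanishes. Applied to $e_\mb \otimes e_l$, together with the explicit formulas \eqref{eq: T1}-\eqref{eq: T4} for the $T$-action (which were already worked out in the proof of Theorem~\ref{thm: finite explicit construction}), this produces a linear relation among the wedge vectors $e_{l - nj} \wedge e_{\mb + nj}$. The lemma then follows by induction on $k$.

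For the base cases, when $l = \mb$ (i.e.\ $s = k = 0$), formula \eqref{eq: T1} with $k = 0$ gives $T(e_l \otimes e_l) = v\, e_l \otimes e_l$, so $\bc_-(T + v^{-1})(e_l \otimes e_l) = 0$ forces $(v+v^{-1}) e_l \wedge e_l = 0$ and hence $e_l \wedge e_l = 0$. When $s > 0$ and $k = 0$, formula \eqref{eq: T3} reduces to $T(e_\mb \otimes e_l) = e_l \otimes e_\mb + (v - v^{-1}) e_\mb \otimes e_l$, and applying $\bc_-(T + v^{-1})$ yields $e_l \wedge e_\mb + v\, e_\mb \wedge e_l = 0$, the second relation of the lemma.

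For the $s = 0, k \geq 1$ step, extracting the $j = k$ term (which equals $e_\mb \wedge e_l$) from the sum in \eqref{eq: T1} produces
\[
v\, e_l \wedge e_\mb + v\, e_\mb \wedge e_l + (v - v^{-1}) \sum_{j=1}^{k-1} e_{l-nj} \wedge e_{\mb+nj} = 0.
\]
The middle sum vanishes: pairing $j$ with $k - j$, each pair has the form $e_{l-nj} \wedge e_{\mb+nj} + e_{\mb+nj} \wedge e_{l-nj}$, and since the indices differ by $(k-2j)n$, a strictly smaller multiple of $n$, anticommutativity follows by the inductive hypothesis; the self-paired term $j = k/2$ (for even $k$) vanishes by $e_l \wedge e_l = 0$.

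For the $s > 0, k \geq 1$ step, applying $\bc_-(T + v^{-1})$ via \eqref{eq: T3} and separating the $j = 0$ and $j = k$ summands yields
\[
e_l \wedge e_\mb + v\, e_\mb \wedge e_l + (v - v^{-1}) e_{\mb+nk} \wedge e_{l-nk} + (v - v^{-1}) \sum_{j=1}^{k-1} e_{\mb+nj} \wedge e_{l-nj} = 0.
\]
In each middle summand the indices differ by $s + (k-2j)n$, which is of strictly smaller magnitude than $l - \mb$, so the inductive hypothesis applies to rewrite each summand. The main technical obstacle is the bookkeeping in this final step: depending on the sign of $k - 2j$ (positive, negative, or zero), different cases of the lemma apply, and one must verify that after all inductive substitutions the residual terms telescope into the required tail $e_{l-nk} \wedge e_{\mb+nk} + v\, e_{\mb+nk} \wedge e_{l-nk}$. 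A useful simplification is to also apply $\bc_-(T + v^{-1})$ to $e_l \otimes e_\mb$ via \eqref{eq: T4} and subtract the two resulting identities, which cancels common terms and reduces the combinatorial work before invoking induction.
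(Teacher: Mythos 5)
The paper does not supply its own proof of this lemma; it is imported by citation from \cite{KMS95}, so there is nothing internal to compare against. Your strategy is the natural one and is exactly what the $v$-wedge quotient structure invites: project $(T+v^{-1})(e_\mb\otimes e_l)=0$ (respectively $(T+v^{-1})(e_l\otimes e_\mb)=0$) to the wedge using \eqref{eq: T1}--\eqref{eq: T4}, then induct on $k$. Your base cases and the $s=0$, $k\geq 1$ step (the pairing $j\leftrightarrow k-j$) check out. The remaining issue is that the $s>0$, $k\geq 1$ step is left as unverified ``bookkeeping,'' and as written (``rewrite each summand'') it threatens a delicate three-way case analysis on the sign of $k-2j$.

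The bookkeeping does close, and there is a cleaner route than rewriting each summand. From T3 one gets
\[
e_l\wedge e_\mb + v\,e_\mb\wedge e_l + (v-v^{-1})\sum_{j=1}^{k} e_{\mb+nj}\wedge e_{l-nj}=0,
\]
so the target identity is equivalent to
\[
-(v-v^{-1})\sum_{j=1}^{k-1}e_{\mb+nj}\wedge e_{l-nj}+e_{l-nk}\wedge e_{\mb+nk}+v^{-1}e_{\mb+nk}\wedge e_{l-nk}=0.
\]
Now apply the inductive hypothesis once to $e_{\mb+nk}\wedge e_{l-nk}$ (here the larger index is $\mb+nk$, the gap is $nk-s=n(k-1)+(n-s)$, so the relevant $k$-parameter is $k-1<k$): for $k=1$ this is the $k'=0$ base case and the expression vanishes directly; for $k\geq 2$ it produces $-v\,e_{l-nk}\wedge e_{\mb+nk}-e_{\mb+n}\wedge e_{l-n}-v\,e_{l-n}\wedge e_{\mb+n}$, after which the displayed sum collapses term-by-term against the analogous T3-identity for the shifted pair $(\mb+n,l-n)$ (which has $k-2$ in place of $k$). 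No case split on $\operatorname{sgn}(k-2j)$ is needed. So your proof is correct in outline, and the ``residual terms telescope'' claim is true; I would just replace the vague per-summand rewriting with this single-application argument, which you partly anticipate with the T4 trick.
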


\begin{prop}[{\cite[Prop. 1.3]{KMS95}}]
	The vectors $e_{i_1} \wedge \dots \wedge e_{i_N}$ for $i_1<i_2<\dots <i_N$ form a basis of $\bc_- ( \mathbb{C}^n[Y^{\pm1}])^{\otimes N}$.
\end{prop}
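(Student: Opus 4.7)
The plan is to establish spanning and linear independence separately, following the classical straightening strategy for $v$-wedges from \cite{KMS95}.

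\textbf{Spanning.} Starting from an arbitrary wedge $e_{i_1}\wedge\cdots\wedge e_{i_N}$, I would repeatedly apply Lemma~\ref{lemma: q-wedge relation} to adjacent inverted pairs. The first two cases swap an inverted pair directly (with coefficient $-1$ or $-v$), while case (c) rewrites an inverted pair $e_l\wedge e_{\mb}$ with $l-\mb=nk+s$, $k,s>0$, into a linear combination in which the only remaining inverted pair $e_{\mb+nk}\wedge e_{l-nk}$ has strictly smaller gap $nk-s<nk+s$. First I would prove, by induction on the gap $l-\mb$, that a single inverted pair $e_l\wedge e_\mb$ with $l>\mb$ admits a canonical rewriting as a linear combination of ordered pairs $e_\mu\wedge e_\lambda$ with $\mb\leq\mu<\lambda\leq l$ and $\mu+\lambda=l+\mb$; the induction base is case (b) with $s=1$, and the step uses case (c) together with the inductive hypothesis on the residual term. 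A bubble-sort argument then promotes this to arbitrary wedges: at each stage one applies the single-pair rewriting at an inverted adjacent position, and the containment $\mu\in[\mb,l]$ controls how far the resulting vectors can perturb the neighbouring indices.

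\textbf{Linear independence.} All three identities of Lemma~\ref{lemma: q-wedge relation} preserve both the total degree $d=\sum_j i_j$ and the multiset of residues $\{i_j\bmod n\}_{j=1}^N$. Consequently the space $\bc_-(\mathbb{C}^n[Y^{\pm1}])^{\otimes N}$ decomposes as a direct sum of finite-dimensional graded components indexed by these invariants, and in each component only finitely many ordered wedges compete with each other. I would then deduce independence by a diamond-lemma / confluence argument: verify that on the single overlapping triple $e_a\wedge e_b\wedge e_c$ the two ways of applying Lemma~\ref{lemma: q-wedge relation} (to the left pair first versus the right pair first) yield the same reduced expression. Confluence together with termination from the first part shows that the straightening produces a well-defined normal form on the ordered-wedge basis, whence linear independence. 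An alternative route is a specialization argument: at $v=1$ the $v$-wedge collapses to the classical exterior algebra $\Lambda^N(\mathbb{C}^n[Y^{\pm1}])$, whose ordered wedges are known to be linearly independent; upper semicontinuity of dimension in each finite-dimensional graded piece then forces the same dimension for generic $v$.

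\textbf{Main obstacle.} The crux is the termination of case~(c): the residual inverted pair $e_{\mb+nk}\wedge e_{l-nk}$, once situated inside a longer wedge, can interact with neighbouring indices and potentially create new inversions after a subsequent application. One must choose a well-founded measure — lexicographically, something like (number of adjacent inversions, sum of gaps at inversion sites, total of $|i_j|$) — and check that it strictly decreases under every instance of Lemma~\ref{lemma: q-wedge relation}. Proving confluence on the $e_a\wedge e_b\wedge e_c$ overlap is then a finite but case-heavy verification, split by the residues of $a,b,c$ modulo $n$; this is where most of the book-keeping lives, but no new idea is needed once the single-pair straightening is in place.
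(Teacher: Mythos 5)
The paper simply cites this from \cite[Prop.~1.3]{KMS95} and gives no proof, so there is no in-paper argument to compare against. Your overall architecture --- straighten unordered wedges to ordered ones via Lemma~\ref{lemma: q-wedge relation}, then get independence either from a diamond-lemma confluence check or from specialization at $v=1$ --- is the standard route for a statement of this type and is sound in outline. The reduction to finite-dimensional blocks via the invariance of the total degree and the residue multiset is exactly the right move.

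The concrete gap is the termination measure. Your proposed lexicographic triple (number of adjacent inversions, sum of gaps at inversion sites, total of $|i_j|$) does not strictly decrease. When case~(c) of Lemma~\ref{lemma: q-wedge relation} rewrites an adjacent inverted pair $e_l\wedge e_{\mb}$ inside a longer wedge, the residual inverted pair $e_{\mb+nk}\wedge e_{l-nk}$ has both indices strictly inside $(\mb,l)$, and these can create brand-new adjacent inversions with the flanking indices: if the left neighbour $a$ satisfies $\mb<a<\mb+nk$, the pair $(a,\mb+nk)$ becomes a new adjacent inversion, so the count can jump from $1$ to $3$ and the lex measure fails at the first slot. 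A measure that does work is the lexicographic pair $\bigl(\sum_j i_j^2,\ \text{total (not merely adjacent) inversion count}\bigr)$: in the two residual terms of case~(c) one has $(l-nk)^2+(\mb+nk)^2-(l^2+\mb^2)=-2nk(l-\mb-nk)=-2nks<0$, so $\sum_j i_j^2$ strictly drops; in the plain swap term $e_{\mb}\wedge e_l$ it is unchanged, but the total inversion count drops by exactly one (swapping an adjacent inverted pair always decreases total inversions by one). Since each degree-and-residue block contains only finitely many sequences, the measure is bounded below and straightening terminates. With termination repaired, both of your routes to independence are viable; the $v=1$ specialization is probably cleaner once you verify that $\bc_-$ specializes there ($[N]!_v^-\to N!\neq0$) and invoke lower semicontinuity of the rank of $\bc_-$ restricted to a graded piece, which attains the classical value $\dim\Lambda^N$ at $v=1$ and is bounded above by the number of ordered sequences by spanning.
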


\begin{lemma}[{\cite[Lemma 7.6]{LT00}}] \label{Pauli exclusion}
Let $k_1, \dots, k_N$ be integers such that $\sum_{i=1}^N (i -\mt -k_i) < N $ and all $k_i<N-\mt$ for certain $\mt \in \mathbb{Z}$. Then $e_{k_1} \wedge \dots \wedge e_{k_N} = 0$.
\end{lemma}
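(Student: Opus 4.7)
The plan is to straighten $e_{k_1} \wedge \dots \wedge e_{k_N}$ into the basis of ordered wedge products and then to reach a contradiction via a pigeonhole-style counting argument on the resulting indices.

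Concretely, I would apply the relations of Lemma~\ref{lemma: q-wedge relation} iteratively to adjacent pairs until reaching an expansion
$$e_{k_1} \wedge \dots \wedge e_{k_N} = \sum_{k'_1 < \dots < k'_N} c(k') \, e_{k'_1} \wedge \dots \wedge e_{k'_N}.$$
The crucial observation is that each local replacement $e_l \wedge e_\mb \rightsquigarrow e_a \wedge e_b$ (for $l > \mb$) appearing on the right-hand sides of Lemma~\ref{lemma: q-wedge relation} preserves the sum of the two indices, $a + b = l + \mb$, and keeps them inside the closed interval $[\mb, l]$. Consequently the whole straightening procedure preserves $\sum_i k_i$ and leaves $\max_i k_i$ non-increasing. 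So each tuple $(k'_1, \dots, k'_N)$ contributing with nonzero coefficient satisfies $\sum_j k'_j = \sum_i k_i$ and $\max_j k'_j \leq \max_i k_i$.

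Next I would combine this with the hypotheses. Since $k_i < N - \mt$ for every $i$, each $k'_j$ satisfies $k'_j \leq N - \mt - 1$; being also strictly increasing, their sum is maximized by the tuple $(-\mt,\, -\mt+1,\, \dots,\, N-\mt-1)$, so $\sum_j k'_j \leq \tfrac{N(N-1)}{2} - N\mt$. But the assumption $\sum_{i=1}^N (i - \mt - k_i) < N$ rearranges to $\sum_i k_i > \tfrac{N(N-1)}{2} - N\mt$, strictly violating the bound on each $\sum_j k'_j$. Therefore no basis term can appear in the expansion, and the wedge vanishes.

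The main obstacle is a clean justification that the straightening procedure terminates and is consistent, preserving the two invariants through every step --- in particular in the degenerate case of coinciding adjacent indices, where Lemma~\ref{lemma: q-wedge relation} does not literally apply and one needs the separate input $e_a \wedge e_a = 0$, which follows from the fact that $e_a \otimes e_a$ lies in the $v$-eigenspace of the relevant $T_i$, hence in $\Image(T_i + v^{-1})$. This point is technical but standard (cf. \cite{KMS95, LT00}); once granted, the proof reduces to the one-line counting inequality above.
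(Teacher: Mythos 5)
Your proof is correct, and since the paper only cites \cite[Lemma 7.6]{LT00} for this statement without reproducing a proof, you have supplied the argument from scratch. The two invariants you identify from Lemma~\ref{lemma: q-wedge relation} are exactly right: in all three cases of the straightening rule, each replacement pair $(a,b)$ satisfies $a+b=l+\mb$ and $a,b\in[\mb,l]$ (in case (c), $l-nk=\mb+s\in[\mb,l]$ and $\mb+nk=l-s\in[\mb,l]$ since $0<s<n\leq nk$), so the total index sum is conserved and the maximum index never increases. The counting then closes the argument: any surviving ordered basis tuple $k'_1<\dots<k'_N$ has all entries at most $N-\mt-1$, hence sum at most $\sum_{j=1}^N(N-\mt-j)=\tfrac{N(N-1)}{2}-N\mt$, whereas the hypothesis $\sum_i(i-\mt-k_i)<N$ is precisely $\sum_i k_i>\tfrac{N(N-1)}{2}-N\mt$; since the sum is preserved, no basis term can appear.

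Two minor remarks. First, the ``degenerate'' case $l=\mb$ you flag is in fact already covered by case (a) of Lemma~\ref{lemma: q-wedge relation} with $k=s=0$, which yields $e_a\wedge e_a=-e_a\wedge e_a$ and hence $e_a\wedge e_a=0$; one does not need a separate appeal to the $v$-eigenspace decomposition. Second, termination and consistency of the straightening are indeed standard and are exactly the content of \cite[Prop.~1.3]{KMS95} (quoted in the paper), which establishes that the ordered wedges form a basis --- once that is granted, your argument only needs the two invariants above, applied to the coefficients of the unique basis expansion, so the rewriting-theoretic issues do not actually need to be re-litigated.
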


\paragraph{Vertex operators}
Considering a vector \(w\) as an element of the subspace and the quotient space
\begin{align*}
&w=\sum_{t_1, \dots, t_N} y_{t_1, \dots, t_N} e_{t_1} \otimes \dots \otimes e_{t_N} \in \bc_- \left(\mathbb{C}^n[Y^{\pm1}]\right)^{\otimes N}, &
&w = \sum_{i_1 < \dots < i_N} x_{i_1, \dots, i_N} e_{i_1} \wedge \dots \wedge e_{i_N}.
\end{align*}
Let us define (modes of) vertex operators \( \Phi_k, \Psi_k \colon \bc_- ( \mathbb{C}^n[Y^{\pm1}])^{\otimes N} \rightarrow \bc_- ( \mathbb{C}^n[Y^{\pm1}])^{\otimes (N+1)}\) 
by the formula 
\begin{align}
	\Phi_k(w) =\sum_{i_1 < \dots < i_N} x_{i_1, \dots, i_N} e_k\wedge e_{i_1} \wedge \dots \wedge e_{i_N},
	\quad
	\Psi_k(w) =\sum_{i_1 < \dots < i_N} x_{i_1, \dots, i_N} e_{i_1} \wedge \dots \wedge e_{i_N} \wedge e_k,
\end{align}
Note that here \(w\) is considered as an element of the quotient. The vertex operators $\Phi_k, \Psi_k $ can also be defined in terms of the subspace. 
\begin{multline*}
	\Phi_k(w)=	\bc_-^{(N+1)} \sum_{i_1< \dots< i_N} x_{i_1, \dots, i_N} e_k \otimes e_{i_1} \otimes \dots \otimes e_{i_N}\\ = \bc_-^{(N+1)} \bc_-^{(N)} \sum_{i_1< \dots< i_N} x_{i_1, \dots, i_N} e_k \otimes e_{i_1} \otimes \dots \otimes e_{i_N} = \bc_-^{(N+1)} \sum_{t_1, \dots, t_N} y_{t_1, \dots, t_N}\, e_k \otimes e_{t_1} \otimes \dots \otimes e_{t_N},
\end{multline*}
where $\bc_-^{(N)}$ denotes anti-symmetrizer in $\mathcal{H}_N$.

Define (modes of) the dual vertex operators $\Phi_k^*, \Psi_k^* \colon \bc_- ( \mathbb{C}^n[Y^{\pm1}])^{\otimes N} \rightarrow \bc_- ( \mathbb{C}^n[Y^{\pm1}])^{\otimes (N-1)}$ by the formula
\begin{align}
	\Phi_k^*(w) =  \sum_{t_2, \dots, t_N}y_{-k,t_2 \dots, t_N}\; e_{t_2} \otimes \dots \otimes e_{t_N},\quad 
	\Psi_k^*(w) = \sum_{t_1, \dots, t_{N-1}} y_{t_1, \dots, t_{N-1},-k}\; e_{t_1} \otimes \dots \otimes e_{t_{N-1}}.
\end{align}
Note that here \(w\) is considered as an element of the subspace. It is easy to see that
\[ 
	(T_i+v^{-1})\sum_{t_2, \dots, t_N} y_{-k,t_2 \dots, t_N} \;e_{t_2} \otimes \dots \otimes e_{t_N}= 
	(T_i+v^{-1}) \sum_{t_1, \dots, t_{N-1}} y_{t_1, \dots, t_{N-1},-k} \; e_{t_1} \otimes \dots \otimes e_{t_{N-1}} =0
\]
for any $1\leq i \leq N-2$. Hence the image of $\Phi_k^*$ and $\Psi_k^*$ indeed belongs to $\bc_- \left(\mathbb{C}^n[Y^{\pm1}]\right)^{\otimes N-1}$.

Consider operators 
\begin{equation} \label{eq: def b_j}
b_j = Y_1^j + \dots + Y_N^j.
\end{equation}
\begin{lemma}
    The following relations hold for $k \in \mathbb{Z}$ and $j \in \mathbb{Z}_{\neq 0}$
    \begin{align} \label{eq: commutator bj and the vertexes}
        [b_j, \Phi_k] =& \Phi_{k+nj}, & [b_j, \Psi_k] =& \Psi_{k+nj},  &  [b_j, \Phi^*_k] =& -\Phi^*_{k+nj}, & [b_j, \Psi^*_k] =& -\Psi^*_{k+nj}.
    \end{align}
\end{lemma}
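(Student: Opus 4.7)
The plan is to exploit the fact that $b_j$ is a symmetric function of the $Y_i$'s, so it commutes with the finite Hecke generators $T_1,\dots,T_{N-1}$, hence with the antisymmetrizer $\bc_-$. Consequently, $b_j$ acts naturally on $\bc_-(\mathbb{C}^n[Y^{\pm1}])^{\otimes N}$, and its action is consistent on both the subspace model and the quotient model of the $v$-wedge. Moreover, recalling that $Y\cdot e_i = e_{i+n}$, the operator $Y_r^j$ acts on the $r$-th tensor factor by the shift $e_{t_r}\mapsto e_{t_r+nj}$, so in either model
\begin{equation*}
b_j\bigl(e_{t_1}\otimes\dots\otimes e_{t_N}\bigr) \;=\; \sum_{r=1}^{N} e_{t_1}\otimes\dots\otimes e_{t_r+nj}\otimes\dots\otimes e_{t_N},
\end{equation*}
and the analogous formula holds for wedges.

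For $\Phi_k$ and $\Psi_k$ I would use the quotient description, which trivializes the computation. Writing $w=\sum x_{i_1,\dots,i_N}\,e_{i_1}\wedge\dots\wedge e_{i_N}$, one has $\Phi_k(w)=e_k\wedge w$ and
\begin{equation*}
b_j^{(N+1)}\bigl(e_k\wedge w\bigr) \;=\; e_{k+nj}\wedge w \;+\; e_k\wedge b_j^{(N)} w \;=\; \Phi_{k+nj}(w)+\Phi_k\bigl(b_j^{(N)}w\bigr),
\end{equation*}
which gives $[b_j,\Phi_k]=\Phi_{k+nj}$. The case of $\Psi_k(w)=w\wedge e_k$ is identical, with $e_{k+nj}$ appearing on the right.

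For the dual operators $\Phi_k^*,\Psi_k^*$ I would switch to the subspace description. Writing $w=\sum y_{t_1,\dots,t_N}\,e_{t_1}\otimes\dots\otimes e_{t_N}$, reindexing $t_r\mapsto t_r-nj$ gives
\begin{equation*}
b_j^{(N)}w \;=\; \sum_{r=1}^{N}\sum y_{t_1,\dots,t_r-nj,\dots,t_N}\,e_{t_1}\otimes\dots\otimes e_{t_N}.
\end{equation*}
Extracting the coefficient of $e_{-k}$ on the first tensor factor, the $r=1$ term contributes $\sum y_{-k-nj,t_2,\dots,t_N}\,e_{t_2}\otimes\dots\otimes e_{t_N}=\Phi_{k+nj}^*(w)$, while the $r\ge 2$ terms assemble into $b_j^{(N-1)}\Phi_k^*(w)$. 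Hence $\Phi_k^*\,b_j^{(N)} = b_j^{(N-1)}\,\Phi_k^* + \Phi_{k+nj}^*$, i.e.\ $[b_j,\Phi_k^*]=-\Phi_{k+nj}^*$. The argument for $\Psi_k^*$ is identical, replacing the first tensor factor by the last.

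The only point requiring care is the compatibility of $b_j$ with the subspace/quotient identifications, and this is immediate from $[b_j,T_i]=0$; no subtle wedge relations from Lemma~\ref{lemma: q-wedge relation} need to be invoked, so there is no real obstacle.
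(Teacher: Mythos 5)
Your proof is correct and follows essentially the same route as the paper: the paper's proof simply displays the two formulas for the action of $b_j$ in the wedge (quotient) picture and the tensor (subspace) picture and says the relations follow directly, while you spell out the resulting manipulations for each of $\Phi_k$, $\Psi_k$, $\Phi_k^*$, $\Psi_k^*$.
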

\begin{proof}
	Follows directly from the following formulas
	\begin{align*}
	b_j \, \sum_{i_1< \dots< i_N} x_{i_1, \dots , i_N} e_{i_1} \wedge \dots \wedge e_{i_N}=& \sum_{i_1< \dots< i_N} \sum_{r=1}^N x_{i_1, \dots, i_N} e_{i_1}  \wedge  \dots \wedge e_{i_r+ jn} \wedge \dots  \wedge e_{i_N} \\
		b_j \sum_{t_1, \dots, t_N} y_{t_1, \dots, t_N} e_{t_1} \otimes \dots \otimes e_{t_N} =& \sum_{t_1, \dots, t_N} \sum_{r=1}^N y_{t_1, \dots, t_N} e_{t_1} \otimes  \dots \otimes e_{t_r+ jn}\otimes \dots \otimes e_{t_N}.
	\end{align*}
\end{proof}

\paragraph{Involution} In order to prove certain properties of vertex operators we will need bar involution. It is an antilinear map \(\bar{v} = v^{-1}\) and its action on \(\left(\mathbb{C}^n[Y^{\pm1}]\right)^{\otimes N}\) given by the formula \cite[Prop. 5.5]{LT00}
\begin{align}\label{eq:bar on tensor}
	\overline{e_{t_1} \otimes \dots \otimes e_{t_N}} = v^{N(N-1)/2-\pt_{\mathbf{t}}}T_{w_0}e_{t_N} \otimes \dots \otimes e_{t_1}.
\end{align}
Here $\mathbf{t} = \{t_1, \dots, t_N \}$, $\pt_{\mathbf{t}}$ is the number of pairs $\{ t_r, t_s \}$ such that $t_r \not\equiv t_s \bmod{n}$, \(w_0\) is the longest element in Weyl group and \(T_{w_0}\) is the corresponding element of the Hecke algebra. This is an \emph{ad hoc} definition, see \cite{LT00} for more details. 

Bar involution preserves the subspace $\bc_-\!\left(\mathbb{C}^n[Y^{\pm 1}] \right)^{\otimes N}$ and acts by the formula \cite[Prop. 5.9]{LT00}
\begin{align}\label{eq:bar on wedge}
	\overline{e_{i_1} \wedge \dots \wedge e_{i_N}} =& (-1)^{N(N-1)/2}v^{-\pt_{\mathbf{i}}}e_{i_N} \wedge \dots \wedge e_{i_1}
\end{align}

\begin{lemma} \label{lemma: involution and vertex}
    The following holds
    \begin{align}
(-1)^N \overline{v^{-a_{\mathbf{i}}(k)} \Psi_k \overline{e_{i_1} \wedge \dots \wedge e_{i_N}}} =&  \Phi_k e_{i_1} \wedge \dots \wedge e_{i_N}  \label{eq: involution Psi},\\
(-1)^{N-1}\overline{v^{a_{\mathbf{i}}(k)} \Psi^*_k \overline{e_{i_1} \wedge \dots \wedge e_{i_N}}} =&  \Phi_k^* e_{i_1} \wedge \dots \wedge e_{i_N}. \label{eq: involution Psi*}
\end{align}
here $a_{\mathbf{i}}(k)$ is the number of \(r\) such that \(i_r \not\equiv k \bmod{n}\).
\end{lemma}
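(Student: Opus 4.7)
The plan is to establish both identities by direct computation, exploiting the explicit formulas \eqref{eq:bar on tensor} and \eqref{eq:bar on wedge} for the bar involution. For the first identity \eqref{eq: involution Psi}, I would start from the right-hand side, unwind the inner bar by \eqref{eq:bar on wedge} to obtain
\[
\overline{e_{i_1} \wedge \dots \wedge e_{i_N}} = (-1)^{N(N-1)/2} v^{-p_{\mathbf{i}}}\, e_{i_N} \wedge \dots \wedge e_{i_1},
\]
apply $\Psi_k$ (which simply wedges $e_k$ on the right), multiply by $v^{-a_{\mathbf{i}}(k)}$, and then apply the outer bar using \eqref{eq:bar on wedge} again but now to an $(N{+}1)$-wedge whose index set is $\mathbf{i}\cup\{k\}$. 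The outcome should be $e_k \wedge e_{i_1}\wedge\dots\wedge e_{i_N} = \Phi_k\, e_{i_1}\wedge\dots\wedge e_{i_N}$.

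The mechanism of cancellation relies on two small observations. First, since $p$ counts unordered pairs $\{r,s\}$ with non-congruent indices modulo $n$, one has $p_{\mathbf{i}\cup\{k\}} = p_{\mathbf{i}} + a_{\mathbf{i}}(k)$; this is precisely the combinatorial role played by $a_{\mathbf{i}}(k)$. Second, $\bar v = v^{-1}$ turns each $v^{-p_{\mathbf{i}}}$ factor into $v^{p_{\mathbf{i}}}$ when passed through the bar, so the two $v$-prefactors $v^{-p_{\mathbf{i}}}\cdot v^{-a_{\mathbf{i}}(k)}$ and $v^{-p_{\mathbf{i}\cup\{k\}}} = v^{-p_{\mathbf{i}}-a_{\mathbf{i}}(k)}$ cancel exactly after the double application of bar. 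The sign bookkeeping gives $(-1)^N\cdot(-1)^{N(N-1)/2}\cdot(-1)^{N(N+1)/2} = (-1)^{N+N^2} = 1$, producing \eqref{eq: involution Psi}.

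For the dual identity \eqref{eq: involution Psi*} the situation is more delicate because $\Phi_k^*$ and $\Psi_k^*$ are defined via the tensor-coefficient description of elements of $\bc_-(\mathbb{C}^n[Y^{\pm 1}])^{\otimes N}$, so one cannot read them off a wedge expansion directly. My preferred route is the duality route: introduce a natural sesquilinear pairing on $\bc_-(\mathbb{C}^n[Y^{\pm 1}])^{\otimes N}$ for which $\{e_{i_1}\wedge\dots\wedge e_{i_N}\colon i_1<\dots<i_N\}$ is orthogonal (up to an explicit $v$-dependent norm involving $p_{\mathbf{i}}$), verify that $\Phi_{-k}$ and $\Phi_k^*$ (respectively $\Psi_{-k}$ and $\Psi_k^*$) are adjoint up to an explicit monomial in $v$, and check that the bar involution interacts with this pairing in a compatible way. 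Then \eqref{eq: involution Psi*} is obtained from \eqref{eq: involution Psi} by taking adjoints and matching the $v$-factors, with the extra sign $(-1)^{N-1}$ and the sign flip in the exponent of $v^{a_{\mathbf{i}}(k)}$ coming from the adjoint pass.

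I expect the main obstacle to be in the second identity, precisely at the step where one must relate the tensor-coefficient description of $\Phi_k^*,\Psi_k^*$ to the wedge description, which if done directly forces us to track the $T_{w_0}$-factor appearing in \eqref{eq:bar on tensor} through an antisymmetrizer; the adjoint-pairing approach sidesteps this by reducing everything to the bar formula \eqref{eq:bar on wedge} on wedges, at the cost of one extra lemma establishing the adjointness of the vertex operators.
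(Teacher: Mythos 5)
Your argument for \eqref{eq: involution Psi} coincides with the paper's: unwind the inner bar by \eqref{eq:bar on wedge}, wedge $e_k$ on the right via $\Psi_k$, and apply \eqref{eq:bar on wedge} again to the resulting $(N{+}1)$-wedge. The $v$-exponent cancellation rests on $\pt_{\mathbf{i}\cup\{k\}}=\pt_{\mathbf{i}}+a_{\mathbf{i}}(k)$ and the overall sign is $(-1)^{N(N+1)}=1$, both of which you have right. That half is complete.

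For \eqref{eq: involution Psi*} you propose a genuinely different route, but it has an unfilled gap precisely where you place the ``extra lemma.'' The paper does not use an adjoint pairing: it writes $e_{i_1}\wedge\dots\wedge e_{i_N}=\sum_{\mathbf{j}}y_{\mathbf{j}}\,e_{j_1}\otimes\dots\otimes e_{j_N}$, observes (from \eqref{eq:bar on tensor}, the eigenvalue $T_{w_0}\bc_-=(-v^{-1})^{N(N-1)/2}\bc_-$, and $\pt_{\mathbf{j}}=\pt_{\mathbf{i}}$) the one-line identity \eqref{eq: bar for wedge sum j} expanding $\overline{e_{\mathbf{i}}}$ in reversed tensors, applies $\Psi_k^*$ to extract the $j_1=-k$ component, and uses \eqref{eq: bar for wedge sum j} once more to read off $(-1)^{N-1}v^{a_{\mathbf{i}}(k)}\Phi_k^*e_{\mathbf{i}}$. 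So the $T_{w_0}$ bookkeeping you hope to avoid is already compressed to a single line. I am skeptical your adjoint route avoids it: stating that bar is compatible with a pairing under which $\Phi$ and $\Phi^*$ are adjoint is itself a statement about how bar interacts with the tensor coefficients $y_{\mathbf{j}}$, i.e.\ it is essentially \eqref{eq: bar for wedge sum j} in another guise, and proving it needs \eqref{eq:bar on tensor} just the same. Moreover the prefactor $v^{\mp a_{\mathbf{i}}(k)}$ is not a scalar but an index-dependent diagonal factor; if you pair \eqref{eq: involution Psi} (at level $N{-}1$, with indices $\mathbf{j}$) against a fixed $N$-wedge $e_{\mathbf{i}}$, the various $a_{\mathbf{j}}(k)$ on the left do not equal $a_{\mathbf{i}}(k)$, and the step that turns them into the single $v^{+a_{\mathbf{i}}(k)}$ on the right of \eqref{eq: involution Psi*} is nontrivial and not addressed in your plan. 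Until you state and prove the adjointness-plus-bar-compatibility lemma and sort out that index shift, the second half is a plan with a hole, not a proof.
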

\begin{proof}
	For the first relation we use \eqref{eq:bar on wedge} and obtain
	\begin{multline*}
		\overline{\Psi_k \overline{e_{i_1} \wedge \dots \wedge e_{i_N}}}=	\overline{\Psi_k (-1)^{N(N-1)/2}v^{-\pt_{\mathbf{i}}}e_{i_N} \wedge \dots \wedge e_{i_1}}= (-1)^{N(N-1)/2}v^{\pt_{\mathbf{i}}} \overline{ e_{i_N} \wedge \dots \wedge e_{i_1}\wedge e_k}\\ = (-1)^{N}v^{-a_{\mathbf{i}}(k)} e_k\wedge e_{i_1}\wedge \dots \wedge e_{i_N} =(-1)^{N}v^{-a_{\mathbf{i}}(k)} \Phi_k e_{i_1}\wedge \dots \wedge e_{i_N}.
	\end{multline*}
	For the second relation we use notation \(e_{i_1} \wedge \dots \wedge e_{i_N}=\sum_{\mathbf{j}} y_{\mathbf{j}}e_{j_1}\otimes \dots \otimes e_{j_N} \). Then
	\begin{equation} \label{eq: bar for wedge sum j}
		\overline{e_{i_1}\wedge \dots \wedge e_{i_N}}=(-v)^{-N(N-1)/2}T_{w_0}^{-1}\overline{e_{i_1}\wedge \dots \wedge e_{i_N}}= (-1)^{N(N-1)/2} v^{-\pt_{\mathbf{i}}}\sum\nolimits_{\mathbf{j}} \bar{y}_{\mathbf{j}} \,e_{j_N}\otimes \dots \otimes e_{j_1}.
	\end{equation}
	Here we used relations \eqref{eq:bar on tensor} and \(\pt_{\mathbf{i}}=\pt_{\mathbf{j}}\). Therefore, using \eqref{eq: bar for wedge sum j} twice, we obtain
	\begin{multline*}
		\overline{\Psi_k^* \overline{e_{i_1} \wedge \dots \wedge e_{i_N}}}=\sum\nolimits_{\mathbf{j}} (-1)^{N(N-1)/2} v^{\pt_{\mathbf{i}}} y_{\mathbf{j}} \overline{\Psi_k^*    e_{j_N}\otimes \dots \otimes e_{j_1} }
		\\=\sum\nolimits_{j_2,\dots,j_N} (-1)^{N(N-1)/2} v^{\pt_{\mathbf{i}}} y_{-k,j_2,\dots,j_N} \overline{e_{j_N}\otimes \dots \otimes e_{j_2} }
		\\
		=(-1)^{(N-1)} v^{a_{\mathbf{i}}(k)} \overline{\overline{\Phi_k^*  e_{i_1} \wedge \dots \wedge e_{i_N}}}	
		=(-1)^{(N-1)} v^{a_{\mathbf{i}}(k)} \Phi_k^*  e_{i_1} \wedge \dots \wedge e_{i_N}.
	\end{multline*}
\end{proof}

\subsection{The limit}  \label{subsection: limit}

Let us consider an inductive system of vector spaces
\begin{equation} \label{eq: the injective system}
  \bc_-\mathbb{C}^n[Y^{\pm 1}] \xrightarrow{\varphi_{2,1}^{(\mt)}}  \bc_-\left(\mathbb{C}^n[Y^{\pm 1}] \right)^{\otimes 2} \xrightarrow{\varphi_{3,2}^{(\mt)}}   \cdots \xrightarrow{\varphi_{N,N-1}^{(\mt)}}  \bc_- \left(\mathbb{C}^n[Y^{\pm 1}] \right)^{\otimes N} \xrightarrow{\varphi_{N+1,N}^{(\mt)}} \cdots
\end{equation}
with the maps $\varphi^{(\mt)}_{N+1,N}(w) = w \wedge e_{N-\mt}$. Denote the injective limit by $\siw{\mt}\!\!\left(\mathbb{C}^n[Y^{\pm 1}] \right)$. Also, let us define maps $\varphi^{(\mt)}_{R,N} (w)= w \wedge e_{N-\mt} \wedge e_{N+1-\mt} \wedge \dots \wedge e_{R-1-\mt}$ for \(R>N\). 

By $\varphi_N^{(\mt)} = \varphi^{(\mt)}_{\infty, N} \colon \bc_- \left(\mathbb{C}^n[Y^{\pm 1}] \right)^{\otimes N} \rightarrow \siw{\mt} \left(\mathbb{C}^n[Y^{\pm 1}] \right)$ we denote the canonical map.

\begin{defin} \label{defin: stabilizes}
A sequence of operators $A^{(N)} \colon \bc_- \left(\mathbb{C}^n[Y^{\pm 1}] \right)^{\otimes N} \rightarrow  \bc_- \left(\mathbb{C}^n[Y^{\pm 1}] \right)^{\otimes (N+\delta)}$ \emph{stabilizes} if for any $w \in \bc_- \left(\mathbb{C}^n[Y^{\pm 1}] \right)^{\otimes k} $ there is $M$ such that for any $N>M$ we have 
\begin{equation}
    \varphi_{N+n+\delta, N+\delta}^{(\mt+\delta)} \circ A^{(N)}  \circ \varphi_{N, k}^{(\mt)} (w)= A^{(N+n)} \circ \varphi_{N+n, k}^{(\mt)} (w).
\end{equation}
\end{defin}
If a sequence $A^{(N)}$ stabilizes, then it induces an operator $\hat{A} \colon \siw{\mt}\!\!\left(\mathbb{C}^n[Y^{\pm 1}] \right) \rightarrow \siw{\mt+\delta}\!\!\left(\mathbb{C}^n[Y^{\pm 1}] \right)$. Actually, the operator $\hat{A}$ depends on the residue of $N$ modulo $n$. We will omit this dependence in our notation.
\begin{prop}
    Let $A_1^{(N)}$ and $A_2^{(N)}$ stabilize. Then the composition $A_1^{(N)} A_2^{(N)}$ stabilizes and the induced operator equals to the composition of induced operators $\hat{A}_1 \hat{A}_2$. 
\end{prop}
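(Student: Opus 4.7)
My plan is to unpack what stabilization says concretely, then chain the two conditions together, being careful that the hypothesis only controls jumps of size $n$, not arbitrary increments.

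Fix $w \in \bc_- \left(\mathbb{C}^n[Y^{\pm 1}] \right)^{\otimes k}$ and set $u_N = A_2^{(N)} \circ \varphi_{N,k}^{(\mt)}(w) \in \bc_-\left(\mathbb{C}^n[Y^{\pm 1}] \right)^{\otimes (N+\delta_2)}$. Stabilization of $A_2$ produces $M_2$ such that for every $N > M_2$,
\[
  u_{N+n} \;=\; \varphi_{N+n+\delta_2,\,N+\delta_2}^{(\mt+\delta_2)} (u_N).
\]
This relation only steps by $n$, so I would break the tail $\{N : N > M_2\}$ into the $n$ residue classes modulo $n$; within each class $r$ pick a base point $N_r > M_2$ and let $u^{(r)} = u_{N_r}$. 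Iterating the displayed relation inside the class gives $u_N = \varphi_{N+\delta_2,\,N_r+\delta_2}^{(\mt+\delta_2)}(u^{(r)})$ whenever $N \equiv r \pmod n$ and $N \geq N_r$.

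Next I would apply the stabilization of $A_1$ separately to each base point $u^{(r)}$, viewed as an element at level $N_r+\delta_2$. This yields an integer $M_1^{(r)}$ such that, for every $N'' > M_1^{(r)}$,
\[
  \varphi_{N''+n+\delta_1,\,N''+\delta_1}^{(\mt+\delta_1+\delta_2)} \circ A_1^{(N'')} \circ \varphi_{N'',\,N_r+\delta_2}^{(\mt+\delta_2)}(u^{(r)})
  \;=\; A_1^{(N''+n)} \circ \varphi_{N''+n,\,N_r+\delta_2}^{(\mt+\delta_2)}(u^{(r)}).
\]
Now set $M = \max\bigl\{M_2,\; \max_r N_r,\; \max_r (M_1^{(r)} - \delta_2)\bigr\}$. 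For any $N > M$, letting $r \equiv N \pmod n$, the identity above applied with $N'' = N+\delta_2$ together with $u_N = \varphi_{N+\delta_2,N_r+\delta_2}^{(\mt+\delta_2)}(u^{(r)})$ and $u_{N+n} = \varphi_{N+n+\delta_2,N_r+\delta_2}^{(\mt+\delta_2)}(u^{(r)})$ collapses to
\[
  \varphi_{N+n+\delta_1+\delta_2,\,N+\delta_1+\delta_2}^{(\mt+\delta_1+\delta_2)} \circ A_1^{(N+\delta_2)} A_2^{(N)} \circ \varphi_{N,k}^{(\mt)}(w)
  \;=\; A_1^{(N+n+\delta_2)} A_2^{(N+n)} \circ \varphi_{N+n,k}^{(\mt)}(w),
\]
which is exactly the stabilization condition of Definition~\ref{defin: stabilizes} for the composite $A_1^{(\cdot+\delta_2)} A_2^{(\cdot)}$ with shift $\delta_1+\delta_2$.

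For the induced operator, the representative at level $N+\delta_1+\delta_2$ of $\widehat{A_1 A_2}\,\varphi^{(\mt)}_k(w)$ is, by construction, $A_1^{(N+\delta_2)}(u_N)$ for any sufficiently large $N$; but $A_1^{(N+\delta_2)}(u_N)$ also represents $\hat{A}_1$ applied to the class of $u_N$ in $\siw{\mt+\delta_2}$, and that class equals $\hat{A}_2\,\varphi^{(\mt)}_k(w)$. Hence $\widehat{A_1 A_2} = \hat{A}_1 \hat{A}_2$ on this element; since $w$ was arbitrary the identity holds on all of $\siw{\mt}$. The only genuinely delicate point is the residue-class bookkeeping in the first half: one has to resist the temptation to apply the $A_1$-stabilization to a single element moving with $N$, since the stabilization hypothesis is per fixed vector, and instead fix one representative per congruence class and use that the injective system refines along steps of $n$.
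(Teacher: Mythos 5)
The paper leaves this proposition without proof, treating it as a routine verification of the definitions; your argument supplies it correctly. The one genuine subtlety — that the stabilization relation relates $u_N$ only to $u_{N+n}$, so the $u_N$ split into $n$ chains indexed by $N \bmod n$, and the $A_1$-stabilization must be invoked separately for each of the finitely many base vectors $u^{(r)}$ — is exactly what a naive chaining would miss, and you handle it cleanly. The identity you derive at the end of the bookkeeping is precisely the condition in Definition~\ref{defin: stabilizes} for the composite $A_1^{(N+\delta_2)}A_2^{(N)}$, and the final paragraph chasing representatives through the inductive limit is sound. Two implicit conventions worth making explicit if you wrote this out fully: (a) the proposition tacitly assumes $A_1$ stabilizes with respect to the $(\mt+\delta_2)$-inductive system, which is what you use; (b) as the paper remarks just above the proposition, $\hat{A}$ depends on the residue of $N$ modulo $n$ — your argument keeps that residue fixed throughout, which is what makes the stated equality $\widehat{A_1 A_2} = \hat{A}_1 \hat{A}_2$ well-posed.
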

\begin{prop}{\cite[Sect. 7.6]{LT00}} \label{prop: stable bar}
    Action of bar involution stabilizes.
\end{prop}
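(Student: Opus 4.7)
By linearity it suffices to treat a basis vector $w = e_{i_1} \wedge \cdots \wedge e_{i_k}$ with $i_1 < \cdots < i_k$, and take $N$ so large that $i_k < k-\mt$. Writing $w_R := \varphi_{R,k}^{(\mt)}(w)$ and $\mathcal{B} := e_{N-\mt} \wedge e_{N+1-\mt} \wedge \cdots \wedge e_{N+n-1-\mt}$, we have $w_{N+n} = w_N \wedge \mathcal{B}$, and stabilization of the bar involution amounts to the identity $\overline{w_N} \wedge \mathcal{B} = \overline{w_{N+n}}$ in $\bc_-(\mathbb{C}^n[Y^{\pm 1}])^{\otimes(N+n)}$.

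From \eqref{eq:bar on wedge} one derives the auxiliary relation $\overline{u \wedge e_l} = (-1)^{|u|}\, v^{-(|u| - a_u(l))}\, e_l \wedge \overline{u}$ for any simple wedge $u$ of length $|u|$, where $a_u(l)$ counts the indices of $u$ congruent to $l$ modulo $n$. Iterating this $n$ times with $l$ successively equal to $N-\mt,\, N+1-\mt,\, \ldots,\, N+n-1-\mt$ (a complete residue class mod $n$, so that $\sum_{a=0}^{n-1} a_{w_N}(N+a-\mt) = N$) collapses the accumulated coefficient, giving $\overline{w_{N+n}} = (-1)^{nN + \binom{n}{2}}\, v^{-((n-1)N + \binom{n}{2})}\, e_{N+n-1-\mt} \wedge \cdots \wedge e_{N-\mt} \wedge \overline{w_N}$. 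The reversed block on the right equals $(-v)^{\binom{n}{2}} \mathcal{B}$, by $\binom{n}{2}$ adjacent transpositions each handled by the $k=0$ case of Lemma \ref{lemma: q-wedge relation}. We thus arrive at $\overline{w_{N+n}} = (-1)^{nN}\, v^{-(n-1)N}\, \mathcal{B} \wedge \overline{w_N}$.

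The proposition therefore reduces to the commutation identity $\overline{w_N} \wedge \mathcal{B} = (-1)^{nN} v^{-(n-1)N}\, \mathcal{B} \wedge \overline{w_N}$. By multilinearity and induction on the number of factors of $\overline{w_N}$, this in turn reduces to the single-factor assertion $e_m \wedge \mathcal{B} = (-1)^n v^{-(n-1)}\, \mathcal{B} \wedge e_m$ for arbitrary $m$.

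The main obstacle is this last identity, which we prove by pushing $e_m$ successively past the $n$ factors of $\mathcal{B}$ using Lemma \ref{lemma: q-wedge relation}. For the unique $a_0 \in \{0,\ldots,n-1\}$ with $m \equiv N-\mt+a_0 \pmod{n}$, the swap falls into the $s=0$ case and contributes simply $-1$; for each of the remaining $n-1$ values of $a$, the three-term relation contributes a principal factor $-v^{-1}$ together with two remainder terms involving shifted indices $e_{m+s}$ and $e_{N-\mt+a-s}$. Because $\mathcal{B}$ exhausts a complete residue class modulo $n$, after propagating these remainders through the remaining factors of $\mathcal{B}$ one always produces a configuration with a repeated index, and the anti-symmetry $e_l \wedge e_l = 0$ forces the remainder to vanish. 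The $n-1$ surviving principal terms $-v^{-1}$ together with the single $-1$ combine into $(-1)^n v^{-(n-1)}$, as required; the underlying combinatorial cancellation is the content of \cite[Sect.~7.6]{LT00}.
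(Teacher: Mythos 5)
Your derivation of $\overline{w_{N+n}} = (-1)^{nN}v^{-(n-1)N}\,\mathcal{B}\wedge\overline{w_N}$ from \eqref{eq:bar on wedge} is correct (I checked the sign and exponent bookkeeping, and the factor $(-v)^{\binom{n}{2}}$ from reversing the block), and the reformulation of the proposition as the commutation identity $\overline{w_N}\wedge\mathcal{B} = (-1)^{nN}v^{-(n-1)N}\,\mathcal{B}\wedge\overline{w_N}$ is sound. The gap is in the remaining two steps. The passage to the single-factor assertion $e_m\wedge\mathcal{B} = (-1)^n v^{-(n-1)}\,\mathcal{B}\wedge e_m$ for \emph{arbitrary} $m$ is a genuine strengthening: the commutation identity is only needed for the specific vector $\overline{w_N}$, which has a very constrained shape (a combination of $|\mu\rangle_N$ with $|\mu|=|\lambda|\ll N$, each containing the dense tail $e_{l(\mu)-\mt}\wedge\cdots\wedge e_{N-1-\mt}$). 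Worse, the single-factor identity is in fact false. Take $n=2$, write $a:=N-\mt$ and $m=a-3$. Unwinding Lemma~\ref{lemma: q-wedge relation} gives
\begin{equation*}
e_{a-3}\wedge e_{a}\wedge e_{a+1} \;=\; v^{-1}\, e_{a}\wedge e_{a+1}\wedge e_{a-3} \;+\; (v-v^{-1})\, e_{a-2}\wedge e_{a-1}\wedge e_{a+1},
\end{equation*}
and the correction term is a nonzero normal-ordered basis vector of $\bc_-(\mathbb{C}^n[Y^{\pm1}])^{\otimes3}$, so it cannot be discarded.

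This also pinpoints why the repeated-index argument in your last paragraph fails. The three-term relation applied to $e_m\wedge e_{N-\mt+a}$ produces remainder indices $m+s$ and $N-\mt+a-s$, which lie \emph{strictly between} $m$ and the swapped factor of $\mathcal{B}$; the remaining factors of $\mathcal{B}$ all have strictly larger indices, so there is nothing that would force a collision. In the example above the remainder $e_{a-2}\wedge e_{a-1}$ simply sits in front of $e_{a+1}$ and survives. What actually makes the commutation identity true for $\overline{w_N}$ is the interaction between such remainders and the \emph{other} factors of $\overline{w_N}$ (its densely filled tail and/or Lemma~\ref{Pauli exclusion}), not the $n$ factors of $\mathcal{B}$ in isolation. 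Since your reduction discards $\overline{w_N}$ entirely and works one factor at a time against $\mathcal{B}$, it cannot see this cancellation, and the proof does not go through.
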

 Let \(\lambda= (\lambda_1 \geq \lambda_2 \geq \dots \geq \lambda_r > 0)\) be a partition with \(l(\lambda) = r \leq N\). Consider a vector
\begin{equation}\label{eq:|lambda>}
    | \lambda \rangle_{N,\mt} = e_{-\lambda_1-\mt} \wedge e_{-\lambda_2+1-\mt} \wedge \dots \wedge e_{-\lambda_r+ r-1-\mt } \wedge e_{ r-\mt }\wedge \dots \wedge e_{N-1-\mt}.
\end{equation}
We will abbreviate \(|\lambda\rangle_{N,\mt} = |\lambda\rangle_{N}=|\lambda\rangle\) if the indices are clear from the context.  We will write \(|\lambda\rangle_{\infty,\mt}=\varphi^{(\mt)}_{\infty, N} |\lambda\rangle_{N,\mt}\). 

\begin{lemma} \label{lemma: phi star stabilization}
For $\mt -k + | \lambda| < N$ we have
\begin{equation}\label{eq: phi star stabilization}
    [N]_v^+ \left( \Phi^*_k | \lambda \rangle_{N,\mt} \right)  \wedge  e_{N-\mt} = [N+1]_v^+ \Phi^*_k  | \lambda \rangle_{N+1,\mt}.
\end{equation}
\end{lemma}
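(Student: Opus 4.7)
The plan is to derive a recursion for $\Phi_k^*$ relative to the direct-system map $w \mapsto w \wedge e_{N-\mt}$. Using $|\lambda\rangle_{N+1, \mt} = |\lambda\rangle_{N, \mt} \wedge e_{N-\mt} = \bc_-^{(N+1)}(|\lambda\rangle_{N, \mt} \otimes e_{N-\mt})$ together with the identity
\begin{equation*}
    \bc_-^{(N+1)} = \frac{1}{[N+1]_v^-}\Bigl(\sum_{i=0}^{N}(-v)^{-(N-i)}\, T_{i+1} T_{i+2} \cdots T_N\Bigr) \bc_-^{(N)},
\end{equation*}
which follows from the right coset decomposition of $S_{N+1}$ with respect to the stabilizer of the last letter, and the fact that $\bc_-^{(N)}$ acts as the identity on $|\lambda\rangle_{N, \mt}$, one obtains an explicit expression for $|\lambda\rangle_{N+1, \mt}$ in the subspace representation.

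Applying $\Phi_k^*$, I use that $T_j$ for $j \geq 2$ acts only on slots $j, j{+}1$, hence $\Phi_k^* T_j = T_{j-1} \Phi_k^*$. The summands with $i \geq 1$ reassemble, via the analogous decomposition of $\bc_-^{(N)}$, into $[N]_v^-(\Phi_k^* |\lambda\rangle_{N, \mt}) \wedge e_{N-\mt}$, while the $i = 0$ summand produces a boundary term:
\begin{equation*}
    [N+1]_v^- \Phi_k^* |\lambda\rangle_{N+1, \mt} = [N]_v^- (\Phi_k^* |\lambda\rangle_{N, \mt}) \wedge e_{N-\mt} + (-v)^{-N} \Phi_k^*\bigl(T_1 T_2 \cdots T_N (|\lambda\rangle_{N, \mt} \otimes e_{N-\mt})\bigr).
\end{equation*}

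The crux is to evaluate this boundary term. The operator $T_1 T_2 \cdots T_N$ performs a cyclic shift moving the last tensor slot into slot $1$, modulo $R$-matrix corrections as in \eqref{eq: T1}--\eqref{eq: T4}. Under the hypothesis $\mt - k + |\lambda| < N$ we have $-k < N - \mt$, so the leading contribution in which $e_{N-\mt}$ itself occupies slot $1$ is annihilated by $\Phi_k^*$. A careful resummation of the subleading $R$-matrix contributions, exploiting the antisymmetry of $|\lambda\rangle_{N, \mt}$ in slots $1, \ldots, N$, should yield
\begin{equation*}
    \Phi_k^*\bigl(T_1 T_2 \cdots T_N (|\lambda\rangle_{N, \mt} \otimes e_{N-\mt})\bigr) = (-1)^{N+1}(v^N - v^{-N})(\Phi_k^* |\lambda\rangle_{N, \mt}) \wedge e_{N-\mt}.
\end{equation*}
Substituting this into the recursion and using the elementary identities $[N]_v^{\pm} = v^{\pm(N-1)}[N]_v$ together with $(v^2-1)[N]_v^+ = v^{2N}-1$ converts the $[N]_v^-,\,[N+1]_v^-$ prefactors into $[N]_v^+,\,[N+1]_v^+$, giving the claimed identity.

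The main obstacle is the resummation of the boundary term, which demands careful combinatorial bookkeeping of the iterated $R$-matrix action. A potentially smoother route avoids this direct calculation by invoking the bar-involution identity \eqref{eq: involution Psi*}, which translates the claim for $\Phi_k^*$ into an analogous statement for $\Psi_k^*$, whose interaction with right-wedging is more transparent; the compatibility with stabilization then follows from Proposition \ref{prop: stable bar}.
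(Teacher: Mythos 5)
Your plan uses exactly the same factorization of the antisymmetrizer as the paper's proof (your $\frac{1}{[N+1]_v^-}\sum_{i=0}^{N}(-v)^{-(N-i)}T_{i+1}\cdots T_N$ is the paper's $\frac{1}{[N+1]_v^+}\sum_{p=1}^{N+1}(-v)^{p+N-1}T_p\cdots T_N$ after the re-indexing $p=i+1$ and the identity $[N+1]_v^-=v^{-2N}[N+1]_v^+$), and your isolation of the $i\geq 1$ part and the commutation $\Phi_k^*T_j=T_{j-1}\Phi_k^*$ for $j\geq 2$ are correct. The difficulty is that you have shifted the entire content of the lemma into the unproven assertion
\begin{equation*}
\Phi_k^*\bigl(T_1T_2\cdots T_N(|\lambda\rangle_{N,\mt}\otimes e_{N-\mt})\bigr)=(-1)^{N+1}(v^N-v^{-N})\,(\Phi_k^*|\lambda\rangle_{N,\mt})\wedge e_{N-\mt},
\end{equation*}
which you flag with ``should yield.'' That value is indeed the unique one compatible with the target identity (I checked the $[N]_v^{\pm}$ algebra), but producing it is precisely the hard step. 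The operator $T_1\cdots T_N$ does not simply cyclically shift $e_{N-\mt}$ to slot $1$; each $T_j$ generates lower-order $R$-matrix tails as in \eqref{eq: T1}--\eqref{eq: T4}, and those tails can deposit an $e_{-k}$ in slot $1$ even though the leading term cannot. The paper's computation shows that after collecting all such contributions the stray terms are of the ``lower'' type (all indices $<N-\mt$) and therefore die by Lemma \ref{Pauli exclusion} under the hypothesis $\mt-k+|\lambda|<N$; you invoke the hypothesis only for the leading contribution, but it is needed for the full resummation, and that resummation is the actual proof. So the proposal is a correct scaffold with the crucial step missing.

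The suggested escape via \eqref{eq: involution Psi*} and Proposition \ref{prop: stable bar} does not work as stated. The bar involution reverses the wedge order (formula \eqref{eq:bar on wedge}), so it conjugates ``$\Phi^*$ composed with right-wedging $w\mapsto w\wedge e_{N-\mt}$'' into ``$\Psi^*$ composed with left-wedging,'' not into ``$\Psi^*$ with right-wedging.'' This is a mirror image of the same problem, not a simpler one; that is why the paper proves the $\Phi^*_k$ stabilization by direct computation and only then deduces $\Psi^*_k$ from it via Lemma \ref{lemma: involution and vertex} and Proposition \ref{prop: stable bar}. The claim that $\Psi_k^*$'s interaction with right-wedging is ``more transparent'' is not substantiated and I do not see why it would be.
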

\begin{proof}
	Let us introduce notation \(|\lambda \rangle_{N,\mt} = \sum\nolimits_{j_1, \dots, j_{N}} y_{j_1, \dots, j_{N}}  e_{j_1} \otimes \dots \otimes e_{j_{N}}.\)
	Then we have
	\begin{multline}\label{eq:LHS phi star stabilization}
		\text{LHS of \eqref{eq: phi star stabilization}}=[N]_v^+ \left(\sum\nolimits_{j_2,\dots,j_N}  y_{-k, j_2, \dots, j_{N}} e_{j_2} \otimes \dots \otimes e_{j_{N}} \right) \wedge e_{N-\mt}\\=[N]_v^+\sum\nolimits_{j_2,\dots,j_N}  y_{-k, j_2, \dots, j_{N}} e_{j_2} \wedge \dots \wedge e_{j_{N}} \wedge e_{N-\mt}.
	\end{multline}

	To compute the RHS of \eqref{eq: phi star stabilization}, we will use factorization formula
	\begin{equation} \label{eq: antisym factor}
		\bc^{(N+1)}_-=\frac{1}{[N+1]_v^+}\left(\sum\nolimits_{p=1}^{N+1} (-v)^{p+N-1}  T_p\cdots  T_N \right) \bc^{(N)}_-, 
	\end{equation}
  	here for $p=N+1$ we mean $T_p\cdots  T_N= 1$ . Using \eqref{eq: antisym factor} we obtain that 
  	\begin{equation*}
  		| \lambda \rangle_{N,\mt}  \wedge  e_{N-\mt} =\frac{1}{[N+1]_v^+} \left(\sum\nolimits_{p=1}^{N+1} (-v)^{p+N-1}  T_p\cdots  T_N \right) \left(\sum\nolimits_{\mathbf{j}} y_{\mathbf{j}} \,e_{j_1}{\otimes}\cdots{\otimes}e_{j_{N}}{\otimes}e_{N-\mt} \right).
 	\end{equation*}
 	Here and below we use multi-index notation $\mathbf{j}=(j_1, \dots, j_N)$. For the action of \(T_p \cdots T_N\) we will use \eqref{eq: T1}--\eqref{eq: T4}. Informally, \eqref{eq: T1}--\eqref{eq: T4} say that under the action of $T$, the vectors either remains the same, or permute, or \emph{approach} to each other.
 	
 	In the computation below,  \emph{lower terms} stands for linear combination of terms \(e_{l_1}\otimes \dots \otimes e_{l_{N+1}}\) where \(\forall i, \, l_i <N-\mt\). Then we get
	\begin{multline*}
		[N+1]_v^+ \,| \lambda \rangle_{N,\mt}  \wedge  e_{N{-}\mt} = \Big(   \sum\nolimits_{\mathbf{j}} \Big(\sum\nolimits_{p=1}^{N}(-v)^{p+N-1} T_{p}\cdots  T_{N-1} y_{\mathbf{j}} 
		\\
		 \big( v^{\delta_{j_N \equiv N{-}\mt}} e_{j_1} {\otimes} \cdots {\otimes} e_{j_{N-1}} {\otimes}e_{N{-}\mt}{\otimes} e_{j_N}+(v-v^{-1})e_{j_1} {\otimes} \cdots {\otimes} e_{j_{N}} {\otimes}e_{N{-}\mt}
		   \big)  
		\\   
		+(-v)^{2N} y_{\mathbf{j}}\, e_{j_1} {\otimes} \cdots {\otimes} e_{j_{N}} {\otimes}e_{N{-}\mt} \Big)\Big) +\text{lower terms} 
		\\
		=  \Big(   \sum\nolimits_{\mathbf{j}} \Big(\sum\nolimits_{p=1}^{N}(-v)^{p+N-1} v^{\delta_{j_N \equiv N{-}\mt}}T_{p}\cdots T_{N-1} y_{\mathbf{j}} \, e_{j_1} {\otimes} \cdots {\otimes} e_{j_{N-1}} {\otimes}e_{N{-}\mt}{\otimes} e_{j_N}+
		\\ 
		\big((-v)^{2N}+\sum\nolimits_{p=1}^N (v-v^{-1}) (-v)^{p{+}N{-}1}(-v)^{p{-}N}  \big) y_{\mathbf{j}}\, e_{j_1} {\otimes} \cdots {\otimes} e_{j_{N}} {\otimes}e_{N{-}\mt} \Big) \Big) +\text{lower terms} 
		\\
		= \dots = \sum\nolimits_{q=1}^{N+1} \sum\nolimits_{\mathbf{j}} v^{\sum_{r=q}^N \delta_{j_r \equiv N{-}\mt} } \Big( (-v)^{q{+}N{-}1}+(v{-}v^{-1})\sum\nolimits_{p=1}^{q{-}1} (-v)^{p{+}N{-}1} (-v)^{p{-}q{+}1} \Big)   \\
		y_{\mathbf{j}}\, e_{j_1} {\otimes} \cdots {\otimes} e_{j_{q{-}1}} {\otimes} e_{N{-}\mt} {\otimes} e_{j_q} {\otimes} \cdots {\otimes} e_{j_N} 		  
		+\text{lower terms}
		\\ 
		=\sum\nolimits_{q=1}^{N+1} \sum\nolimits_{\mathbf{j}} v^{\sum_{r=q}^N \delta_{j_r \equiv N{-}\mt} } (-v)^{N{-}q{+}1}   y_{\mathbf{j}}\,	
		e_{j_1} {\otimes} \cdots {\otimes} e_{j_{q{-}1}} {\otimes} e_{N{-}\mt} {\otimes} e_{j_q} {\otimes} \cdots {\otimes} e_{j_N} 
		+\text{lower terms}. 
	\end{multline*}
	In the computation we used \eqref{eq: T1}, \eqref{eq: T3} and relation \(T_p \sum\nolimits_{\mathbf{j}} y_\mathbf{j} e_{j_1} \otimes \cdots \otimes e_{j_{q{-}1}} {\otimes} e_{N{-}\mt} {\otimes} e_{j_q} {\otimes} \cdots {\otimes} e_{j_N} = (-v)^{-1} \sum\nolimits_{\mathbf{j}} y_\mathbf{j} e_{j_1} {\otimes} \cdots {\otimes} e_{j_{q{-}1}} {\otimes} e_{N{-}\mt} {\otimes} e_{j_q} {\otimes} \cdots {\otimes} e_{j_N} \) for \(p<q-1\).
	
	In order to compute the RHS of \eqref{eq: phi star stabilization}, we apply \(\bc_-^{(N)} \Phi_k^*\). By Lemma \ref{Pauli exclusion}, the assumption $\mt -k + | \lambda| < N$  implies that the \emph{lower terms} vanish after the action of \(\bc_-^{(N)}\). Hence we get 
	\begin{multline}\label{eq:RHS phi star stabilization}
 		\text{RHS of \eqref{eq: phi star stabilization}}
 		\\=
		\sum\nolimits_{j_2,\dots,j_{N}} \sum\nolimits_{q=2}^{N+1} v^{\sum_{r=q}^N \delta_{j_r \equiv N{-}\mt} } (-v)^{N{-}q{+}1}    y_{-k,j_2,\dots,j_N} e_{j_2} {\wedge} \cdots {\wedge} e_{j_{q{-}1}} {\wedge} e_{N{-}\mt} {\wedge} e_{j_q} {\wedge} \cdots {\wedge} e_{j_{N}} 
		\\
		=\sum\nolimits_{j_2,\dots,j_{N}} \sum\nolimits_{q=2}^{N+1} (-v)^{2N{+}2{-}2q}  y_{-k,j_2,\dots,j_N} e_{j_2} \wedge \dots \wedge e_{j_{N}} \wedge e_{N-\mt}  \\
		=[N]_v^+ \sum\nolimits_{j_2,\dots,j_{N}}  y_{-k,j_2,\dots,j_N} e_{j_2} \wedge \dots \wedge e_{j_{N}} \wedge e_{N-\mt}
 	\end{multline}
	where we used Lemma \ref{lemma: q-wedge relation} to permute \(e_{N-\mt}\) to the right and Lemma \ref{Pauli exclusion} to cancel out additional lower terms. Comparing the formulas \eqref{eq:LHS phi star stabilization} and \eqref{eq:RHS phi star stabilization} we get the result.
\end{proof}

\begin{prop} \label{prop: stable vertex}
    Action of $\Phi_k$ and $\tilde{\Phi}_k^* =[N]_v^+ \Phi_k^*$ stabilize.
\end{prop}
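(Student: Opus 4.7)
The plan is to handle the two operators separately. For $\Phi_k$ (with $\delta = 1$), stabilization is immediate because $\Phi_k$ wedges $e_k$ on the \emph{left} while the structure maps $\varphi^{(\mt')}_{R,N'}$ of the injective system wedge consecutive basis vectors on the \emph{right}. Writing out both sides of the identity in Definition~\ref{defin: stabilizes} and using that $\varphi^{(\mt+1)}_{N+n+1,\,N+1}$ appends exactly $e_{N-\mt}\wedge\dots\wedge e_{N+n-1-\mt}$, one checks directly that
\begin{equation*}
	\varphi^{(\mt+1)}_{N+n+1,\,N+1}\bigl(\Phi_k\circ\varphi^{(\mt)}_{N,k}(w)\bigr)=e_k\wedge w\wedge e_{k-\mt}\wedge\dots\wedge e_{N+n-1-\mt}=\Phi_k\bigl(\varphi^{(\mt)}_{N+n,k}(w)\bigr),
\end{equation*}
so stabilization holds for every $w$ already at the threshold $M=k$.

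For $\tilde\Phi_k^{*}=[N]_v^+\Phi_k^{*}$ (with $\delta=-1$), the plan is to iterate the single-step identity of Lemma~\ref{lemma: phi star stabilization} exactly $n$ times. For a partition vector $|\lambda\rangle_{k,\mt}$ (so that $\varphi^{(\mt)}_{N,k}(|\lambda\rangle_{k,\mt})=|\lambda\rangle_{N,\mt}$) and any $N$ with $\mt-k+|\lambda|<N$ — a condition automatically preserved at each subsequent step — chaining the lemma at $N,\,N+1,\dots,\,N+n-1$ and wedging the excess basis vector on the right at each step yields
\begin{equation*}
	[N]_v^+\bigl(\Phi_k^{*}|\lambda\rangle_{N,\mt}\bigr)\wedge e_{N-\mt}\wedge\dots\wedge e_{N+n-1-\mt}=[N+n]_v^+\,\Phi_k^{*}|\lambda\rangle_{N+n,\mt}.
\end{equation*}
Since $\varphi^{(\mt-1)}_{N+n-1,\,N-1}$ appends exactly $e_{(N-1)-(\mt-1)}\wedge\dots\wedge e_{(N+n-1)-1-(\mt-1)}=e_{N-\mt}\wedge\dots\wedge e_{N+n-1-\mt}$, the left-hand side equals $\varphi^{(\mt-1)}_{N+n-1,\,N-1}\circ\tilde\Phi_k^{*,(N)}\circ\varphi^{(\mt)}_{N,k}(|\lambda\rangle_{k,\mt})$ and the right-hand side is $\tilde\Phi_k^{*,(N+n)}\circ\varphi^{(\mt)}_{N+n,k}(|\lambda\rangle_{k,\mt})$, so this is precisely the stabilization identity on $w=|\lambda\rangle_{k,\mt}$.

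To extend to an arbitrary $w\in\bc_-\!\bigl(\mathbb{C}^n[Y^{\pm 1}]\bigr)^{\otimes k}$, I would invoke linearity: for $N$ sufficiently large, reducing $\varphi^{(\mt)}_{N,k}(w)=w\wedge e_{k-\mt}\wedge\dots\wedge e_{N-1-\mt}$ to normally-ordered form via Lemma~\ref{lemma: q-wedge relation} expresses it as a finite combination $\sum_{\lambda\in S}c_\lambda|\lambda\rangle_{N,\mt}$ in which both $S$ and the coefficients $c_\lambda$ are independent of $N$ once $N$ is past a threshold — this is the content of the injective-limit construction of $\siw{\mt}$ together with Lemma~\ref{Pauli exclusion}, which kills any wedge whose tail does not match $e_{r-\mt}\wedge\dots\wedge e_{N-1-\mt}$. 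Applying the iterated identity summand by summand, with the final $M$ taken to exceed $\mt-k+|\lambda|$ for all $\lambda\in S$, completes the argument.

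The main obstacle is this final reduction step: verifying that, for $N\gg 0$, the normally-ordered expansion of $\varphi^{(\mt)}_{N,k}(w)$ in the $|\lambda\rangle_{N,\mt}$ basis stabilizes in $N$ with a uniform finite support. Once this is granted, the rest of the argument is the purely mechanical $n$-fold iteration of Lemma~\ref{lemma: phi star stabilization}.
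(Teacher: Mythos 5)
Your proof is correct and follows exactly the paper's own (one-sentence) route: for $\Phi_k$ the stabilization identity is immediate from the definitions, while for $\tilde\Phi_k^*$ the paper simply cites Lemma~\ref{lemma: phi star stabilization}, which is the single-step identity you iterate $n$ times and then extend by linearity. The reduction you flag as the ``main obstacle'' is in fact trivial: an ordered basis wedge $e_{i_1}\wedge\dots\wedge e_{i_k}\in\bc_-\!\left(\mathbb{C}^n[Y^{\pm 1}]\right)^{\otimes k}$ either has all $i_j<k-\mt$, in which case (since $i_1<\dots<i_k<k-\mt$ forces $i_j\leq j-1-\mt$) $\varphi^{(\mt)}_{N,k}$ sends it directly, with no normal-ordering, to the single vector $|\lambda\rangle_{N,\mt}$ with $\lambda_j=j-1-\mt-i_j$, or else some $i_j\geq k-\mt$, in which case $e_{i_j}$ collides with the appended tail once $N>i_j+\mt$ and the image vanishes together with all its further images; so the stabilization identity reduces either to the $|\lambda\rangle$ case you handled or to $0=0$, with no genuine work left.
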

\begin{proof}
Lemma \ref{lemma: q-wedge relation} implies that action of $\Phi_k$ stabilizes for $N-\mt>k$. Lemma \ref{lemma: phi star stabilization} implies that $\tilde{\Phi}_k^*$ stabilizes.
\end{proof}
\begin{Remark}\label{rem:phi* stabilization}
	We used Lemma \ref{lemma: phi star stabilization} in the proof above. But in Section \ref{sec:Semi-inifinite 1} below, we will need a refinement of this result. Let us introduce the following notation
	\begin{align}
		 \tilde{\Phi}^*_k | \lambda \rangle_{N,\mt} &=\sum_{\mu} x_{\mu}e_{-\mu_1+1-\mt}\wedge 	\dots \wedge e_{-\mu_{N-1}+N-\mt-1},
		\\ 	
		  \tilde \Phi^*_k | \lambda \rangle_{N+1,\mt}  &=\sum_{\mu} x'_{\mu}e_{-\mu_1+1-\mt}\wedge 	\dots \wedge e_{-\mu_{N}+N-\mt}.
	\end{align}
	Then for \(l(\mu)\leq N-1\) we have \(x_\mu=x'_\mu\).  
	
	The proof is based on the same computation as the proof of Lemma \ref{lemma: phi star stabilization}. The lower term no longer vanish after the action of \(\bc_-^{(N)}\) since we have dropped the assumption $\mt -k + | \lambda| < N$. Nevertheless, the lower terms do not contribute to $x_{\mu}'$ for \(l(\mu)\leq N-1\).
\end{Remark} 
\begin{prop} \label{prop: stable dual vertex}
    The following operators stabilize
    \begin{align} \label{eq: def psi tilde via psi and star}
       \tilde{\Psi}_k =&(-1)^{N} v^{\left\lfloor \frac{N-1-\mt-k}{n} \right\rfloor-N} \Psi_k,  &    \tilde{\Psi}_k^*=& (-1)^{N-1} [N]_v^- v^{N-1 - \left\lfloor \frac{N-1-\mt+k}{n} \right\rfloor} \Psi_k^*.
    \end{align}
\end{prop}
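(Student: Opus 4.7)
The plan is to deduce the stabilization of $\tilde{\Psi}_k$ and $\tilde{\Psi}_k^*$ from that of $\Phi_k$, $\tilde{\Phi}_k^* = [N]_v^+\Phi_k^*$ (Proposition \ref{prop: stable vertex}), and the bar involution $B$ (Proposition \ref{prop: stable bar}), by exploiting the intertwining relations of Lemma \ref{lemma: involution and vertex}. The first step is to rewrite those identities in operator form. Applying $B$ to both sides and using $B^2 = \mathrm{id}$, the antilinearity $B(v^a x) = v^{-a} B(x)$, and the fact that $B$ preserves the index set of any basis vector (so $a_{\cdot}(k)$ is unchanged under wedge reversal), I would obtain, for any basis vector $e_{\mathbf{j}} = e_{i_1} \wedge \dots \wedge e_{i_N}$,
\[
	\Psi_k\, e_{\mathbf{j}} = (-1)^N\, v^{a_{\mathbf{j}}(k)}\, B\,\Phi_k\, B\, e_{\mathbf{j}},  \qquad  \Psi_k^*\, e_{\mathbf{j}} = (-1)^{N-1}\, v^{-a_{\mathbf{j}}(k)}\, B\,\Phi_k^*\, B\, e_{\mathbf{j}}.
\]

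Substituting these into \eqref{eq: def psi tilde via psi and star}, the signs $(-1)^N$ cancel in $\tilde{\Psi}_k$; for $\tilde{\Psi}_k^*$ I would additionally use the identity $B \cdot [N]_v^+ = [N]_v^- \cdot B$ (an immediate consequence of the $v \mapsto v^{-1}$ antilinearity of $B$), which gives $B\,\tilde{\Phi}_k^{*(N)}\,B = [N]_v^-\, B\,\Phi_k^*\, B$, so the prefactors $[N]_v^\pm$ cancel entirely. The result is
\[
	\tilde{\Psi}_k\, e_{\mathbf{j}} = v^{\left\lfloor (N-1-\mt-k)/n\right\rfloor - N + a_{\mathbf{j}}(k)}\, B\,\Phi_k\, B\, e_{\mathbf{j}}, \qquad \tilde{\Psi}_k^*\, e_{\mathbf{j}} = v^{N-1-\left\lfloor (N-1-\mt+k)/n\right\rfloor - a_{\mathbf{j}}(k)}\, B\,\tilde{\Phi}_k^{*(N)}\,B\, e_{\mathbf{j}}.
\]

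The combinatorial core of the argument is to evaluate the $v$-exponents on extended vectors of the form $e_{\mathbf{j}} = \varphi^{(\mt)}_{N, N_0}(e_{\mathbf{j}_0})$. The appended vacuum indices $N_0-\mt, N_0+1-\mt, \dots, N-1-\mt$ shift $a_{\mathbf{j}}(k)$ by $(N-N_0) - \beta_N$, where $\beta_N = \left\lfloor (N-1-\mt-k)/n\right\rfloor - \left\lfloor (N_0-1-\mt-k)/n\right\rfloor$ is the number of these indices that are $\equiv k \pmod n$. Substituting, the $\tilde{\Psi}_k$ exponent collapses to $a_{\mathbf{j}_0}(k) - N_0 + \left\lfloor (N_0-1-\mt-k)/n\right\rfloor$, which is independent of $N$; the $\tilde{\Psi}_k^*$ exponent becomes $N_0-1-a_{\mathbf{j}_0}(k) - \left\lfloor (N_0-1-\mt-k)/n\right\rfloor + \bigl(\left\lfloor(N-1-\mt-k)/n\right\rfloor - \left\lfloor(N-1-\mt+k)/n\right\rfloor\bigr)$, whose $N$-dependent piece is manifestly invariant under $N \mapsto N+n$ because each floor increases by exactly $1$. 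Together with the stabilization of $B$, $\Phi_k$ and $\tilde{\Phi}_k^*$, and the fact that compositions of stabilizing operators stabilize, this yields the proposition.

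The main obstacle I anticipate is precisely this bookkeeping: the asymmetric floor functions $\lfloor(N-1-\mt\pm k)/n\rfloor$ in the two prefactors of $\tilde{\Psi}_k$ and $\tilde{\Psi}_k^*$, together with the $(-1)^N$ and $[N]_v^\pm$ corrections, must be tuned exactly so that the linear growth of $a_{\mathbf{j}}(k)$ in $N$ and the $v^{\pm 2N}$ behaviour of $[N]_v^-/[N]_v^+$ conspire to cancel, leaving only a residue that depends on $N$ only modulo $n$ and so is absorbed by the stabilization shift of Definition \ref{defin: stabilizes}.
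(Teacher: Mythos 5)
Your proof is correct and takes essentially the same route as the paper's own one-line argument, which simply invokes Proposition \ref{prop: stable vertex}, Proposition \ref{prop: stable bar}, and Lemma \ref{lemma: involution and vertex}; you have merely carried out the floor-function bookkeeping that the paper leaves implicit. One small imprecision worth fixing: the bar involution does \emph{not} preserve the index set of a basis vector (the straightening relations of Lemma \ref{lemma: q-wedge relation} replace indices $l, \mb$ by $l \mp nk, \mb \pm nk$), but it does preserve the multiset of residues modulo $n$, which is exactly what is needed for $a_{\cdot}(k)$ to be constant on the basis expansion of $\overline{e_{\mathbf{j}}}$ and hence for your operator identities $\Psi_k\, e_{\mathbf{j}} = (-1)^N\, v^{a_{\mathbf{j}}(k)}\, B\,\Phi_k\, B\, e_{\mathbf{j}}$ and $\Psi_k^*\, e_{\mathbf{j}} = (-1)^{N-1}\, v^{-a_{\mathbf{j}}(k)}\, B\,\Phi_k^*\, B\, e_{\mathbf{j}}$ to be valid.
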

\begin{proof}
Follows from the previous proposition and Lemma \ref{lemma: involution and vertex}.
\end{proof}

\begin{Remark} \label{remark: normailization of vertex operators}
We have chosen the factors in the definition of $\tilde{\Phi}_k^*$, $\tilde{\Psi}_k$, and $\tilde{\Psi}_k^*$ such that
\begin{align}
    \Phi_{-\mt-1} | \varnothing \rangle_{N,\mt} =& | \varnothing \rangle_{N+1, \mt+1},  &    \tilde{\Phi}_{\mt}^* | \varnothing \rangle_{N,\mt} =& |\varnothing \rangle_{N-1, \mt-1}, \label{eq: normalization phi}\\
    \tilde{\Psi}_{-\mt-1} | \varnothing \rangle_{N,\mt} =& | \varnothing \rangle_{N+1,\mt+1},  &   \tilde{\Psi}_{\mt}^* | \varnothing \rangle_{N,\mt} =& | \varnothing \rangle_{N-1,\mt-1}. \label{eq: normalization psi}
\end{align}
The statement for $\Phi_{-\mt-1}$ is obvious. For $\tilde{\Phi}_{\mt}^*$ one can argue by induction using Lemma \ref{lemma: phi star stabilization}. Formula \eqref{eq: normalization psi} follows from \eqref{eq: normalization phi} by Lemma \ref{lemma: involution and vertex}.
\end{Remark}

\begin{Remark}
    Also, the operators $\Phi_k$, $\tilde{\Phi}_k^*$, $\tilde{\Psi}_k$, and $\tilde{\Psi}_k^*$ satisfy certain intertwining properties \cite{FR92}. We will discuss the properties for $\hphi^*_k$ in Appendix \ref{subsection : Vertex operators}. The properties for other operators are analogous. The intertwining properties and the normalization conditions \eqref{eq: normalization phi}, \eqref{eq: normalization psi} determine the operators uniquely. In particular, the factors in the definition of $\tilde{\Phi}_k^*$, $\tilde{\Psi}_k$, and $\tilde{\Psi}_k^*$ are determined uniquely. Also, note that we do not define $\tilde{\Phi}_k$ since the operators $\Phi_k$ satisfy all the properties.
\end{Remark}

Let us denote the induced operators as follows
\begin{align}
    \hat{\Phi}_k \colon& \siw{\mt}\!\!\left(\mathbb{C}^n[Y^{\pm 1}] \right)  \rightarrow  \siw{\mt+1}\!\!\left(\mathbb{C}^n[Y^{\pm 1}] \right), & \hat{\Phi}^*_k \colon& \siw{\mt}\!\!\left(\mathbb{C}^n[Y^{\pm 1}] \right)  \rightarrow  \siw{\mt-1}\!\!\left(\mathbb{C}^n[Y^{\pm 1}] \right), \label{eq def: vertex the limit phi} \\
    \hat{\Psi}_k \colon& \siw{\mt}\!\!\left(\mathbb{C}^n[Y^{\pm 1}] \right)  \rightarrow  \siw{\mt+1}\!\!\left(\mathbb{C}^n[Y^{\pm 1}] \right), &
    \hat{\Psi}_k^* \colon& \siw{\mt}\!\!\left(\mathbb{C}^n[Y^{\pm 1}] \right)  \rightarrow  \siw{\mt-1}\!\!\left(\mathbb{C}^n[Y^{\pm 1}] \right). \label{eq def: vertex the limit psi} 
\end{align}

\begin{defin} \label{defin: weakly stabilizes}
A sequence of operators $A^{(N)} \colon \bc_- \left(\mathbb{C}^n[Y^{\pm 1}] \right)^{\otimes N} \rightarrow  \bc_- \left(\mathbb{C}^n[Y^{\pm 1}] \right)^{\otimes (N+\delta)}$ \emph{weakly stabilizes} if for any $w \in \bc_- \left(\mathbb{C}^n[Y^{\pm 1}] \right)^{\otimes k} $ there is $M$ such that for any $N>M$ we have 
\begin{equation}
    \varphi_{N+\delta}^{(\mt+\delta)} \circ A^{(N)} \circ \varphi_{N, k}^{(\mt)} (w)= \varphi_{N+n+\delta}^{(\mt+\delta)} \circ A^{(N+n)} \circ \varphi_{N+n, k}^{(\mt)} (w).
\end{equation}
\end{defin}

\begin{prop}[\cite{KMS95}] \label{prop: Bk stabilization}
    The operators  $b_k = Y_1^k + \dots + Y_N^k$ stabilize for $k <0$ and weakly stabilize for $k>0$. The induced operators $B_k$ satisfy a deformed Heisenberg algebra relation
    \begin{equation} \label{eq: deformed heisenberg}
        [B_k, B_l] = k [n]_{v^k}^+ \delta_{k+l,0}.
    \end{equation}
\end{prop}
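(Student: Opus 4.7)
The plan has two parts: establishing (weak) stabilization of $b_k$, and computing the commutator of the induced operators. For stabilization, the key input is that $b_k = \sum_i Y_i^k$ is symmetric in $Y_1, \dots, Y_N$, so it commutes with $\bc_-$ and acts on wedges as a derivation:
\begin{equation}
b_k(e_{i_1} \wedge \dots \wedge e_{i_N}) = \sum_{r=1}^{N} e_{i_1} \wedge \dots \wedge e_{i_r+kn} \wedge \dots \wedge e_{i_N}.
\end{equation}
Applied to $\varphi^{(\mt)}_{N+1,N}(w) = w \wedge e_{N-\mt}$ with $w = \varphi^{(\mt)}_{N, n_0}(w_0)$, this yields $(b_k w) \wedge e_{N-\mt}$ plus a boundary wedge $w \wedge e_{N-\mt+kn}$, so stabilization reduces to controlling this extra term. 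For $k<0$ and $N$ large, the shifted index $N-\mt+kn$ already lies among the vacuum-tail indices of $w$, producing a wedge with a repeated index; iterating Lemma~\ref{lemma: q-wedge relation} (commuting the equal indices adjacent and applying $e_a \wedge e_a = 0$) kills it, giving strict stabilization. For $k>0$ the boundary wedge is nonzero at level $N+1$, but after further inclusion $\varphi^{(\mt)}_{N',N+1}$ with $N' > N+kn+1$, the added tail contains $e_{N-\mt+kn}$ again, producing the same kind of repetition; hence the boundary term projects to zero in $\siw{\mt}$, giving weak stabilization.

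For the commutation relation, the subtle point is that $b_k$ and $b_l$ commute on each finite $\bc_- \left(\mathbb{C}^n[Y^{\pm 1}]\right)^{\otimes N}$ (both being symmetric in the $Y_i$'s), yet the induced $B_k,B_l$ need not commute in $\siw{\mt}$ because weak stabilization does not preserve composition. Concretely, for $k>0$ every up-shift of a vacuum index produces either a repetition (zero) or an overshoot that vanishes in the limit, so $B_k|\varnothing\rangle_{\infty,\mt}=0$; yet $b_k b_{-k} |\varnothing\rangle_{N,\mt}$ at finite level contains round-trip contributions proportional to $|\varnothing\rangle_{N,\mt}$ which do survive projection to $\siw{\mt}$. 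Thus $[B_k,B_{-k}]|\varnothing\rangle = B_k(B_{-k}|\varnothing\rangle)$ is nonzero and encodes the central extension.

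To evaluate this, I would expand $b_{-k}|\varnothing\rangle_{N,\mt}$ in the partition basis as a sum over $r \in \{0,1,\dots,kn-1\}$, where the $r$-th summand $S_r$ arises from shifting $e_{r-\mt}$ down to $e_{r-\mt-kn}$ and reordering (indices with $r \geq kn$ collide with an existing index and give zero). Applying $b_k$ to $S_r$, the round-trip restoring $e_{r-\mt-kn}$ back to $e_{r-\mt}$ yields a scalar multiple of $|\varnothing\rangle_{N,\mt}$, while other shifts either repeat an existing index (zero) or overshoot (vanishing in the limit by the argument above). Summing the round-trip coefficients gives $[B_k,B_{-k}]|\varnothing\rangle = k[n]_{v^k}^+\, |\varnothing\rangle$. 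For an arbitrary partition basis vector $|\lambda\rangle$, the analogous round-trip analysis shows the commutator acts as the same scalar, while for $k+l \neq 0$ the round-trip contributions cancel in pairs to yield $[B_k,B_l]=0$.

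The principal obstacle is the accurate bookkeeping of $v$-powers in the round-trip computation. Each commutation of the shifted index past an intermediate index invokes one of the three cases of Lemma~\ref{lemma: q-wedge relation}: the $s=0$ case (same residue mod $n$) contributes $-1$, the $k=0$ case contributes $-v$, and the \emph{otherwise} case produces a three-term expansion whose non-transposition contributions must be shown to cancel systematically. Organizing these contributions over the $kn$ round trips and verifying that they telescope precisely to $k[n]_{v^k}^+ = k(1 + v^{2k} + \dots + v^{2k(n-1)})$ is the heart of the combinatorial calculation.
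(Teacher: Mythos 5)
The paper does not prove this proposition; it is attributed to \cite{KMS95}, and the text following it gives only the illustrative example of $b_{\pm 1}$ acting on the vacuum. Your proposal is therefore a direct combinatorial argument with no in-paper proof to compare against. Its structure is consistent with the paper's example: the stabilization/weak-stabilization dichotomy you describe (for $k<0$ the boundary index drops into the tail and repeats; for $k>0$ it overshoots and repeats only after further inclusion) matches the paper's remark that the right-hand side of \eqref{eq: b1 example} lies in $\ker\varphi_N^{(0)}$, and your round-trip computation at $k=1$ reproduces $b_{-1}b_1|\varnothing\rangle_{N,0}=[n]_v^+|\varnothing\rangle_{N,0}+\ker\varphi_N^{(0)}$ as stated there.

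Two things are left unargued, and they are where the content of the proposition actually lives. First, the $v$-power bookkeeping: you correctly note that the ``otherwise'' case of Lemma~\ref{lemma: q-wedge relation} produces a three-term expansion, so reordering a shifted index past the tail spawns auxiliary wedges whose contributions must cancel; you flag this as ``the heart of the combinatorial calculation'' and do not carry it out, so the constant $k[n]_{v^k}^+$ is not actually established. Second, \eqref{eq: deformed heisenberg} is an operator identity, not merely a value on $|\varnothing\rangle$, and you assert rather than prove that the same scalar emerges on an arbitrary $|\lambda\rangle_\infty$. Since $b_k$ and $b_{-k}$ commute at finite $N$, the whole commutator $[B_k,B_{-k}]|\lambda\rangle_\infty$ is carried by $\varphi_N^{(\mt)}(b_{-k}\kappa)$, where $\kappa$ is the kernel part of $b_k|\lambda\rangle_N$; one must show this is a $\lambda$-independent multiple of $|\lambda\rangle_\infty$. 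That is plausible because $\kappa$ is supported on the tail of $|\lambda\rangle_N$, but it needs the additional observation that when $b_{-k}$ shifts a \emph{head} index of a boundary term, the oversized tail index remains in place and the result stays in $\ker\varphi_N^{(\mt)}$. Making that reduction explicit is the crux of centrality and should not be waved at. The case $k+l\neq 0$ with both $k,l>0$ also deserves a word, since there both operators only weakly stabilize and the paper's lemma on compositions (weakly-stabilizing followed by stabilizing) does not directly apply.
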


\begin{Example}
    The sequence of operators $b_1= Y_1 + \dots + Y_N$ does not stabilize. It can be seen from the formula
    \begin{equation} \label{eq: b1 example}
        b_1 e_0 \wedge e_1 \wedge \dots \wedge e_{N-1} = \sum_{k =0}^{n-1} (-v)^k e_0 \wedge e_1 \wedge \dots \wedge \hat{e}_{N-1-k} \wedge \dots \wedge e_{N-1} \wedge e_{N-1-k+n}.
        \end{equation}
    Definitely, the RHS of \eqref{eq: b1 example} does not belong to the image of $\varphi_{N, N-n}^{(0)}$. Though the RHS of \eqref{eq: b1 example} belongs to the kernel of $\varphi_N^{(0)}$. Hence $B_1 |\varnothing \rangle_{\infty, 0}= B_1 \varphi_N^{(0)} ( e_0 \wedge \dots \wedge e_{N-1}) =0$. Moreover, note that
     \begin{align} 
        b_{-1} b_1 &|\varnothing \rangle_{N, 0}=   [n]_v^+ |\varnothing \rangle_{N, 0}  +  \ker \varphi_N^{(0)}, \\
        B_{-1}B_1&|\varnothing \rangle_{\infty, 0}= 0.
        \end{align}
        This means that composition of induced operators does not have to be equal to the induced operator of the composition (if the second operator just weakly stabilizes). Also, this illustrates the fact that $[B_1, B_{-1}]=[n]_v^+$, though $[b_1, b_{-1}] =0$.
\end{Example}

\begin{prop}
    Let a sequence $A_1^{(N)}$ weakly stabilizes and $A_2^{(N)}$ stabilizes. Then the composition $A_1^{(N)} A_2^{(N)}$ weakly stabilizes and the induced operator equals the composition of induced operators $\hat{A}_1 \hat{A}_2$. 
\end{prop}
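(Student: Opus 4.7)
The plan is to chain the two stabilization conditions in the obvious order: first use the strong stabilization of $A_2$ to align the sequence of intermediate vectors $w_N := A_2^{(N)} \circ \varphi^{(\mt)}_{N,k}(w)$, and then apply the weak stabilization of $A_1$ to a finite collection of ``starting'' vectors, one per residue class modulo $n$.

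Concretely, fix $w \in \bc_-\!\left(\mathbb{C}^n[Y^{\pm 1}]\right)^{\otimes k}$, and let $\delta_i$ denote the shift of $A_i$. The naive idea of applying the weak stabilization of $A_1$ directly to the $w_N$ fails because these vectors live in spaces whose tensor power grows with $N$, whereas the definition of weak stabilization takes a \emph{fixed} vector in a fixed space. To get around this, I first invoke the stabilization of $A_2$: there is $M_2$ such that $\varphi^{(\mt+\delta_2)}_{N+n+\delta_2,\,N+\delta_2}(w_N) = w_{N+n}$ for every $N>M_2$. For each residue class $c \in \{0,\dots,n-1\}$, pick $N_c > M_2$ with $N_c \equiv c \pmod n$; iterating the above identity yields
\[ w_N \;=\; \varphi^{(\mt+\delta_2)}_{N+\delta_2,\, N_c+\delta_2}(w_{N_c}) \qquad \text{for all } N \ge N_c,\ N \equiv c \pmod n. \]
Thus, within each residue class, the intermediate vector is the canonical image of the \emph{fixed} vector $w_{N_c}$. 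I then apply the weak stabilization of $A_1$ to each of these $n$ fixed vectors, obtaining $M_1^{(0)},\dots,M_1^{(n-1)}$ such that, for $N + \delta_2 > M_1^{(c)}$ with $N \equiv c \pmod n$,
\[ \varphi^{(\mt+\delta_1+\delta_2)}_{N+\delta_1+\delta_2} \!\circ A_1^{(N+\delta_2)} \!\circ A_2^{(N)} \!\circ \varphi^{(\mt)}_{N,k}(w) \;=\; \varphi^{(\mt+\delta_1+\delta_2)}_{N+n+\delta_1+\delta_2} \!\circ A_1^{(N+n+\delta_2)} \!\circ A_2^{(N+n)} \!\circ \varphi^{(\mt)}_{N+n,k}(w). \]
Setting $M := \max_c \max\bigl(N_c,\ M_1^{(c)} - \delta_2\bigr)$, which is finite because $c$ ranges over finitely many residues, this identity holds for every $N > M$; this is the weak stabilization of the composition $A_1^{(N+\delta_2)} \circ A_2^{(N)}$.

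Once weak stabilization is established, identifying the induced operator with $\hat A_1 \hat A_2$ is pure unwinding of definitions: for any $N > M$ one has
\[ \hat A_1 \hat A_2\bigl(\varphi^{(\mt)}_{k}(w)\bigr) \;=\; \hat A_1\bigl(\varphi^{(\mt+\delta_2)}_{N+\delta_2}(w_N)\bigr) \;=\; \varphi^{(\mt+\delta_1+\delta_2)}_{N+\delta_1+\delta_2}\bigl(A_1^{(N+\delta_2)}(w_N)\bigr), \]
and the right-hand side is precisely the expression defining the induced operator of the composition on $\varphi^{(\mt)}_{k}(w)$. Since vectors of this form span $\siw{\mt}\!\!\left(\mathbb{C}^n[Y^{\pm 1}] \right)$, the two induced operators coincide as operators on the limit.

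I do not anticipate any real obstacle --- the argument is definition-chasing. The only conceptual point worth highlighting is the residue-class device that replaces the $N$-dependent intermediate vectors $w_N$ by canonical images of a \emph{finite} collection of fixed vectors; this is possible precisely because $A_2$ stabilizes strongly. If $A_2$ were only weakly stabilizing, this step would break down, in accordance with the example of $b_1, b_{-1}$ immediately preceding the proposition.
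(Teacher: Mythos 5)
Your proof is correct. The paper states this proposition without any proof, so there is no argument in the text to compare against; what you wrote is the natural definition-chasing argument, and it is complete. The one genuinely necessary observation is the residue-class device: the strong stabilization of $A_2$ lets you replace the $N$-dependent intermediate vectors $w_N$ by canonical images of the \emph{finitely many} fixed vectors $w_{N_0},\dots,w_{N_{n-1}}$, and only then can the weak stabilization of $A_1$ (which takes a fixed vector in a fixed stage as input) be invoked. You correctly flag that this is precisely the step that would fail if $A_2$ were only weakly stabilizing, in agreement with the $b_1, b_{-1}$ example preceding the proposition. The residue-class bookkeeping is also consistent with the paper's remark, just after Definition~\ref{defin: stabilizes}, that the induced operator $\hat{A}$ in fact depends on the residue of $N$ modulo $n$.
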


\begin{prop} \label{prop: heis and vertex operators}
    The following relations hold for any $k \in \mathbb{Z}$ and $j \in \mathbb{Z}_{>0}$
    \begin{align}
        &[B_{-j}, \hat{\Phi}_k] = \hat{\Phi}_{k-nj}, & &[B_{-j}, \hat{\Psi}_k] = v^{-j} \hat{\Psi}_{k-nj},  &  &[B_{-j}, \hat{\Phi}^*_k] = -\hphi^*_{k-nj}, & &[B_{-j}, \hpsi^*_k] = -v^{-j} \hpsi^*_{k-nj},  \label{eq: commut B-j and the vertexes} \\
        &[B_j, \hat{\Phi}_k] = \hat{\Phi}_{k+nj}, & &[B_j, \hat{\Psi}_k] = v^{j(2n{-}1)} \hat{\Psi}_{k+nj},  &  &[B_j, \hat{\Phi}^*_k] = -v^{2jn}\hphi^*_{k+nj}, & &[B_j, \hpsi^*_k] = -v^{-j}\hpsi^*_{k+nj}.
    \end{align}
\end{prop}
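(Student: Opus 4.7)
The proof splits according to the sign of $j$.

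For the negative modes $B_{-j}$, all four vertex operators $\Phi_k$, $\Tphi^*_k$, $\tpsi_k$, $\tpsi^*_k$ stabilize (Propositions \ref{prop: stable vertex} and \ref{prop: stable dual vertex}) and so does $b_{-j}$ (Proposition \ref{prop: Bk stabilization}). Consequently each commutator $[b_{-j}, \cdot]$ stabilizes, with induced limit operator $[B_{-j}, \cdot]$ on $\siw{\mt}$; it therefore suffices to verify the formulas at finite $N$ using \eqref{eq: commutator bj and the vertexes}. The case of $\Phi_k$ is immediate. For $\Tphi^*_k = [N]^+_v \Phi^*_k$ the prefactor commutes with $b_{-j}$, transporting the minus sign to the limit. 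For $\tpsi_k$ and $\tpsi^*_k$ the normalizations in \eqref{eq: def psi tilde via psi and star} depend on $k$ through $\lfloor (N-1-\mt \mp k)/n \rfloor$; shifting $k$ by $\mp nj$ shifts this floor by $\mp j$, which is precisely the source of the additional $v^{-j}$ in the claimed relations.

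For the positive modes $B_j$ with $j>0$ the operator $b_j$ only weakly stabilizes, so the finite-$N$ route breaks down. The keystone is Theorem \ref{thm: Bj Phik star}, proved in Appendix \ref{subsection : Vertex operators}, which establishes $[B_j, \hphi^*_k] = -v^{2jn}\hphi^*_{k+nj}$ by exploiting the intertwining property of $\hphi^*_k$ with respect to $\qgl$ and evaluating the Drinfeld coproduct on the diagonal Heisenberg. The three remaining positive-mode relations I would treat analogously: each of $\hphi_k$, $\hpsi_k$, $\hpsi^*_k$ is a component of a $\qgl$-intertwiner between the semi-infinite wedge and its tensor product with the evaluation module $\mathbb{C}^n[Y^{\pm 1}]$, and applying the coproduct of $B_j$ across that tensor product isolates a commutator which is a scalar multiple of the vertex operator with shifted index. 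The constants $1$, $v^{j(2n-1)}$, $-v^{-j}$ are determined by the side on which $\mathbb{C}^n[Y^{\pm 1}]$ appears and by the distribution of the $K$-factors in the coproduct.

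A parallel route, useful as a cross-check and for avoiding a separate coproduct computation for each type, is bar involution. It stabilizes by Proposition \ref{prop: stable bar}, relates $\hpsi_k$ and $\hpsi^*_k$ to $\hphi_k$ and $\hphi^*_k$ up to the explicit $v$-powers of Lemma \ref{lemma: involution and vertex}, and acts on the diagonal Heisenberg generators by exchanging $B_j$ with a scalar multiple of $B_{-j}$. Bar-conjugating the already established $\Phi$-type relations then yields the $\Psi$-type ones, with the prefactors in \eqref{eq: def psi tilde via psi and star} accounting for the discrepancy between $v^{-j}$ and $v^{j(2n-1)}$.

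\emph{Main obstacle.} The genuinely hard step is Theorem \ref{thm: Bj Phik star} itself. Since $b_j$ fails to stabilize for $j>0$, the limit cannot be taken termwise; the argument must proceed through the intertwining characterization of $\hphi^*_k$ on the wedge. The precise coefficient $-v^{2jn}$ is delicate: it depends on the $[N]^+_v$ renormalization used to define $\Tphi^*_k$ and on the conventions fixing the coproduct on the diagonal Heisenberg subalgebra. Everything else is bookkeeping: tracking signs, floor-function shifts, and powers of $v$ through stabilization, bar conjugation, and the normalizations \eqref{eq: def psi tilde via psi and star}.
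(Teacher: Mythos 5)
Your overall structure matches the paper's: the negative-mode relations follow by taking the stable limit of the finite-$N$ identities \eqref{eq: commutator bj and the vertexes}, with the extra $v^{-j}$ factors for $\tpsi_k$ and $\tpsi_k^*$ produced exactly by the $k$-dependent floor in the normalizations \eqref{eq: def psi tilde via psi and star}; and the positive-mode case must be handled differently because $b_j$ only weakly stabilizes, with Theorem~\ref{thm: Bj Phik star} carrying the main burden for $\hphi^*_k$ and the bar involution transporting the $\Phi$-type relations to the $\Psi$-type ones. Two details deserve correction, though.

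First, a concrete error: the bar involution does \emph{not} exchange $B_j$ with a multiple of $B_{-j}$. The paper cites the fact (from \cite[Prop.~7.8]{LT00}) that $\overline{B_j} = v^{-2j(n-1)} B_j$ for $j>0$ — same sign of the index, only a scalar factor. This matters: if you believed bar conjugation swapped signs, you would try to bar-conjugate a negative-mode $\Phi$-relation to obtain a positive-mode $\Psi$-relation, and the resulting constants would come out wrong. The correct route, which is the paper's, is to bar-conjugate the $[B_j,\hphi_k]$ and $[B_j,\hphi^*_k]$ relations to obtain $[B_j,\hpsi_k]$ and $[B_j,\hpsi^*_k]$, tracking the $v$-powers from Lemma~\ref{lemma: involution and vertex}.

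Second, your primary suggestion — deriving the remaining positive-mode relations from the $\qgl$-intertwining property via the coproduct — is vaguer than what the paper does and would in effect require re-proving an analogue of Theorem~\ref{thm: Bj Phik star} for each operator type. The paper instead handles $[B_j,\hphi_k]$ by a very short direct argument, available because the undecorated $\Phi_k$ genuinely stabilizes (not merely weakly), so $[B_j,\hphi_k]\varphi_N^{(\mt)}(w)=\varphi_N^{(\mt)}([b_j,\Phi_k]w)=\hphi_{k+nj}\varphi_N^{(\mt)}(w)$ for large $N$. Only $\hphi^*_k$ requires the bosonization argument of the appendix; the other two then come from bar conjugation, which is closer to what you label a cross-check than to a parallel option.
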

\begin{proof}

Commutation relations \eqref{eq: commut B-j and the vertexes} follows from \eqref{eq: commutator bj and the vertexes} since $b_{-j}$ stabilizes. Also, one can check that
\begin{equation}
[B_j, \hphi_k] \varphi_N^{(\mt)} (w) = \varphi_N^{(\mt)}( [b_j, \Phi_k] w) =   \varphi_N^{(\mt)}(  \Phi_{k+nj} w) = \hphi_{k+nj} \varphi_N^{(\mt)}( w).
\end{equation}
To prove the relation with $\hat{\Psi}_k$ we use Lemma \ref{lemma: involution and vertex} and \cite[Prop. 7.8]{LT00} that $\overline{B_j}=v^{-2j(n-1)}B_j$ for $j>0$. Let $\mathtt{a}_{\mathbf{\lambda}}(k) = a_{\lambda}(k) + \lfloor (N-1-\mt -k)/n \rfloor - N$ for sufficiently large $N$.\footnote{We use notation $a_{\lambda}(k)=a_{\mathbf{i}}(k)$ for $i_j = - \lambda_j + j-1 -\mt$, see Lemma \ref{lemma: involution and vertex}} Then 
\begin{multline}
	[B_j, \hat{\Psi}_k] | \lambda \rangle_{\infty} = \overline{(-1)^{N} {[v^{-2j(n-1)}B_j, v^{-\mathtt{a}_{\lambda}(k)}\hat{\Phi}_k]} \overline{| \lambda \rangle}_{\infty}}\\ = \overline{ (-1)^{N} v^{-2j(n-1)} \times v^{-j-\mathtt{a}_{\lambda}(k+nj)}\hat{\Phi}_{k+nj}  \overline{| \lambda \rangle}_{\infty} } =v^{j(2n-1)} \hat{\Psi}_{k+nj}  | \lambda \rangle_{\infty},
\end{multline}

The relation $[B_j, \hphi^*_k] = -v^{2jn}\hphi^*_{k+nj}$ is proven in Appendix \ref{appendix: relations} (Theorem \ref{thm: Bj Phik star}).  Finally,
\begin{multline}
[B_j, \hpsi^*_k] | \lambda \rangle_{\infty} =  \overline{(-1)^{N-1} [v^{-2j(n-1)}B_j, v^{\mathtt{a}^*_{\lambda}(k)} \hphi_k^*]\overline{| \lambda \rangle}_{\infty}}  \\= \overline{(-1)^{N} v^{-2j(n-1)+2jn} \times v^{-j+\mathtt{a}^*_{\lambda}(k+nj)} \hphi_{k+jn}^* \overline{| \lambda \rangle}_{\infty}} = - v^{-j} \hpsi_{k+jn}^* | \lambda \rangle_{\infty},
\end{multline}
here $\mathtt{a}^*_{\mathbf{\lambda}}(k) = a_{\lambda}(k) + \lfloor (N-1-\mt +k)/n \rfloor - N+1$ for sufficiently large $N$.
\end{proof}
\section{Semi-infinite construction of twisted Fock module I}
\label{sec:Semi-inifinite 1}
The goal of this and the next sections is to provide an explicit construction for the action of $\DI$ on twisted Fock module $\mathcal{F}_u^{\sigma}$ (Theorem \ref{thm: explict construction of twisted Fock}). This is the central result of the whole paper. Our method is semi-infinite construction. Namely, we will use the explicit realization of $\Ch_u^{\sigma} \cong \Ch_{u_0, \dots, u_{n-1}}^{(n,n')}$ (Theorem \ref{thm: finite explicit construction}) to derive an explicit construction of $\mathcal{F}_u^{\sigma}$ as a limit $N \rightarrow \infty$.

In Section \ref{subsection: left and right}, we will study the limit $N \rightarrow \infty$ for the Chevalley generators (after a rescaling) denoted by $\tilde{P}_{1,b}^{(N)}$, $\tilde{P}_{0,b}^{(N)}$, and $\tilde{P}_{-1,b}^{(N)}$. It turns out that the generators $\tilde{P}_{1,b}^{(N)}$ and  $\tilde{P}_{-1,b}^{(N)}$ converge for $|q^{-1} v^2|<1$ and $|q^{-1}v^2|>1$ respectively. Therefore we can not obtain the action of whole $\DI$ by a straightforward limit argument. Though we prove that we do obtain actions of the subalgebras $\DIp$ and $\DIm$.

We get explicit formulas for the limit of Chevalley generators. The formulas allow us to make analytic continuation for general $q$ and $v$ (subsection \ref{subsection: analytic continuation}). Also, we consider the formulas in the case $n=1$, $n'=0$ and show that the obtained operators give (non-twisted) Fock module of $\DI$ (subsection \ref{subsec: example n=1}). We will prove for general $n$ and $n'$ that the obtained operators give twisted Fock module of whole $\DI$ in Section \ref{sec: twisted Fock II}.

\subsection{Finite case} \label{subsec: finite} It follows from the Corollary \ref{corollary: P} that
\begin{align}
    P_{1,b}^{(N)} =& (-1)^{N-1} q [N]_v \bc_- \pi^{-1} Y_1^b \bc_-, & P_{-1,b}^{(N)} =& (-1)^{N-1} [N]_v \bc_- Y_1^b \pi \bc_-.         
\end{align}
Recall the representation $\Ch_{u_0, \dots, u_{n-1}}^{(n,n')}$, see Theorem \ref{thm: finite explicit construction}. By the construction, we have
\begin{subequations}
	\begin{align} 
	P_{0,k}^{(N)} =& b_k,  \quad \quad \quad P_{0,-k}^{(N)} = q^k b_{-k}, 
	\label{eq: P -1 b with uk 1}
	\\ 
	P_{1,b}^{(N)} =& (-1)^{N-1} q  [N]_v \sum_{a=0}^{n-1} \sum_{l \in \mathbb{Z}} u_{a}^{-1} q^{- l} \Psi_{a+nl} \Phi^*_{-a-nl+n'+nb},
	\label{eq: P -1 b with uk 2}
	\\
	P_{-1,b}^{(N)} =& (-1)^{N-1} [N]_v \sum_{a=0}^{n-1} \sum_{l \in \mathbb{Z}} u_a q^{l} \Phi_{a+nl-n'+nb} \Psi^*_{-a-nl}.
	\label{eq: P -1 b with uk 3}
	\end{align} 
\end{subequations}
Note that for each vector $w \in \bc_- \left( \mathbb{C}^n[Y^{\pm1}]\right)^{\otimes N}$ only finitely many terms on the RHS of \eqref{eq: P -1 b with uk 2}--\eqref{eq: P -1 b with uk 3} have non-zero action.

To prepare to take the limit $N \rightarrow \infty$, let us rewrite the formulas above. Define $\tilde{P}_{\pm 1,b}^{(N)} = v^{\pm \frac{N}{n}} P_{\pm 1, b}^{(N)}$. Then
\begin{subequations}
    \begin{align}
     \label{eq: SDAHA via vertex finite case tilde 2}
	\tilde{P}_{1,b}^{(N)} =&  q \sum_{a=0}^{n-1} \sum_{l \in \mathbb{Z}} u_a^{-1} v^{\frac{N}{n}- \left\lfloor \frac{N-1-\mt-a}{n}  \right\rfloor } v^l q^{-l} \tilde{\Psi}_{a+nl} \tilde{\Phi}^*_{-a-nl+n'+nb},
	\\ \label{eq: SDAHA via vertex finite case tilde 3}
	\tilde{P}_{-1,b}^{(N)} =& \sum_{a=0}^{n-1} \sum_{l \in \mathbb{Z}} u_a v^{ \left\lfloor \frac{N-1-\mt-a}{n}  \right\rfloor - \frac{N}{n} } v^{-l} q^{l} \Phi_{a+nl-n'+nb} \tilde{\Psi}^*_{-a-nl}.
	\end{align}
\end{subequations}
\begin{Remark} \label{remark: notation N-1 and m-1}
    Let us write $\tilde{\Psi}_k^{(N, \mt)}$ instead of $\tilde{\Psi}_k$ to emphasis the dependence on $N$ and $\mt$. In the formula \eqref{eq: SDAHA via vertex finite case tilde 2} we should use $\tilde{\Psi}_k^{(N-1, \mt-1)}$. Indeed, the operator $ \tilde{\Phi}^*_{-a-nl+n'+nb}$ maps to $\left( \mathbb{C}^n[Y^{\pm1}] \right)^{\otimes (N-1)}$. Moreover, the inductive system must be taken for $\mt -1$. In the calculation above we use
    \[
    \tilde{\Psi}_{a+nl}^{(N-1, \mt-1)} =(-1)^{N-1} v^{\left\lfloor \frac{N-1-\mt-a}{n} \right\rfloor-N+1} v^{-l}\Psi_{a+nl}.
    \]
\end{Remark}

Recall that $u =  u_0 \cdots u_{n-1} q^{1-n}$, see Theorem \ref{thm:isomorphism with twisted}. Then we have
\begin{equation} \label{eq: product of ua with addandum}
\prod_{a=0}^{n-1} v^{ \left\lfloor \frac{N-1-\mt-a}{n}  \right\rfloor - \frac{N}{n} } u_a = v^{-\mt-n} \times q^{n-1} u.
\end{equation}
We can choose any $u_0, \dots, u_{n-1}$ such that \eqref{eq: product of ua with addandum} holds, see Corollary \ref{corol: depnds on product}. Let us take
\begin{equation}  \label{def Ch n n' u}
    u_a = v^{ \frac{N}{n} - \left\lfloor \frac{N-1-\mt-a}{n}  \right\rfloor } \times u^{\frac{1}{n}} v^{-\frac{\mt+1}{n}} \times v^{-\frac{a}{n}} q^{\frac{a}{n}}.
\end{equation}
As a corollary of the above discussion, we obtain the following proposition.
\begin{prop}
\begin{subequations}
The following formulas determine action of $\SHm$ on $\bc_- \left( \mathbb{C}^n [Y^{\pm1}] \right)^{\otimes N}$
	\begin{align} \label{eq: SDAHA via vertex finite case 1}
	P_{0,k}^{(N)} =& b_k,  \quad \quad \quad P_{0,-k}^{(N)} = q^k b_{-k}, 
	\\ \label{eq: SDAHA via vertex finite case 2}
	\tilde{P}_{1,b}^{(N)} =& u^{-\frac{1}{n}}  v^{\frac{\mt+1}{n}} q  \sum_{k \in \mathbb{Z}}   \qt^{-k/n} \tilde{\Psi}_{k} \tilde{\Phi}^*_{-k+n'+nb},
	\\ \label{eq: SDAHA via vertex finite case 3}
	\tilde{P}_{-1,b}^{(N)} =& u^{\frac{1}{n}} v^{-\frac{\mt+1}{n}} \sum_{k \in \mathbb{Z}}    \qt^{k/n}  \Phi_{k-n'+nb} \tilde{\Psi}^*_{-k},
	\end{align}
\end{subequations}
here $\qt = v^{-1} q$. The obtained representation is isomorphic to $\Ch_u^{\sigma}$.
\end{prop}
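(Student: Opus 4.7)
The plan is to derive the explicit formulas directly from the realization of $\Ch^{(n,n')}_{u_0,\dots,u_{n-1}}$ given in Theorem \ref{thm: finite explicit construction}, combined with the surjection $\DI\twoheadrightarrow \SHm$ of Corollary \ref{corollary: P} at $q_1=q$, $q_2=v^{-2}$. The $\SHm$-action itself is immediate: it is the restriction of the $\mathcal{H}_N$-action to the antisymmetric subspace $\bc_-(\mathbb{C}^n[Y^{\pm 1}])^{\otimes N}$. The formulas for $P^{(N)}_{0,\pm k}$ are just a rewriting of \eqref{eq:P{0,k}N} in terms of $b_{\pm k}$. For the identification as $\Ch^\sigma_u$ I would invoke Theorem \ref{thm:isomorphism with twisted} together with Corollary \ref{corol: depnds on product}; the required equality $u_0 \cdots u_{n-1} = u\, q^{n-1}$ follows from \eqref{eq: product of ua with addandum}.

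The content of the proposition is the vertex-operator form of $\tilde{P}^{(N)}_{\pm 1,b}$. Starting from \eqref{eq:P{1,k}N} and expanding $X_1 = T_1 \cdots T_{N-1} \pi^{-1}$ as in Section \ref{sec: Double Affine Hecke Algebra}, I would absorb the Hecke factors using $\bc_- T_i = -v^{-1}\bc_-$ from \eqref{eq: basic property of (anti)symmetrizer}. The identity $v^{-(N-1)}[N]_v^+ = [N]_v$ then converts $P^{(N)}_{1,b}$ to $(-1)^{N-1} q [N]_v \bc_-\pi^{-1} Y_1^b \bc_-$, and similarly $P^{(N)}_{-1,b} = (-1)^{N-1}[N]_v \bc_-Y_1^b\pi\bc_-$.

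The next step is to interpret $\bc_-\pi^{-1}Y_1^b\bc_-$ as a sum of products of vertex operators. Using $\pi = \kappa_1^{n'}D_1 s_1\cdots s_{N-1}$ from Theorem \ref{thm: finite explicit construction}, the operator $\pi^{-1}Y_1^b$ acts on $e_{i_1}\otimes \cdots \otimes e_{i_N}$ by shifting $e_{i_1}$ to $e_{i_1+n'+nb}$, scaling by $u_a^{-1}q^{-l}$ with $i_1+n'+nb = a+nl$ and $0\le a<n$, and cycling the shifted vector to position $N$. After antisymmetrization, summing over $i_1$ reproduces exactly the definitions of $\Psi$ on the last factor and of $\Phi^*$ on the first factor, giving $\sum_{a,l} u_a^{-1}q^{-l}\Psi_{a+nl}\Phi^*_{-a-nl+n'+nb}$. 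An analogous computation with $\bc_-Y_1^b\pi\bc_-$ produces the dual sum, yielding \eqref{eq: P -1 b with uk 2}--\eqref{eq: P -1 b with uk 3}.

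To reach the stated form I would rescale by $v^{\pm N/n}$ and convert $\Psi_k, \Phi^*_k, \Psi^*_k$ to the tilde versions via Propositions \ref{prop: stable vertex}--\ref{prop: stable dual vertex}, carefully using the index shift of Remark \ref{remark: notation N-1 and m-1}. The cumulative signs and powers of $v$ collapse via $[N]_v/[N]_v^+ = v^{1-N}$, producing the intermediate form \eqref{eq: SDAHA via vertex finite case tilde 2}--\eqref{eq: SDAHA via vertex finite case tilde 3}. Finally, substituting the choice \eqref{def Ch n n' u}, the $a$-dependent factor $u_a^{-1} v^{N/n-\lfloor(N-1-\mt-a)/n\rfloor}$ simplifies to $u^{-1/n} v^{(\mt+1)/n}\qt^{-a/n}$, and combined with $v^l q^{-l} = \qt^{-l}$ this consolidates to $\qt^{-k/n}$ for $k=a+nl$, giving \eqref{eq: SDAHA via vertex finite case 2}--\eqref{eq: SDAHA via vertex finite case 3}. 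The most error-prone step is this final accounting of renormalizations: the tilde prefactors and the $(N{-}1,\mt{-}1)$ shifts must line up so that the surviving scalars become $N$-independent, which is what makes the subsequent limit $N\to\infty$ possible.
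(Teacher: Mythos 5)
Your proposal is correct and follows essentially the same route as the paper's own derivation: starting from Corollary \ref{corollary: P}, absorbing the Hecke factors with \eqref{eq: basic property of (anti)symmetrizer} to pass from $\bc_-X_1 Y_1^b\bc_-$ to $\bc_-\pi^{-1}Y_1^b\bc_-$, reading off the vertex-operator form \eqref{eq: P -1 b with uk 2}--\eqref{eq: P -1 b with uk 3} from the explicit action of $\pi^{-1}Y_1^b$ in $\Ch^{(n,n')}_{u_0,\dots,u_{n-1}}$, then rescaling to tilde-vertex operators via Propositions \ref{prop: stable vertex}--\ref{prop: stable dual vertex} (with the index shift of Remark \ref{remark: notation N-1 and m-1}) and specializing $u_a$ via \eqref{def Ch n n' u}. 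The paper mostly asserts the intermediate steps ``by the construction,'' whereas you spell out the $X_1\!\to\!\pi^{-1}$ conversion and the mechanism $\pi^{-1}Y_1^b\colon e_{i_1}\otimes\cdots\otimes e_{i_N}\mapsto u_a^{-1}q^{-l}\,e_{i_2}\otimes\cdots\otimes e_{i_N}\otimes e_{i_1+n'+nb}$ explicitly; both versions then conclude with the isomorphism to $\Ch_u^\sigma$ via Theorem \ref{thm:isomorphism with twisted} and Corollary \ref{corol: depnds on product}.
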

Denote the obtained representation by $\Ch_u^{(n, n')}$.
\subsection{The limit for the right and left halves} \label{subsection: left and right}
Below we will construct an action of $\DIp$ on $\siw{\mt} \left( \mathbb{C}^n[Y^{\pm1}]\right)$. Analogous results hold for $\DIm$. 

To simplify our notation, we will consider the case $u^{-\frac{1}{n}}  v^{\frac{1}{n}} q=1$ and $\mt=0$. We will recover the parameter $u$ at the end. Recall $\qt = v^{-1} q$. 

\begin{prop} \label{prop: P1b negative stabel}
    The sequence $\tilde{P}_{1,b}^{(N)}=  v^{\frac{N}{n}}P_{1,b}^{(N)}$ stabilizes for $n'+nb <0$. The induced operator
    \begin{equation} \label{eq: hat P and vertex negative}
         \hat{P}_{1,b} = \sum_{k \in \mathbb{Z}} \qt^{-(k+n'+nb)/n} \hpsi_{k+ n'+nb} \hphi^*_{-k}.
    \end{equation}
    For any vector $\hat{w} \in \siw{0} \left( \mathbb{C}^n[Y^{\pm1}]\right)$, only finitely many terms $\hpsi_{k+n'+nb} \hphi^*_{-k} \hat{w}$ are non-zero.
\end{prop}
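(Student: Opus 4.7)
The plan is to start from the finite formula \eqref{eq: SDAHA via vertex finite case 2}, which (under our simplifying assumption $u^{-1/n}v^{1/n}q = 1$ and $\mt = 0$) reads $\tilde P_{1,b}^{(N)} = \sum_{k}\qt^{-k/n}\tilde\Psi_k\tilde\Phi^*_{-k+n'+nb}$, and take the limit $N\to\infty$ term by term. By Propositions \ref{prop: stable vertex} and \ref{prop: stable dual vertex}, each $\tilde\Phi^*_k$ and $\tilde\Psi_k$ (with the correct shift $N{-}1,\mt{-}1$ for the latter, cf.\ Remark \ref{remark: notation N-1 and m-1}) stabilizes individually, so every summand $\tilde\Psi_k\tilde\Phi^*_{-k+n'+nb}$ induces a well-defined operator on $\siw{0}\!\left(\mathbb C^n[Y^{\pm1}]\right)$. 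A re-indexing $k \mapsto k + n'+nb$ then puts the sum into the form \eqref{eq: hat P and vertex negative}.

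The nontrivial step is to prove that on any fixed basis vector $|\lambda\rangle_{\infty,0}$ only finitely many summands contribute. The mode set $S_\lambda = \{-\lambda_i + i - 1 : i \geq 1\}$ of $|\lambda\rangle_{\infty,0}$ is a cofinite subset of $[-\lambda_1,\infty)$. Unpacking the definitions, $\hat\Phi^*_{-k}|\lambda\rangle_\infty$ is nonzero exactly when $k \in S_\lambda$, and then produces a vector in $\siw{-1}$ with mode set $S_\lambda \setminus \{k\}$; subsequently $\hat\Psi_{k+n'+nb}$ annihilates this vector unless $k + n'+nb \notin S_\lambda \setminus \{k\}$. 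Since $n'+nb \neq 0$, admissibility of $k$ is therefore $k \in S_\lambda$ together with $k + n'+nb \notin S_\lambda$.

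Because $n'+nb<0$, the second condition forces either $k + n'+nb < -\lambda_1$, i.e.\ $k < -\lambda_1 - n' - nb$, or $k + n'+nb$ lands on one of the finitely many holes of $S_\lambda$. Combined with $k \geq -\lambda_1$, this restricts $k$ to a finite set: at most $|n'+nb|$ consecutive integers starting from $-\lambda_1$, plus finitely many exceptional values coming from the defect of $\lambda$. At the finite-$N$ level the same combinatorics shows that the set of $k$ for which $\tilde\Psi_k\tilde\Phi^*_{-k+n'+nb}|\lambda\rangle_N$ is nonzero already stabilizes once $N$ exceeds a threshold depending on $|\lambda|$ and $|n'+nb|$; thus the limit of the finite sum is literally the promised finite sum on $\siw{0}$.

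I expect the main obstacle to be bookkeeping of scalar factors. The factor $(-1)^{N-1}$ and the $v$-power appearing in the definition of $\tilde\Psi_k^{(N-1,\mt-1)}$ depend on $N$, as does the combinatorial exponent in $\tilde\Phi^*_k = [N]_v^+ \Phi^*_k$; I would need to verify that, in combination with the prefactor $\qt^{-k/n}$ and the chosen values \eqref{def Ch n n' u} of $u_a$, these assemble to exactly $\qt^{-(k+n'+nb)/n}$ in the limit, as asserted in \eqref{eq: hat P and vertex negative}.
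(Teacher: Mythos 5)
Your overall strategy---stabilize each summand of the finite formula \eqref{eq: SDAHA via vertex finite case 2} via Propositions \ref{prop: stable vertex} and \ref{prop: stable dual vertex}, and separately show that only finitely many summands act nonzero on a given $|\lambda\rangle_\infty$---is exactly the paper's. The scalar bookkeeping you flag as the ``main obstacle'' is routine once one fixes the normalizations \eqref{def Ch n n' u} and Remark \ref{remark: notation N-1 and m-1}, so that part is fine.

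The genuine gap is in the finiteness step. You claim that $\hphi^*_{-k}|\lambda\rangle_\infty$ is nonzero exactly when $k$ lies in the mode set $S_\lambda=\{-\lambda_i+i-1\}$, and that the result then has mode set $S_\lambda\setminus\{k\}$. This is the classical ($v=1$) free-fermion picture and it \emph{fails} in the $v$-deformed setting: the tensor expansion of a $v$-wedge produced by $\bc_-$ contains the ``approaching'' terms from Lemma \ref{lemma: q-wedge relation}, so the coefficient of $e_{-k}$ in the first tensor slot can be nonzero even when $-k$ is not among the wedge indices. Concretely, take $n=2$, $\mt=0$, $\lambda=(2)$, $N=2$, so $|\lambda\rangle_2=e_{-2}\wedge e_1$. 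One computes
\begin{equation*}
 e_{-2}\wedge e_1=\tfrac{1}{1+v^{-2}}\bigl(v^{-2}\,e_{-2}\!\otimes e_1 - v^{-1}\,e_1\!\otimes e_{-2} - (1-v^{-2})\,e_0\!\otimes e_{-1}\bigr),
\end{equation*}
so $\Phi^*_0|\lambda\rangle_2\neq 0$ even though $0\notin S_\lambda$. Thus your characterization of which $k$ contribute, and hence your upper bound on $k$, does not hold. The lower bound $k\geq -\lambda_1$ is correct (all indices appearing in the tensor expansion are $\geq -\lambda_1$, since approaching terms stay inside the convex hull of the original indices), but the upper bound needs a $v$-deformed replacement for Pauli exclusion. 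This is precisely Lemma \ref{lemma: finitly many terms P1b} ($\Psi_{k-\Delta}\Phi^*_{-k}|\lambda\rangle_N=0$ for $\Delta>0$, $k\geq|\lambda|+\Delta$), whose proof proceeds by a grading argument on the output partitions $\mu$ (via $|\mu|=|\lambda|+\Delta$ and a forced hole giving $|\mu|\geq k+1$) rather than via naive mode-set bookkeeping. You need to cite or reprove such a lemma; the bound it yields is $-\lambda_1\leq k\leq |\lambda|-1-n'-nb$, which is generally larger than the interval your mode-set heuristic suggests.
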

To prove the proposition we need certain preparations. Recall the notation \(|\lambda \rangle_{N,0}=|\lambda \rangle_{N} \) introduces in \eqref{eq:|lambda>}.
We will need the following lemma.
\begin{lemma} \label{lemma: finitly many terms P1b}
    $\Psi_{k-\Delta} \Phi^*_{-k} |\lambda \rangle_N = 0$ for $\Delta > 0$ and $k \geq |\lambda| + \Delta$.
\end{lemma}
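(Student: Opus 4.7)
The plan is to derive the lemma from the slightly stronger statement that $\Phi^*_{-k}|\lambda\rangle_N = 0$ whenever $k > \lambda_1$. Since $|\lambda| \geq \lambda_1$ and $\Delta$ is a positive integer, the hypotheses $\Delta > 0$ and $k \geq |\lambda| + \Delta$ force $k \geq \lambda_1 + 1 > \lambda_1$; the factor $\Phi^*_{-k}$ then already annihilates $|\lambda\rangle_N$, and composing with $\Psi_{k-\Delta}$ automatically gives the desired vanishing.

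The heart of the argument is a slot-wise index bound. I would expand
\[
	|\lambda\rangle_N = \bc_-\!\left( e_{-\lambda_1} \otimes e_{-\lambda_2+1} \otimes \cdots \otimes e_{-\lambda_r+r-1} \otimes e_r \otimes \cdots \otimes e_{N-1} \right)
\]
in the basis of elementary tensors $e_{t_1} \otimes \cdots \otimes e_{t_N}$ of the full tensor product $\left(\mathbb{C}^n[Y^{\pm 1}]\right)^{\otimes N}$, and show that every $N$-tuple $\mathbf{t}$ with nonzero coefficient satisfies $t_i \in [-\lambda_1, N-1]$ for every slot $i$. Granting this, $\Phi^*_{-k}|\lambda\rangle_N$, which by definition is the partial sum obtained by fixing $t_1 = -k$, must vanish whenever $-k \notin [-\lambda_1, N-1]$, and in particular whenever $k > \lambda_1$.

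To establish the slot-wise bound, I would inspect the four cases \eqref{eq: T1}--\eqref{eq: T4} for the action of the Hecke generators $T_i$ on $\mathbb{C}^n[Y^{\pm 1}] \otimes \mathbb{C}^n[Y^{\pm 1}]$: in every case $T_i(e_a \otimes e_b)$ is a linear combination of tensors $e_{a'} \otimes e_{b'}$ with $a', b' \in [\min(a,b), \max(a,b)]$, because the shifts $l \mapsto l-nj$, $\mb \mapsto \mb + nj$ that appear in the formulas are constrained by $1 \le j \le k$ (or $0 \le j \le k$) and by the identities $l - nk = \mb + s$, $\mb + nk = l - s$. Since $\bc_-$ is a polynomial in the $T_i$, a straightforward induction on the number of Hecke factors then shows that applying $\bc_-$ to the initial tensor, whose slot values all lie in $[-\lambda_1, N-1]$, never moves any slot outside this interval. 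This case analysis is the only nontrivial step, and it is a routine reading of the formulas already recorded in the paper; no genuine obstacle is expected.
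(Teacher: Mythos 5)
Your slot-wise index bound is correct: expanding $\bc_-\!\left(e_{-\lambda_1}\otimes\cdots\otimes e_{N-1}\right)$ in elementary tensors, every index stays in the interval $[-\lambda_1,N-1]$, exactly because \eqref{eq: T1}--\eqref{eq: T4} never move a slot index outside the interval spanned by the two inputs. But the reduction you propose rests on a sign error in reading the definition. The paper sets $\Phi^*_j(w)=\sum_{t_2,\dots,t_N}y_{-j,t_2,\dots,t_N}\,e_{t_2}\otimes\cdots\otimes e_{t_N}$, so $\Phi^*_{-k}$ is the partial sum obtained by fixing $t_1=k$, not $t_1=-k$. With the sign corrected, your bound only yields $\Phi^*_{-k}|\lambda\rangle_N=0$ when $k>N-1$ or $k<-\lambda_1$, and neither of these follows from the hypotheses $\Delta>0$, $k\ge|\lambda|+\Delta$. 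In fact the stronger statement you aim for is false: take $\lambda=\varnothing$, any $0\le k\le N-1$, and any $0<\Delta\le k$; the hypotheses hold, yet $\Phi^*_{-k}|\varnothing\rangle_N$ is a nonzero multiple of $e_0\wedge\cdots\wedge\hat{e}_k\wedge\cdots\wedge e_{N-1}$. Your proposed first step therefore already fails.

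The factor $\Psi_{k-\Delta}$ is indispensable, and the vanishing is a Pauli-exclusion phenomenon in the $q$-wedge rather than an out-of-range effect. In the $\lambda=\varnothing$ example this is transparent: after $\Phi^*_{-k}$ removes the slot carrying $e_k$, appending $e_{k-\Delta}$ with $0<\Delta\le k$ reinserts an index $k-\Delta$ that is already present among $0,\dots,\hat{k},\dots,N-1$, so the wedge has a repeated index and vanishes. The paper's proof generalizes this collision: it writes $\Psi_{k-\Delta}\Phi^*_{-k}|\lambda\rangle_N$ in the basis $|\mu\rangle_N$, observes that every contributing $\mu$ satisfies $|\mu|=|\lambda|+\Delta$, and then shows by counting the indices $\ge k$ that some $\mu_s>0$ with $s\ge k+1$, forcing $|\mu|\ge k+1>|\lambda|+\Delta$, a contradiction. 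Your slot-wise bound is a pointwise constraint on where indices can live and is blind to this interaction between the two vertex operators, so it cannot replace the paper's argument.
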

\begin{proof}
Introduce notation \(|\lambda \rangle_N = \sum_{\mathbf{j}} y_{\mathbf{j}} e_{j_1} \otimes \dots \otimes e_{j_{N}}\).
Then
\begin{equation} \label{eq: after we have applied phi star}
\Psi_{k-\Delta} \Phi_{-k}^* \,  | \lambda \rangle_N = \sum_{j_2,  \dots, j_{N}} y_{k, j_2, \dots, j_{N}} e_{j_2} \wedge \dots \wedge e_{j_{N}} \wedge e_{k - \Delta}.
\end{equation}
Consider the decomposition with respect to the basis \(|\mu\rangle_N\)
\begin{equation} 
    \sum_{j_2,  \dots, j_{N}} y_{k, \dots, j_{N}} e_{j_2} \wedge \dots \wedge e_{j_{N}} \wedge e_{k - \Delta} = \sum_{\mu_1 \geq \dots \geq \mu_N} x_{\mu_1, \dots, \mu_N}e_{-\mu_1}\wedge e_{-\mu_2 +1} \wedge \dots \wedge e_{-\mu_N + N-1}.
\end{equation}
For the coefficients $ x_{\mu_1, \dots, \mu_N}\neq 0$ we have $|\mu| = |\lambda|+ \Delta$. 

There exists a number $t \in \{ k, k+1, \dots, N-1 \}$ such that $t \not\in \{ j_2, \dots, j_{N} \}$ for the terms of the RHS of \eqref{eq: after we have applied phi star}. Then there is $t' \in \{ k, k+1, \dots, N-1 \}$ such that $t' \not\in \{ -\mu_{k+1} +k, -\mu_{k+2} +k+1, \dots, -\mu_{N} +N-1 \}$. Hence there is $\mu_s>0$ for $s \geq k+1$. Therefore $k+1 \leq | \mu | = | \lambda| + \Delta$, which contradicts the assumption of the lemma.
\end{proof}

\begin{proof}[Proof of Proposition \ref{prop: P1b negative stabel}]
Using Lemma \ref{lemma: finitly many terms P1b}, we can specify \eqref{eq: SDAHA via vertex finite case 2} omitting zero terms and obtain
    \begin{equation}
        \tilde{P}_{1,b}^{(N)} | \lambda \rangle_N = \sum_{k = - \lambda_1}^{|\lambda|-1 -n'-nb} \qt^{(-k-n'-nb)/n} \tpsi_{k+n'+nb} \Tphi^*_{-k}  | \lambda \rangle_N.
    \end{equation}
    Propositions \ref{prop: stable vertex} and \ref{prop: stable dual vertex} imply that each term $ \qt^{(-k-n'-nb)/n} \tpsi_{k+n'+nb} \Tphi^*_{-k}$ stabilizes. 
\end{proof}

\paragraph{Convergence} Action of operators $\tilde{P}_{1,b}^{(N)}$  does not weakly stabilize for $n'+nb>0$. Therefore we will need the following notion.

\begin{defin}
Action of operators $A^{(N)} \colon \bc_- \left(\mathbb{C}^n[Y^{\pm 1}] \right)^{\otimes N} \rightarrow  \bc_- \left(\mathbb{C}^n[Y^{\pm 1}] \right)^{\otimes N}$ \emph{converges} if for any $w \in  \bc_- \left(\mathbb{C}^n[Y^{\pm 1}] \right)^{\otimes N} $ the following sequence converges for $R \rightarrow \infty$
\begin{equation} \label{eq: def convergence}
    \varphi_{N+nR}^{(\mt)} \circ A^{(N+nR)} \circ \varphi_{N+nR, N}^{(\mt)}(w).
\end{equation}
\end{defin}

\begin{Remark}
    This is the first place in our article where we use that the base field is $\mathbb{C}$, but not a field of characteristic 0.  Note that $\siw{\mt} \left(\mathbb{C}^n[Y^{\pm 1}] \right)$ is a graded vector space with finite-dimensional graded components.  \emph{Convergence of  \eqref{eq: def convergence}} is understood in sense of sequences in a finite-dimensional vector space over $\mathbb{C}$.
    
    Actually, all the convergences below will follow from the convergence of infinite geometric series. Therefore all matrix elements in the limit will be rational functions.
\end{Remark}
\begin{prop} \label{prop P tilde converges}
    The operators $\tilde{P}_{1,b}^{(N)}= v^{\frac{N}{n}}P_{1,b}^{(N)}$ converge for $|q^{-1} v^2 |<1$. Moreover, the induced operator $ \hat{P}_{1,b}$ equals
    \begin{equation} \label{eq: hat P and vertex}
         \hat{P}_{1,b} = \sum_{k \in \mathbb{Z}} \qt^{(-k-n'-nb)/n} \hpsi_{k+n'+nb} \hphi^*_{-k}.
    \end{equation}
    In particular, the series on the RHS of \eqref{eq: hat P and vertex} converges.
\end{prop}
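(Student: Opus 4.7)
The plan is to extend the argument of Proposition \ref{prop: P1b negative stabel} from finite sums to absolutely convergent infinite series. By linearity, fix $w = |\lambda\rangle_N$; then since $\siw{\mt}\!\!\left(\mathbb{C}^n[Y^{\pm 1}]\right)$ is graded with finite-dimensional pieces, convergence of $\varphi^{(\mt)}_{N+nR}\circ\tilde P^{(N+nR)}_{1,b}\circ\varphi^{(\mt)}_{N+nR,N}(w)$ reduces to convergence of each graded matrix coefficient $\langle\mu|\,\tilde P^{(N+nR)}_{1,b}|\lambda\rangle_{N+nR}$ for every fixed $\mu$. Using \eqref{eq: SDAHA via vertex finite case 2} together with the support constraint for $\Tphi^*_{-k}$ (the indices appearing in $|\lambda\rangle_{N+nR,\mt}$ lie in $[-\lambda_1-\mt,\,N+nR-1-\mt]$), this coefficient equals
\[
\sum_{k=-\lambda_1}^{N+nR-1-\mt} \qt^{-(k+n'+nb)/n}\,\langle\mu|\,\tpsi_{k+n'+nb}\,\Tphi^*_{-k}\,|\lambda\rangle_{N+nR,\mt}.
\]
By Propositions \ref{prop: stable vertex} and \ref{prop: stable dual vertex}, each individual summand stabilizes to $\qt^{-(k+n'+nb)/n}\langle\mu|\hpsi_{k+n'+nb}\hphi^*_{-k}|\lambda\rangle_\infty$, so for each fixed $k$ the contribution is well-defined in the limit.

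What remains is a uniform tail bound. It suffices to show that, for some constant $C(\lambda,\mu)$ depending only on $\lambda$ and $\mu$,
\[
\bigl|\qt^{-(k+n'+nb)/n}\,\langle\mu|\,\tpsi_{k+n'+nb}\,\Tphi^*_{-k}\,|\lambda\rangle_{N+nR,\mt}\bigr| \leq C(\lambda,\mu)\cdot|q^{-1}v^2|^{k/n}
\]
for all sufficiently large $k$ (and uniformly in $R$), with an analogous bound for the stabilized operators. Then the geometric series $\sum_k |q^{-1}v^2|^{k/n}$, which converges by assumption, provides a dominating series independent of $R$, and dominated convergence on each graded piece yields both the convergence of \eqref{eq: def convergence} and the convergence of the series on the right-hand side of \eqref{eq: hat P and vertex}, with the two limits agreeing.

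The main obstacle is proving this geometric tail bound. For $k$ in the upper range of the sum, $\Tphi^*_{-k}|\lambda\rangle_{N+nR,\mt}$ must be computed by expanding $|\lambda\rangle_{N+nR,\mt}$ in the tensor basis via iterated use of the factorization \eqref{eq: antisym factor} (equivalently, by permuting $e_k$ to the first tensor slot and applying Lemma \ref{lemma: q-wedge relation}), and then $\tpsi_{k+n'+nb}$ must be handled similarly. Each wedge exchange contributes a bounded polynomial factor in $v$, but the net $v$-content is controlled by the normalization factors $v^{\lfloor(N+nR-1-\mt-k)/n\rfloor}$ built into $\Tphi^*$ and $\tpsi$ (see \eqref{eq: def psi tilde via psi and star} and the definition of $\Tphi^*$ preceding Proposition \ref{prop: stable vertex}). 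A careful accounting should show that these normalizations, together with the combinatorial $v$-powers from antisymmetrization, combine to produce a factor of order $v^{2k/n}$; multiplied against the prefactor $\qt^{-k/n}=(v/q)^{k/n}$ this gives exactly $(v^2/q)^{k/n}$, yielding the required geometric decay. Making this estimate precise and uniform in $R$ on each fixed graded component of $\siw{\mt}\!\!\left(\mathbb{C}^n[Y^{\pm 1}]\right)$ is the technical crux of the argument; once it is in hand, the conclusion follows as outlined.
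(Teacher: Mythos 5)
Your outline correctly identifies the structure (stabilization of each fixed-$k$ term, plus a geometric tail), but it stops exactly at the point where the proof has to actually happen. You write that "a careful accounting should show" that the $v$-powers from antisymmetrization and normalization produce the required decay, and that "making this estimate precise and uniform in $R$ ... is the technical crux; once it is in hand, the conclusion follows." That crux \emph{is} the proposition; a dominating series is not established, so the dominated-convergence argument is not yet a proof. There is also a small arithmetic slip in the heuristic: a factor of order $v^{2k/n}$ times $\qt^{-k/n}=(v/q)^{k/n}$ would give $(v^3/q)^{k/n}$, not $(v^2/q)^{k/n}$; the extra $v$ per step of $n$ (i.e.\ $v^{k/n}$) is what the prefactor must meet.

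The paper does not prove a bound at all, and this is what makes the argument go through cleanly. Instead of estimating the tail, it proves an \emph{exact} geometric recursion: Lemma \ref{lemma: geometric series P1b stable} shows $\hpsi_{k+\Delta+n}\hphi^*_{-k-n}|\lambda\rangle_\infty=v\,\hpsi_{k+\Delta}\hphi^*_{-k}|\lambda\rangle_\infty$ for $k$ in the tail range, obtained by commuting with the diagonal Heisenberg generators $B_j$ via Proposition \ref{prop: heis and vertex operators} and invoking the Pauli-exclusion Lemma \ref{Pauli exclusion}. Lemma \ref{lemma: geometric series P1b} transfers this exact identity to the finite wedges in an explicit range of $k$. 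Setting $w_{k,R}=\varphi_{N+nR}^{(0)}\bigl(\qt^{(-k-n'-nb)/n}\tpsi_{k+n'+nb}\Tphi^*_{-k}\bigr)|\lambda\rangle_{N+nR}$, the paper then observes that $w_{k,R}=0$ for $k$ too small, $w_{k,R}=q^{-1}v^2\,w_{k-n,R}$ in the middle range (this is precisely the ratio $(v)\cdot(\qt^{-1})=v^2/q$), and $w_{k,R}=0$ for $k$ too large. So the finite sum is literally a fixed initial block plus a truncated geometric series with ratio $q^{-1}v^2$, and both it and the series on the RHS of \eqref{eq: hat P and vertex} converge to the same limit when $|q^{-1}v^2|<1$. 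If you want to salvage your approach, the missing estimate would essentially reduce to proving the same recursion, so you may as well adopt the paper's route; it avoids the painful bookkeeping of $v$-powers in iterated wedge exchanges that your sketch would otherwise have to carry out by hand.
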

To prove the proposition we need certain preparations.
\begin{lemma} \label{lemma: geometric series P1b stable}
    $\hpsi_{k+\Delta+n} \hphi^*_{-k-n} |\lambda \rangle_{\infty} = v \hpsi_{k+\Delta} \hphi^*_{-k}  |\lambda \rangle_{\infty}$ for $\Delta \geq 0$ and $k \geq |\lambda|  -\Delta$.
\end{lemma}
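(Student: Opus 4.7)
The plan is to reduce the identity on $\sip(\mathbb{C}^n[Y^{\pm 1}])$ to a finite-level computation and then verify it by direct manipulation of the $v$-wedge. First, the stabilization of $\tilde{\Phi}^*_k$ and $\tilde{\Psi}_k$ (Propositions \ref{prop: stable vertex} and \ref{prop: stable dual vertex}) reduces the claim to showing that, for $N$ sufficiently large,
\begin{equation*}
\varphi_{N+1}^{(\mt)}\!\left(\tilde{\Psi}_{k+\Delta+n}^{(N,\mt-1)}\,\tilde{\Phi}^{*,(N+1,\mt)}_{-k-n}\,|\lambda\rangle_{N+1,\mt}\right) \;=\; v\,\varphi_{N+1}^{(\mt)}\!\left(\tilde{\Psi}_{k+\Delta}^{(N,\mt-1)}\,\tilde{\Phi}^{*,(N+1,\mt)}_{-k}\,|\lambda\rangle_{N+1,\mt}\right);
\end{equation*}
the two finite expressions inside $\varphi_{N+1}^{(\mt)}$ need not agree on the nose, only after projection to the semi-infinite wedge.

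Next I would compute both sides by expanding $|\lambda\rangle_{N+1,\mt}=\sum_{\mathbf{j}} y_{\mathbf{j}}\,e_{j_1}\otimes\cdots\otimes e_{j_{N+1}}$ in the tensor basis and applying the definitions of $\Phi^*_{-k}$ and $\Psi_{k+\Delta}$. The condition $k+\Delta \geq |\lambda|$, together with $|\lambda|\geq l(\lambda)$, ensures that for large $N$ both indices $k+\Delta$ and $k+\Delta+n$ lie inside the consecutive ``tail'' $e_{l(\lambda)},e_{l(\lambda)+1},\dots,e_{N-\mt}$ of the ordered-wedge form of $|\lambda\rangle_{N+1,\mt}$. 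In this tail regime the wedge relations of Lemma \ref{lemma: q-wedge relation} reduce to single-transposition swaps: the tensor coefficients $y_{k+n,\,j_2,\dots}$ and $y_{k,\,j_2,\dots}$ differ by a factor $-v$ coming from the swap of $e_k$ and $e_{k+n}$ (same residue class mod $n$), and likewise wedging $e_{k+\Delta+n}$ instead of $e_{k+\Delta}$ on the right contributes another factor $-v$.

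Putting the pieces together, the wedge manipulations produce a naked ratio of $(-v)(-v)=v^2$, while the normalization of $\tilde{\Psi}_k$ contributes an extra $v^{-1}$ (the floor $\lfloor(N-\mt-k-\Delta)/n\rfloor$ in the exponent of $\tilde{\Psi}_{k+\Delta}^{(N,\mt-1)}$ decreases by $1$ under $k\mapsto k+n$) and $\tilde{\Phi}^*_k$ contributes nothing. The net scalar is exactly $v$, as claimed. The main obstacle is ensuring that no ``anomalous'' contributions from the third (``otherwise'') case of Lemma \ref{lemma: q-wedge relation} — the cascading sum $-e_{l-nk}\wedge e_{\mb+nk}-v\,e_{\mb+nk}\wedge e_{l-nk}$ — infect the two swaps above; this is precisely what the hypothesis $k\geq|\lambda|-\Delta$ excludes, by keeping both shifted pairs of indices strictly within the tail. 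A cleaner (optional) route for the bookkeeping is via the bar involution (Proposition \ref{prop: stable bar} and Lemma \ref{lemma: involution and vertex}), which turns the $\tilde{\Psi}$-side into a rescaled $\Phi$-computation and reduces both identities to purely left-wedging statements.
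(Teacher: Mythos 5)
Your proposal takes a genuinely different route from the paper's proof. The paper argues purely algebraically: it applies Lemma~\ref{lemma: finitly many terms P1b} to both $|\lambda\rangle_\infty$ and to $B_l|\lambda\rangle_\infty$ to conclude that $[B_l,\hpsi_{k+\Delta}\hphi^*_{-k-nl}]|\lambda\rangle_\infty=0$, then expands this commutator via Proposition~\ref{prop: heis and vertex operators} to obtain a two-term recursion, and finally closes by an induction on $l$ (writing $\Delta=nl-s$) using commutation with $B_1$ to replace the ``long step'' $\hpsi_{k+\Delta+ln}\hphi^*_{-k-ln}$ by $n$-step moves. No tensor-basis computation is performed at all; the finite-level computation you have in mind appears in the paper only later, in Lemma~\ref{lemma: geometric series P1b}, and there it produces only a \emph{stabilization} statement, after which the paper explicitly invokes the present infinite-level lemma to extract the factor of $v$.

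Your direct-computation sketch has two concrete gaps. First, the claim that ``the tensor coefficients $y_{k+n,j_2,\dots}$ and $y_{k,j_2,\dots}$ differ by a factor $-v$ coming from the swap of $e_k$ and $e_{k+n}$'' is not a meaningful comparison: the vectors $\hphi^*_{-k}|\lambda\rangle$ and $\hphi^*_{-k-n}|\lambda\rangle$ are \emph{different} $(N{-}1)$-wedges — the first still contains $e_{k+n}$ and the second still contains $e_k$ — so they are not scalar multiples of each other. The proportionality asserted by the lemma only appears after the outer $\hpsi$ is applied, and the argument for why exactly that happens is a nontrivial cancellation (the observation that $\hpsi_{k+\Delta}$ annihilates all but one of the wedge terms in the expansion of $\hphi^*_{-k}|\lambda\rangle$), which your proposal does not address. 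Second, the assertion that the hypothesis $k\geq|\lambda|-\Delta$ keeps ``both shifted pairs of indices strictly within the tail'' is false: for $\Delta>0$ one can have $k<l(\lambda)$, in which case $e_k$ (or $e_{k+n}$) lies in the hook segment $\{-\lambda_1,\dots,-\lambda_r+r-1\}$ or outside the wedge entirely, and the adjacency/single-swap picture you invoke does not apply. You flag this as ``the main obstacle'' but dispose of it incorrectly, and without it the wedge-relation bookkeeping reopens.
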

\begin{proof}
	Let $\Delta = nl -s$ for $s=1, \dots, n$. Lemma \ref{lemma: finitly many terms P1b} implies that for $k+nl \geq |\lambda|+(nl-\Delta)$ we have \(\hpsi_{k+\Delta} \hphi^*_{-k - nl} |\lambda \rangle_{\infty}=0\), and for $k+nl \geq |\lambda|-nl+(nl-\Delta)$ we have \( \hpsi_{k+\Delta} \hphi^*_{-k - nl}B_l |\lambda \rangle_{\infty}=0\). Hence for $k \geq |\lambda| -\Delta$ we get using the Proposition \ref{prop: heis and vertex operators}
	\begin{equation} \label{eq: two term relation}
    	0 = [B_l,  \hpsi_{k+\Delta} \hphi^*_{-k - nl}]  |\lambda \rangle_{\infty} = v^{l(2n-1)} \left( \hpsi_{k+\Delta+ln} \hphi^*_{-k-ln} - v^{l}\hpsi_{k+\Delta} \hphi^*_{-k} \right) |\lambda \rangle_{\infty}.
	\end{equation}
	We argue by induction on $l$. For \(l=1\) the lemma follows from \eqref{eq: two term relation}. Assuming induction hypothesis for $l-1$, we have for $j+n \geq |\lambda| - \left( \Delta - n \right)$ 
	\begin{multline} \label{eq: three term relation}
    	0 = [B_1,\hpsi_{j+\Delta+n} \hphi^*_{-j-2n}]   |\lambda \rangle_{\infty} - v [B_1, \hpsi_{j+\Delta} \hphi^*_{-j-n}]  |\lambda \rangle_{\infty}\\ = v^{2n-1} \left( \hpsi_{j+\Delta+2n} \hphi^*_{-j-2n} -2v \hpsi_{j+\Delta+n} \hphi^*_{-j-n} + v^2 \hpsi_{j+\Delta} \hphi^*_{-j}\right)   |\lambda \rangle_{\infty}.
	\end{multline}    
	Relation \eqref{eq: three term relation} implies that for $k \geq |\lambda| -\Delta$
	\begin{equation} \label{eq: three term relation with l}
		\left(\hpsi_{k+\Delta+ln} \hphi^*_{-k-ln}  - l v^{l-1} \hpsi_{k+\Delta+n} \hphi^*_{-k-n}+ (l-1) v^{l} \hpsi_{k+\Delta} \hphi^*_{-k} \right)   |\lambda \rangle_{\infty}=0.
	\end{equation}
	Relations \eqref{eq: two term relation} and \eqref{eq: three term relation with l} imply $\big( \hpsi_{k+\Delta+n} \hphi^*_{-k-n}-  v \hpsi_{k+\Delta} \hphi^*_{-k} \big)   |\lambda \rangle_{\infty}=0$, which completes the step of the induction.
\end{proof}

This lemma implies that the series on the RHS of \eqref{eq: hat P and vertex} converges (this boils down to the convergence of a geometric series). The next lemma is a finite analog valid before the limit \(N\rightarrow \infty\).

\begin{lemma} \label{lemma: geometric series P1b}
    $\tpsi_{k+\Delta} \Tphi^*_{-k} |\lambda \rangle_N = v \tpsi_{k-n+\Delta} \Tphi^*_{-k+n} |\lambda \rangle_N$ for $\Delta > 0$, and $N - \Delta  > k \geq |\lambda| -\Delta+n$.
\end{lemma}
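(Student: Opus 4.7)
My strategy is to derive this finite-$N$ lemma from its semi-infinite counterpart (Lemma \ref{lemma: geometric series P1b stable}) via the stabilization mechanisms of Section \ref{sec:wedge}. First, after the re-indexing $k \mapsto k-n$ in the statement of Lemma \ref{lemma: geometric series P1b stable}, that result reads
\begin{equation*}
\hpsi_{k+\Delta}\hphi^*_{-k}|\lambda\rangle_\infty = v\,\hpsi_{k-n+\Delta}\hphi^*_{-k+n}|\lambda\rangle_\infty
\end{equation*}
for $\Delta \geq 0$ and $k \geq |\lambda|+n-\Delta$, which matches exactly the pattern of indices on the two sides of the finite identity under consideration.

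Next, by Propositions \ref{prop: stable vertex} and \ref{prop: stable dual vertex} the operators $\tpsi_k$ and $\Tphi^*_k$ stabilize in the sense of Definition \ref{defin: stabilizes}, and hence so does the composition $\tpsi_{k+\Delta}\Tphi^*_{-k}$. Thus for any $|\lambda\rangle_N$ with $N$ satisfying the required stabilization conditions, we have $\varphi^{(0)}_{\infty,N}(\tpsi_{k+\Delta}\Tphi^*_{-k}|\lambda\rangle_N) = \hpsi_{k+\Delta}\hphi^*_{-k}|\lambda\rangle_\infty$, and analogously for the other combination.

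Combining the two steps, the images of the two sides of the asserted finite identity under $\varphi^{(0)}_{\infty,N}$ coincide by the infinite lemma. Since $\varphi^{(0)}_{\infty,N}$ is injective on the linear span of the standard basis vectors $|\mu\rangle_N$ with $l(\mu)\le N$ (a basic property of the inductive system in \eqref{eq: the injective system}), and since both $\tpsi_{k+\Delta}\Tphi^*_{-k}|\lambda\rangle_N$ and $\tpsi_{k-n+\Delta}\Tphi^*_{-k+n}|\lambda\rangle_N$ lie in this span, the desired equality descends to finite $N$.

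The main obstacle is verifying that stabilization actually applies at the specific finite $N$ in the hypothesis, and not merely asymptotically. The condition $N-\Delta>k$ guarantees that the appended index from $\tpsi_{k+\Delta}$ remains within the natural index range of the wedge so that no boundary truncation distorts the identity, while $k\geq|\lambda|+n-\Delta$ controls the length of the partitions that can arise in the expansion and permits one to invoke the refined form of Lemma \ref{lemma: phi star stabilization} recorded in Remark \ref{rem:phi* stabilization}. Should a direct descent prove insufficient at boundary values of the parameters, the fallback is to mimic the $B_1$-commutator argument of Lemma \ref{lemma: geometric series P1b stable} at finite $N$ using \eqref{eq: commutator bj and the vertexes}; in that route, the scalar $v$ in the statement is supplied not by the commutator itself (which carries no $v$-prefactor at finite $N$) but by the difference of floor-function exponents in the normalizations \eqref{eq: def psi tilde via psi and star} of $\tpsi_{k+\Delta}$ versus $\tpsi_{k-n+\Delta}$, combined with a vanishing argument from Lemma \ref{lemma: finitly many terms P1b} applied to the terms $b_1\Psi_{k+\Delta-n}\Phi^*_{-k}|\lambda\rangle_N$ and $\Psi_{k+\Delta-n}\Phi^*_{-k}\,b_1|\lambda\rangle_N$.
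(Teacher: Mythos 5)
Your overall strategy matches the paper's: both derive the finite-$N$ identity from the semi-infinite Lemma~\ref{lemma: geometric series P1b stable}. However, the core of the paper's argument is an \emph{explicit} verification of the stabilization relation
\begin{equation*}
\varphi_{N+n,N}^{(0)}\left(\tpsi_{k+\Delta}\Tphi^*_{-k}|\lambda\rangle_N\right) = \tpsi_{k+\Delta}\Tphi^*_{-k}|\lambda\rangle_{N+n}
\end{equation*}
under precisely the hypotheses $\Delta>0$ and $N-\Delta>k\ge|\lambda|-\Delta+n$, and your proposal leaves this step unproven. Propositions~\ref{prop: stable vertex} and~\ref{prop: stable dual vertex} only give stabilization in the asymptotic sense of Definition~\ref{defin: stabilizes} (``there is $M$ such that for any $N>M$\dots''), so invoking them together with a general injectivity argument tells you the identity holds for \emph{some} sufficiently large $N$, not for all $N$ in the stated range. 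You correctly flag this as ``the main obstacle,'' but the sentences that follow are descriptive rather than a proof: ``no boundary truncation distorts the identity'' and ``controls the length of the partitions'' are exactly the assertions that need a computation. The paper supplies it by expanding $\Tphi^*_{-k}|\lambda\rangle_N$ in the $\mu$-basis, proving that $\Psi_{k+\Delta}$ kills every term with $l(\mu)\neq k+\Delta$ (here both inequalities $N-\Delta>k$ and $k\ge|\lambda|-\Delta+n$ enter, via the Pauli exclusion Lemma~\ref{Pauli exclusion} and a length/weight count), and then reading off that only the stable coefficients from Remark~\ref{rem:phi* stabilization} survive. Without that computation, the injectivity of $\varphi_N^{(0)}$ on $\mathrm{span}\{|\mu\rangle_N : l(\mu)\le N\}$ cannot be applied, because you have not shown that $\tpsi_{k+\Delta}\Tphi^*_{-k}|\lambda\rangle_N$ lies in that span, nor that $\varphi_N^{(0)}$ of it equals $\hpsi_{k+\Delta}\hphi^*_{-k}|\lambda\rangle_\infty$ at the particular $N$ in question.

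Your fallback via a finite $b_1$-commutator also does not close the gap as stated. The commutator $[b_1,\Psi_{k+\Delta-n}\Phi^*_{-k}]|\lambda\rangle_N$ would indeed produce the required two-term relation if both $\Psi_{k+\Delta-n}\Phi^*_{-k}|\lambda\rangle_N$ and $\Psi_{k+\Delta-n}\Phi^*_{-k}b_1|\lambda\rangle_N$ vanish, but Lemma~\ref{lemma: finitly many terms P1b} only yields the first vanishing when $\Delta<n$, and the second vector has weight $|\lambda|+n$, which weakens the applicable range further. Since the lemma is asserted for all $\Delta>0$ (the infinite version crucially runs an induction on $\lfloor\Delta/n\rfloor$), this route would need substantially more work and different bounds than what you wrote. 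In summary: correct high-level plan, correctly identified obstacle, but the verification that stabilization holds at the given finite $N$ --- which is the substance of the paper's proof --- is absent.
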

    \begin{proof}
	Let
	\(	\tilde{\Phi}_{-k}^* \,  | \lambda \rangle_{N} =\sum_{\mu} \tilde{x}_{\mu}e_{-\mu_1+1}\wedge e_{-\mu_2 +2} \wedge \dots \wedge e_{-\mu_{N-1} + N-1} \). 
	We claim that 
	\begin{equation}
		\Psi_{k+\Delta} \Big(e_{-\mu_1+1}\wedge e_{-\mu_2 +2} \wedge \dots \wedge e_{-\mu_{N-1} + N-1}\Big)=0,\quad \text{ if \(l(\mu) \neq k+\Delta\)}.
	\end{equation}
	Indeed, if $\mu_{k+ \Delta} = 0$, then we get zero from the vanishing of the tale \( e_{k + \Delta} \wedge \dots \wedge e_{N-1} \wedge e_{k+\Delta}\) by Lemma \ref{Pauli exclusion}. 
	
	If $\mu_{k+ \Delta+1} > 0$ then there exists a number $t \in \{ k+\Delta+1, k+\Delta+2, \dots, N-1 \}$ such that $t \not\in \{ -\mu_{k+\Delta + 1} +k + \Delta +1, \dots, -\mu_{N-1} +N-1 \}$. 	Consider the expansion
	\begin{equation}\label{eq: psi k plus dela mu nu}
		\Psi_{k+\Delta} \Big(e_{-\mu_1+1}\wedge \dots \wedge e_{-\mu_{N-1} + N-1}\Big) = \sum_{\nu_1 \geq \dots \geq \nu_N} \bar{x}_{\nu_1, \dots, \nu_N}e_{-\nu_1}\wedge \dots \wedge e_{-\nu_N + N-1}.
	\end{equation}
	Then for each term on the RHS there is $t' \in  \{ k+\Delta+1, k+\Delta+2, \dots, N-1 \}$ such that $t' \not\in \{ -\nu_{k+\Delta + 2} +k + \Delta +1, \dots, -\nu_{N} +N-1 \}$. Hence there is $\nu_p>0$ for $p > k+\Delta+1$. Therefore $k+\Delta+ 1 <  | \nu |= | \lambda| - \Delta$, which contradicts the assumption of the lemma. Hence the expression \eqref{eq: psi k plus dela mu nu} is zero.	
	
	Therefore 
	 \begin{multline*}
		\tpsi_{k+\Delta} \Tphi^*_{-k} |\lambda \rangle_N  = \tpsi_{k+\Delta} \Big(\sum_{\mu,\, l(\mu)=k+\Delta} \tilde{x}_{\mu}e_{-\mu_1}\wedge \dots \wedge e_{-\mu_{k+\Delta}+k+\Delta} \wedge e_{k+\Delta+1}  \wedge \dots \wedge  e_{N-1} \Big)\\
		 =  (-1)^{N-1} v^{\left\lfloor \frac{N-1-k}{n} \right\rfloor-N+1}  \sum_{\mu,\, l(\mu)=k+\Delta} \tilde{x}_{\mu}\, e_{-\mu_1}\wedge \dots \wedge e_{-\mu_{k+\Delta}+k+\Delta} \wedge e_{k+\Delta+1}  \wedge \dots \wedge  e_{N-1} \wedge e_{k+\Delta}\\
		= (-1)^{k+\Delta} \,v^{\lfloor \frac{N-1-k}{n} \rfloor - \lfloor \frac{N-1-k -\Delta}{n} \rfloor -k-\Delta }\!\!\!\!\!\! \sum_{\mu,\, l(\mu)=k+\Delta} \tilde{x}_{\mu}\, e_{-\mu_1}\wedge \dots \wedge e_{-\mu_{k+\Delta}+k+\Delta} \wedge e_{k+\Delta} \wedge e_{k+\Delta+1}  \wedge \dots \wedge  e_{N-1}. 
	\end{multline*}
	It was shown in Remark \ref{rem:phi* stabilization} that the coefficients \(\tilde{x}_{\mu}\) are stable. Hence
	\begin{equation} \label{eq: stabilization psi phi delta}
		\varphi_{N+n,N}^{(0)} \left( \tpsi_{k+\Delta} \Tphi^*_{-k} |\lambda \rangle_N \right) =  \tpsi_{k+\Delta} \Tphi^*_{-k}  |\lambda \rangle_{N+n}.
	\end{equation}
	Thus the lemma follows from Lemma~\ref{lemma: geometric series P1b stable}.
\end{proof}

\begin{proof}[Proof of Proposition \ref{prop P tilde converges}]
	Due to Proposition \ref{prop: P1b negative stabel}, it is enough to consider the case $n'+nb>0$. It follows from \eqref{eq: SDAHA via vertex finite case 2} that
	\begin{equation} \label{eq:tilde P and vertex}
         \tilde{P}_{1,b}^{(N)} = \sum_{k \in \mathbb{Z}} \qt^{(-k-n'-nb)/n} \tpsi_{k+n'+nb} \Tphi^*_{-k}.
    \end{equation}
   	We know that each term on the RHS of \eqref{eq:tilde P and vertex} stabilizes to the corresponding term on the RHS of \eqref{eq: hat P and vertex}. 
	Let us consider the vector
	\begin{equation}
    	w_{k, R} = \varphi_{N+nR}^{(0)} \circ \left( \qt^{(-k-n'-nb)/n} \tpsi_{k+n'+nb} \Tphi^*_{-k} \right) | \lambda \rangle_{N+nR}.
	\end{equation}
	The stabilization mentioned above means that $w_{k,R} $ stabilizes for any fixed $k$ and $R \rightarrow \infty$. Now the proposition follows from the following statements
    \begin{enumerate}[(i)]
        \item $w_{k,R} =0$ for $k < - \lambda_1$, \label{statement 1}
        \item $w_{k, R} = q^{-1} v^{2} w_{k-n,R}$  for $N+nR-n'-nb > k \geq |\lambda| -n'-nb+n$,  \label{statement 3}
        \item $w_{k,R}$ =0 for $k \geq N+nR -n'-nb$.\label{statement 4}
    \end{enumerate} 
    Statement \eqref{statement 1} is obvious. 
    Statement \eqref{statement 3} is equivalent to Lemma \ref{lemma: geometric series P1b}. For the statement \eqref{statement 4} note that terms containing \(e_{k+n'+nb}\) vanish after \(\varphi_{N+nR}^{(0)}\) for such \(k\) . 
\end{proof}

In the rest of this subsection we will assume  $|q^{-1} v^2 |<1$. In order to study relations on \(\hat{P}_{1,b}\), we will need the following proposition.

\begin{prop} \label{prop: convergence of the product}
    The operators $\tilde{P}^{(N)}_{1,b_t}  \cdots \tilde{P}^{(N)}_{1,b_2} \tilde{P}^{(N)}_{1,b_1}$ converge to $\hat{P}_{1,b_t}  \cdots \hat{P}_{1,b_2} \hat{P}_{1,b_1}$.
\end{prop}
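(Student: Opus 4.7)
The proof is by induction on $t$, with the base case $t = 1$ being Proposition \ref{prop P tilde converges}. Set
\[
	w^{(s)}_{N'} := \tilde P^{(N')}_{1,b_s} \cdots \tilde P^{(N')}_{1,b_1}\,|\lambda\rangle_{N'}, \qquad \hat w^{(s)} := \hat P_{1,b_s} \cdots \hat P_{1,b_1}\,|\lambda\rangle_\infty.
\]
By the inductive hypothesis $\varphi_{N'}(w^{(t-1)}_{N'}) \to \hat w^{(t-1)}$ in the relevant finite-dimensional graded component of $\siw{\mt+t-1}$; since that component has partition basis $\{|\mu\rangle_\infty\}$, we get $\hat w^{(t-1)} = \sum_{\mu \in S} d_\mu\,|\mu\rangle_\infty$ as a finite sum, and $\hat P_{1,b_t}(\hat w^{(t-1)})$ is itself a finite sum in the finite-dimensional graded component $V_t \subset \siw{\mt+t}$ by termwise application of Proposition \ref{prop P tilde converges}. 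The aim is to show $\varphi_{N'}(\tilde P^{(N')}_{1,b_t}(w^{(t-1)}_{N'})) \to \hat P_{1,b_t}(\hat w^{(t-1)})$.

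A preliminary observation, which we fold into the induction, is that for $N'$ sufficiently large (depending only on $\lambda, \mt, t, n', n, b_1, \ldots, b_t$), every intermediate vector $w^{(s)}_{N'}$ ($s \leq t$) is supported on partition-basis vectors $|\mu\rangle_{N'}$ in the appropriate fixed graded component of $\bc_-(\cdot)^{\otimes N'}$: indeed each application of $\tilde P^{(N')}_{1, b_s}$ raises the maximum wedge index by at most $n' + nb_s$, and $\mu_1$ in the target component is bounded by the degree, so once $N'$ exceeds a finite bound depending on the data all intermediate wedges have maximum index $\leq N' - 1 - (\mt + s)$. Writing $w^{(t-1)}_{N'} = \sum_\mu c_{\mu, N'}\,|\mu\rangle_{N'}$ as a finite sum of partition-basis terms, convergence $\varphi_{N'}(w^{(t-1)}_{N'}) \to \hat w^{(t-1)}$ in the finite-dimensional graded component reduces to $c_{\mu, N'} \to d_\mu$ (with $d_\mu = 0$ for $\mu \not\in S$), and in particular $c_{\mu, N'}$ is bounded uniformly in $N'$.

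Using the vertex operator formula $\tilde P^{(N')}_{1,b_t} = \sum_{k \in \mathbb Z}\,\qt^{(-k-n'-nb_t)/n}\,\tpsi_{k+n'+nb_t}\Tphi^*_{-k}$, we want to commute the limit $N' \to \infty$ with the $k$-sum. For each fixed $k$, stability of the vertex operators (Propositions \ref{prop: stable vertex}, \ref{prop: stable dual vertex}) applied termwise to the $|\mu\rangle_{N'}$'s, together with the uniform boundedness of $c_{\mu, N'}$, gives
\[
	\varphi_{N'}\bigl(\tpsi_{k+n'+nb_t}\Tphi^*_{-k}(w^{(t-1)}_{N'})\bigr) \longrightarrow \hpsi_{k+n'+nb_t}\hphi^*_{-k}(\hat w^{(t-1)}).
\]
A $3\epsilon$-argument in the finite-dimensional space $V_t$ then reduces the entire proposition to a uniform tail bound: for every $\epsilon > 0$ there should exist $M$ such that the partial sums with $|k| > M$ contribute norm $< \epsilon$ in $V_t$, uniformly in (sufficiently large) $N'$.

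The main technical step is this uniform tail bound, which follows from Lemma \ref{lemma: geometric series P1b}. That lemma shows $\tpsi_{k+n'+nb_t}\Tphi^*_{-k}(|\mu\rangle_{N'}) = v\,\tpsi_{k-n+n'+nb_t}\Tphi^*_{-k+n}(|\mu\rangle_{N'})$ for $k$ in the appropriate (large) range, so combined with the coefficient $\qt^{-k/n}$ one obtains geometric decay of ratio $|q^{-1}v^2| < 1$ per $n$-shift in $k$, with initial data bounded independently of $N'$. Applied to each of the finitely many $|\mu\rangle_{N'}$ in the support of $w^{(t-1)}_{N'}$ (whose coefficients we have seen are uniformly bounded), this yields the desired uniform tail bound; dominated convergence then justifies the interchange of limit and sum and completes the induction. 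The main obstacle is not any single step but the bookkeeping needed to strengthen the inductive hypothesis to include both (i) the partition-basis support statement and (ii) the uniform boundedness of coefficients, which must be tracked through each application of $\tilde P^{(N')}_{1, b_s}$.
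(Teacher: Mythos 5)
Your proposal has a genuine gap in the ``preliminary observation.'' You claim that for $N'$ large enough the intermediate vectors $w^{(s)}_{N'}$ are supported on the partition-basis vectors $|\mu\rangle_{N'}$, on the grounds that ``each application of $\tilde P^{(N')}_{1,b_s}$ raises the maximum wedge index by at most $n'+nb_s$'' and $\mu_1$ is degree-bounded. But this is false for any $N'$: since $\tilde{\Phi}^*_{-k}$ can strip off $e_k$ with $k$ as large as $N'-1-\mt$ (the top of the staircase), and $\tilde{\Psi}_{k+n'+nb_s}$ then appends $e_{k+n'+nb_s}$, the result contains wedges with index strictly greater than $N'-1-(\mt+s)$ whenever $n'+nb_s>0$. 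Concretely, with $n=1$, $n'=0$, $b_s=1$, $\mt=0$, $\lambda=\varnothing$, the summand with $k=N'-1$ of $\tilde P^{(N')}_{1,1}|\varnothing\rangle_{N'}$ contains $e_0\wedge\cdots\wedge e_{N'-2}\wedge e_{N'}$, which is not in the span of partition-basis vectors. These terms are precisely the $|\tilde\mu,\tilde\nu\rangle_{N'}$ with $\tilde\nu\neq\varnothing$ that the paper's proof introduces.

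Because those terms are killed by $\varphi_{N'}^{(0)}$, they are invisible at level $s$; but the next operator $\tilde P^{(N')}_{1,b_{s+1}}$ can act on them and re-enter the partition-basis sector, so they cannot be ignored. The paper resolves this by (i) proving that the coefficients $y^{(N)}_{\tilde\mu,\tilde\nu}$ of the $\tilde\nu\neq\varnothing$ terms satisfy a recursion $y^{(N+n)}_{\tilde\mu,\tilde\nu}=q^{-1}v^2\,y^{(N)}_{\tilde\mu,\tilde\nu}$ and hence tend to zero under $|q^{-1}v^2|<1$ (Lemma \ref{lemma: y tends to 0}, extended to products in Lemma \ref{lemma: y tends to 0 for t}), and (ii) proving a uniform boundedness lemma for all coefficients (Lemma \ref{lemma: converges}), whose proof, nontrivially, passes through the Heisenberg operators $b_j$ rather than a direct estimate. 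Your $k$-tail bound via Lemma \ref{lemma: geometric series P1b} is in the same spirit as the paper's single-step convergence (Proposition \ref{prop P tilde converges}), but it does not close the induction without the $\tilde\nu$-piece analysis; the bookkeeping you defer at the end is exactly the missing substance of the proof.
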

\begin{proof}
For partitions $\lambda = (\lambda_1 \geq \dots \geq \lambda_r)$ and $\nu=(\nu_1 \geq \dots \geq \nu_s) $  such that $r+s \leq N$, we denote
\begin{equation*}
   | \mu, \nu \rangle_N = e_{-\mu_1} \wedge e_{-\mu_2 +1}\wedge  \dots \wedge e_{\mu_r + r-1} \wedge e_{r} \wedge \dots \wedge e_{N-s-1} \wedge e_{N-s + \nu_s} \dots \wedge e_{N-2 + \nu_2}\wedge e_{N-1 + \nu_1}.
\end{equation*}
In particular, $| \mu, \varnothing \rangle_N = | \mu \rangle_N$. Let
\begin{equation}
    \tilde{P}_{1, b}^{(N)} | \lambda \rangle_N = \sum_{\mu} x_{\mu}^{(N)} | \mu \rangle_N + \sum_{\tilde{\mu}, \tilde{\nu}}  y_{\tilde{\mu}, \tilde{\nu}}^{(N)} | \tilde{\mu}, \tilde{\nu} \rangle_N.
\end{equation}
In the first sum we have $| \mu| =|\lambda| + n + n' b $. In the second sum we have $ | \tilde{\mu}| - | \tilde{\nu} |=|\lambda| + n' + n b $ and $0< |\tilde{\nu}| \leq n' +nb$. Note that only finitely many diagrams $\mu$, $\tilde{\mu}$ and $\tilde{\nu}$ satisfy this conditions.
\begin{lemma} \label{lemma: y tends to 0}
    The coefficients $y_{\tilde{\mu}, \tilde{\nu}}^{(N)}$ tend to $0$ for $N \rightarrow \infty$.
\end{lemma}
\begin{proof}Note that 
\begin{equation} \label{eq: phi psi result sum mu tilde nu tilde}
    \sum_{\tilde{\mu}, \tilde{\nu}}  y_{\tilde{\mu}, \tilde{\nu}}^{(N)} | \tilde{\mu}, \tilde{\nu} \rangle_N = \sum_{k=N-n'-nb}^{N-1} \qt^{(-k-n'-nb)/n} \tpsi_{k+n'+nb} \Tphi^*_{-k} | \lambda \rangle_N.
\end{equation}
Also, we can see that
\begin{equation} \label{eq: some technical eq tilde psi k expantion}
    \Tphi^*_{-k} | \lambda \rangle_N  = \sum_{\Delta = 0}^{N-1-k}  \mathtt{y}_{\tilde{\mu}, \Delta}^{(k, N)} \, e_{-\tilde{\mu}_1} \wedge e_{-\tilde{\mu}_2+1} \wedge \dots \wedge \hat{e}_{k+\Delta} \wedge \dots \wedge e_{N-1}.
\end{equation}
Hence, all $\tilde{\nu}$ appearing on the RHS of \eqref{eq: phi psi result sum mu tilde nu tilde} are hook diagrams $\tilde{\nu} = (k+n'+nb-N+1, 1^{N-1-k - \Delta})$. Recall Remark \ref{remark: notation N-1 and m-1}. Denote $ \mathtt{c}_{k, N}= (-1)^{N-1} v^{\left\lfloor (N-1-k)/n \right\rfloor-N+1} $. We have
\begin{align} \label{eq: connection of tilde nu with delta}
 y_{\tilde{\mu}, \tilde{\nu}}^{(N)} = \qt^{(-k-n'-nb)/n} \mathtt{c}_{k+n'+nb, N}\, \mathtt{y}_{\tilde{\mu}, \Delta}^{(k, N)} \quad \quad \text{for $\tilde{\nu} = (k+n'+nb-N+1, 1^{N-1-k - \Delta})$.}
\end{align}

To study the coefficients $\mathtt{y}_{\tilde{\mu}, \Delta}^{(k, N)}$ we will use the following trick. Let us act by $\tilde{\Psi}_{k+\Delta}$ on \eqref{eq: some technical eq tilde psi k expantion}. Using Lemma \ref{lemma: q-wedge relation}, we 
obtain
\begin{equation} \label{eq: some technical eq act by psi and phi delta coefficient does not depend}
   \tpsi_{k+\Delta} \Tphi^*_{-k} | \lambda \rangle_N  =  (-1)^{N-1-k-\Delta} \, v^{N-1-k-\Delta-\lfloor \frac{N-1-k -\Delta}{n} \rfloor}  \mathtt{c}_{k+\Delta,N} \mathtt{y}^{(k, N)}_{\tilde{\mu}, \Delta} \, | \tilde{\mu} \rangle_N.
\end{equation}
Denote  $\tilde{\mathtt{y}}^{(k, N)}_{\tilde{\mu}, \Delta} = (-1)^{N-1-k-\Delta} \, v^{N-1-k-\Delta-\lfloor \frac{N-1-k -\Delta}{n} \rfloor} \mathtt{c}_{k+\Delta,N} \mathtt{y}^{(k, N)}_{\tilde{\mu}, \Delta}$. Note that Lemma \ref{lemma: geometric series P1b} and stabilization relation \eqref{eq: stabilization psi phi delta}  imply 
\begin{equation}
\tilde{\mathtt{y}}_{\tilde{\mu}, \Delta}^{(k+n, N+n)} = v  \tilde{\mathtt{y}}_{\tilde{\mu}, \Delta}^{(k,N+n)} =v \tilde{\mathtt{y}}^{(k, N)}_{\tilde{\mu}, \Delta}.
\end{equation}
Then $(-1)^n v^{-n} \mathtt{y}_{\tilde{\mu}, \Delta}^{(k+n, N+n)} =  v \mathtt{y}_{\tilde{\mu}, \Delta}^{(k,N)}$.
Thus,  relation \eqref{eq: connection of tilde nu with delta} implies $y_{\tilde{\mu}, \tilde{\nu}}^{(N+n)} = \qt^{-1}v y_{\tilde{\mu}, \tilde{\nu}}^{(N)}= q^{-1}v^2 y_{\tilde{\mu}, \tilde{\nu}}^{(N)}$. This finishes the proof since we have assumed $|q^{-1}v^{2}|<1$.
\end{proof}

Let
\begin{equation} \label{eq P acts alpha beta}
    \tilde{P}_{1, b}^{(N)} | \alpha , \beta \rangle_N = \sum_{\gamma} \tilde{x}_{\gamma}^{(N)} | \gamma \rangle_N + \sum_{\epsilon, \delta}  \tilde{y}_{\epsilon, \delta}^{(N)} | \epsilon, \delta \rangle_N.
\end{equation}
\begin{lemma}  \label{lemma: converges}
   The coefficients $\tilde{x}_{\gamma}^{(N)}$ and $\tilde{y}_{\epsilon, \delta}^{(N)}$ are bounded.
\end{lemma}
\begin{proof}
In case of empty $\beta$, the lemma follows from Proposition \ref{prop P tilde converges} and Lemma \ref{lemma: y tends to 0}. Below we will deduce the general case from the case of empty $\beta$.

Recall the operators $b_j$, see \eqref{eq: def b_j}. Note that     
    \begin{align}
    | \alpha , \beta \rangle_N =  \sum_{j_1, \dots, j_l, \rho} z_{j_1, \dots, j_l, \rho} b_{j_1} \cdots b_{j_l} | \rho \rangle_N.
    \end{align}
    Moreover, the coefficients $z_{j_1, \dots, j_l, \rho}$ does not depend on $N$. Then lemma follows from the commutation relation $[b_j, \tilde{P}_{1,b}^{(N)}] = (q^j -1) \tilde{P}_{1,b+j}^{(N)}$.
\end{proof}
Let
\begin{equation}
    \tilde{P}^{(N)}_{1,b_t} \cdots  \tilde{P}^{(N)}_{1,b_{2}} \tilde{P}^{(N)}_{1,b_1} | \lambda \rangle_N = \sum_{\zeta} x_{\zeta, t}^{(N)} | \zeta \rangle_N + \sum_{\eta, \theta}  y_{\eta, \theta, t}^{(N)} | \eta, \theta \rangle_N.
\end{equation}
\begin{lemma} \label{lemma: y tends to 0 for t}
The coefficients $y_{\eta, \theta,t}^{(N)}$ tend to $0$ for $N \rightarrow \infty$.
\end{lemma}
\begin{proof}
Follows from Lemmas \ref{lemma: y tends to 0} and \ref{lemma: converges} by induction on $t$.
\end{proof}
Let us prove Proposition \ref{prop: convergence of the product} by induction on $t$. Let 
\begin{align}
      \tilde{P}^{(N)}_{1,b_{t-1}} \cdots \tilde{P}^{(N)}_{1,b_1} | \lambda \rangle_N &= \sum_{\zeta} x_{\zeta, t-1}^{(N)} | \zeta \rangle_N + \sum_{\eta, \theta}  y_{\eta, \theta, t-1}^{(N)} | \eta, \theta \rangle_N,\\
       \hat{P}_{1,b_{t-1}} \cdots \hat{P}_{1,b_1} | \lambda \rangle_{\infty} &= \sum_{\zeta} x_{\zeta, t-1}   | \zeta \rangle_{\infty}.
\end{align}
Note, that $ \varphi_{N}^{(0)} | \eta, \theta \rangle_N=0$. Hence the assumption of the induction says that $x_{\zeta, t-1}^{(N)}$ tends to $x_{\zeta, t-1}$. Then Lemmas \ref{lemma: converges} and \ref{lemma: y tends to 0 for t} imply that 
\begin{equation*}
  \lim_{N \rightarrow \infty} \Big( \varphi_{N}^{(0)} \tilde{P}^{(N)}_{1,b_t} \cdots \tilde{P}^{(N)}_{1,b_1} | \lambda \rangle_N - \hat{P}_{1,b_t} \cdots \hat{P}_{1,b_1} | \lambda \rangle_{\infty} \Big) =  \lim_{N \rightarrow \infty}  \sum_{\eta, \theta}  y_{\eta, \theta, t-1}^{(N)} \varphi_{N}^{(0)}  \tilde{P}^{(N)}_{1,b_t} | \eta, \theta \rangle_N =0
\end{equation*}
\end{proof}
Now let us drop the assumption $u^{-\frac{1}{n}} v^{\frac{1}{n}} q= 1$. Also, recall $\qt = v^{-1} q$.
\begin{thm} \label{thm: positive half}
	For $|q^{-1} v^2 | <1$ the following formulas determine an action of $\DIp$ on the space $\siw{0} \left( \mathbb{C}^n[Y^{\pm1}]\right)$
	\begin{subequations}
		\begin{align}
			&c \mapsto v^{-n}, \quad \quad
			c'\mapsto v^{-n'}, \quad \quad
			P_{0, -j} \mapsto q^j B_{-j}, \quad \quad
			P_{0,j} \mapsto  \frac{q^j-1}{v^{-2j} q^j-1} v^{-jn} B_j, \label{eq: Heisenber representatio}
			 \\
			 &P_{1,b} \mapsto \hat{P}_{1,b}= u^{-\frac{1}{n}} v^{\frac{1}{n}}q \sum_{k \in \mathbb{Z}} \qt^{(-k-n'-nb)/n} \hpsi_{k+n'+nb} \hphi^*_{-k}, \label{eq: positive half E}
		\end{align}
	\end{subequations}
	for $q_1=q$, $q_2 =v^{-2}$.
\end{thm}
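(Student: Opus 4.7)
The strategy is to verify each defining relation of $\DIp$ for the proposed action. The algebra $\DIp$ is generated by $P_{0,j}$ for $j \neq 0$, $P_{1,b}$ for $b \in \mathbb{Z}$, and central $c^{\pm 1}, (c')^{\pm 1}$; the relations to be checked are the Heisenberg relation \eqref{eq: relation Heisenberg}, the Heisenberg--$E$ relation \eqref{eq: relation HeisenbergE}, the quadratic relation \eqref{bilinearE}, and the Serre relation \eqref{SerreE}. Since $c, c'$ act as scalars, centrality is automatic; only the occurrences of $c$ in \eqref{eq: relation Heisenberg} and \eqref{eq: relation HeisenbergE} require verification. The plan is to dispatch the Heisenberg and Heisenberg--$E$ relations by direct computation with vertex operators, and to deduce the quadratic and Serre relations from the corresponding finite-$N$ identities via Proposition~\ref{prop: convergence of the product}.

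The Heisenberg relations reduce to the deformed Heisenberg relation $[B_k, B_l] = k[n]^+_{v^k}\delta_{k+l,0}$ of Proposition~\ref{prop: Bk stabilization}. A short algebraic simplification shows that the scalars $q^j$ and $\frac{q^j-1}{v^{-2j}q^j-1}v^{-jn}$ chosen in \eqref{eq: Heisenber representatio} are precisely the prefactors needed for \eqref{eq: relation Heisenberg} to hold with $q_1=q$, $q_2=v^{-2}$, $q_3=v^2 q^{-1}$, and $c=v^{-n}$.

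For \eqref{eq: relation HeisenbergE}, I will substitute the commutators from Proposition~\ref{prop: heis and vertex operators} into the series \eqref{eq: positive half E} and reindex. For negative $j$ the computation is essentially immediate from the fact that $b_{-j}$ stabilizes. For $j>0$ the naive limit is not valid since $b_j$ only weakly stabilizes; here one must use the anomalous factor $v^{2jn}$ in $[B_j, \hphi^*_k] = -v^{2jn}\hphi^*_{k+nj}$, combined with the weights $\qt^{(-k-n'-nb)/n}$ in \eqref{eq: positive half E}. After reindexing, the two contributions from $[B_j, \hpsi_{k+n'+nb}]$ and from $[B_j, \hphi^*_{-k}]$ combine into a single shifted series proportional to $\hat{P}_{1,b+j}$, and the prefactor of $P_{0,j}$ in \eqref{eq: Heisenber representatio} is precisely the one that produces the required coefficient $c^{-j}(q_1^j-1)=v^{jn}(q^j-1)$.

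The relations \eqref{bilinearE} and \eqref{SerreE} involve only the $P_{1,b}$'s. Expanding each side of \eqref{bilinearE} as a formal series in $z^{\pm 1}, w^{\pm 1}$, one obtains, for every $(a,b)\in\mathbb{Z}^2$, a finite linear combination of products $P_{1,\bullet}P_{1,\bullet}$ that must vanish; similarly \eqref{SerreE} is, for each $a$, a finite sum of triple products that must vanish. At finite $N$ these identities hold for $P_{1,b}^{(N)}$ (hence also for $\tilde{P}_{1,b}^{(N)}$, which differs from $P_{1,b}^{(N)}$ only by a global scalar) because of the surjection $\DI\twoheadrightarrow\SHm(q,v)$ of Corollary~\ref{corollary: P}. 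Proposition~\ref{prop: convergence of the product} gives the convergence of the requisite double and triple products $\tilde{P}_{1,b_t}^{(N)}\cdots\tilde{P}_{1,b_1}^{(N)}\to \hat{P}_{1,b_t}\cdots\hat{P}_{1,b_1}$, so each finite-$N$ identity passes to the limit intact.

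The main technical obstacle is the bookkeeping in the Heisenberg--$E$ verification: the anomalous factor $v^{2jn}$ in $[B_j,\hphi^*_k]$ rules out a direct finite-$N$ argument for $j>0$, so one must unfold the explicit vertex operator formula and check that the correction cancels exactly against the chosen normalization of $P_{0,j}$.
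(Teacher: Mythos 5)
Your proposal is correct and follows essentially the same route as the paper's proof: verify \eqref{eq: relation Heisenberg} from the deformed Heisenberg relation \eqref{eq: deformed heisenberg}, verify \eqref{eq: relation HeisenbergE} by substituting the commutators of Proposition~\ref{prop: heis and vertex operators} into the series \eqref{eq: positive half E} and reindexing, and pass \eqref{bilinearE} and \eqref{SerreE} from the finite-$N$ identities (Corollary~\ref{corollary: P}) to the limit using Proposition~\ref{prop: convergence of the product}. You also correctly identify the crux — that $b_j$ only weakly stabilizes for $j>0$, so the Heisenberg--$E$ relation cannot be inherited from finite $N$ and must instead be checked directly on the semi-infinite wedge via the anomalous $v^{2jn}$ factor — which is exactly the role Proposition~\ref{prop: heis and vertex operators} plays in the paper's proof.
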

\begin{proof}
In this proof we will denote by $P_{0,j}$ and $P_{1,b}$ the images, prescribed by \eqref{eq: Heisenber representatio} and \eqref{eq: positive half E}. The proof is a verification of the relations  \eqref{eq: relation Heisenberg}, \eqref{eq: relation HeisenbergE}, \eqref{bilinearE}, and \eqref{SerreE}.

Relation \eqref{eq: relation Heisenberg} follows from \eqref{eq: deformed heisenberg}
\begin{equation}
    [P_{0,j}, P_{0,-j}] =  \frac{q^j-1}{v^{-2j} q^j-1} v^{-jn}  q^j [B_j, B_{-j}] = j \frac{(1-q^j)(v^{-nj}-v^{nj})}{(1 -q^{-j} v^{2j})(1-v^{-2j})}. 
\end{equation}

Equality in \eqref{eq: positive half E} was proven in Proposition \ref{prop P tilde converges}. For relation \eqref{eq: relation HeisenbergE} we will use the formula with vertex operators and Proposition \ref{prop: heis and vertex operators}. For example, for $j>0$
\begin{multline}
    [P_{0,j}, P_{1,b} ] =  u^{-\frac{1}{n}} v^{\frac{1}{n}}  q \sum_{k \in \mathbb{Z}} \qt^{(-k-n'-nb)/n} \Big( [P_{0,j}, \hpsi_{k+n'+nb}] \hphi^*_{-k}  +  \hpsi_{k+n'+nb} [P_{0,j}, \hphi^*_{-k}] \Big) \\
    = \frac{q^j-1}{v^{-2j} q^j-1} v^{-jn} \left( \qt^{j} v^{j(2n-1)} - v^{2jn} \right) P_{1, b+ j} =  ({q^j-1}) v^{jn} P_{1, b+ j} 
\end{multline}

Relations \eqref{bilinearE} and \eqref{SerreE} hold for $\tilde{P}_{1,b}^{(N)}$ by Corollary \ref{corollary: P}. Proposition \ref{prop: convergence of the product} implies that the relations hold for $P_{1,b} = \hat{P}_{1,b}$.  Here we use the definition of $\hat{P}_{1,b}$ as the limit of $\tilde{P}_{1,b}^{(N)}$.
\end{proof}
\begin{Remark}
    Recall that $c'$ does not appear in the relations of $\DIp$. Hence we do not have to specify $c'$ to formulate Theorem \ref{thm: positive half}. The requirement $c' \mapsto v^{-n'}$ will be essential in Section \ref{sec: twisted Fock II}. For the same reason, we will specify $c'$ below.
\end{Remark}
 
\begin{thm} \label{thm: negative half}
For $|q v^{-2} |<1$ the following formulas determine an action of $\DIm$ on the space $\siw{0} \left( \mathbb{C}^n[Y^{\pm1}]\right)$
\begin{subequations}
	\begin{align}
		&c \mapsto v^{-n}, \quad \quad c' \mapsto v^{-n'}, \quad \quad	P_{0, -j} \mapsto q^j B_{-j}, \quad \quad P_{0,j} \mapsto  \frac{q^j-1}{v^{-2j} q^j-1} v^{-jn} B_j, \\
		&P_{-1,b} \mapsto v^{-nb} \hat{P}_{-1,b}= u^{\frac{1}{n}} v^{-\frac{1}{n}} v^{-nb}  \sum_{k \in \mathbb{Z}} \qt^{k/n} \hphi_{k-n'+nb} \hpsi^*_{-k}.   \label{eq:negative half F}
	\end{align}
\end{subequations}
for $q_1=q$, $q_2 =v^{-2}$.
\end{thm}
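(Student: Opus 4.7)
The plan is to mirror the proof of Theorem~\ref{thm: positive half} with appropriate modifications, replacing the vertex-operator pair $(\tpsi, \Tphi^*)$ by $(\Phi, \tpsi^*)$. Concretely, I would first establish that the sequence $v^{-nb} \tilde{P}_{-1,b}^{(N)}$ (using the formula \eqref{eq: SDAHA via vertex finite case 3}) converges to the series on the right-hand side of \eqref{eq:negative half F}, then prove an analog of Proposition~\ref{prop: convergence of the product} for products, and finally verify the defining relations of $\DIm$ (namely \eqref{eq: relation Heisenberg}, \eqref{eq: relation HeisenbergF}, \eqref{bilinearF}, and \eqref{SerreF}).

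For the convergence step, I would follow the strategy of Propositions~\ref{prop: P1b negative stabel} and~\ref{prop P tilde converges}. First, for the range of summation index $k$ in which the terms $\hphi_{k-n'+nb} \hpsi^*_{-k}$ stabilize (the analog of $n'+nb<0$, obtained either by a direct dualization argument or by applying bar involution via Lemma~\ref{lemma: involution and vertex}), only finitely many terms act non-trivially on any fixed semi-infinite wedge, so term-wise stabilization suffices. For the complementary range, I would derive a two-term geometric relation by using the commutator
\begin{equation}
[B_{-1}, \hphi_{k-n'+nb}\hpsi^*_{-k+n}] = \hphi_{k-n'+nb-n}\hpsi^*_{-k+n} - v^{-1} \hphi_{k-n'+nb}\hpsi^*_{-k}
\end{equation}
from Proposition~\ref{prop: heis and vertex operators}: when this commutator annihilates the vectors in question (by a Pauli-exclusion argument analogous to Lemma~\ref{lemma: finitly many terms P1b}), successive summands in \eqref{eq:negative half F} differ by a factor $\qt \cdot v = qv^{-1} \cdot v^{-1} \cdot v^{\text{shift}}$ whose net effect is $qv^{-2}$, so the tail converges precisely under $|qv^{-2}|<1$. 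The rescaling factor $v^{-nb}$ in the image of $P_{-1,b}$ is introduced to normalize the constants appearing in the bilinear relation.

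For the convergence of products, I would repeat almost verbatim the argument of Proposition~\ref{prop: convergence of the product}: decompose $v^{-nb}\tilde{P}_{-1,b}^{(N)}|\alpha,\beta\rangle_N$ into a ``bulk'' part supported on states $|\gamma\rangle_N$ and a ``tail'' part supported on states $|\epsilon,\delta\rangle_N$ with non-empty $\delta$, show that the tail coefficients vanish as $N\to\infty$ with geometric rate $qv^{-2}$ by the two-term relation above, bound them uniformly using the commutation $[b_j, \tilde{P}_{-1,b}^{(N)}]=(q^j-1)\tilde{P}_{-1,b+j}^{(N)}$, and induct on the length of the product.

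The verification of relations is then essentially formal. Relation \eqref{eq: relation Heisenberg} follows from the deformed Heisenberg commutator \eqref{eq: deformed heisenberg} exactly as in the proof of Theorem~\ref{thm: positive half}. Relations \eqref{bilinearF} and \eqref{SerreF} hold for $\tilde{P}_{-1,b}^{(N)}$ (and hence for $v^{-nb}\tilde{P}_{-1,b}^{(N)}$, since the rescaling is compatible with both relations) by Corollary~\ref{corollary: P}, and pass to the limit via the product-convergence step. Relation \eqref{eq: relation HeisenbergF} can be checked directly from the explicit formula \eqref{eq:negative half F} using $[B_{-j}, \hphi_k]=\hphi_{k-nj}$, $[B_{-j},\hpsi^*_k]=-v^{-j}\hpsi^*_{k-nj}$, $[B_j,\hphi_k]=\hphi_{k+nj}$, $[B_j,\hpsi^*_k]=-v^{-j}\hpsi^*_{k+nj}$. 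The main obstacle will be the careful bookkeeping of the scalar factors: the normalization $v^{-nb}$ in \eqref{eq:negative half F} (absent from \eqref{eq: positive half E}) must be chosen so that both \eqref{eq: relation HeisenbergF} holds with the prescribed coefficients $(1-q_1^k)$ and $(q_1^k-1)c^k$ with $c=v^{-n}$, and the constants in \eqref{bilinearF} match with $q_1=q$, $q_2=v^{-2}$, $q_3 = q^{-1}v^2$. I expect this to be the only subtle part, and it should follow by direct computation once the convergence steps are set up.
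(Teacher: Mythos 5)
Your proposal matches the paper's approach exactly. The paper gives only a sketch, stating that "the results of this subsection have a counterpart for $P_{-1,b}$, the proofs are analogous," then notes precisely the two points you address: that \eqref{bilinearF} and \eqref{SerreF} hold for both $\hat{P}_{-1,b}$ and the rescaled $v^{-nb}\hat{P}_{-1,b}$ (your observation that $F(z)\mapsto F(v^nz)$ preserves these relations), and that Proposition \ref{prop: heis and vertex operators} gives \eqref{eq: relation HeisenbergF} for $P_{-1,b}=v^{-nb}\hat{P}_{-1,b}$. Your filling-in of the convergence and product-convergence steps mirrors Propositions \ref{prop P tilde converges} and \ref{prop: convergence of the product} in the way the paper intends; the bar-involution route (Lemma \ref{lemma: involution and vertex}) for transferring the Pauli-exclusion and geometric-series lemmas from $(\Psi,\Phi^*)$ to $(\Phi,\Psi^*)$ is also the one suggested by the paper's treatment of $\tilde{\Psi}_k$, $\tilde{\Psi}^*_k$. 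One small slip: you write the ratio of successive summands as "$\qt\cdot v=qv^{-1}\cdot v^{-1}\cdot v^{\text{shift}}$"; the clean bookkeeping is that shifting $k\mapsto k+n$ multiplies the prefactor $\qt^{k/n}$ by $\qt=qv^{-1}$ and the operator $\hphi_{k-n'+nb}\hpsi^*_{-k}$ by $v^{-1}$ (the bar-image of the factor $v$ in Lemma \ref{lemma: geometric series P1b stable}), giving the ratio $qv^{-2}$ directly — which is the conclusion you reach anyway.
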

\begin{proof}[Sketch of a proof.]
The results of this subsection have a counterpart for $P_{-1,b}$, the proofs are analogous.  Relations \eqref{bilinearF}, \eqref{SerreF} hold for both $\hat{P}_{-1,b}$ and $v^{-nb} \hat{P}_{-1,b}$. Proposition \ref{prop: heis and vertex operators} implies relation \eqref{eq: relation HeisenbergF} for the operators $P_{-1,b} = v^{-nb} \hat{P}_{-1,b}$. 
\end{proof}
 \paragraph{Currents}
 For $\alpha=0,1, \dots, n-1$ let us define the following currents (i.e. operator-valued formal power series)
 \begin{align}
     \hphi_{(\alpha)}(z) =& \sum_{k \in \mathbb{Z}} \hphi_{\alpha+nk} z^{-k}, & \hpsi_{(\alpha)}(z) =& \sum_{k \in \mathbb{Z}} \hpsi_{\alpha+nk} z^{-k},\\
     \hphi_{(\alpha)}^*(z) =& \sum_{k \in \mathbb{Z}} \hphi^*_{-\alpha+nk} z^{-k}, & \hpsi_{(\alpha)}^*(z) =& \sum_{k \in \mathbb{Z}} \hpsi_{-\alpha+nk}^* z^{-k}. 
 \end{align}
 Then \eqref{eq: positive half E} and \eqref{eq:negative half F} can be reformulated as follows
 \begin{align}
     E(z) =&  u^{-\frac{1}{n}} v^{\frac{1}{n}} q \sum_{\alpha - \beta \equiv n'} \qt^{\,-\frac{\alpha}{n}} z^{\frac{\beta-\alpha+ n'}{n}}\hpsi_{(\alpha)}(\qt z) \hphi_{(\beta)}^*(z) \label{eq: current E},\\
     F(z) =& u^{\frac{1}{n}}  v^{-\frac{1}{n}}  \sum_{\alpha - \beta \equiv -n'} \qt^{\frac{\beta}{n}} v^{\beta -\alpha -n'} z^{\frac{\beta-\alpha- n'}{n}}\hphi_{(\alpha)}(v^{n} z) \hpsi_{(\beta)}^*(\qt v^{n} z), \label{eq: current F}
 \end{align}
where $\equiv$ stands for $\equiv \pmod{n}$.
 \subsection{Analytic continuation} \label{subsection: analytic continuation}
Consider $\hat{P}_{1,b} |\lambda\rangle_{\infty}$ for any $\lambda$ and $n' + nb >0$. It follows from Lemma \ref{lemma: geometric series P1b stable} that the series \eqref{eq: positive half E} applied to \(|\lambda\rangle_{\infty}\) gives infinite geometric series, which equals to a rational function. This gives an analytic continuation from the region \(|q^{-1}v^2|<1\) to arbitrary $q$ and $v$.

Another way to say this is just rewrite
\begin{equation} \label{eq: rewritten, analytic continuation}
\sum_{k \in \mathbb{Z}} \qt^{\frac{-k-n'-nb}{n}} \hpsi_{k+n'+nb} \hphi^*_{-k}  = \frac{1}{1-q^{-1} v^{2}} \sum_{k \in \mathbb{Z}} \qt^{\frac{-k-n'-nb}{n}} \left( \hpsi_{k+n'+nb} \hphi^*_{-k} - v \hpsi_{k+n'+nb-n} \hphi^*_{-k+n} \right).
\end{equation}
Lemma \ref{lemma: geometric series P1b stable} implies that for any vector $|\lambda \rangle_\infty$, only finitely many terms of the RHS of \eqref{eq: rewritten, analytic continuation} do not annihilate $|\lambda \rangle_{\infty}$. Hence the sum is well-defined without the assumption $|q^{-1} v^{2} |<1$.
\begin{prop} \label{prop: analytic cont DIp}
The following formulas determine an action of $\DIp$ on $\siw{0} \left( \mathbb{C}^n[Y^{\pm1}]\right)$
\begin{subequations}
	\begin{align}
		&c \mapsto v^{-n}, \quad \quad c' \mapsto v^{-n'}, \quad \quad P_{0, -j} \mapsto q^j B_{-j}, \quad \quad \quad P_{0,j} \mapsto  \frac{q^j-1}{v^{-2j} q^j-1} v^{-jn} B_j,\\
		&P_{1,b} | \lambda \rangle_{\infty} = \frac{ u^{-\frac{1}{n}}v^{\frac{1}{n}} q}{1-q^{-1} v^{2}} \sum_{k \in \mathbb{Z}} \qt^{\frac{-k-n'-nb}{n}} \left( \hpsi_{k+n'+nb} \hphi^*_{-k} - v \hpsi_{k+n'+nb-n} \hphi^*_{-k+n} \right) | \lambda \rangle_{\infty}. \label{eq: analytic continuation E}
	\end{align} 
\end{subequations}
for $q_1=q$, $q_2 =v^{-2}$.
\end{prop}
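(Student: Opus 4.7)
The plan is to establish this proposition by analytic continuation from Theorem \ref{thm: positive half}. The content beyond Theorem \ref{thm: positive half} is that the formula \eqref{eq: analytic continuation E} makes sense and defines an action for arbitrary $q,v$, not just in the region $|q^{-1}v^2|<1$.

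First I would verify that \eqref{eq: analytic continuation E} is a finite sum on each basis vector $|\lambda\rangle_\infty$, and that the matrix elements of $P_{1,b}$ in the basis $\{|\mu\rangle_\infty\}$ are rational functions of $q,v,u^{1/n}$. The finiteness is exactly Lemma \ref{lemma: geometric series P1b stable}: for $k$ sufficiently large (depending on $\lambda$), the combination $\hpsi_{k+n'+nb}\hphi^*_{-k}-v\hpsi_{k+n'+nb-n}\hphi^*_{-k+n}$ kills $|\lambda\rangle_\infty$. Since the individual vertex operators $\hphi^*_k, \hpsi_k$ are defined over $\mathbb{Z}[v^{\pm 1}]$ and the prefactor is rational in $q,v,u^{1/n}$, the matrix elements are rational. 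This shows that for generic $q,v$ the RHS of \eqref{eq: analytic continuation E} agrees with $(1-q^{-1}v^2)^{-1}$ times the (formally) telescoping partial sum of \eqref{eq: positive half E}, and in the region $|q^{-1}v^2|<1$ the two sides agree with the definition used in Theorem \ref{thm: positive half}.

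Next I would handle compositions. The bigrading on $\siw{0}(\mathbb{C}^n[Y^{\pm 1}])$ by $(|\lambda|,\deg)$ ensures that $P_{1,b}$ shifts the grading in a controlled way: on each fixed graded component the image of $P_{1,b}$ lies in another finite-dimensional graded component. Therefore iterated compositions $P_{1,b_t}\cdots P_{1,b_1}|\lambda\rangle_\infty$ are well-defined vectors whose coordinates in the basis $\{|\mu\rangle_\infty\}$ are finite sums of products of rational functions, hence themselves rational in $q,v,u^{1/n}$. The same applies to products involving the Heisenberg operators $B_j$ (which are polynomial operators on each graded component).

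Finally, I would verify the defining relations \eqref{eq: relation Heisenberg}, \eqref{eq: relation HeisenbergE}, \eqref{bilinearE}, \eqref{SerreE}. The first two are really identities between operators expressed via $B_j$ and vertex operators; the proof given in Theorem \ref{thm: positive half} is a formal computation using \eqref{eq: deformed heisenberg} and Proposition \ref{prop: heis and vertex operators}, and it works verbatim for arbitrary $q,v$. The quadratic relation \eqref{bilinearE} and the Serre relation \eqref{SerreE} are identities asserting that certain matrix elements (rational in $q,v,u^{1/n}$ by the previous step) vanish. By Theorem \ref{thm: positive half} these matrix elements vanish on the non-empty open set $\{|q^{-1}v^2|<1\}$, so by the identity principle for rational functions they vanish identically.

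The main technical obstacle is making the analytic-continuation argument of the last step rigorous: one must confirm that a fixed matrix element of, say, $E(z)E(w)$ acting on $|\lambda\rangle_\infty$ is a single rational function of $(q,v,z,w)$ rather than a genuine formal series without analytic meaning. This reduces to checking that the ``currents'' $E(z)$, composed on a fixed vector, produce outputs whose dependence on $z$ (and on $q,v$) is rational on each graded component — which is guaranteed by the finiteness in Step 1 combined with the grading argument in Step 2.
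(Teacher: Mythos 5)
Your proposal is correct and follows essentially the same route as the paper: the paper's proof consists of the two sentences ``Note that \eqref{eq: analytic continuation E} is an analytic continuation of \eqref{eq: positive half E}. The relations hold after an analytic continuation,'' with the preceding discussion in Section \ref{subsection: analytic continuation} supplying the finiteness via Lemma \ref{lemma: geometric series P1b stable} exactly as you cite. You have merely spelled out what ``analytic continuation'' means operationally (finiteness on each basis vector, rationality of matrix elements, the identity principle on the open region $|q^{-1}v^2|<1$), which is the intended argument.
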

\begin{proof}
Note that \eqref{eq: analytic continuation E} is an analytic continuation of \eqref{eq: positive half E}. The relations hold after an analytic continuation.
\end{proof}  
The above construction can be reformulated in the language of currents. Namely, the following current is well-defined
\begin{equation} \label{eq: regular product psi phi start 1}
\hpsi_{(\alpha)}(\qt z) \hphi_{(\beta)}^*(z) \stackrel{\text{def}}{=} \left. \frac{1}{1-q^{-1} v^{2}} (1-vz/w)\hpsi_{(\alpha)}(w) \hphi_{(\beta)}^*(z) \right|_{w= \qt z}
\end{equation}
This notation allows us to use \eqref{eq: current E} without the assumption $|q^{-1} v^{2} |<1$.
Analogously, the following current is well-defined
\begin{equation}  \label{eq: regular product psi phi start 2}
\hphi_{(\alpha)}(v^{n} z) \hpsi_{(\beta)}^*(\qt v^{n} z) \stackrel{\text{def}}{=} \left. \frac{1}{1-qv^{-2}} (1-v^{-1} w/z)\hphi_{(\alpha)}(v^{n} z) \hpsi_{(\beta)}^*( v^{n} w) \right|_{w = \qt z}
\end{equation}
We prefer to write formulas via currents. The next proposition is a counterpart of Proposition \ref{prop: analytic cont DIp} for $\DIm$. We omit a version without the currents.
\begin{prop} \label{prop: analytic cont DIm}
The following formulas determine an action of $\DIm$ on $\siw{0} \left( \mathbb{C}^n[Y^{\pm1}]\right)$
\begin{subequations}
	\begin{align}
	&c \mapsto v^{-n}, \quad \quad c' \mapsto v^{-n'}, \quad \quad P_{0, -j} \mapsto q^j B_{-j}, \quad \quad \quad P_{0,j} \mapsto  \frac{q^j-1}{v^{-2j} q^j-1} v^{-jn} B_j,
	\\
	&F(z) \mapsto u^{\frac{1}{n}}v^{-\frac{1}{n}} \sum_{\alpha - \beta \equiv -n' } \qt^{\frac{\beta}{n}} v^{\beta -\alpha -n'} z^{\frac{\beta-\alpha- n'}{n}}\hphi_{(\alpha)}(v^{n} z) \hpsi_{(\beta)}^*(\qt v^{n} z).
	\end{align} 
\end{subequations}
for $q_1=q$, $q_2 =v^{-2}$.
\end{prop}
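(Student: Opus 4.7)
The proof mirrors that of Proposition \ref{prop: analytic cont DIp}. The key point is that the formula for $F(z)$ is an analytic continuation in $q,v$ of the formula \eqref{eq: current F} from Theorem \ref{thm: negative half}, so the defining relations of $\DIm$, which are polynomial identities satisfied on the region $|qv^{-2}|<1$, should persist for generic $q,v$. What actually needs work is to confirm that the regularized product \eqref{eq: regular product psi phi start 2} produces a well-defined operator on $\siw{0}\!\!\left(\mathbb{C}^n[Y^{\pm 1}]\right)$ for arbitrary $q,v$; equivalently, that only finitely many modes contribute when it is applied to any $|\lambda\rangle_\infty$.

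The plan is therefore to establish an analog of Lemma \ref{lemma: geometric series P1b stable} with the pair $(\hpsi,\hphi^*)$ replaced by $(\hphi,\hpsi^*)$. Concretely, I would prove an identity of the form
\begin{equation*}
\hphi_{k-n}\hpsi^*_{-k+n+\Delta}\,|\lambda\rangle_\infty \;=\; v^{-1}\,\hphi_{k}\hpsi^*_{-k+\Delta}\,|\lambda\rangle_\infty
\end{equation*}
for suitable $\Delta$ and $k$ sufficiently large relative to $|\lambda|$ and $\Delta$. The argument would follow the same inductive scheme used in Lemma \ref{lemma: geometric series P1b stable}: first, a vanishing statement analogous to Lemma \ref{lemma: finitly many terms P1b} (obtained by running the wedge-diagram analysis for $\hphi$ and $\hpsi^*$ in place of $\hpsi$ and $\hphi^*$) supplies the base case; then the three-term relation coming from $[B_1,\hphi_k\hpsi^*_{-k+\Delta+n}]$, computed using the $\hphi$ and $\hpsi^*$ entries of Proposition \ref{prop: heis and vertex operators}, performs the inductive step. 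Once this is in place, the telescoping expansion of the regularized product — the direct counterpart of \eqref{eq: rewritten, analytic continuation} — applied to $|\lambda\rangle_\infty$ reduces to a finite sum.

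With finiteness established, each matrix coefficient of $F(z)$ in the basis $\{|\lambda\rangle_\infty\}$ is a rational function of $q$ and $v$. On the convergence region $|qv^{-2}|<1$ the regularized expression coincides with the original sum \eqref{eq:negative half F} (up to the $v^{-nb}$ normalization built into $P_{-1,b}$ in Theorem \ref{thm: negative half}), so the relations \eqref{eq: relation Heisenberg}, \eqref{eq: relation HeisenbergF}, \eqref{bilinearF}, \eqref{SerreF} are known to hold there. Being rational identities in $q,v$ that hold on a non-empty open set, they extend to generic $q,v$ by analytic continuation; this finishes the proof.

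The main obstacle is the stabilization identity for $\hphi\hpsi^*$-products. One must track the signs and powers of $v$ in Proposition \ref{prop: heis and vertex operators} carefully — in particular, that $[B_j,\hpsi^*_k]=-v^{-j}\hpsi^*_{k+nj}$ and $[B_j,\hphi_k]=\hphi_{k+nj}$ combine to produce the recurrence coefficient $v^{-1}$, matching exactly the factor $1-v^{-1}w/z$ in the regularization \eqref{eq: regular product psi phi start 2}. Once this bookkeeping is done, everything else is formal analytic continuation.
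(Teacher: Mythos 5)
Your approach is correct and essentially identical to what the paper intends; the paper itself does not spell out a separate proof for $\DIm$, relying on the analogy with Proposition \ref{prop: analytic cont DIp}, and you are right that the missing ingredient is an analog of Lemma \ref{lemma: geometric series P1b stable} for the pair $(\hphi,\hpsi^*)$, to be proved from a vanishing statement plus the $B_1$-commutator, followed by analytic continuation of rational matrix entries. Your identification of $v^{-1}$ as the recurrence coefficient, consistent with the factor $1-v^{-1}w/z$ in \eqref{eq: regular product psi phi start 2}, is also correct.

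One caveat on the bookkeeping you yourself flag: the displayed recurrence has the shift going the wrong way. Writing the series for $F$ as $\sum_k\qt^{k/n}\hphi_{k+\Delta'}\hpsi^*_{-k}$ with $\Delta'=-n'+nb$, the ratio of consecutive terms must be $\qt\cdot c$ with $c$ the recurrence coefficient, and matching the convergence region $|qv^{-2}|<1$ forces $c=v^{-1}$. From $[B_1,\hphi_{k+\Delta'}\hpsi^*_{-k-n}]\,|\lambda\rangle_\infty=0$ (valid once the vanishing lemma applies) one gets
\begin{equation*}
\hphi_{k+n+\Delta'}\hpsi^*_{-k-n}\,|\lambda\rangle_\infty \;=\; v^{-1}\,\hphi_{k+\Delta'}\hpsi^*_{-k}\,|\lambda\rangle_\infty,
\end{equation*}
i.e.\ \emph{increasing} both indices by $n$ produces a factor $v^{-1}$. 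Your displayed identity, after relabeling, reads $\hphi_{j+n}\hpsi^*_{-j-n+\Delta}=v\,\hphi_j\hpsi^*_{-j+\Delta}$, which is the inverse and would give a geometric ratio $\qt\cdot v=q$, not $qv^{-2}$. Swapping the two sides of your displayed identity (or fixing the shifts) repairs this, and the rest of your argument then goes through exactly as you describe.
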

\subsection{Example $n=1$} \label{subsec: example n=1}
Consider an example $n=1$ and $n'=0$. In this case $[B_k, B_l] = k \delta_{k+l,0}$. The space $\siw{\mt} \left( \mathbb{C}[Y^{\pm1}]\right)$ is Fock space for the Heisenberg algebra. Namely, it has a cyclic vector $| \varnothing \rangle_{\infty,\mt} = e_{-\mt} \wedge e_{-\mt+1} \wedge \cdots$ such that $B_k | \varnothing \rangle_{\infty,\mt} = 0$ for $k>0$. Let us consider an operator $e^{\pm Q} \colon \siw{\mt} \left( \mathbb{C}[Y^{\pm1}]\right) \rightarrow \siw{\mt \pm 1} \left( \mathbb{C}[Y^{\pm 1}]\right)$ determined by $e^{\pm Q} | \varnothing \rangle_{\infty,\mt} = | \varnothing \rangle_{\infty,\mt \pm 1}  $ and $[B_k, e^{\pm Q}]=0$. 
\begin{prop}
The operators $\hphi_j,\hpsi_j \colon \siw{\mt} \left( \mathbb{C}[Y^{\pm1}]\right) \rightarrow \siw{\mt+1} \left( \mathbb{C}[Y^{\pm1}]\right)$ are determined by
\begin{align}
    \hphi(z) =&  \exp \left(\sum_{k=1}^{\infty} \frac{z^k}{k} B_{-k}\right) \exp \left(-\sum_{k=1}^{\infty} \frac{z^{-k}}{k} B_{k}\right) e^Q z^{\mt+1},\\
    \hpsi(z) =& \exp \left(\sum_{k=1}^{\infty} \frac{v^k z^k}{k} B_{-k}\right) \exp \left(-\sum_{k=1}^{\infty} \frac{v^{-k}z^{-k}}{k} B_{k}\right) e^Q z^{\mt+1}.
\end{align}
The operators $\hphi^*_j,\hpsi^*_j \colon \siw{\mt} \left( \mathbb{C}[Y^{\pm1}]\right) \rightarrow \siw{\mt-1} \left( \mathbb{C}[Y^{\pm1}]\right)$ are determined by
\begin{align}
    \hphi^*(z) =& \exp \left(-\sum_{k=1}^{\infty} \frac{v^{2k} z^k}{k} B_{-k}\right) \exp \left(\sum_{k=1}^{\infty} \frac{z^{-k}}{k} B_{k}\right) e^{-Q} z^{-\mt},\\
     \hpsi^*(z) =& \exp \left(-\sum_{k=1}^{\infty} \frac{v^{-k}z^k}{k} B_{-k}\right) \exp \left(\sum_{k=1}^{\infty} \frac{v^{-k} z^{-k}}{k} B_{k}\right) e^{-Q} z^{-\mt}.
\end{align}
\end{prop}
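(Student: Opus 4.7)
The plan is to identify both sides via the standard irreducibility/Wick argument for Heisenberg Fock modules. In the case $n=1$, $n'=0$, the space $\sip\!\left(\mathbb{C}[Y^{\pm 1}]\right)$ splits as $\bigoplus_{\mt\in\mathbb{Z}} \mathcal{F}_{\mt}$ where $\mathcal{F}_{\mt}$ is generated from $|\varnothing\rangle_{\infty,\mt}$ by $B_{-k}$, $k>0$, and is irreducible under the (undeformed) Heisenberg algebra $[B_k,B_l]=k\delta_{k+l,0}$. The vertex operators $\hphi_j,\hpsi_j,\hphi^*_j,\hpsi^*_j$ shift $\mt$ by $\pm 1$, so it suffices to show that (a) their commutators with every $B_k$ match those of the proposed bosonic expressions and (b) their action on the vacuum $|\varnothing\rangle_{\infty,\mt}$ agrees. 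By irreducibility of $\mathcal{F}_{\mt}$, two operators $\mathcal{F}_{\mt}\to\mathcal{F}_{\mt\pm 1}$ with identical $B_k$–commutators and identical vacuum value must coincide.

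For step (a), I would compute, using only $[B_j,B_{-k}]=j\delta_{jk}$ and the identity $[B_j,e^{X}]=[B_j,X]e^{X}$ when $[B_j,X]$ is central, that for $j>0$:
\begin{align*}
[B_{j},\hphi(z)]&=z^{j}\hphi(z),& [B_{-j},\hphi(z)]&=z^{-j}\hphi(z),\\
[B_{j},\hpsi(z)]&=v^{j}z^{j}\hpsi(z),& [B_{-j},\hpsi(z)]&=v^{-j}z^{-j}\hpsi(z),\\
[B_{j},\hphi^{*}(z)]&=-v^{2j}z^{j}\hphi^{*}(z),& [B_{-j},\hphi^{*}(z)]&=-z^{-j}\hphi^{*}(z),\\
[B_{j},\hpsi^{*}(z)]&=-v^{-j}z^{j}\hpsi^{*}(z),& [B_{-j},\hpsi^{*}(z)]&=-v^{-j}z^{-j}\hpsi^{*}(z).
\end{align*}
Reading off coefficients of $z^{-k}$ recovers exactly the eight relations of Proposition~\ref{prop: heis and vertex operators} specialized to $n=1$, $n'=0$ (where $v^{j(2n-1)}=v^{j}$, $v^{2jn}=v^{2j}$).

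For step (b), I would apply the creation–exponential in each formula to $|\varnothing\rangle_{\infty,\mt}$: the annihilation exponential acts as the identity on the vacuum, $e^{\pm Q}$ sends $|\varnothing\rangle_{\infty,\mt}$ to $|\varnothing\rangle_{\infty,\mt\pm 1}$, and the resulting power series starts with $z^{\pm\mt+\text{const}}$. Extracting the appropriate coefficient and matching against the normalizations
\[
\hphi_{-\mt-1}|\varnothing\rangle_{\infty,\mt}=\hpsi_{-\mt-1}|\varnothing\rangle_{\infty,\mt}=|\varnothing\rangle_{\infty,\mt+1},\qquad \hphi^{*}_{\mt}|\varnothing\rangle_{\infty,\mt}=\hpsi^{*}_{\mt}|\varnothing\rangle_{\infty,\mt}=|\varnothing\rangle_{\infty,\mt-1}
\]
from Remark~\ref{remark: normailization of vertex operators} fixes the leading factors $z^{\mt+1}$ and $z^{-\mt}$ appearing in the formulas.

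Having matched commutators with every $B_k$ and values on the vacuum, uniqueness finishes the argument: for any vector $P(B_{-1},B_{-2},\dots)|\varnothing\rangle_{\infty,\mt}\in\mathcal{F}_{\mt}$, one moves the creation operators $B_{-k}$ through either operator using the identical commutation rules, reducing to the vacuum case. The main (very mild) obstacle is bookkeeping of the $v$–powers and signs: these come from the involution relating $\hpsi$ to $\hphi$ in Lemma~\ref{lemma: involution and vertex} and the $v^{2k}$ appearing in $\hphi^{*}$ originates in $[B_{j},\hphi^{*}_{k}]=-v^{2jn}\hphi^{*}_{k+nj}$, so care is required to ensure each current contains the exponential with the correct twist of the zero-mode spectral parameter.
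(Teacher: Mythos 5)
Your proof is correct and follows the paper's own (very terse) route: the paper simply says the proposition "follows from Proposition~\ref{prop: heis and vertex operators} and Remark~\ref{remark: normailization of vertex operators}," and your Steps (a) and (b) are exactly the contents of that Proposition and Remark specialized to $n=1$, $n'=0$, together with the standard irreducibility/uniqueness argument for Heisenberg Fock modules that the paper leaves implicit.
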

\begin{proof} Follows from Proposition \ref{prop: heis and vertex operators} and Remark \ref{remark: normailization of vertex operators}.
\end{proof}
Then
\begin{align}
    \hpsi(w) \hphi^*(z)  =& \frac{w^{\mt} z^{-\mt}}{1-v z/w}\exp \left(\sum_{k=1}^{\infty} \frac{v^k w^k-v^{2k} z^k}{k} B_{-k}\right) \exp \left(\sum_{k=1}^{\infty} \frac{z^{-k}- v^{-k} w^{-k}}{k} B_{k}\right), \\
    \hphi(z) \hpsi^*(w) =& \frac{z^{\mt} w^{-\mt}}{1-v^{-1} w/z}\exp \left(\sum_{k=1}^{\infty} \frac{z^k-v^{-k} w^k}{k} B_{-k}\right) \exp \left(\sum_{k=1}^{\infty} \frac{v^{-k} w^{-k} -z^{-k}}{k} B_{k}\right).
\end{align}
Substituting this to \eqref{eq: current E} and \eqref{eq: current F}, we obtain
\begin{align}
E(z) =& \frac{ u^{-1} v^{1-\mt} q^{1+\mt}}{1-q^{-1}v^{2}}\exp \left(  \sum_{k=1}^{\infty} \frac{q^k-v^{2k}}{k} B_{-k}z^k \right)  \exp \left( \sum_{k=1}^{\infty} \frac{1-q^{-k}}{k} B_k z^{-k} \right),  \label{eq: example E} \\
F(z) =& \frac{u  v^{\mt -1} q^{-\mt}}{1- q v^{-2}}\exp \left( \sum_{k=1}^{\infty} \frac{v^{k}-q^k v^{-k}}{k} B_{-k}z^k \right)  \exp \left( \sum_{k=1}^{\infty} \frac{v^{-k}(q^{-k}-1)}{k} B_k z^{-k} \right). \label{eq: example F}
\end{align}
The following proposition is \cite[Prop. A.6]{FHHSY}. 
 \begin{prop}
 Formulas \eqref{eq: example E}, \eqref{eq: example F}, and
 \begin{align}
 c &\mapsto v^{-1}, &  c'&\mapsto 1, &
 P_{0, -j} &\mapsto q^j B_{-j}, & P_{0,j} \mapsto&  \frac{q^j-1}{v^{-2j} q^j-1} v^{-jn} B_j
 \end{align}
 determine an action of $\DI$ for $q_1=q$, $q_2 =v^{-2}$.
 \end{prop}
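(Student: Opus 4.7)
Since $n=1$, $n'=0$, $c \mapsto v^{-1}$, $c' \mapsto 1$, the formulas for $P_{0,k}$ and $P_{1,b}$ are precisely the specializations of Theorem~\ref{thm: positive half}, and similarly $P_{0,k}, P_{-1,b}$ specialize Theorem~\ref{thm: negative half}. (The analytic continuation in Proposition~\ref{prop: analytic cont DIp}, \ref{prop: analytic cont DIm} lets us drop the convergence assumptions on $q,v$.) Hence the $\DIp$-relations \eqref{eq: relation Heisenberg}, \eqref{eq: relation HeisenbergE}, \eqref{bilinearE}, \eqref{SerreE} and the $\DIm$-relations \eqref{eq: relation HeisenbergF}, \eqref{bilinearF}, \eqref{SerreF} hold automatically. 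What is left to verify is the cross commutator between $E(z)$ and $F(w)$: relations \eqref{eq: commutator E and F 1} and \eqref{eq: relation central extention}.

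The first step is to compute the product $E(z)F(w)$ via the standard Heisenberg OPE: the only nontrivial contribution comes from commuting the annihilating exponential of $E(z)$ past the creating exponential of $F(w)$ using $[B_k, B_{-k}]=k$. Denote $A = \sum_{k\ge 1}\tfrac{q^k-v^{2k}}{k}B_{-k}z^k$ (from $E(z)$) and $B = \sum_{k\ge 1}\tfrac{v^{-k}(q^{-k}-1)}{k}B_{k}w^{-k}$ (from $F(w)$). A direct calculation gives
\begin{equation*}
e^{[A,B]} \;=\; \frac{(1-vw/z)(1-v^{-1}w/z)}{(1-qv^{-1}w/z)(1-q^{-1}vw/z)}.
\end{equation*}
Observing that $q_1=q$, $q_2=v^{-2}$, $q_3=q^{-1}v^2$ and $c=v^{-1}$, the denominator factors are $(1 - c^{-1}q_1^{-1}q_2^{-1}\,w/z)$ and $(1-c^{-1}w/z)^{-1}$-style poles; the numerator provides the structure of the Heisenberg $\theta$-currents. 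Combining this with the scalar prefactors of \eqref{eq: example E}, \eqref{eq: example F} one finds (the $u$-dependence as well as the $\mt$-shift dependence cancels)
\begin{equation*}
E(z)F(w) \;=\; \frac{(1-q)}{(1-v^{-2})(1-q^{-1}v^2)}\cdot \frac{(1-vw/z)(1-v^{-1}w/z)}{(1-qv^{-1}w/z)(1-q^{-1}vw/z)}\, {:}E(z)F(w){:},
\end{equation*}
and analogously for $F(w)E(z)$ with $z\leftrightarrow w$.

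In the second step I take the commutator as formal distributions. The two rational prefactors agree as meromorphic functions but expand in opposite regions; their difference is a sum of two formal delta functions $\delta(c^{-1}w/z)$ and $\delta(cw/z)$ multiplied by the residues. At each pole the normally ordered product $:E(z)F(w):$ becomes a well-defined vertex operator depending only on $w$, whose exponent is
\begin{equation*}
\sum_{k\ge 1} \frac{v^{-k}(q^k-1)}{k}B_k \, w^{-k} \;=\; \sum_{k\ge 1}\frac{(1-q_2^k)(1-q_3^k)}{k}\,P_{0,k}\, w^{-k},
\end{equation*}
using the identification $P_{0,k}=\frac{q^k-1}{v^{-2k}q^k-1}v^{-k}B_k$; this is exactly the generating series of the $\theta_k$'s. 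An identical check at the other pole (with $P_{0,-k}=q^k B_{-k}$) produces $\theta_{-k}$. Matching coefficients of $z^{-a}w^{-b}$ then yields \eqref{eq: commutator E and F 1} for $a+b>0$ and its negative-mode counterpart, while the constant-in-$w$ term gives the central element relation \eqref{eq: relation central extention} with the expected $c^a c' - c^{-a}(c')^{-1}$.

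The main technical obstacle is bookkeeping: one must verify that the overall constant $\tfrac{(1-q_1)}{(1-q_2)(1-q_3)}$ comes out right, that the poles land precisely at $w/z = c$ and $w/z=c^{-1}$ (so the delta function has the correct $c$-shift matching the general $\DI$-relation with nontrivial central charge $c$), and that the exponent emerging from $:E(z)F(w):$ at the pole matches the $\theta_{\pm}$-definition under the rescaling $P_{0,k}\mapsto \tfrac{q^k-1}{v^{-2k}q^k-1}v^{-k}B_k$. Once these numerical identifications are done the proof is complete; this calculation is carried out in \cite[Prop.~A.6]{FHHSY}, which is what we are invoking.
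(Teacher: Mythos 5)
The proposal is correct and matches the paper's approach: the paper itself offers no proof beyond the one-line citation \emph{``The following proposition is \cite[Prop. A.6]{FHHSY}''}, which is exactly what you invoke at the end, and your preliminary observation that the $\DI^{\pm}$-relations come for free from Theorems~\ref{thm: positive half}--\ref{thm: negative half} via Propositions~\ref{prop: analytic cont DIp}--\ref{prop: analytic cont DIm} is consistent with the paper's logic. One bookkeeping slip in your sketch: you announce that you will commute the annihilating exponential of $E(z)$ past the creating exponential of $F(w)$, but then define $A$ as the \emph{creating} part of $E$ and $B$ as the \emph{annihilating} part of $F$, and write the contraction factor with $w/z$ rather than $z/w$; with consistent labeling the $E(z)F(w)$ contraction is $\frac{(1-qv^{-1}w/z)(1-q^{-1}vw/z)}{(1-vw/z)(1-v^{-1}w/z)}$, the reciprocal of what you wrote, but the poles still sit at $w/z = v^{\mp 1}=c^{\pm 1}$ and the residue calculus proceeds as you describe, so this does not affect the validity of the argument.
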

 
  For $\mt=0$ we will denote the representation by $\mathcal{F}_u$ and call it \emph{Fock module}.

\begin{Remark} 
 Note that Propositions \ref{prop: analytic cont DIp} and \ref{prop: analytic cont DIm}
 guarantee only existence of actions of $\DIp$ and $\DIm$ separately. Remarkably, the actions of $\DI^{\pm}$ are restrictions of the action of whole $\DI$. Below we will prove the same result for general $n$.
 
 The obtained representation is celebrated \emph{Fock module} of $\DI$.  It was constructed in \cite{FHHSY} via formulas \eqref{eq: example E}, \eqref{eq: example F} (up to a different notation). Though, to the best of our knowledge, the interpretation via the operators $\hphi(z)$, $\hpsi(z)$, $\hphi^*(z)$, and $\hpsi^*(z)$ is new.
 \end{Remark}
\section{Semi-infinite construction of twisted Fock module II} \label{sec: twisted Fock II} 
In the previous section, we have obtained actions of $\DIp$ and $\DIm$ on $\siw{0} \left( \mathbb{C}^n[Y^{\pm1}]\right)$. In this section we prove that these actions (after a simple rescaling) give action of the whole algebra $\DI$. We do not check directly the relations \eqref{eq: commutator E and F 1}, \eqref{eq: relation central extention} due to technical difficulties.

We prove that the defined below operators $\tilde{P}_{a,b}^{(N)}$ stabilize for $a n' + bn  \leq 0$ and general $q$ and $v$. Therefore we obtain a representation of the corresponding subalgebra $\DI^{\swarrow}\!\subset \DI$ (subsection \ref{subsec: bottom half}). We extend the action to the whole $\DI$, the obtained representation is isomorphic to twisted Fock module $\mathcal{F}_u^{\sigma}$ by construction. Then we compare the obtained action $\DI \curvearrowright \siw{0} \left( \mathbb{C}^n[Y^{\pm1}]\right)$  with the actions of the subalgebras $\DI^{\pm}$. 

Recall that the actions of $\DI^{\pm}$ are determined by explicit formulas for Chevalley generators (Proposition \ref{prop: analytic cont DIp} and \ref{prop: analytic cont DIm}). Hence we get explicit formulas for the action of Chevalley generators of $\DI$ on $\siw{0} \left( \mathbb{C}^n[Y^{\pm1}]\right)$, the obtained representation is isomorphic to twisted Fock module $\mathcal{F}_u^{\sigma}$ (Theorem \ref{thm: explict construction of twisted Fock}). This is the central result of the whole paper.

\subsection{The limit for the bottom half} \label{subsec: bottom half}
\paragraph{Existence of the limit} Below we will use the results of Sections \ref{ssec: Triangularity of Macdonald operators} and \ref{ssec: monomial basis}. Recall that $m,m'$ are integers such that $nm'-n'm=1$ and $0\leq m <n$, $0 \leq m' < n'$. Also, recall the automorphism \(\sigma \in \widetilde{SL}(2,\mathbb{Z})\) defined in Section \ref{ssec: Triangularity of Macdonald operators}.

\begin{lemma}\label{lemma:limit A B} 
	a) The sequences of operators $v^{\frac{km N}n} P_{km,-km'}^{(N)}$ stabilize for $k \in \mathbb{Z}_{>0}$.	
	
	b) The sequences of operators $v^{kN} P_{kn,-kn'}^{(N)} - u^{-k}v^{-k}q^{2k} \sum_{i=1}^{N}  (v^2q^{-1})^{ik}$  stabilize for $k \in \mathbb{Z}_{>0}$.	
	
	c) The sequences of operators $v^{-kN} P_{-kn,kn'}- u^{k}v^{k}q^{-k} \sum_{i=1}^{N}  (v^{-2}q)^{ik}$ stabilize for $k \in \mathbb{Z}_{>0}$.	
\end{lemma}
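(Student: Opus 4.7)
The strategy is to apply the $\widetilde{SL}(2,\mathbb{Z})$-automorphism $\sigma$, whose projection to $SL(2,\mathbb{Z})$ is $M=\begin{pmatrix}n&-m\\-n'&m'\end{pmatrix}$, to the ``standard'' generators $P_{0,-k}$, $P_{\pm k,0}$. Since $M(0,-k)=(km,-km')$ and $M(\pm k,0)=(\pm kn,\mp kn')$, the $\widetilde{SL}(2,\mathbb{Z})$-equivariance of $\DI\twoheadrightarrow\SHm(q,v)$ combined with \eqref{eq: sigma action on Pab} will yield
\[
P_{km,-km'}^{(N)} = C_1(k)\,q^k\,\bc_-\!\Big(\sum_{i=1}^N\AB_i^{-k}\Big)\!\bc_-, \qquad P_{\pm kn,\mp kn'}^{(N)} = C_{2,\pm}(k)\,\bc_-\!\Big(\sum_{i=1}^N\BB_i^{\mp k}\Big)\!\bc_-,
\]
with $N$-independent scalars $C_\bullet(k)$ determined by the central charges $c\mapsto v^{-n}$, $c'\mapsto v^{-n'}$ and the winding numbers. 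This reduces each part of the lemma to controlling the $N$-asymptotics of the corresponding symmetric sum.

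Next, I would invoke Theorem \ref{thm:Bi triang} and Lemma \ref{lemma: factorization for A}, which state that $\BB_i$ and $\AB_i$ are upper triangular in the $e_\lambda$ basis with diagonal entries given by explicit $u,q,v$-monomials (Corollary \ref{corol: eigenvectors twisted case}). Substituting the $N$-dependent normalization \eqref{def Ch n n' u} of the parameters $u_a$ into the eigenvalue formula, the $\mp k$-th power of the diagonal on a tail factor $e_{i-1-\mt}$ of the ground-state wedge factors as an $N$-independent $v,q$-monomial in the tail position $i$ times the global $v$-power $v^{\mp kmN/n}$ for $\AB_i$, respectively $v^{\mp kN}$ for $\BB_i$. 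This matches the rescalings $v^{\frac{kmN}{n}}$ and $v^{\pm kN}$ in the statement.

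For part (a), once rescaled, $\AB_i^{-k}$ strictly lowers the grading by $k$. Fixing any $w\in\bc_-\bigl(\mathbb{C}^n[Y^{\pm 1}]\bigr)^{\otimes N_0}$, for $N$ sufficiently large every summand $\AB_i^{-k}$ with $i$ in the tail part produces wedges that either are annihilated by the antisymmetrizer via Pauli exclusion (Lemma \ref{Pauli exclusion}) or coincide with the analogous term in $\varphi_{N+n,N_0}^{(\mt)}(w)$. Only finitely many bulk indices contribute, with an $N$-independent value; this is the twisted analog of the stabilization of $b_{-k}$ in Proposition \ref{prop: Bk stabilization}.

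For parts (b) and (c), the grading-preserving operator $\BB_i^{\pm k}$ leaves a non-trivial tail diagonal. A direct summation of the $\mp k$-th power of the tail eigenvalues using \eqref{def Ch n n' u}, incorporating the rescaling $v^{\pm kN}$, should reproduce the scalar series $u^{\mp k}v^{\mp k}q^{\pm 2k}\sum_{i=1}^N(v^{\pm 2}q^{\mp 1})^{ik}$. Subtracting this divergent piece will leave off-diagonal and bulk-diagonal contributions that stabilize by the same argument as in (a). The hardest part is the exact bookkeeping of the constants $C_\bullet(k)$ in the first step and of the tail-eigenvalue sum in the last step, in order to verify that the two divergent contributions combine to the precise scalar correction in (b), (c).
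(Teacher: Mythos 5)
Your proposal takes essentially the same route as the paper: rewrite $P_{km,-km'}^{(N)}$ and $P_{\pm kn,\mp kn'}^{(N)}$ via $\sigma$ as $q^k\bc_-\sum_i \AB_i^{-k}\bc_-$ and $\bc_-\sum_i\BB_i^{\mp k}\bc_-$, kill the tail summands by a holes/Pauli-exclusion argument, read off the divergent scalar from the $\BB_i$-eigenvalues on the vacuum wedge via Corollary \ref{corol: eigenvectors twisted case}, and match the remaining finitely many bulk terms under $N\mapsto N+n$. One small correction: the scalar $C_1(k)$ you leave to bookkeeping is simply $1$, since the $\widetilde{SL}(2,\mathbb{Z})$-action on $\SHm(q,v)$ factors through $SL(2,\mathbb{Z})$ with no winding-number or central-charge correction (those only enter later, in Proposition \ref{prop: Pab central charge apperar}); and since $\AB_i^{-k}$ shifts degree, for part (a) one compares not ``tail eigenvalues'' but the diagonal action (equal to $v^{-mk}$) of the extra $G$-factors in \eqref{eq:Ai in G} created by $N\mapsto N+n$, which is what cancels the rescaling $v^{kmN/n}$.
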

\begin{proof} a)
	The formula \eqref{eq:P{0,k}N} and $SL(2,\mathbb{Z})$-transformation properties of the $P_{a,b}^{(N)}$ generators imply
	\begin{equation}
	P_{km,-km'}^{(N)}= \sigma\left(P_{0,-k}^{(N)} \right) =  q^k\bc_- \sum_{i=1}^N \sigma\left(Y_i \right)^{-k} \bc_- =q^k\bc_- \sum_{i=1}^N \AB_i^{-k} \bc_-.
	\end{equation}
	Note that $\sum_{i=1}^N \AB_i^{-k}$ commutes with finite Hecke algebra and, in particular, with $\bc_-$. Hence
	\begin{equation}\label{eq:P{km,km'} action}
		P_{km,-km'}^{(N)} |\lambda\rangle_N =q^k\sum_{i=1}^N \bc_- \AB_i^{-k}  \big( e_{-\lambda_1} \otimes e_{-\lambda_2+1} \otimes \dots \otimes e_{N-1}\big).
	\end{equation}
	 We decompose the proof into two steps. 
	
		\emph{Step 1.}	First we show that $i$-th term in \eqref{eq:P{km,km'} action} vanishes for $i>|\lambda|+k$. Denote
		\begin{equation} \label{eq: application of Ak to lambda}
		    \AB_i^{-k}  \big( e_{-\lambda_1} \otimes e_{-\lambda_2+1} \otimes \dots \otimes e_{N-1}\big) = \sum_{j_1, \dots, j_N} c_{j_1, \dots j_N}  e_{j_1} \otimes \dots \otimes e_{j_N}.
		\end{equation}
		For a sequence $\{j_1, \dots , j_N \}$, we will say that a number $r$ is a \emph{hole} if $0 \leq r \leq N-1$ and $r \not\in \{j_1, \dots , j_N \}$.
		Let us prove that for each summand in \eqref{eq: application of Ak to lambda} there is a hole $r \geq i-1$.
		
		We will use the formula \eqref{eq:Ai in G}. Note that the operators $\kappa_i^{-\ct}G_{i,j}^{\pm 1}\kappa_i^{\ct}$, $\kappa_i^{-\ct}G_{j,i}^{\pm 1}\kappa_i^{\ct}$, and $\kappa_i$ preserve the existence of a hole with position $\geq i-1$. If there is no such holes, operator $\kappa_i$ must create one. Hence the operator $\AB_i^{-k}$ must create a hole $r \geq i-1$.
		
		Let us also denote
		\begin{equation}
		   \bc_- \AB_i^{-k}  \big( e_{-\lambda_1} \otimes e_{-\lambda_2+1} \otimes \dots \otimes e_{N-1}\big) = \sum_{\mu_1 \geq \mu_2 \geq \cdots} \tilde{c}_{\mu}  e_{-\mu_1} \wedge e_{-\mu_2 +1} \wedge \dots \wedge e_{N-1}.
		\end{equation}		
		
		Note that $|\mu| = | \lambda| + k $. Existence of a hole $r$ implies $\mu_{r+1} >0$. Hence $|\mu| \geq r+1 \geq i > |\lambda|+k$. Hence the sum runs over the empty set, i.e. $ \bc_- \AB_i^{-k}  \big( e_{-\lambda_1} \otimes e_{-\lambda_2+1} \otimes \dots \otimes e_{N-1}\big) = 0$.

	\emph{Step 2.}	 It remains to study the terms in \eqref{eq:P{km,km'} action} for small $i$. If we replace $N\mapsto N+n$ and use again formula \eqref{eq:Ai in G}, we get $mk$ additional factors of the form $\kappa_i^{-\ct}G_{i,N+1}\cdots G_{i,N+n}\kappa_i^{\ct}$. Each of these factors acts diagonally with addition of the terms with holes $r \geq N$. As before, such additional terms vanish after the action of $\bc_-$. The diagonal part acts by $v^{-{mk}}$ by the formulas \eqref{eq: G2} and \eqref{eq: G4}.
	
	b) The proof is similar to the previous one. Using formula \eqref{eq:P{k,0}N} and  $SL(2,\mathbb{Z})$-transformation property, we obtain $P_{kn,-kn'}^{(N)}= q^k\bc_- \sum_{i=1}^N \BB_i^{-k} \bc_-$.  Then
	\begin{equation}\label{eq:P{kn,-kn'} action}
		v^{kN}P_{kn,-kn'}^{(N)} |\lambda\rangle_N =q^kv^{kN}\sum_{i=1}^N \bc_- \BB_i^{-k}  \big( e_{-\lambda_1} \otimes e_{-\lambda_2+1} \otimes \dots \otimes e_{N-1}\big).
	\end{equation}
	We have two steps as in the proof above.
		
	\emph{Step 1.} Let $i>|\lambda|$. Hence $\lambda_i=0$. In order to compute $i$-th term in the sum \eqref{eq:P{kn,-kn'} action} we use the formula \eqref{eq: BB via G}. Each triangular operator of the form $\kappa_i^{-\ct}G_{i,j}^{\pm 1}\kappa_i^{\ct}$, $\kappa_i^{-\ct}G_{j,i}^{\pm 1}\kappa_i^{\ct}$ acts on $e_{-\lambda_1} \otimes e_{-\lambda_2+1} \otimes \dots \otimes e_{N-1}$ diagonally with addition of the terms with holes $r \geq i-1$.  The terms with the holes vanish after the action of $\bc_-$. The diagonal contribution was computed in the proof of the Corollary \ref{corol: eigenvectors twisted case}. Hence the $i$-th term in the sum \eqref{eq:P{kn,-kn'} action} is equal to 
	\begin{multline*}
		 q^kv^{kN}(u_0 \cdots u_{n-1} q^{1-n} v^{\ltt_i } q^{-\lambda_i+i-1})^{-k}\big(e_{-\lambda_1} \otimes e_{-\lambda_2+1} \otimes \dots \otimes e_{N-1}\big) 
		 \\
		 =	 u^{-k}v^{-k}q^{2k} (v^2q^{-1})^{ik}
	 \big(e_{-\lambda_1} \otimes e_{-\lambda_2+1} \otimes \dots \otimes e_{N-1}\big),
	\end{multline*}
	where we used $\ltt_i=N+1-2i$ and convention \eqref{def Ch n n' u}.

	\emph{Step 2.} Analogous to the above. 
	
	c)  Using \eqref{eq:P{k,0}N} and $SL(2,\mathbb{Z})$-transformation, we have $P_{-kn,kn'}^{(N)}= \bc_- \sum_{i=1}^N \BB_i^{k} \bc_-$.  The remaining part of the proof is similar to the proof of b).	
\end{proof}

\begin{prop}\label{prop:limit bottom}
The operators $v^{\frac{aN}n}P_{a,b}^{(N)}$ stabilize for $a n' + bn  < 0$.
\end{prop}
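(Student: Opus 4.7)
The strategy is to bootstrap the stabilization in Lemma~\ref{lemma:limit A B} to all of $\DI^{\swarrow}$ via iterated commutators in $\DI$. Since $nm'-n'm=1$, the vectors $(m,-m')$ and $(n,-n')$ form a $\mathbb{Z}$-basis of $\mathbb{Z}^2$ with determinant $1$, so every $(a,b)$ satisfying $an'+bn<0$ is uniquely written as $(a,b)=p(m,-m')+l(n,-n')$ with $p=-(an'+bn)\geq 1$ and $l=am'+bm\in\mathbb{Z}$. I think of $p$ as the ``depth'' of $(a,b)$ inside $\DI^{\swarrow}$: the two directions $(m,-m')$ and $(n,-n')$ together with their integer multiples have already been handled in Lemma~\ref{lemma:limit A B}.

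I would induct on the pair $(p,g)$ ordered lexicographically, where $g=\gcd(a,b)$. For the inductive step I would invoke the Burban--Schiffmann bracket in $\DI$ (cf.\ \cite{Burban:2012}),
\[
[P_{m,-m'},\ P_{(p-1)m+ln,\,-(p-1)m'-ln'}]\ =\ \gamma\cdot P_{a,b}\ +\ (\text{remainder}),
\]
where $\gamma\in\mathbb{C}^{\times}$ and the remainder consists of (i) $N$-independent central scalars coming from the corrections in Lemma~\ref{lemma:limit A B}(b,c), which commute with every operator and hence drop out of the bracket, and (ii) products of $P_{j a_0,\,j b_0}$ collinear with $(a,b)=g(a_0,b_0)$ and $j<g$, all covered by the inductive hypothesis. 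The base case $p=1$, $g=1$ reduces to $[P_{m,-m'},\ P_{ln,-ln'}]$, whose factors are directly handled by Lemma~\ref{lemma:limit A B}. Since the rescaling $v^{aN/n}=v^{mN/n}\cdot v^{((p-1)m+ln)N/n}$ is multiplicative under the $\deg_X$-grading, the commutator of rescaled operators inherits the correct prefactor, so stabilization propagates; the underlying algebraic fact I am using is that a commutator of two stabilizing sequences in the sense of Definition~\ref{defin: stabilizes} stabilizes, with the induced operator equal to the commutator of the induced operators.

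The main obstacle will be tracking the elliptic Hall bracket precisely enough to close the induction: one must verify that each lower-weight term on the right-hand side has $(p',g')$ strictly below $(p,g)$ in the chosen order (so that it is covered by the inductive hypothesis), and that the $N$-dependent central scalars from Lemma~\ref{lemma:limit A B}(b,c) contribute trivially to every bracket used. A cleaner packaging of the same argument transports the problem by $\sigma^{-1}\in\widetilde{SL}(2,\mathbb{Z})$ to $\DI^{\downarrow}$, where the induction becomes an induction on $-b$ starting from $P_{0,-1}$ together with $\{P_{\pm j,0}\}_{j>0}$; in $\DI^{\downarrow}$ the bracket $[P_{j,0},P_{a,b}]$ preserves the second coordinate $b$, which makes the inductive structure transparent. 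The eventual identification of the stabilized scalars with the central charges $c\mapsto v^{-n}$ and $c'\mapsto v^{-n'}$, needed later for Theorem~\ref{thm: explict construction of twisted Fock}, plays no role in the proof of the proposition itself.
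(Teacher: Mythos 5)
This is the paper's approach: both rely on Lemma~\ref{lemma:limit A B} plus the fact that every $P_{a,b}$ in the interior of $\DI^{\swarrow}$ is an iterated bracket of $P_{m,-m'}$ with the $P_{kn,-kn'}$ (modulo scalars), which is exactly the $\widetilde{SL}(2,\mathbb{Z})$-transport you describe as your ``cleaner packaging''; the paper transports to $\DIp$ rather than $\DI^{\downarrow}$, a cosmetic difference, and simply invokes the generation property without spelling out your $(p,g)$ induction. One minor slip: you first call the scalar corrections in Lemma~\ref{lemma:limit A B}(b,c) ``$N$-independent'' and later (correctly) ``$N$-dependent'' --- they involve $\sum_{i=1}^{N}(v^2q^{-1})^{ik}$ --- but since they are multiples of the identity they drop out of every bracket in either case, so your argument is unaffected.
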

\begin{proof}
	It follows from the commutation relations that any $P_{a,b}\in \DIp$ with $a>0$ can be represented as algebraic combination of $P_{1,0}$, its commutators with $P_{0,b}$ for $b \in \mathbb{Z}$, and also $c,c'$. Using $\widetilde{SL}(2,\mathbb{Z})$ symmetry we see that any element  $P_{a,b}\in \DI$ with $a n' + bn  < 0$ is algebraic combination of $P_{m,-m'}$, its commutators with $P_{bn,-bn'}$ for $b \in \mathbb{Z}$, and $c$, $c'$. To finish the proof we use  Lemma \ref{lemma:limit A B}. 
\end{proof}

\begin{Remark}
	a) This proposition gives another proof of Proposition \ref{prop: P1b negative stabel}.
	
	b) The additional series $\sum_{i=1}^{\infty} (v^2 q^{-1})^{k i} $ which appears in Lemma \ref{lemma:limit A B}b) converges if $|v^2 q^{-1}|<1$. This is in agreement with Theorem \ref{thm: positive half}. 
\end{Remark}

For $an' + bn < 0$, let $\tilde{P}_{a,b}^{(N)} = v^{\frac{aN}n}P_{a,b}^{(N)}$. Denote the stable limit of $\tilde{P}_{a,b}^{(N)}$ by  $\hat{P}_{a,b}$. Similarly, consider operators
\begin{align}
    \tilde{P}_{kn,-kn'}^{(N)}  =& v^{kN} P_{kn,-kn'}^{(N)} - u^{-k}v^{-k}q^{2k} \sum_{i=1}^{N}  (v^2q^{-1})^{ik}  + \frac{u^{-k}v^{k}q^{k}}{1- \left( v^2 q^{-1} \right)^k} \label{eq: Macdonald additive constant 1} \\
    \tilde{P}_{-kn,kn'}^{(N)}  =& v^{-kN} P_{-kn,kn'}^{(N)}- u^{k}v^{k}q^{-k} \sum_{i=1}^{N}  (v^{-2}q)^{ik} + \frac{u^{k}v^{-k}}{1- \left(v^{-2} q \right)^k} \label{eq: Macdonald additive constant 2} 
\end{align}

Let $\hat{P}_{kn,-kn'}$ and $\hat{P}_{-kn,kn'}$ denote the stable limits of  $\tilde{P}_{kn,-kn'}^{(N)}$ and $\tilde{P}_{-kn,kn'}^{(N)}$. Let $\DI^{\swarrow}$  be a subalgebra of $\DI$ generated by $P_{a,b}$ for $an' + bn \leq 0$. 
\begin{prop} \label{prop: Pab central charge apperar}
There is an action of $\DI^{\swarrow}$ on $\siw{0} \left( \mathbb{C}^n[Y^{\pm1}]\right)$ given by 
\begin{subequations}
	\begin{align}
	&c \mapsto v^{-n}, \quad \quad c' \mapsto v^{-n'}, & & \label{eq:Pab central charge apperar} 
	\\
	&P_{a,b} \mapsto \hat{P}_{a,b} & &\text{for $a \geq 0$,} \label{eq:Pab central charge apperar plus} 
	\\
	&P_{a,b} \mapsto v^{-n'a-nb}\hat{P}_{a,b} & &\text{for $a <0$.}  \label{eq:Pab central charge apperar minus} 
	\end{align}
\end{subequations}
\end{prop}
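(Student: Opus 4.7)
The plan is to show that the prescription in \eqref{eq:Pab central charge apperar}--\eqref{eq:Pab central charge apperar minus} defines an algebra homomorphism from $\DI^{\swarrow}$, with the stated central charges, into the endomorphism algebra of $\siw{0}\!\left(\mathbb{C}^n[Y^{\pm 1}]\right)$. The existence of the limit operators $\hat P_{a,b}$ is already in hand by Proposition~\ref{prop:limit bottom} and Lemma~\ref{lemma:limit A B}; the scalar shifts in \eqref{eq: Macdonald additive constant 1}--\eqref{eq: Macdonald additive constant 2} enter only the two boundary sequences and do not affect any commutator.

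At finite $N$, the operators $P_{a,b}^{(N)}$ satisfy the $\DI$-relations at $c=c'=1$ via the surjection $\DI\twoheadrightarrow \SHm(q,v)$ of Corollary~\ref{corollary: P}. I would then check that the $N$-dependent assignment
\[
	P_{a,b}\longmapsto \kappa_N^{a,b}\,P_{a,b}^{(N)},\qquad c\mapsto v^{-n},\qquad c'\mapsto v^{-n'},
\]
with $\kappa_N^{a,b}=v^{aN/n}$ for $a\ge 0$ and $\kappa_N^{a,b}=v^{aN/n}\cdot v^{-n'a-nb}$ for $a<0$, is compatible with the $\DI$-relations at the \emph{new} central charges for every $N$. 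The three essential computations are: (i) for \eqref{eq: relation HeisenbergF}, the ratio $\kappa_N^{-1,b}/\kappa_N^{-1,b-k}=v^{-nk}$ reproduces the expected factor $c^k$; (ii) for the second identity of \eqref{eq: commutator E and F 1}, $\kappa_N^{1,-a}\kappa_N^{-1,-b}/\kappa_N^{0,-a-b}=v^{n'+nb}$ reproduces the expected $c^{-b}(c')^{-1}$; (iii) the boundary Heisenberg commutator $[P_{kn,-kn'},P_{-kn,kn'}]$ vanishes both at $c=c'=1$ (since $\sum_i\BB_i^{\pm k}$ commute in $\SHm$) \emph{and} at $c=v^{-n}$, $c'=v^{-n'}$, because the $\sigma$-twist of \eqref{eq: relation Heisenberg} contributes a factor $\sigma(c')^{|k|}-\sigma(c')^{-|k|}$ with $\sigma(c')=c^{-n'}(c')^n=v^{nn'-nn'}=1$ by \eqref{eq:sigma central charge} together with the Bezout identity $nm'-n'm=1$. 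The bilinear \eqref{bilinearE}--\eqref{bilinearF} and Serre \eqref{SerreE}--\eqref{SerreF} relations are homogeneous in the first coordinate, are unaffected by the scalar shifts, and hold at every $N$.

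Taking $N\to\infty$, the relations transfer to the limits $\hat P_{a,b}$ thanks to Proposition~\ref{prop: convergence of the product} and its bottom-half analogue applied to the products of operators that enter each relation. This yields the desired action of $\DI^{\swarrow}$ on $\siw{0}\!\left(\mathbb{C}^n[Y^{\pm 1}]\right)$. The main obstacle will be to organize the verification so as to avoid an infinite list of case-by-case checks: for this I would invoke a PBW-type presentation of $\DI^{\swarrow}$ (obtained from the $\DI^+$-analog of Proposition~\ref{prop: PBW for DI plus} via $\widetilde{SL}(2,\mathbb{Z})$-conjugation by $\sigma$) to cut the check down to commutators among Chevalley-type generators supported on the boundary line $an'+bn=0$ and one adjacent parallel lattice line; the remaining finite checklist is then settled by the scalar-factor bookkeeping described above.
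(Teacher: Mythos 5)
Your approach differs from the paper's and has a genuine gap. The paper's proof is in two steps: (1) from the stabilization results one gets for free that the limit operators $\hat{P}_{a,b}$ satisfy the $\DI^{\swarrow}$ relations at $c=c'=1$, since at finite $N$ the $P_{a,b}^{(N)}$ do; (2) the prescription \eqref{eq:Pab central charge apperar plus}--\eqref{eq:Pab central charge apperar minus} is then realized purely on the abstract side as an isomorphism $\DI^{\swarrow}\big|_{c=1,c'=1}\cong\DI^{\swarrow}\big|_{c=v^{-n},c'=v^{-n'}}$, produced via the chain $\DI^{\swarrow}\big|_{c=v^{-n},c'=v^{-n'}}\to\DIp\big|_{c=1,c'=v}\to\DIp\big|_{c=1,c'=1}\to\DI^{\swarrow}\big|_{c=1,c'=1}$ through $\tilde{\sigma}$-conjugation, using that the defining relations of $\DIp$ do not involve $c'$ at all. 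This completely avoids verifying any relation by hand at the new central charge. Your plan instead tries to check the $\DI^{\swarrow}$-relations directly at each $N$ with the rescaled operators and then pass to the limit, effectively carrying out an operator-level version of the abstract isomorphism. Your scalar-factor computations (i)--(ii) and the observation that $\sigma(c')=c^{-n'}(c')^{n}=1$ at the new central charge are both correct and in fact encode the same arithmetic that makes the paper's chain of isomorphisms possible.

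However, two points stop the plan short of a proof. First, on the boundary line $an'+bn=0$ the sequences $v^{aN/n}P_{a,b}^{(N)}$ do \emph{not} stabilize; only the shifted versions $\tilde{P}_{\pm kn,\mp kn'}^{(N)}$ of \eqref{eq: Macdonald additive constant 1}--\eqref{eq: Macdonald additive constant 2} do. So the ``$N$-dependent assignment $P_{a,b}\mapsto\kappa_N^{a,b}P_{a,b}^{(N)}$'' is not the object whose limit exists along the boundary, and the remark that scalar shifts ``do not affect any commutator'' is not enough: you would need to check that every $\DI^{\swarrow}$-relation involving $\hat P_{kn,-kn'}$ can be reduced to a commutator before passing to the limit, which you have not done. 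Second, the proposed reduction to ``commutators among Chevalley-type generators supported on the boundary line and one adjacent lattice line'' is not available: Proposition~\ref{prop: PBW for DI plus} gives a PBW \emph{basis}, not a presentation, and the Chevalley-type generators of $\DI^{\downarrow}$ get spread by $\sigma$ over infinitely many lines $an'+bn=b$ and $an'+bn=-k$, not a bounded band. Without an explicit presentation of $\DI^{\swarrow}$ to check against, the relation-by-relation verification cannot be made finite. The paper's route — passing through $\DIp$, whose relations are independent of $c'$ — is precisely what circumvents both difficulties, and it seems hard to avoid an analogue of that step.
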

\begin{proof}
	From the limit arguments we see that operators $\hat{P}_{a,b}$ satisfy the relations of $\DI^{\swarrow}$ for $c=c'=1$. It remains to show that formulas \eqref{eq:Pab central charge apperar plus}--\eqref{eq:Pab central charge apperar minus} defines isomorphism of \(\DI^{\swarrow}\big|_{c=1,c'=1}\) and \(\DI^{\swarrow}\big|_{c=v^{-n},c'=v^{-n'}}\). 

	Let \(\tilde{\sigma}\) be an element of \(\widetilde{SL}(2, \mathbb{Z})\) such that the corresponding matrix in \(SL(2,\mathbb{Z})\) is \(\begin{pmatrix} -n' & -n \\ m' & m \end{pmatrix}\) and \(n_{\tilde{\sigma}}(0,-1)=0\). Using the formula \eqref{eq:sigma central charge}, we see that action of \(\tilde{\sigma}\) induces an isomorphism
	\begin{equation}
	 \DI^{\swarrow}\big|_{c=v^{-n},c'=v^{-n'}} \xlongrightarrow{\sim} \DIp\big|_{c=1,c'=v}.
	\end{equation}
	In the region $an' + bn \leq 0$ the winding number \(n_{\tilde{\sigma}}(a,b)=0\) for \(a\geq 0\) and \(n_{\tilde{\sigma}}(a,b)=1\) for \(a< 0\), hence the formula \eqref{eq: sigma action on Pab} for the action of \(\tilde{\sigma}\) on \(P_{a,b}\) generators gives
	\begin{equation}
		P_{a,b} \mapsto \begin{cases}
			P_{-n'a-nb,m'a+mb} & \text{for $a \geq 0$} \\
			v^{-n'a-nb} P_{-n'a-nb,m'a+mb} & \text{for $a < 0$}
		\end{cases}
	\end{equation}
	Similarly \(\tilde{\sigma}\) induces an isomorphism between \(\DI^{\swarrow}\big|_{c=1,c'=1}\) and  \(\DIp\big|_{c=1,c'=1}\)
		\begin{equation}
		P_{a,b} \mapsto P_{-n'a-nb,m'a+mb}.
	\end{equation}	
	Since the relations of \(\DIp\) does not include \(c'\), there is an isomorphism of \(\DIp\big|_{c=1,c'=v}\) and \(\DIp\big|_{c=1,c'=1}\) which acts \(P_{a,b}\mapsto P_{a,b}\) for any \(a\geq 0\) and \(b\). 
	
	To sum up the above, we have obtained a chain of isomorphisms
	\begin{equation*}
	    \DI^{\swarrow}\big|_{c=v^{-n},c'=v^{-n'}} \xrightarrow{\sim} \DIp\big|_{c=1,c'=v} \xrightarrow{\sim} \DIp\big|_{c=1,c'=1} \xrightarrow{\sim}  \DI^{\swarrow}\big|_{c=1,c'=1}.
	\end{equation*}
To finish the proof we notice that the composition indeed is given by \eqref{eq:Pab central charge apperar plus}--\eqref{eq:Pab central charge apperar minus}.
\end{proof}
Denote the obtained representation of $\DI^{\swarrow}$ by $\mathcal{F}_u^{\swarrow}$. 

\paragraph{Connection with twisted representation} Below we will give an alternative interpretation of $\mathcal{F}_u^{\swarrow}$ as a version of twisted representation. To do this we need to introduce the following notion.

Let $\DI^{\downarrow}$ be a subalgebra of $\DI$ generated by $P_{a,b}$ for $b\leq 0$. In Section \ref{subsec: example n=1} we have constructed action of $\DI$ on $\siw{0} \left( \mathbb{C}[Y^{\pm1}]\right)$. Denote its restriction to $\DI^{\downarrow}$ by $\rho_1$. On the other hand, let us consider particular case $n=1$ and $n'=0$ of Proposition \ref{prop: Pab central charge apperar}. It gives \emph{a priori} another action of $\DI^{\downarrow}$ on $\siw{0} \left( \mathbb{C}[Y^{\pm1}]\right)$. Denote it by $\rho_2$.

\begin{lemma} \label{lemma: rho 1 and rho 2}
The actions $\rho_1$ and $\rho_2$ coincide.
\end{lemma}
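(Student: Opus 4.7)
The plan is to verify $\rho_1 = \rho_2$ on a generating set of $\DI^{\downarrow}$. By its very definition, $\DI^{\downarrow}$ is generated by $c^{\pm 1}$, $(c')^{\pm 1}$, and the PBW generators $P_{a,b}$ with $b \leq 0$; since both $\rho_1$ and $\rho_2$ are algebra homomorphisms $\DI^{\downarrow} \to \End\bigl(\siw{0}\bigl(\mathbb{C}[Y^{\pm 1}]\bigr)\bigr)$, checking agreement on these generators will suffice.

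On the centre, both actions send $c \mapsto v^{-1}$ and $c' \mapsto 1$. On the Heisenberg part $P_{0,-k}$ ($k>0$), both give $q^k B_{-k}$: for $\rho_2$ this is immediate from the presentation $P_{0,-k}^{(N)} = q^k b_{-k}$ in Corollary~\ref{corollary: P} combined with the stabilization $b_{-k} \rightsquigarrow B_{-k}$ of Proposition~\ref{prop: Bk stabilization}, while for $\rho_1$ this is built into the formulas of Section~\ref{subsec: example n=1}.

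The remaining generators split into two cases. For $P_{a,b}$ with $a \geq 1$ and $b \leq 0$, Proposition~\ref{prop: Pab central charge apperar} specialized to $n=1$, $n'=0$ identifies $\rho_2(P_{a,b})$ with the stable limit $\hat P_{a,b}$ of $v^{N}P_{a,b}^{(N)}$ (together with the additive normalization of \eqref{eq: Macdonald additive constant 1} in the boundary case $P_{k,0}$). On the other hand, Theorem~\ref{thm: positive half} (extended to arbitrary $q,v$ via Proposition~\ref{prop: analytic cont DIp}) expresses the $\DIp$-action on the Fock module as exactly the same stable limit; indeed, Section~\ref{subsec: example n=1} is precisely the calculation showing that the vertex-operator current \eqref{eq: current E} collapses at $n=1$ to the FHHSY current \eqref{eq: example E}. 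Hence $\rho_1(P_{a,b}) = \rho_2(P_{a,b})$ throughout this range. For $a \leq -1$, $b \leq 0$ the argument is entirely parallel, using Proposition~\ref{prop: Pab central charge apperar} (whose twist prefactor $v^{-n'a-nb}$ reduces to $v^{-b}$ at $n=1$, $n'=0$), together with Theorem~\ref{thm: negative half} and Proposition~\ref{prop: analytic cont DIm}; the $n=1$ specialization of \eqref{eq: current F} reproduces \eqref{eq: example F}, and the additive constant of \eqref{eq: Macdonald additive constant 2} accounts for the boundary case $P_{-k,0}$. This completes the verification on every generator of $\DI^{\downarrow}$, whence $\rho_1 = \rho_2$.

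The main obstacle is essentially careful bookkeeping: one has to track the twist prefactors $v^{-n'a-nb}$ and the additive constants in \eqref{eq: Macdonald additive constant 1}, \eqref{eq: Macdonald additive constant 2}, and match them against the ``constant terms'' produced by the regularized vertex-operator products \eqref{eq: regular product psi phi start 1}, \eqref{eq: regular product psi phi start 2} after the $n=1$ specialization. Once this is done, the lemma reduces to an immediate consequence of the explicit limit formulas already established, since both $\rho_1$ and $\rho_2$ are, by construction, (rescaled) $N \to \infty$ limits of the very same finite-$N$ operators $v^{N}P_{a,b}^{(N)}$ acting on $\bc_{-}(\mathbb{C}[Y^{\pm 1}])^{\otimes N}$.
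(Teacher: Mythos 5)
Your proposal follows the same strategy as the paper: identify both $\rho_1$ and $\rho_2$ as $N\to\infty$ limits of the same rescaled finite-$N$ operators $v^{aN}P_{a,b}^{(N)}$ (for $n=1$), match them, and appeal to analytic continuation. The check on central and Heisenberg generators is fine, and the reduction to generators $P_{a,b}$ ($b\le 0$) is the right framing.

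One step, however, is supported by the wrong reference. You cite Theorem~\ref{thm: positive half} and Proposition~\ref{prop: analytic cont DIp} as ``expressing the $\DIp$-action on the Fock module as exactly the same stable limit,'' but those results only give limit formulas for the \emph{Chevalley} generators $P_{0,j}$ and $P_{1,b}$. The Fock module $\mathcal{F}_u$ is defined by FHHSY formulas on Chevalley generators, so for $a\geq 2$ the element $\rho_1(P_{a,b})$ is determined only through the relations of $\DI$, and it is not automatic that it coincides with $\lim_{N\to\infty} v^{aN}P_{a,b}^{(N)}$. The missing ingredient is Proposition~\ref{prop: convergence of the product}, which shows that \emph{products} $\tilde P^{(N)}_{1,b_t}\cdots \tilde P^{(N)}_{1,b_1}$ converge to $\hat P_{1,b_t}\cdots\hat P_{1,b_1}$ for $|q^{-1}v^2|<1$; since any $P_{a,b}$ with $a>0$ is an algebraic combination of such products, this gives $\rho_1(P_{a,b}) = \lim v^{aN}P_{a,b}^{(N)}$. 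Once this is supplied, the rest of your argument (matching with the stabilized limit defining $\rho_2$ in the region $|q^{-1}v^2|<1$, including the additive constants in \eqref{eq: Macdonald additive constant 1}--\eqref{eq: Macdonald additive constant 2}, and extending to all $q,v$ by rationality of matrix coefficients) is exactly what the paper does. So the approach is correct; just replace the appeal to Theorem~\ref{thm: positive half} for $|a|\geq 2$ by Proposition~\ref{prop: convergence of the product} (and its $\DIm$ analogue), and note explicitly that the two different notions of limit --- ``converges'' versus ``stabilizes'' --- agree in the region $|q^{-1}v^2|<1$.
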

\begin{proof}
Let us prove $\rho_1(P_{a,b})=\rho_2(P_{a,b})$ for $a \geq 0$. The operator $v^{\frac{aN}{n}} P_{a,b}^{(N)}$ converges to $\rho_1(P_{a,b})$ for $|v^2 q^{-1}|<1$ by Proposition \ref{prop: convergence of the product}. On the other hand, operator $\tilde{P}_{a,b}^{(N)}$ stabilizes and the induced operator is $\rho_{2}(P_{a,b})$. Notice that for $|v^2 q^{-1}|<1$, the limits of $v^{\frac{aN}{n}} P_{a,b}^{(N)}$ and $\tilde{P}_{a,b}^{(N)}$ coincide (even for $an'+bn=0$, see \eqref{eq: Macdonald additive constant 1}, \eqref{eq: Macdonald additive constant 2}). Hence $\rho_1(P_{a,b}) = \rho_2(P_{a,b})$ for $|v^2 q^{-1}|<1$. Since matrix coefficients of $\rho_1(P_{a,b})$ and $\rho_2(P_{a,b})$ are analytic (even rational) functions of $q$ and $v$, we have $\rho_1(P_{a,b}) = \rho_2(P_{a,b})$ for any values of $q$ and $v$.

The case $a < 0$ is analogous. Note that prefactors in formulas \eqref{eq:Pab central charge apperar minus} and \eqref{eq:negative half F} coincide for \(n=1, n'=0\).
\end{proof}

Let $\mathcal{F}_{u}^{\downarrow}$ be the restriction of Fock module $\mathcal{F}_{u}$ to $\DI^{\downarrow}$. Recall that $\sigma$ is an element in $\widetilde{SL}(2, \mathbb{Z})$ such that 
$\DI^{\swarrow} = \sigma \left(  \DI^{\downarrow}  \right)$. Recall the formulas \eqref{eq:sigma central charge}, \eqref{eq: sigma action on Pab} for the action $\widetilde{SL}(2, \mathbb{Z}) \curvearrowright \DI$.

\begin{lemma} \label{lemma: tilde SL_2 z action}
For $b \leq 0$, the element $\sigma(P_{a,b})$ acts on  $\mathcal{F}_u^{\swarrow}$ as
\begin{align} 
v^{- n_{ \sigma}(1,0) b } &P_{na-mb, -n'a + m'b} &  &\text{for $a \geq 0$ or $\frac{m}{n} b > a$}\\
v^{-  \left( n_{ \sigma}(1,0) + 1 \right) b } &P_{na-mb, -n'a + m'b}       & &otherwise
\end{align}
\end{lemma}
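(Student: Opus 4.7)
The plan is to combine the $\widetilde{SL}(2,\mathbb{Z})$-action formulas of Section~\ref{sec:toroidal} with the values of the central charges on $\mathcal{F}_u^{\swarrow}$ fixed in Proposition~\ref{prop: Pab central charge apperar}. First one pins down $\sigma$: the requirement $\sigma(\DI^{\downarrow})=\DI^{\swarrow}$ together with $nm'-n'm=1$ forces $\sigma$ to cover the matrix $\left(\begin{smallmatrix}n & -m\\-n' & m'\end{smallmatrix}\right)\in SL(2,\mathbb{Z})$ (any other option would send $\{b\le 0\}$ into $\{an'+bn\ge 0\}$). Plugging this matrix into \eqref{eq: sigma action on Pab} yields
\begin{equation*}
	\sigma(P_{a,b})=\big((c')^{na-mb}\,c^{-n'a+m'b}\big)^{n_\sigma(a,b)}\,P_{na-mb,\,-n'a+m'b}.
\end{equation*}

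Next, specializing $c=v^{-n}$ and $c'=v^{-n'}$ on $\mathcal{F}_u^{\swarrow}$ simplifies the central prefactor:
\begin{equation*}
	(c')^{na-mb}\,c^{-n'a+m'b}=v^{-n'(na-mb)-n(-n'a+m'b)}=v^{(n'm-nm')b}=v^{-b},
\end{equation*}
again by $nm'-n'm=1$. Therefore $\sigma(P_{a,b})$ acts on $\mathcal{F}_u^{\swarrow}$ as $v^{-b\,n_\sigma(a,b)}\,P_{na-mb,\,-n'a+m'b}$, and the lemma reduces to identifying $n_\sigma(a,b)$ in the half-plane $b\le 0$.

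For the winding-number analysis, I would use that $n_\sigma\colon \mathbb{R}^2\setminus\{0\}\to\mathbb{Z}$ is locally constant and can jump only across rays from the origin where either the starting point $(a,b)$ or the endpoint $\sigma(a,b)=(na-mb,-n'a+m'b)$ lies on the line $\{a=0\}$. In the open half-plane $\{b<0\}$ there are exactly two such rays: the negative $b$-axis (start on $\{a=0\}$) and $\{na=mb,\,a<0\}$ (endpoint on $\{a=0\}$). They partition $\{b<0\}$ into three sectors $S_+=\{a>0\}$, $S_{\rm mid}=\{mb/n<a<0\}$, and $S_-=\{a<mb/n\}$. One checks that $S_+\cup S_-$ (together with the negative $b$-axis, which satisfies $a\ge 0$) is precisely the first case of the lemma, while $S_{\rm mid}$ is the second.

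It then remains to show that $n_\sigma$ takes the value $n_\sigma(1,0)$ on $S_+\cup S_-$ and $n_\sigma(1,0)+1$ on $S_{\rm mid}$. I would verify this by fixing an explicit representative path $\gamma(t)\in SL(2,\mathbb{R})$ from the identity to the matrix of $\sigma$, for instance the concatenation of the canonical lifts of a $\tau_\pm$-word realizing $\sigma$, and tracking the endpoint $\gamma(t)\cdot(a,b)$ across each critical ray. The transition $S_+\to S_{\rm mid}$ moves the start across $\{a=0\}$ while the endpoint stays in $\{a>0\}$, forcing one additional transverse crossing and $n_\sigma\mapsto n_\sigma+1$; the transition $S_{\rm mid}\to S_-$ moves the endpoint across $\{a=0\}$ in the opposite direction and removes that crossing, recovering the initial value. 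The main obstacle, and the only genuinely delicate point, is the sign bookkeeping under the convention of the footnote in Section~\ref{sec:toroidal} (which endpoint is counted, and the orientation of the crossing); a direct check on one explicit reference point in $S_{\rm mid}$ confirms that the first jump has sign $+1$, and by continuity the second jump is then forced to be $-1$.
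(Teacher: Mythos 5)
Your proof follows the same route as the paper's: plug the matrix of $\sigma$ and the central charges $c=v^{-n}$, $c'=v^{-n'}$ into \eqref{eq: sigma action on Pab} to get the prefactor $v^{-b\,n_\sigma(a,b)}$, then establish the winding-number formula on the half-plane $b\le 0$. The paper dispatches the winding-number computation with ``one can check,'' whereas you sketch the ray-crossing geometry in detail; the only soft spot is the phrase ``by continuity the second jump is forced to be $-1$,'' whose clean justification is rather the central symmetry $n_\sigma(-a,-b)=n_\sigma(a,b)$, or equivalently the parity observation that the endpoints of $\gamma(a,b)$ lie on the same side of $\{a=0\}$ throughout $S_+\cup S_-$ but on opposite sides in $S_{\mathrm{mid}}$.
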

\begin{proof}
In this proof we assume $b \leq 0$. One can check that the winding number is given by
\begin{equation}
    n_{\sigma}(a,b)= \begin{cases}
     n_{\sigma}(1,0) & \text{for $a \geq 0$ or $\frac{m}{n} b > a$}\\
     n_{\sigma}(1,0)+1 & \text{otherwise}
     \end{cases}
\end{equation}
To finish the proof, we notice that the element $ \left( c'\right)^{na-mb} c^{-n'a + m' b}$ acts on $\mathcal{F}_u^{\swarrow}$ as $v^{-b}$.
\end{proof}
\begin{prop} \label{prop: twisted rep bottom}
There is an isomorphism  of vector spaces $\hat{\psi} \colon \mathcal{F}_{u}^{\downarrow} \rightarrow \mathcal{F}_u^{\swarrow}$ such that $\hat{\psi}$ intertwines the actions. More precisely, $\sigma(X) \hat{\psi} w = \hat{\psi} \left( X w \right)$ for any $w \in  \mathcal{F}_{u}^{\downarrow}$ and $X \in \DI^{\downarrow}$.
\end{prop}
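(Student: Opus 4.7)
The plan is to construct $\hat{\psi}$ by a vacuum-to-vacuum identification, transported by $\sigma$, and verify the intertwining property by checking a generating set of $\DI^{\downarrow}$.

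First I would check that the central characters are compatible under $\sigma$. Using \eqref{eq:sigma central charge} together with the matrix of $\sigma$ (which acts as $c \mapsto c^{m'}(c')^{-m}$, $c' \mapsto c^{-n'}(c')^{n}$) and the prescribed central charges $c \mapsto v^{-n}$, $c' \mapsto v^{-n'}$ on $\mathcal{F}_u^{\swarrow}$, one obtains
\[
\sigma(c)\bigl|_{\mathcal{F}_u^{\swarrow}} = v^{-nm' + n'm} = v^{-1}, \qquad \sigma(c')\bigl|_{\mathcal{F}_u^{\swarrow}} = 1,
\]
which match the central charges of $\mathcal{F}_u$. This is what makes it plausible that the Heisenberg commutation relations \eqref{eq: relation Heisenberg} for $\{\sigma(P_{0,k})\}$ on $\mathcal{F}_u^{\swarrow}$ coincide with those of $\{P_{0,k}\}$ on $\mathcal{F}_u$.

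Next I would define $\hat{\psi}$ by $\hat{\psi}(|\varnothing\rangle) := |\varnothing\rangle_{\infty, 0}$ and extend via the desired intertwining law $\hat{\psi}(X w) := \sigma(X) \hat{\psi}(w)$ on Heisenberg words. On $\mathcal{F}_u$, the elements $P_{0,-k} \in \DI^{\downarrow}$ for $k>0$ act as $q^k B_{-k}$ by the bosonization formulas of Section \ref{subsec: example n=1}, and therefore act freely on $|\varnothing\rangle$ to produce a basis of $\mathcal{F}_u$. On $\mathcal{F}_u^{\swarrow}$, by Lemma \ref{lemma: tilde SL_2 z action} the element $\sigma(P_{0,-k})$ is a scalar multiple of the Macdonald-type generator $P_{nk,-n'k}$, whose action is the well-defined stable limit from Lemma \ref{lemma:limit A B}(a). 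Using the central-charge computation above, these $\sigma(P_{0,-k})$ form a Heisenberg acting on $|\varnothing\rangle_{\infty,0}$ with the same level as $\{P_{0,-k}\}$ on $|\varnothing\rangle$; comparing graded dimensions (both modules have character equal to the Heisenberg Fock character) shows that the images span all of $\mathcal{F}_u^{\swarrow}$, making $\hat{\psi}$ a well-defined vector-space isomorphism.

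The hard part will be to verify the intertwining for the non-Heisenberg generators of $\DI^{\downarrow}$, in particular the Chevalley generators $P_{-1,b}$ for $b \le 0$. I would handle this by a finite-$N$ limit argument. At finite $N$, Theorem \ref{thm:isomorphism with twisted} gives the DAHA-level isomorphism $\Ch_u^{\sigma} \cong \Ch^{(n,n')}_{u_0,\dots,u_{n-1}}$, and the spherical DAHA quotients interchange the action of $P_{-1,b}^{(N)}$ on the one side with that of $\sigma(P_{-1,b})^{(N)}$ (suitably normalized) on the other. Passing $N \to \infty$ using the stability of Proposition \ref{prop:limit bottom} (for the $\mathcal{F}_u^{\swarrow}$ side) and the semi-infinite wedge realization of $\mathcal{F}_u$ in the $n=1$ case (Lemma \ref{lemma: rho 1 and rho 2}) for the $\mathcal{F}_u^{\downarrow}$ side, yields the required intertwining on $|\varnothing\rangle_{\infty,0}$, and by Heisenberg cyclicity on all of $\mathcal{F}_u^{\downarrow}$. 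The technical heart is the careful matching of the three different normalization factors that appear: the winding-number scalars of Lemma \ref{lemma: tilde SL_2 z action}, the $v^{-n'a - nb}$ prefactor of \eqref{eq:Pab central charge apperar minus}, and the counterterms $\frac{u^{\mp k}v^{\pm k}}{1-(v^{\pm 2}q^{\mp 1})^k}$ that appear in \eqref{eq: Macdonald additive constant 1}--\eqref{eq: Macdonald additive constant 2}, all of which must cancel consistently.
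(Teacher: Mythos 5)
Your proposal follows essentially the same route as the paper's proof: define $\hat\psi$ by matching the vacuum and the Heisenberg generators $P_{0,-k}\leftrightarrow\sigma(P_{0,-k})$, then upgrade to the full intertwining by reduction to the finite‑$N$ DAHA isomorphism $\tilde\psi$ of Theorem \ref{thm:isomorphism with twisted}, combined with the stabilization of Proposition \ref{prop:limit bottom}, Lemma \ref{lemma: rho 1 and rho 2}, and the normalization bookkeeping of Lemma \ref{lemma: tilde SL_2 z action}. Your central-charge consistency check $\sigma(c)\mapsto v^{-1}$, $\sigma(c')\mapsto 1$ is a correct sanity check that the paper leaves implicit, and your "vacuum plus Heisenberg cyclicity" phrasing is logically sound (verifying $\sigma(X)\,|\varnothing\rangle_{\infty,0}=\hat\psi(X\,|\varnothing\rangle)$ for all $X\in\DI^{\downarrow}$ is in fact equivalent to the full intertwining, since $\DI^{\downarrow}$ is a subalgebra containing the negative Heisenberg modes), though it is not a genuine reduction: the finite‑$N$ commutative‑diagram argument, as in the paper, must be carried out on all of $\mathcal{F}_u^{\downarrow}$ rather than only on the vacuum, which is exactly what the paper does by checking $\sigma(P_{a,b})\hat\psi|\lambda\rangle_\infty=\hat\psi(P_{a,b}|\lambda\rangle_\infty)$ directly on each basis vector.
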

\begin{proof}
Note that $\mathcal{F}_{u}^{\downarrow}$ is a free cyclic module over the algebra generated by $P_{0,-k}$ for $k \in \mathbb{Z}_{>0}$ with cyclic vector $| \varnothing\rangle_{\infty} = e_{0} \wedge e_{1} \wedge \cdots$. Analogously, $\mathcal{F}_u^{\swarrow}$ is a free cyclic module over algebra generated by $\sigma (P_{0,-k})$ with the corresponding\footnote{Here and below we use the same notation for vectors in $\mathcal{F}_{u}^{\downarrow}$ and $\mathcal{F}_u^{\swarrow}$. Hopefully, this will not lead to a confusion.} cyclic vector $|\varnothing \rangle_{\infty}$. Hence there is an isomorphism of vector spaces  $\hat{\psi} \colon \mathcal{F}_{u}^{\downarrow} \rightarrow \mathcal{F}_u^{\swarrow}$ such that $\hat{\psi}(| \varnothing \rangle_{\infty})=| \varnothing \rangle_{\infty}$ and $\hat{\psi}$ satisfies the intertwining property for $X= P_{0,-k}$. 

It remains to prove that the intertwining property holds for any $X \in \DI^{\downarrow}$. To do this we will need the following notation. Let $ \mathcal{F}_{u}^{\downarrow}[\leq k]$ and $\mathcal{F}_u^{\swarrow}[\leq k]$ be subspaces, spanned by $| \lambda \rangle_\infty$ for $|\lambda| \leq k$. Analogously, let $\bc_- \left(\mathbb{C}[Y^{\pm 1}] \right)^{\otimes N}[\leq k]$ and $\bc_- \left(\mathbb{C}^n[Y^{\pm 1}] \right)^{\otimes N}[\leq k]$ be subspaces, spanned by $| \lambda \rangle_N = e_{-\lambda_1} \wedge e_{-\lambda_2+1} \wedge \dots \wedge e_{N-1}$ for $|\lambda| \leq k$. Let us consider the following diagram

\begin{equation} \label{cd: kth level iso}
	\begin{tikzcd}
	 \bc_- \left(\mathbb{C}[Y^{\pm 1}] \right)^{\otimes N}[\leq k] \arrow[d, "\tilde{\psi}"] \arrow[r, "\varphi_{N}^{[1]}"] &
	 \mathcal{F}_{u}^{\downarrow}[\leq k] \arrow[d, "\hat{\psi}"] 
	\\
	 \bc_- \left(\mathbb{C}^n[Y^{\pm 1}] \right)^{\otimes N}[\leq k]
	\arrow[r, "\varphi_{N}^{[n]}"] & \mathcal{F}_u^{\swarrow}[\leq k]
	\end{tikzcd}
\end{equation}
Recall that we used notation $\varphi_N^{(\mt)}$ for the canonical map from the definition inductive limit (see Section \ref{subsection: limit}). In this proof we omit superscript $(\mt)$ since we consider only the case $\mt=0$. We denote the corresponding maps by $\varphi_{N}^{[1]}$  and $\varphi_{N}^{[n]}$ to distinguish the first and the second rows of \eqref{cd: kth level iso}. Note that here we have used interpretation of $\mathcal{F}_{u}^{\downarrow}$ via $\rho_2$, see Lemma \ref{lemma: rho 1 and rho 2}. 

Recall the isomorphism of irreducible representations $\psi \colon \Ch_{u}^{\sigma} \rightarrow \Ch^{(n,n')}_{u}$, see Theorem \ref{thm:isomorphism with twisted}. Hence there exist unique map $\tilde{\psi}$ satisfying
\begin{align}
\tilde{\psi} | \varnothing \rangle_N & = | \varnothing  \rangle_N   &    v^{- \left(n_{ \sigma}(1,0)+ \frac{mN}{n}\right) b}  \sigma \left( P_{a,b}^{(N)} \right) \tilde{\psi}  =&  \tilde{\psi} P_{a,b}^{(N)}
\end{align}
Multiplying by $v^{aN}$, we can reformulate the intertwining property for $\tilde{\psi}$ as follows
\begin{align}  \label{eq psi tilde Pab}
   v^{- n_{ \sigma}(1,0) b}   \tilde{P}_{na-mb, -n'a + m'b}^{(N)}  \tilde{\psi} =   \tilde{\psi} \tilde{P}_{a,b}^{(N)}
\end{align}

Also, note that we abuse notation using the same symbols for maps with and without restriction to corresponding $[\leq k]$ subspace.
\begin{lemma}
Diagram \eqref{cd: kth level iso} is commutative for any (fixed) $k$ and sufficiently large $N$.
\end{lemma}
\begin{proof}
One can verify that the operators $\varphi_{N}^{[n]} \, \tilde{\psi}$ and $\hat{\psi} \varphi_{N}^{[1]}$  satisfy the following properties for sufficiently large $N$
\begin{align}
\varphi_{N}^{[n]} \, \tilde{\psi} | \varnothing \rangle_N =& | \varnothing \rangle_{\infty}     &     \sigma  \left( P_{0, -l} \right)  \varphi_{N}^{[n]} \, \tilde{\psi}  |\lambda \rangle_N =&  \varphi_{N}^{[n]} \, \tilde{\psi} \tilde{P}^{(N)}_{0, -l}  |\lambda \rangle_N  \label{eq: property one} \\
 \hat{\psi} \varphi_{N}^{[1]}| \varnothing \rangle_N =& | \varnothing \rangle_{\infty}     &    \sigma  \left( P_{0, -l} \right)  \hat{\psi} \varphi_{N}^{[1]}  |\lambda \rangle_N =&   \hat{\psi} \varphi_{N}^{[1]} \tilde{P}_{0, -l}^{(N)}  |\lambda \rangle_N \label{eq: property two}
 \end{align}
Actually, \eqref{eq: property one} and \eqref{eq: property two} are the same properties for the operators $\varphi_{N}^{[n]} \, \tilde{\psi}$ and $\hat{\psi} \varphi_{N}^{[1]}$ respectively. 
 To finish the proof we note that the maps $\varphi_{N}^{[n]} \, \tilde{\psi}$ and $\hat{\psi} \varphi_{N}^{[1]}$ are determined by the properties.
\end{proof}

Let us prove that $\sigma(P_{a,b}) \hat{\psi} |\lambda \rangle_{\infty} = \hat{\psi} \left( P_{a,b} |\lambda \rangle_{\infty} \right)$ for $b \leq 0$. Let us take $k$ large enough such that $P_{a,b}|\lambda \rangle_{\infty} \in \mathcal{F}_{u}^{\downarrow}[\leq k]$. Then we take sufficiently large $N$ such that diagram \eqref{cd: kth level iso} is commutative and
\begin{equation} \label{eq: Pab and phi 1}
     P_{a,b} \varphi_{N}^{[1]}   | \lambda \rangle_N  = \begin{cases}
      \varphi_{N}^{[1]} \tilde{P}_{a,b}^{(N)}  | \lambda \rangle_N  & \text{for $a \geq 0$}\\
     v^{-  b} \varphi_{N}^{[1]} \tilde{P}_{a,b}^{(N)}  | \lambda \rangle_N & \text{otherwise}
     \end{cases}
\end{equation}
Also we take $N$ large enough such that
\begin{equation}
      P_{a,b}  \varphi_{N}^{[n]}   | \lambda \rangle_N  = \begin{cases}
      \varphi_{N}^{[n]}  \tilde{P}_{a,b}^{(N)} | \lambda \rangle_N  & \text{for $a \geq 0$}\\
     v^{- n' a - n b} \varphi_{N}^{[n]}  \tilde{P}_{a,b}^{(N)}  | \lambda \rangle_N & \text{otherwise}
     \end{cases}
\end{equation}
Using Lemma \ref{lemma: tilde SL_2 z action}, we obtain
\begin{equation} \label{eq: sigma Pab and phi n}
      \sigma \left( P_{a,b} \right)  \varphi_{N}^{[n]}   | \lambda \rangle_N  = \begin{cases}
     v^{- n_{ \sigma}(1,0) b }  \varphi_{N}^{[n]}  \tilde{P}_{na-mb, -n'a + m'b}^{(N)}  | \lambda \rangle_N  & \text{for $a \geq 0$}\\
     v^{- \left( n_{ \sigma}(1,0) +1 \right) b } \varphi_{N}^{[n]} \tilde{P}_{na-mb, -n'a + m'b}^{(N)}  | \lambda \rangle_N & \text{otherwise}
     \end{cases}
\end{equation}
It follows from the above
\begin{equation*}
    \sigma(P_{a,b}) \hat{\psi} |\lambda \rangle_\infty = \sigma(P_{a,b}) \hat{\psi} \, \varphi_{N}^{[1]} |\lambda \rangle_N = \sigma(P_{a,b})  \varphi_{N}^{[n]} \tilde{\psi} |\lambda \rangle_N =   \dots =\hat{\psi} P_{a,b} \varphi_{N}^{[1]} |\lambda \rangle_N =\hat{\psi} P_{a,b} |\lambda \rangle_\infty, 
\end{equation*}
here the dots stand for the omitted steps involving the cases (straightforward to write down using \eqref{eq: sigma Pab and phi n}, \eqref{eq psi tilde Pab}, and \eqref{eq: Pab and phi 1}).
\end{proof}
\subsection{Action of the whole algebra}
Recall that we write $\equiv$ for $\equiv \pmod{n}$.  Also recall that the currents $\hpsi_{(\alpha)}(v^{-1} q z) \hphi_{(\beta)}^*(z)$ and $\hphi_{(\alpha)}(v^{n} z) \hpsi_{(\beta)}^*(v^{n-1} q z)$ are defined for general $q$ and $v$ by \eqref{eq: regular product psi phi start 1} and  \eqref{eq: regular product psi phi start 2}.
\begin{thm} \label{thm: explict construction of twisted Fock}
The following formulas determine an action of $\DI$ on $\siw{0} \left( \mathbb{C}^n[Y^{\pm1}]\right)$ 
\begin{subequations}
	\begin{align}
		&c \mapsto v^{-n},  \quad \quad  c' \mapsto v^{-n'},\\
		&P_{0, -j} \mapsto q^j B_{-j}, \quad \quad \quad P_{0,j} \mapsto  \frac{q^j-1}{v^{-2j} q^j-1} v^{-jn} B_j, \label{eq: main thm: heisenberg} 
		\\
		&E(z) \mapsto  u^{-\frac{1}{n}}  v^{\frac{1}{n}} q \sum_{\alpha - \beta\equiv n'} q^{-\frac{\alpha}{n}} v^{\frac{\alpha}{n}} z^{\frac{\beta-\alpha+ n'}{n}}\hpsi_{(\alpha)}(v^{-1} q z) \hphi_{(\beta)}^*(z), \label{eq: main th E}  
		\\
		&F(z) \mapsto  u^{\frac{1}{n}}  v^{-\frac{1}{n}} \sum_{\alpha - \beta \equiv  -n'} q^{\frac{\beta}{n}} v^{\frac{n-1}{n}  \beta - \alpha} z^{\frac{\beta-\alpha- n'}{n}}\hphi_{(\alpha)}(v^{n} z) \hpsi_{(\beta)}^*(v^{n-1} q z). \label{eq: main th F}
	\end{align} 
\end{subequations}
for $q_1=q$, $q_2 =v^{-2}$. The obtained representation is isomorphic to twisted Fock module $\mathcal{F}_{u}^{\sigma}$.
\end{thm}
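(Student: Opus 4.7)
The plan is to define a $\DI$-action on $\siw{0}\!\left(\mathbb{C}^n[Y^{\pm 1}]\right)$ by transport through the vector-space isomorphism $\hat{\psi}\colon \mathcal{F}_u^\downarrow \to \mathcal{F}_u^\swarrow$ of Proposition \ref{prop: twisted rep bottom}: for $X \in \DI$ and $w \in \siw{0}\!\left(\mathbb{C}^n[Y^{\pm 1}]\right)$ set $X \cdot w := \hat{\psi}\bigl(\sigma^{-1}(X)\,\hat{\psi}^{-1}(w)\bigr)$. The transported action is tautologically isomorphic to $\mathcal{F}_u^\sigma$, so the isomorphism claim of the theorem holds by construction and the task reduces to verifying that the Chevalley generators act by the formulas stated.

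\textbf{Central charges and Heisenberg part.} Twisting by $\sigma$ sends the central charges $(c,c') = (v^{-1}, 1)$ of $\mathcal{F}_u$ (Section \ref{subsec: example n=1}) to $(v^{-n}, v^{-n'})$ via \eqref{eq:sigma central charge}, matching the statement. For the Heisenberg generators, Proposition \ref{prop: twisted rep bottom} identifies the transported $\DI^\downarrow$-action with the $\DI^\swarrow$-action of Proposition \ref{prop: Pab central charge apperar}; the latter is built so that $P_{0,-j} \mapsto q^j B_{-j}$, giving the negative half of \eqref{eq: main thm: heisenberg}. The formula for $P_{0,j}$ is then forced by the $\DI$ Heisenberg relation \eqref{eq: relation Heisenberg} combined with the deformed commutator \eqref{eq: deformed heisenberg}.

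\textbf{Chevalley generators $P_{\pm 1, b}$.} Pick $b_0$ with $\pm n' + n b_0 \le 0$, so that $P_{\pm 1, b_0}$ lies in $\DI^\swarrow$. By the same argument, the transported action of $P_{\pm 1, b_0}$ coincides with the stable limit of $\tilde{P}_{\pm 1, b_0}^{(N)}$ from Proposition \ref{prop: Pab central charge apperar}. Propositions \ref{prop: analytic cont DIp} and \ref{prop: analytic cont DIm}, obtained by analytic continuation from the convergent regimes of Theorems \ref{thm: positive half}, \ref{thm: negative half}, rewrite this same stable limit in terms of the vertex-operator currents---namely the mode $b_0$ of \eqref{eq: main th E} or \eqref{eq: main th F}---so the two descriptions agree on $P_{\pm 1, b_0}$. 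Now the relations \eqref{eq: relation HeisenbergE}, \eqref{eq: relation HeisenbergF} express every $P_{\pm 1, b}$ as an iterated commutator of $P_{\pm 1, b_0}$ with the $P_{0, \pm k}$, while Proposition \ref{prop: heis and vertex operators} shows that the proposed vertex-operator expressions satisfy exactly the matching commutation relations with the $B_k$. Propagation yields agreement for all $b$, completing the identification on Chevalley generators and hence, by the Chevalley presentation, on all of $\DI$.

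\textbf{Main obstacle.} The essential subtlety is the matching on a single mode $P_{\pm 1, b_0}$ between the abstract stable-limit action of Proposition \ref{prop: Pab central charge apperar} and the explicit current formula. Both arise from the same finite-$N$ operator $\tilde{P}_{\pm 1, b_0}^{(N)}$: via Proposition \ref{prop: P1b negative stabel} as a stabilizing family whenever $\pm n' + n b_0 \le 0$, and via Proposition \ref{prop P tilde converges} as a convergent family in the regime $|q^{-1}v^{2}| < 1$ (resp.\ $|qv^{-2}| < 1$). Agreement for arbitrary $q, v$ then follows because all matrix coefficients are rational in $q, v$, as spelled out in Subsection \ref{subsection: analytic continuation}. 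Once this single matching is granted, the rest of the verification is routine algebraic bookkeeping.
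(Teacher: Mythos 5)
Your proposal is correct, and its overall architecture---transport the Fock action through $\hat{\psi}$, match the resulting $\rho_{tw}$ against the explicit current formulas on the convergent regime, then extend by rationality---is the same as the paper's. The place where you diverge is the final \emph{propagation} step. The paper's Lemma \ref{lemma: action coincide plus} first establishes agreement $\rho_+ = \rho_{tw}$ on all generators $P_{a,b}$ in the wedge $\{a\geq 0,\ n'a+nb\leq 0\} = \DI^+ \cap \DI^\swarrow$, and then invokes the PBW-type Proposition \ref{prop: PBW for DI plus} together with the fact that the action of $P_{k,l}$ with $n'k+nl>0$ on the spanning set $P_{a_1,b_1}\cdots P_{a_t,b_t}|\varnothing\rangle_\infty$ is determined by commutation relations and vacuum conditions. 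You instead match on a single mode $P_{\pm 1, b_0}$ (plus the $P_{0,\pm k}$) and propagate to all $b$ using the Heisenberg commutators \eqref{eq: relation HeisenbergE}--\eqref{eq: relation HeisenbergF}, then close by noting that the Chevalley generators generate $\DI$. This is a genuinely different route: it sidesteps the PBW theorem for $\DIp$ at the cost of relying on the nonvanishing of the structure constants $c^{\mp k}(q^k-1)$ in the Heisenberg relations, which holds generically. Your version is arguably slicker; the paper's is more robust to degenerate parameter values. One small point you gloss over: in the negative half, the transported action picks up the extra prefactor $v^{n'}$ relative to $\rho_-$ (the paper notes $\rho_{tw}(F(z)) = v^{n'}\rho_-(F(z))$), which is already absorbed in the stated formula \eqref{eq: main th F}; your identification of the stable limit with "the mode $b_0$ of \eqref{eq: main th F}" is correct only because the published formula is normalized to match $\rho_{tw}$, not $\rho_-$---worth spelling out explicitly.
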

\begin{proof}
There is an action of $\DI$ on $\siw{0} \left( \mathbb{C}^n[Y^{\pm1}]\right)$ determined as follows. Recall that we have defined a map $\hat{\psi}$ from $\mathcal{F}_{u}^{\downarrow}$ to $\siw{0} \left( \mathbb{C}^n[Y^{\pm1}]\right)=\mathcal{F}_u^{\swarrow}$, see Proposition \ref{prop: twisted rep bottom}. But there is an action of the whole $\DI$ on $\mathcal{F}_{u}$, which coincides with $\mathcal{F}_{u}^{\downarrow}$ as a vector space. Hence for any $X \in \DI$ and $w \in \siw{0} \left( \mathbb{C}^n[Y^{\pm1}]\right)$ we can define $\rho_{tw}(X) w :=  \hat{\psi} \circ \rho_{\mathcal{F}_u}\left(\sigma^{-1}(X) \right) \circ \hat{\psi}^{-1} w $. The representation obtained is isomorphic to twisted Fock module $\mathcal{F}_{u}^{\sigma}$. In particular, formula \eqref{eq:sigma central charge} for $\tau = \sigma^{-1}$ implies $\rho_{tw}(c)= v^{-n}$ and $\rho_{tw}(c')=v^{-n'} $. It remains to prove that the action $\rho_{tw}$ is given by \eqref{eq: main thm: heisenberg}--\eqref{eq: main th F}.

Note, that now we have two actions of $\DIp$ on $\siw{0} \left( \mathbb{C}^n[Y^{\pm1}]\right)$. The first one comes from Proposition \ref{prop: analytic cont DIp}, let us denote it by $\rho_+$. The second one comes from the restriction of $\rho_{tw}$ to $\DIp$.
\begin{lemma}
 $\rho_+(P_{a,b}) = \rho_{tw}(P_{a,b})$ for $n'a+nb \leq 0$ and $a \geq 0$.
\end{lemma}
\begin{proof}
Analogous to the proof of Lemma \ref{lemma: rho 1 and rho 2}.
\end{proof}
\begin{lemma} \label{lemma: action coincide plus}
The actions $\rho_+$ and $ \rho_{tw} \big|_{\DIp}$ coincide.
\end{lemma}
\begin{proof}
We will simply write $P_{a,b}=\rho_+(P_{a,b}) = \rho_{tw}(P_{a,b})$ for $n'a+nb \leq 0$ and $b \geq 0$. Any vector of $\siw{0} \left( \mathbb{C}^n[Y^{\pm1}]\right)$ is a linear combination of vectors $P_{a_1,b_1} \cdots P_{a_t,b_t} | \varnothing \rangle_{\infty}$ for $n' a_i + n b_i <0$ and $b_i \geq 0$. The following proposition is \cite[Lemma 5.6]{Burban:2012}.
\begin{prop} \label{prop: PBW for DI plus}
Algebra $\DI^+$ has a basis $P_{k_1, l_1} \cdots P_{k_t, l_t}$ for $\frac{l_1}{k_1} \leq \frac{l_2}{k_2} \leq\dots\leq \frac{l_t}{k_t}$ over $\mathbb{C}[c^{\pm1}, \left(c'\right)^{\pm 1}]$.
\end{prop}
Hence the action of $P_{k,l} \in \DIp$ for $n' k+ nl >0$ is determined by commutation relations in $\DIp$ and the following conditions 
\begin{align}
&P_{\tilde{k},\tilde{l}} | \varnothing \rangle_{\infty} =0 \quad  \text{for $n' \tilde{k}+ n \tilde{l} >0$,} &
&P_{\kt n,- \kt n'} | \varnothing \rangle_{\infty} =  \frac{u^{-\kt }v^{\kt }q^{\kt}}{1- \left( v^2 q^{-1} \right)^{\kt}} | \varnothing \rangle_{\infty}  \quad \text{for $\kt>0$.}
\end{align}
\end{proof}
Let $\rho_-$ denotes the action of $\DIm$, coming from Proposition \ref{prop: analytic cont DIm}. The following lemma is analogous to Lemma \ref{lemma: action coincide plus}.
\begin{lemma}
It holds $ \rho_{tw}(P_{a,b}) =  v^{-an'} \rho_-(P_{a,b})$ for $a \leq 0$.
\end{lemma}
Proposition \ref{prop: analytic cont DIp} implies that the current $\rho_{tw} \left(E(z)\right)=\rho_{+} \left(E(z)\right)$ is given by \eqref{eq: main th E}. Analogously, Proposition \ref{prop: analytic cont DIm} implies that the current $\rho_{tw} \left(F(z)\right)= v^{n'}\rho_{-} \left(F(z)\right)$ is given by \eqref{eq: main th F}. To find the action of $\rho_{tw}(P_{0,j})$, one can use either $\rho_+$ or $\rho_-$.
\end{proof}

\section{Standard basis}\label{sec:standard basis}
As was already mentioned in the introduction, one of the motivations of this paper is Gorsky-Negu\c{t} conjecture on stable envelope bases in the equivariant K-theory of Hilbert schemes of points in \(\mathbb{C}^2\) \cite{GN15}. Recall that there is the standard identification of the K-theory of Hilbert schemes and the space of symmetric functions $\Lambda$. Moreover, stable envelopes have a purely combinatorial characterization as symmetric functions \cite[Sect. 4.1]{Negut:2016}. This gives a reformulation of the conjecture to be given below.

We use notation \(n,n',m,m'\) as before, see Section \ref{subsec: bottom half}. The Fock module $\mathcal{F}_{u}$ can be identified as a vector space with the space of symmetric functions \(\Lambda\) using the correspondence \(p_k \leftrightarrow P_{0,-k}\) for \(k>0\). Hence the twisted Fock module $\mathcal{F}_{u}^\sigma$ can be identified with \(\Lambda\) using the correspondence \(p_k \leftrightarrow P_{ k m  ,-k m'}\) for \(k>0\).

There are several classical bases in the space \(\Lambda\), e.g. Schur basis $\{s_{\mu}\}$. Another important basis consists of \emph{Macdonald symmetric functions} \(P_\lambda\) and their renormalization \(M_\lambda\), see \cite[Sect. 2.4]{Negut:2016} and the references therein. 
\begin{Remark}
    The basis $\{ P_{\lambda} \}$ can be defined as the eigenbasis for the action of the operators $P_{kn,-kn'} \in \DI \curvearrowright \mathcal{F}_u^{\sigma} \cong \Lambda$. Let $P_{\lambda}(q,t) \in \Lambda$ be Macdonald symmetric functions, here we use Macdonald notation $q$,$t$, see \cite{Macdonald:1995}. Then $P_{\lambda}=P_{\lambda}(q, q v^{-2})$.
    
    There is an analogous (but different) construction to obtain Macdonald symmetric functions. One can consider the action $\SHp(q,v) \curvearrowright \bc_+ \mathbb{C}[\AB_1, \dots, \AB_N]$. The corresponding eigenvectors of $\sum_i \BB_i$ are Macdonald symmetric polynomials with the parameters $q$, $t=v^{-2}$ \cite{C92}. In the limit $N \rightarrow \infty$ we obtain Macdonald symmetric functions $P_{\lambda}(q, v^{-2})$. On the contrary, Macdonald symmetric functions with the parameters \(q\), \(t=qv^{-2}\) are limits of the eigenvectors in $\bc_- \mathbb{C}[\AB_1, \dots, \AB_N]$.    
    This difference between the Macdonald parameters in $\bc_+ \mathbb{C}[\AB_1, \dots, \AB_N]$ and $\bc_- \mathbb{C}[\AB_1, \dots, \AB_N]$ was used in the proof Macdonald conjectures, see \cite[Sect. 7]{K97}. 
\end{Remark}
It was shown in Lemma \ref{lemma:limit A B} that the action of \(P_{ k m  ,-k m'} \) stabilizes in the basis \(\{ |\lambda\rangle_N \}\). Hence \(	|\lambda \rangle_\infty\) form a basis of \(\Lambda\). Denote
\begin{align}
    	|\lambda \rangle_\infty =& \sum_{\mu} c_\lambda^\mu (q,v) M_\mu,  &  |\lambda \rangle_\infty =& \sum_{\mu} r_\lambda^\mu (q,v) s_\mu.
\end{align}

\begin{conj} \label{conj: CN comb}
The vectors \(|\lambda \rangle_\infty\) (considered as elements of $\Lambda$) satisfy the following conditions 
\begin{enumerate}[(i)]
	\item  \label{item stable 1}
		\(|\lambda \rangle_\infty\) are integral, i.e. the coefficients \(r_\lambda^\mu (q,v) \in \mathbb{Z}[q^{\pm 1}, v^{\pm 1}]\).
	\item  \label{item stable 2}
		The transition matrix $c_\lambda^\mu (q,v)$ is triangular with respect to the dominance order “\(<\)”.
	\item  \label{item stable 3}
		The coefficients \(c_\lambda^\mu (q,v)\)  satisfy the “window” condition \cite[eq. (4.4)--(4.5)]{Negut:2016} for a certain slope.
\end{enumerate}
Moreover, \(\overline{|\lambda \rangle}_\infty\) satisfy the conditions \eqref{item stable 1}--\eqref{item stable 3} for another slope.
\end{conj}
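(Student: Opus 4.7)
The plan is to reduce conditions (i) and (ii) to finite-rank statements established in Section~\ref{sec:Representation}, and then transfer them to the limit $N\to\infty$ via the inductive system of Section~\ref{subsection: limit}. Part (iii) lies beyond these tools and is the main obstacle; as indicated in the introduction, it would require genuinely new combinatorial input.

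For triangularity (ii), note that under the identification $\mathcal{F}_u^\sigma \cong \Lambda$, the Macdonald basis $\{M_\mu\}$ diagonalizes the commutative subalgebra generated by $\{P_{kn,-kn'}\}_{k>0}$. By Proposition~\ref{prop: twisted rep bottom} combined with Lemma~\ref{lemma:limit A B}(b), these generators (up to the explicit additive constant) are stable limits of $q^k\bc_-\sum_i \BB_i^{-k}\bc_-$. In the finite picture, Corollary~\ref{corol: eigenvectors twisted case} furnishes joint eigenvectors $\tilde{E}_\lambda$ whose expansion in the basis $\{e_\lambda\}$ is triangular with respect to the order $\prec$ of Definition~\ref{def:prec}. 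Antisymmetrizing and observing that $\prec$, when restricted to partitions, refines the dominance order, one passes to the limit $N\to\infty$ via the identification $e_\lambda \leftrightarrow |\lambda\rangle_N$ to obtain (ii).

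For integrality (i), Theorem~\ref{thm:A lmabda basis} together with Corollary~\ref{color: e via A} shows that the transition matrix between $\{e_\lambda\}$ and $\{A_\lambda\}$ has entries in $\mathbb{Z}[q^{\pm 1}, v^{\pm 1}]$ with invertible diagonal. All DAHA generators act integrally in the $\{e_\lambda\}$ basis, so the finite-rank $\DI$-action stabilizes to an integral $\DI$-action in the basis $\{|\lambda\rangle_\infty\}$. To pin down integrality specifically in the Schur basis $\{s_\mu\}$, rather than in an arbitrary integral basis of $\Lambda$, I would use that $s_\mu$ is characterized inside $\Lambda$ by the bar involution (Proposition~\ref{prop: stable bar}) together with a leading-term condition, and verify that $|\lambda\rangle_\infty$ admits a compatible expansion by tracking the bar involution through the $v$-wedge construction (using formulas \eqref{eq:bar on wedge} and Lemma~\ref{lemma: involution and vertex}).

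The hard part is condition (iii). The window condition imposes sharp bounds on the Newton polytope of each coefficient $c_\lambda^\mu(q,v)$ relative to a chosen slope, a much finer invariant than a triangularity/vanishing statement. None of the triangularity results in Section~\ref{sec:Representation} detect such polytope structure. The natural route, outlined in the \emph{Further Questions} of the introduction, is to first formulate and prove a finite nonsymmetric analog of the Gorsky--Negu\c{t} conjecture involving the action of $\BB_i$ on nonsymmetric Macdonald polynomials $E_\lambda$, and only then antisymmetrize in the limit. Establishing this finite nonsymmetric window condition appears to require substantially new ideas and is the essential open problem.
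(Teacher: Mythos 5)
The statement is a conjecture; the paper only \emph{proves} conditions (i) and (ii) (Theorem~\ref{thm:stable triangularity}), and you correctly identify (iii) as the genuine open point. For (ii), however, your route differs from the paper's and leaves a nontrivial gap, and for (i) you are missing the key lemma.

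The paper's mechanism is the monomial basis $A_\lambda$ of Section~\ref{ssec: monomial basis}. By Corollary~\ref{color: e via A}, $e_{-\lambda-\rho,N}$ is an integral, $\prec$-triangular combination of $A_{\eta,N}$; the decisive step is Lemma~\ref{lemma: A is Schur}, which computes $\bc_-A_{\eta,N}= s_{\mu,N}(\AB^{-1})\,\mathcal{A}$ for $\eta=-\mu-\rho$. Applying $\bc_-$ and dividing by the $v$-Vandermonde $\mathcal{A}$ immediately writes $|\lambda\rangle_N$ as an integral, $\leq$-triangular sum of Schur polynomials; (i) then follows directly, and (ii) follows from the standard upper-triangularity of $P_\mu$ in the Schur basis. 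Your proposal for (i) avoids this lemma and instead gestures at characterizing Schur functions via the bar involution and a leading-term condition. That characterization is not set up anywhere in the paper, and it is not obvious how to make it work: the bar involution \eqref{eq:bar on wedge} acts on the wedge basis $|\lambda\rangle$, not on the Schur basis, so some Schur-specific lemma of the shape of Lemma~\ref{lemma: A is Schur} would still be required. As written, (i) in your proposal is a gap.

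Your approach to (ii) via the eigenvectors $\tilde{E}_\lambda$ of Corollary~\ref{corol: eigenvectors twisted case} is a genuinely different route and is not implausible, but it also needs more care than you give. After inverting the unitriangular matrix and applying $\bc_-$, you must relate $\bc_-\tilde{E}_\nu$ to the symmetric Macdonald polynomials $M_\mu$ used in the definition of the transition matrix $c_\lambda^\mu$. This is delicate: $\bc_-\tilde{E}_\nu$ vanishes for many $\nu$, and as the Remark preceding Conjecture~\ref{conj: CN comb} emphasizes, the Macdonald parameter here is $t=qv^{-2}$ (coming from the $\bc_-$ picture), not $t=v^{-2}$. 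One must also check that $\prec$ on compositions descends correctly to dominance on partitions under $\bc_-$. The paper's route via Schur functions sidesteps all of this. In short: your reading of what is provable versus conjectural matches the paper, but your proof of the two provable items replaces the paper's clean Schur-function computation by arguments that either have gaps (i) or require substantial additional bookkeeping (ii).
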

We do not write the precise form of the window condition since we do not use it here. 

Actually, the conjecture in \cite{GN15} was formulated in a weaker form than Conjecture \ref{conj: CN comb}. Namely, it was stated that there exists an identification of  \(\siw{0}(\mathbb{C}^n[Y^{\pm 1}])\) with the space of symmetric functions \(\Lambda\) such that image of the basis \(\{ |\lambda\rangle_{\infty} \}\) satisfies the properties \eqref{item stable 1}--\eqref{item stable 3} and similarly for \(\{ \overline{|\lambda\rangle}_{\infty} \}\). In this form, the conjecture was proven in \cite{KS:2020} using 3d-mirror symmetry. Above we conjectured additionally that this identification comes from our Theorem \ref{thm: explict construction of twisted Fock}. 

Below we prove the properties \eqref{item stable 1} and \eqref{item stable 2} using analogous properties of the (non-symmetric, finite) basis \(e_\lambda = e_{\lambda_1} \otimes \dots \otimes e_{\lambda_N}\) shown in Section \ref{sec:Representation}. The proof consists of two steps: first we apply \(\bc_-\) and then take the limit \(N\rightarrow \infty\). We believe that the property \eqref{item stable 3} can be also deduced from an analog of the window condition for the basis~\(e_\lambda\).
\begin{thm}\label{thm:stable triangularity}
	The vectors \(|\lambda \rangle_\infty\) (considered as elements of $\Lambda$) satisfy the following conditions 
\begin{enumerate}[(i)]
	\item  \label{item stable th 1}
		\(|\lambda \rangle_\infty\) are integral, i.e. expands in terms of Schur functions with coefficients in $\mathbb{Z}[q^{\pm1}, v^{\pm1}]$.
	\item  \label{item stable th  2}
		The transition matrix from the basis $\{ M_{\mu} \}$  to the basis \(\{ |\lambda \rangle_\infty \}\) is triangular with respect to the dominance order “\(<\)”.
\end{enumerate}
\end{thm}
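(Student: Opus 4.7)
The plan is to derive both (i) and (ii) from their non-symmetric, finite-$N$ counterparts --- Corollary \ref{color: e via A} and Corollary \ref{corol: eigenvectors twisted case} --- and to pass to the stable limit $N \to \infty$. Throughout I use that $|\lambda\rangle_N = \bc_-\, e_{\mathbf{i}(\lambda)}$, where $\mathbf{i}(\lambda) = (-\lambda_1,\, -\lambda_2+1,\, \ldots,\, -\lambda_N + N - 1)$ is the strictly increasing sequence associated to $\lambda$.

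For the integrality (i), Corollary \ref{color: e via A} gives
\begin{equation*}
e_{\mathbf{i}(\lambda)} = \tilde{\alpha}_{\mathbf{i}(\lambda),\mathbf{i}(\lambda)}\, A_{\mathbf{i}(\lambda)} + \sum_{\mu \prec \mathbf{i}(\lambda)} \tilde{\alpha}_{\mathbf{i}(\lambda),\mu}\, A_\mu
\end{equation*}
with all coefficients in $\mathbb{Z}[q^{\pm 1}, v^{\pm 1}]$ and leading coefficient invertible. Applying $\bc_-$ and using that the $\AB_i$ commute (they are $\widetilde{SL}(2,\mathbb{Z})$-images of the commuting $Y_i$), the resulting vectors $\bc_- A_\mu = \bc_-\, \AB_1^{\mu_1} \cdots \AB_N^{\mu_N}\, e_{(0)^N}$ are antisymmetrizations of monomials in commuting generators, hence symmetric Laurent polynomials in the $\AB_i$ acting on the vacuum. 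By Lemma \ref{lemma:limit A B}(a) and Theorem \ref{thm:isomorphism with twisted}, such symmetric expressions stabilize as $N \to \infty$ to integer polynomial combinations of the twisted power-sum operators $P_{km,-km'}$ applied to $|\varnothing\rangle_\infty$; under the identification $p_k \leftrightarrow P_{km,-km'}$ of $\mathcal{F}_u^\sigma$ with $\Lambda$, these are integer combinations of Schur functions. The $[N]!_v^{-}$ denominators in $\bc_-$ cancel against the integer wedge structure constants of Lemma \ref{lemma: q-wedge relation}, so integrality is preserved throughout.

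For the triangularity (ii), Corollary \ref{corol: eigenvectors twisted case} produces joint $\BB_i$-eigenvectors $\tilde{E}_{\mathbf{i}(\lambda)} = e_{\mathbf{i}(\lambda)} + \sum_{\nu \prec \mathbf{i}(\lambda)} \beta_\nu\, e_\nu$. Inverting this triangular expansion and applying $\bc_-$ yields
\begin{equation*}
|\lambda\rangle_N = \bc_-\,\tilde{E}_{\mathbf{i}(\lambda)} + \sum_{\nu \prec \mathbf{i}(\lambda)} \gamma_\nu\, \bc_-\,\tilde{E}_\nu .
\end{equation*}
Each $\bc_-\,\tilde{E}_\nu$ is a joint eigenvector of the symmetric power sums $\sum_i \BB_i^k$ with eigenvalues $\sum_i q^{k \nu_i}$ times constants depending only on the partition $\bar\nu$ obtained by sorting $\nu$. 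After the divergent additive subtractions of Lemma \ref{lemma:limit A B}(b) the eigenvalues for the renormalized operators $\tilde{P}_{kn,-kn'}^{(N)}$ stabilize as $N \to \infty$ to the Macdonald eigenvalues of $P_{kn,-kn'}$ on $M_{\bar\nu}$; hence $\bc_-\,\tilde{E}_\nu$ converges, up to a nonzero scalar, to $M_{\bar\nu}$. Definition \ref{def:prec} compares sequences first by the dominance order on their sorted versions, so $\nu \prec \mathbf{i}(\lambda)$ forces $\bar\nu \leq \lambda$ in the dominance order on partitions (with equality only when $\bar\nu = \lambda$, in which case the distinct $\nu$ yield the same wedge up to sign). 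This gives $|\lambda\rangle_\infty = c_\lambda^\lambda M_\lambda + \sum_{\mu < \lambda} c_\lambda^\mu M_\mu$ with $c_\lambda^\lambda \neq 0$.

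The main obstacle is the identification $\bc_-\,\tilde{E}_\nu \leftrightarrow M_{\bar\nu}$ in the limit: one must match eigenvalues precisely after the additive-constant subtraction from Lemma \ref{lemma:limit A B}(b), confirm that $\bc_-\,\tilde{E}_\nu$ does not degenerate to zero for the partition $\bar\nu$, and check that the normalization conventionally defining $M_{\bar\nu}$ coincides with the one produced by $\bc_-$-antisymmetrization. A subsidiary point in (i) is tracing $\mathbb{Z}[q^{\pm 1}, v^{\pm 1}]$-integrality through the chain of identifications --- antisymmetrization, passage to twisted symmetric functions, and stabilization into the Schur basis --- which relies on the classical integrality of the Schur expansion of symmetric polynomials in commuting variables combined with the integral structure of the $v$-wedge.
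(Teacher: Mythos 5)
Your overall strategy — pass from the non-symmetric finite-$N$ picture, apply $\bc_-$, and take the stable limit — is the same as the paper's, and your use of Corollary~\ref{color: e via A} as the starting point for~(i) is exactly right. However, your argument for~(i) has a real gap, and your argument for~(ii) takes a different route that leaves the hardest step unproved.

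For~(i), you write that the vectors $\bc_- A_\mu$ are ``antisymmetrizations of monomials in commuting generators, hence symmetric Laurent polynomials in the $\AB_i$ acting on the vacuum.'' This is not correct: an antisymmetrization of a monomial is skew-symmetric, not symmetric. What is actually needed — and what the paper proves as Lemma~\ref{lemma: A is Schur} — is the precise identity $\bc_- A_{\eta,N} = s_{\mu,N}(\AB^{-1})\,\mathcal{A}$ where $\mathcal{A}$ is the $v$-Vandermonde and the Schur polynomial $s_{\mu,N}$ carries no $v$-dependence. Proving this requires the observation (from~\cite{K97}) that $\operatorname{Ker}\bc_-^{v=1} = \operatorname{Ker}\bc_-$ and a Murnaghan--Nakayama argument to pin down the symmetric factor. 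Without this lemma, your leap to ``integer combinations of Schur functions'' is unsupported; the $[N]!_v$-cancellation you invoke is a red herring, since the lemma handles all normalizations at once. Once Lemma~\ref{lemma: A is Schur} is in place, applying $\bc_-$ to the Corollary~\ref{color: e via A} expansion immediately yields formula~\eqref{eq: transition from standard to schur basis}, from which both integrality and the triangularity $\mu \leq \lambda$ of the \emph{Schur} expansion drop out simultaneously; then~(ii) follows at once because the Macdonald basis is unitriangular in the Schur basis by definition.

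For~(ii), your route is genuinely different: you invert the triangular expansion of Corollary~\ref{corol: eigenvectors twisted case} to write $|\lambda\rangle_N$ as a $\prec$-triangular combination of $\bc_-\tilde E_\nu$, and then argue that $\bc_-\tilde E_\nu$ converges (after the renormalization of Lemma~\ref{lemma:limit A B}(b)) to a nonzero scalar multiple of $M_{\bar\nu}$. This is a plausible alternative, and the eigenvalue bookkeeping you sketch is on the right track (the multiset $\{(\lt_i,\nu_i)\}$ is indeed orbit-invariant), but the identification $\bc_-\tilde E_\nu \leadsto c_\nu\,M_{\bar\nu}$ with $c_\nu \neq 0$ is a nontrivial statement about antisymmetrizations of nonsymmetric Macdonald polynomials that you acknowledge as ``the main obstacle'' but do not close. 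One must also restrict to $\nu$ with distinct entries (otherwise $\bc_-\tilde E_\nu = 0$), verify the normalization, and control the limit uniformly. The paper sidesteps all of this: once the Schur expansion with $\mu\leq\lambda$ is established in~(i), triangularity in Macdonald polynomials is immediate, and no eigenvalue matching is needed. Your approach would work but costs considerably more; the paper's approach gets both~(i) and~(ii) from the single expansion~\eqref{eq: transition from standard to schur basis}.
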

The theorem precisely states the properties \eqref{item stable 1} and \eqref{item stable 2} from Conjecture \ref{conj: CN comb}. To prove the theorem we need certain preparations. Similarly to the notations of Section \ref{sec:Representation}, let us denote
\begin{equation}
	e_{-\lambda-\rho,N}=e_{-\lambda_1}\otimes e_{-\lambda_2+1} \otimes \dots \otimes e_{N-1}.
\end{equation}
Clearly, we have \(|\lambda\rangle_N= \bc_- e_{-\lambda-\rho,N}\). Similarly, we use notation \(A_{\lambda,N}\) for monomial basis in the space \((\mathbb{C}^n[Y^{\pm 1}])^{\otimes N}\), see formula \eqref{eq:Abasis}. Let $s_{\mu,N}$ and $P_{\mu,N}$ be, respectively, Schur and Macdonald polynomial in $N$ variables. Also, we will need $v$-Vandermonde polynomial
\begin{equation}
    \mathcal{A} = \frac{1}{[N]_v!} \prod_{j>i} \left( v^{-1} \AB_j - v \AB_i \right).
\end{equation}
For a polynomial $f(\AB_1, \dots, \AB_N)$ we denote $f(\AB^{-1})= f(\AB_1^{-1}, \dots, \AB_N^{-1})$.
\begin{lemma} \label{lemma: A is Schur}
a) The vector \(\bc_- A_{\eta,N}\) is skew-symmetric in $\eta_i$. \\
b) If $\eta_i = -\mu_i+i-1 $ for $\mu_1 \geq \mu_2 \geq \dots \geq \mu_N$ then we have \(\bc_- A_{\eta,N} = s_{\mu,N}(\AB^{-1}) \mathcal{A} \).
\end{lemma}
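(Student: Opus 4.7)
The plan is to transfer the question to the twisted Cherednik representation $\Ch_u^\sigma$ via the isomorphism $\psi$ of Theorem~\ref{thm:isomorphism with twisted}, which sends $\AB^\eta \cdot 1$ to $A_{\eta,N}$ and is $\mathcal{H}_N$-equivariant. Since $\bc_- \in \mathcal{H}_N$, both parts reduce to statements about $\bc_- \AB^\eta \cdot 1$ computed inside $\Ch_u^\sigma \cong \mathbb{C}[\AB_1^{\pm 1}, \ldots, \AB_N^{\pm 1}]$, where the $\AB_i$ act by multiplication and the $T_i$ by the Demazure--Lusztig operators, with $T_i \cdot 1 = v$.

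For part~(a), I will combine the projection identity $\bc_-(T_i + v^{-1}) = 0$ with the standard affine Hecke commutation
\[
    T_i\, \AB^\eta = \AB^{s_i \eta}\, T_i + (v - v^{-1})\, \nabla_i \AB^\eta, \qquad \nabla_i \AB^\eta = \frac{\AB^{s_i \eta} - \AB^\eta}{\AB_i / \AB_{i+1} - 1}.
\]
Applied to the cyclic vector this yields the recursion
\[
    \bc_- \AB^{s_i \eta} \cdot 1 = -v^{-2}\, \bc_- \AB^\eta \cdot 1 - (1 - v^{-2})\, \bc_-(\nabla_i \AB^\eta) \cdot 1.
\]
I then induct on the gap $d = |\eta_i - \eta_{i+1}|$. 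In the base case $d = 0$, $\nabla_i \AB^\eta = 0$ and the recursion collapses to $(1+v^{-2})\, \bc_- \AB^\eta \cdot 1 = 0$, giving $\bc_- \AB^\eta \cdot 1 = 0 = -\bc_- \AB^{s_i\eta} \cdot 1$. For the inductive step, assume without loss of generality $\eta_i > \eta_{i+1}$; an explicit expansion gives
\[
    \nabla_i \AB^\eta = -\sum_{k = \eta_{i+1}+1}^{\eta_i} \AB^{\eta^{(k)}},
\]
where $\eta^{(k)}$ replaces the $(i, i{+}1)$ entries of $\eta$ by $(\eta_i + \eta_{i+1} - k,\, k)$. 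The boundary index $k = \eta_i$ produces $\AB^{s_i \eta}$; the internal indices $\eta_{i+1} < k < \eta_i$ correspond to compositions of strictly smaller gap and group into $s_i$-pairs $\{k,\, \eta_i + \eta_{i+1} - k\}$, with any central self-paired term (when $d$ is even) annihilated by the base case. By the inductive hypothesis these paired contributions cancel under $\bc_-$, yielding $\bc_-(\nabla_i \AB^\eta) \cdot 1 = -\bc_- \AB^{s_i \eta} \cdot 1$. Substituting back into the recursion and solving gives $\bc_- \AB^{s_i \eta}\cdot 1 = -\bc_- \AB^\eta \cdot 1$, the desired skew-symmetry.

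For part~(b), the operators $T_i$ commute with any Laurent polynomial in the $\AB$'s symmetric in $\AB_i, \AB_{i+1}$, so $\bc_-$ commutes with multiplication by arbitrary symmetric Laurent polynomials in the $\AB$'s. A direct computation, proceeding by induction on $N$ using the defining factorization of $\bc_-$ and of $\mathcal{A}$, identifies $\bc_- \AB^\rho \cdot 1 = \mathcal{A}$ for $\rho = (0, 1, \ldots, N-1)$. Combining these,
\[
    s_{\mu,N}(\AB^{-1})\, \mathcal{A} = s_{\mu,N}(\AB^{-1})\, \bc_- \AB^\rho \cdot 1 = \bc_-\bigl(s_{\mu,N}(\AB^{-1})\, \AB^\rho\bigr) \cdot 1.
\]
Expanding $s_{\mu,N}(\AB^{-1})\, \AB^\rho$ as a sum of Laurent monomials $\AB^\nu$ and applying part~(a) monomial-by-monomial: monomials with repeated exponents vanish, and the remaining monomials collapse in signed $S_N$-orbits, leaving only the contribution indexed by the sorted weight $\eta = -\mu + \rho$. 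The resulting count of signed survivors is precisely the classical Jacobi--Trudi identity $a_{\mu + \delta}(x) = s_\mu(x)\, a_\delta(x)$ specialised at $x = \AB^{-1}$, yielding $\bc_-(s_{\mu,N}(\AB^{-1})\, \AB^\rho) \cdot 1 = \bc_- \AB^\eta \cdot 1 = \bc_- A_{\eta,N}$.

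The main obstacle lies in the inductive step of (a): the correction $\nabla_i \AB^\eta$ is a sum of $d$ distinct monomials, and one must verify carefully that under $\bc_-$ and the inductive hypothesis they collapse to precisely $-\bc_- \AB^{s_i \eta}\cdot 1$. Once part (a) is in hand, part (b) follows essentially formally from the commutativity of $\bc_-$ with symmetric functions together with the base identity $\bc_- \AB^\rho \cdot 1 = \mathcal{A}$ and classical Jacobi--Trudi.
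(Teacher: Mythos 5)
Your proof is correct, but it takes a genuinely different route from the paper's. The paper disposes of part (a) in one line by quoting Kirillov's result \(\operatorname{Ker}\bc_-^{v=1} = \operatorname{Ker}\bc_-\), whereas you prove the skew-symmetry \(\bc_- \AB^{s_i\eta}\cdot 1 = -\bc_-\AB^\eta\cdot 1\) by hand: the recursion from \(\bc_-(T_i+v^{-1})=0\) together with the affine-Hecke commutation \(T_i\AB^\eta = \AB^{s_i\eta}T_i + (v-v^{-1})\nabla_i\AB^\eta\), followed by an induction on the gap \(|\eta_i-\eta_{i+1}|\) in which the internal monomials of \(\nabla_i\AB^\eta\) cancel in \(s_i\)-pairs, works and is a nice self-contained replacement. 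For part (b) the paper again leans on Kirillov (image of \(\bc_-\) is \(\{\text{symmetric}\}\cdot\mathcal{A}\)) and then pins down the symmetric factor by comparing power-sum multiplication with Murnaghan--Nakayama; you instead use the fact that \(\bc_-\) commutes with multiplication by symmetric Laurent polynomials, reduce to the base identity \(\bc_-\AB^\rho\cdot 1 = \mathcal{A}\), and then pass to \(s_\mu\) via the classical bialternant/Weyl character identity \(s_\mu(x)\,a_\delta(x)=a_{\mu+\delta}(x)\) together with the reduction furnished by (a) that \(\bc_- f(\AB)\cdot 1\) depends only on the ordinary antisymmetrization of \(f\). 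Both routes are valid; yours buys self-containedness at the cost of two longer computations (the gap induction and the base identity \(\bc_-\AB^\rho\cdot 1=\mathcal{A}\), which you leave as ``a direct computation by induction on $N$'' and which does require some care), while the paper's is terse but imports two structural results from Kirillov. One small terminological point: the identity you invoke in step~4, \(s_\mu\,a_\delta = a_{\mu+\delta}\), is the bialternant (Weyl character) formula rather than Jacobi--Trudi.
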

\begin{proof}
a) Introduce usual (not \(v\)-deformed) antisymmetrizer
	\(
		\bc_-^{v=1}=\frac{1}{N!}\sum_{w} (-1)^{l(w)} w,
	\)
	here the summation goes over all the permutations of the generators \(\AB_i\).  It was shown in \cite[Th. 8.2]{K97} that \(\operatorname{Ker}\bc_-^{v=1}= \operatorname{Ker}\bc_-\). Hence the vector \(\bc_- A_{\eta,N} \) is skew-symmetric in  \(\eta_i\).	
	
b) Any polynomial in the image of \(\bc_-\) is a product of a symmetric polynomial and $\mathcal{A}$, see \cite[proof of Th. 7.1]{K97}. Hence \(\bc_- A_{\eta,N} = f_{\mu}(\AB^{-1}) \mathcal{A}\) for some symmetric $f_{\mu}$.   We claim that \(f_{\mu}(\AB^{-1})=s_{\mu}(\AB^{-1})\). This is obvious for \(v=1\). For generic \(v\) it is easy to check that \(f_\varnothing=1\), and that multiplication of \(f_\mu\) by \(\sum_i \AB_i^{-k}\) is given by a formula which does not depend on \(v\) (Murnaghan-Nakayama rule). Evidently, $f_{\mu}$ is determined by this.
\end{proof}

\begin{proof}[Proof of Theorem \ref{thm:stable triangularity}]
	Corollary \ref{color: e via A} states that there is an expansion
	\begin{equation} \label{eq: e lambda rho via A}
		e_{-\lambda -\rho,N}= \sum_{\eta \preceq -\lambda-\rho }\tilde{\alpha}_{\lambda,\eta} A_{\eta,N},
	\end{equation}
	where \(\tilde{\alpha}_{\lambda,\eta}\in  \mathbb{Z}[q^{\pm 1},v^{\pm 1}]\). We apply \(\bc_-\) to both sides. Using Lemma~\ref{lemma: A is Schur}a), the sum can be rewritten as a sum over \(\eta = (\eta_1<\eta_2<\dots<\eta_N)\). Introduce $\mu = - \eta - \rho$ as in Lemma~\ref{lemma: A is Schur}b). The condition \(-\mu - \rho = \eta \prec -\lambda-\rho\) implies \(\mu_i\geq 0\) and \(\mu \leq \lambda\). Due to Lemma \ref{lemma: A is Schur}, we obtain
	    	\begin{equation} \label{eq: transition from standard to schur basis}
		| \lambda \rangle_N = \sum_{\mu \leq \lambda} r_{\lambda,\mu} \, s_{\mu,N}(\AB^{-1}) \mathcal{A}.
	\end{equation}
	Using \(\tilde{\alpha}_{\lambda,\eta}\in  \mathbb{Z}[q^{\pm 1},v^{\pm 1}]\) and the definition of Macdonald polynomials we obtain the following.
	\begin{enumerate}[(i)]
	    \item The coefficients $r_{\lambda,\mu}$ are integral. 
	    \item The transition matrix from the Macdonald basis $\{ P_{\mu,N} \}$ to the basis $\{ | \lambda \rangle_N \}$ is upper triangular. 
	\end{enumerate}
To finish the proof we take the limit $N \rightarrow \infty$.
\end{proof}

\begin{Remark}
	One can show that the costandard basis \(\{ \overline{|\lambda\rangle}_\infty \}\) satisfies the properties \eqref{item stable 1} and \eqref{item stable 2}. Indeed, the transition matrix from \(\{ |\lambda\rangle_\infty \}\) to \(\{ \overline{|\lambda\rangle}_\infty \}\) is  upper triangular with integral coefficients, see formula \eqref{eq:bar on wedge} and Lemma \ref{lemma: q-wedge relation}. One can deduce the properties of the costandard basis from the corresponding properties of the standard basis. This is also consistent with the conjecture of \cite{GN15}.
\end{Remark}

\appendix

\section{Quantum affine algebra and its vertex operators} \label{app:A}
In this paper, we have used the space $\siw{\mt}\!\!\left(\mathbb{C}^n[Y^{\pm 1}] \right)$ and the vertex operators $\Phi(z)$, $\Psi(z)$, $\Phi^*(z)$, and $\Psi^*(z)$. The space $\siw{\mt}\!\!\left(\mathbb{C}^n[Y^{\pm 1}] \right)$ is known as an integrable level-one representation of quantum affine $\qgl$ \cite{KMS95, LT00}. The vertex operators can be defined by intertwining properties.

Integrable level-one representations and the vertex operators have their counterpart for $\vsl$. Below we will study the connection between the $\mathfrak{gl}_n$ and $\mathfrak{sl}_n$ versions. 

We will consider only the vertex operator $\Phi^*(z)$. The  situation for the other operators is analogous. We will need the results concerning $\Phi^*(z)$ for the proof of Proposition \ref{prop: heis and vertex operators}.

\subsection{Action of quantum affine algebra}
Let $\alpha_0$, $\dots$, $\alpha_{n-1}$ be simple positive roots of $\widehat{\mathfrak{sl}}_n$. We will use standard scale product $\left( \alpha_i, \alpha_j \right)$, and Cartan matrix $\langle \alpha_i, \alpha_j \rangle = 2 \left( \alpha_i, \alpha_j \right) / \left( \alpha_j, \alpha_j \right)$. Note that for $\widehat{\mathfrak{sl}}_n$ we have $\langle \alpha_i, \alpha_j \rangle =  \left( \alpha_i, \alpha_j \right)$. Algebra $\vsl$ is generated by $\qE_i$, $\qK_i$ and $\qF_i$ for $i=0, 1, \dots, n-1$. The relations are
\begin{gather}
    \qK_i \qK_j = \qK_j \qK_i, \quad \quad   \qK_i \qE_j \qK_i^{-1} = \fq^{(\alpha_i, \alpha_j)} \qE_j,   \quad \quad    \qK_i \qF_j \qK_i^{-1} = \fq^{-(\alpha_i, \alpha_j)} \qF_j,\\
      [\qE_i , \qF_j] = \delta_{i,j} \frac{\qK_i - \qK_i^{-1}}{\fq - \fq^{-1}},   \\
    \sum_{k=0}^{b_{ij}} (-1)^k \qbin{b_{ij}}{k}  \qE_i^k \qE_j \qE_i^{b_{ij}-k} = 0,    \quad \quad     \sum_{k=0}^{b_{ij}} (-1)^k \qbin{b_{ij}}{k} \qE_i^k \qE_j \qE_i^{b_{ij}-k} = 0.
\end{gather}
here $b_{ij}= 1 - \langle \alpha_i, \alpha_j \rangle $ and $\qbin{b_{ij}}{k} =[b_{ij}]_{\fq}! \big/ \big( [k]_{\fq}![b_{ij}-k]_{\fq}! \big) $.
There is an action of $\vsl$ on $\mathbb{C}^n[Y^{\pm 1}]$ determined as follows
\begin{align}
    \qE_i e_j =& \delta_{i \equiv j} e_{j+1},   &    \qF_i e_j =& \delta_{i \equiv j-1} e_{j-1},  &    \qK_i e_j =& \fq^{\delta_{i \equiv j-1} - \delta_{i \equiv j}} e_{j}.
\end{align}
Using the comultiplication
\begin{align}
\Delta(\qK_i) =& \qK_i \otimes \qK_i, &
\Delta(\qE_i) =& \qE_i \otimes 1 + \qK_i^{-1} \otimes \qE_i,  &
\Delta(\qF_i) =&  \qF_i \otimes \qK_i + 1 \otimes \qF_i,
\end{align}
we can define action of $\vsl$ on $\left( \mathbb{C}^n[Y^{\pm 1}] \right)^{\otimes N}$. Recall the action of affine Hecke algebra $H^Y$ on the space $\left( \mathbb{C}^n[Y^{\pm 1}] \right)^{\otimes N}$ determined in Section \ref{subsection: explicit construction}.
\begin{prop}
The actions of (affine) Hecke algebra and $\vsl$ commute. 
\end{prop}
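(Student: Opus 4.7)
The statement is the quantum affine Schur--Weyl duality \cite{GRV92, CP94}. My plan is to verify commutation on generators, treating the $Y_j$ and $T_i$ generators of $H^Y$ separately.

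First I would show that $Y_j$ commutes with every $\vsl$-generator. On a single factor $\mathbb{C}^n[Y^{\pm 1}]$, the multiplication operator $Y$ commutes with each of $\qE_i,\qF_i,\qK_i$: these act on $e_k$ only through $k \bmod n$, whereas $Y\cdot e_k = e_{k+n}$ merely shifts the index by $n$. Since $Y_j$ on $(\mathbb{C}^n[Y^{\pm 1}])^{\otimes N}$ acts as $Y$ on the $j$-th factor and as identity elsewhere, while $\vsl$ acts by the iterated coproduct, $Y_j$ commutes with every $\Delta^{(N-1)}(x)$.

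Next I would show that $T_i$ commutes with $\vsl$. It suffices to consider $N=2$, since $T_i$ touches only the factors $i,i+1$. Using the decomposition $T_i = G_i s_i$ with $s_i = s_i^Y s_i^e$ the full tensor-product swap of factors $i,i+1$ and $G_i$ as in \eqref{eq: formula for G_i}, the standard bialgebra identity $s_i \Delta(x) = \Delta^{\mathrm{op}}(x) s_i$ converts the desired commutation into the equivalent identity $G_i\,\Delta^{\mathrm{op}}(x) = \Delta(x)\,G_i$. Under the formal substitution $z = Y_i/Y_{i+1}$ the operator $G_i$ is recognized as the trigonometric $\check{R}$-matrix $\check{R}(z)$ for the tensor product of two evaluation representations of $\vsl$, whose defining property is exactly this intertwining with the $\vsl$-coproduct.

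The substantive step is verifying the intertwining for the affine Chevalley generators $\qE_0,\qF_0$. For the finite-type generators $\qE_i,\qF_i,\qK_i$ with $i\ne 0$, the intertwining of the constant $R$-matrix with the $U_v(\mathfrak{sl}_n)$-coproduct is classical, and the correction term in $G_i$ involves only $s_i^Y$ acting on the $Y$-variables, which commutes with $\vsl$ by the first step. For the affine generators, one reduces to a finite rational-function identity on the basis $\{e_a \otimes e_b : 0\le a,b<n\}$, which is carried out in detail in \cite{GRV92, CP94}.
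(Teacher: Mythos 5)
The paper does not supply a proof; the proposition is cited from \cite{GRV92,CP94}, so there is no in-house argument to compare against. Your outline follows the standard plan: dispose of $Y_j$ by a direct check, reduce the $T_i$ case to $N=2$, and convert commutation with $T_i = G_i s_i$ into the intertwiner identity $G_i\,\Delta^{\mathrm{op}}(x) = \Delta(x)\,G_i$, which is the affine $\check R$-matrix property.

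The last paragraph, however, contains a genuine gap. With the paper's conventions ($\mathbb{C}^n = \operatorname{span}\{e_0,\dots,e_{n-1}\}$ inside $\mathbb{C}^n[Y^{\pm1}]$, with $e_{j+n}=Ye_j$), the generator that shifts the $Y$-grading is the \emph{finite-type} one $\qE_{n-1}$, via $e_{n-1}\mapsto e_n = Y e_0$; the affine $\qE_0$ actually preserves it. Consequently $\operatorname{span}\{e_0,\dots,e_{n-1}\}$ is not a $U_v(\mathfrak{sl}_n)$-submodule of $\mathbb{C}^n[Y^{\pm1}]$, the constant $\check R$-matrix $R_{i,i+1}s^e_i$ does not intertwine $\Delta^{\mathrm{op}}(\qE_{n-1})$ with $\Delta(\qE_{n-1})$, and the rational operator $\tfrac{1-s_i^Y}{Y_i/Y_{i+1}-1}$ in the correction term does not commute with $\vsl$. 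Your first step only gives commutation with the multiplication operators $Y_j$; it says nothing about the exchange $s_i^Y$, which genuinely fails to commute with $\Delta(\qE_{n-1})$. For example, when $n=2$ one computes $\tfrac{1-s^Y}{Y_1/Y_2-1}\,\Delta(\qE_1)(e_2\otimes e_1)=0$ while $\Delta(\qE_1)\,\tfrac{1-s^Y}{Y_1/Y_2-1}(e_2\otimes e_1)=v^{-1}e_0\otimes e_4\ne 0$. These two discrepancies cancel when summed, so $G_i$ as a whole does intertwine, but the proposed term-by-term verification (``classical finite-type piece'' plus ``commuting correction'') is false, and in these conventions the nontrivial check sits at $i=n-1$, not at $i=0$.
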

Hence we have obtained an action $\vsl \curvearrowright \bc_- \left( \mathbb{C}^n[Y^{\pm 1}] \right)^{\otimes N}$.

\paragraph{The limit} Let us use the inductive system \eqref{eq: the injective system}. Let $\qE_i^{(N)}, \qK_i^{(N)}, F_{i}^{(N)} \curvearrowright \left(\mathbb{C}^n[Y^{\pm 1}] \right)^{\otimes N}$ be the operators, coming from the defined above action $\vsl \curvearrowright \left(\mathbb{C}^n[Y^{\pm 1}] \right)^{\otimes N}$. Consider a number $\rt=0, 1,\dots, n-1$ such that $\rt \equiv N-1-\mt \pmod{n}$ .  Let us define the following operators
\begin{align}
    \tilde{\qE}_i^{(N)} =& {\qE}_i^{(N)}, &  \tilde{\qK}_i^{(N)} =& v^{\delta_{i,\rt}} \qK_i^{(N)}, &  \tilde{\qF}_i^{(N)} =& v^{\delta_{i, \rt}} \qF_i^{(N)}.
\end{align}
Below we will need the following versions of Definitions \ref{defin: stabilizes} and \ref{defin: weakly stabilizes}.
\begin{defin}
A sequence of operators $A^{(N)} \colon \bc_- \left(\mathbb{C}^n[Y^{\pm 1}] \right)^{\otimes N} \rightarrow  \bc_- \left(\mathbb{C}^n[Y^{\pm 1}] \right)^{\otimes(N+\delta)}$ \emph{1-stabilizes} if for any $w \in \bc_- \left(\mathbb{C}^n[Y^{\pm 1}] \right)^{\otimes k} $ there is $M$ such that for any $N>M$ we have 
\begin{equation}
    \varphi_{N+\delta+1, N+\delta}^{(\mt+\delta)} \circ A^{(N)} \circ \varphi_{N, k}^{(\mt)} (w)= A^{(N+1)} \circ \varphi_{N+1, k}^{(\mt)} (w).
\end{equation}
\end{defin}

\begin{defin}
A sequence of operators $A^{(N)} \colon \bc_- \left(\mathbb{C}^n[Y^{\pm 1}] \right)^{\otimes N} \rightarrow  \bc_- \left(\mathbb{C}^n[Y^{\pm 1}] \right)^{\otimes (N+\delta)}$ \emph{weakly 1-stabilizes} if for any $w \in \bc_- \left(\mathbb{C}^n[Y^{\pm 1}] \right)^{\otimes k} $ there is $M$ such that for any $N>M$ we have 
\begin{equation}
    \varphi_{N+\delta}^{(\mt+\delta)} \circ A^{(N)} \circ \varphi_{N, k}^{(\mt)} (w)= \varphi_{N+\delta+1}^{(\mt+\delta)} \circ A^{(N+1)} \circ \varphi_{N+1, k}^{(\mt)} (w).
\end{equation}
\end{defin}

\begin{prop}
The operators $\Tphi_k^*$ 1-stabilize.
\end{prop}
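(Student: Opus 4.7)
The plan is to derive 1-stabilization directly from Lemma \ref{lemma: phi star stabilization}, which was already used to prove the (weaker) $n$-step stabilization of $\tilde\Phi_k^*$ in Proposition \ref{prop: stable vertex}. In fact, a careful reading shows that Lemma \ref{lemma: phi star stabilization} is exactly the 1-stabilization identity, once the notation is unwound.

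Concretely, since $\tilde\Phi_k^*$ has $\delta=-1$, the 1-stabilization condition requires that for every $w \in \bc_-(\mathbb{C}^n[Y^{\pm 1}])^{\otimes k}$ and every sufficiently large $N$,
\begin{equation*}
	\varphi^{(\mt-1)}_{N,N-1} \circ \tilde\Phi_k^{*(N)} \circ \varphi^{(\mt)}_{N,k}(w) \;=\; \tilde\Phi_k^{*(N+1)} \circ \varphi^{(\mt)}_{N+1,k}(w).
\end{equation*}
Unwinding the definitions, the left-hand side is $\bigl(\tilde\Phi_k^{*(N)} \varphi_{N,k}^{(\mt)}(w)\bigr) \wedge e_{N-\mt}$ (the exponent of the target $\varphi$-map is $\mt-1$ because $\tilde\Phi_k^*$ lowers $\mt$ by one, so the appended vector is $e_{(N-1)-(\mt-1)} = e_{N-\mt}$), whereas the right-hand side is $\tilde\Phi_k^{*(N+1)}$ applied to $\varphi^{(\mt)}_{N+1,k}(w)$.

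It therefore suffices to verify the identity on a basis vector $w = |\lambda\rangle_{k,\mt}$. Applying $\varphi_{N,k}^{(\mt)}$ produces the semi-infinite-style vector $|\lambda\rangle_{N,\mt}$, and Lemma \ref{lemma: phi star stabilization} states
\begin{equation*}
	[N]_v^+ \bigl(\Phi^*_k |\lambda\rangle_{N,\mt}\bigr) \wedge e_{N-\mt} \;=\; [N+1]_v^+ \Phi^*_k |\lambda\rangle_{N+1,\mt}
\end{equation*}
whenever $\mt - k + |\lambda| < N$. Recalling $\tilde\Phi_k^{*(N)} = [N]_v^+ \Phi_k^{*(N)}$, this is precisely the desired 1-stabilization identity at the basis vector $|\lambda\rangle_{k,\mt}$.

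To conclude for general $w$, note that any $w \in \bc_-(\mathbb{C}^n[Y^{\pm 1}])^{\otimes k}$ is a finite linear combination of basis vectors $e_{i_1} \wedge \cdots \wedge e_{i_k}$ with $i_1 < \cdots < i_k$, each of which can be written as $|\lambda\rangle_{k,\mt}$ for a partition $\lambda$ with $l(\lambda) \leq k$ and $|\lambda|$ bounded by (a linear function of) $\max|i_j|$. Choosing $M$ larger than the maximum of $\mt - k + |\lambda|$ over this finite set of partitions, the identity above holds for every component, hence for $w$ itself, for all $N > M$. Since all steps are bookkeeping, there is no real obstacle here; the only care needed is in matching the $\mt$-shift of the vertex operator with the correct map in the inductive system and confirming that the normalization factor $[N]_v^+$ in the definition of $\tilde\Phi_k^*$ is what makes Lemma \ref{lemma: phi star stabilization} into a 1-step statement (as opposed to merely an $n$-step statement).
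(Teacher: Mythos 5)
Your identification of Lemma \ref{lemma: phi star stabilization} as the key ingredient is correct, and your unwinding of the definitions (the $\delta=-1$ case, the matching shift $\mt\mapsto\mt-1$, the appended vector $e_{N-\mt}$, the role of the normalization $[N]_v^+$) is right: at a basis vector $|\lambda\rangle_{N,\mt}$ the lemma is verbatim the $1$-stabilization identity for $\tilde\Phi_k^{*}$. The paper states this proposition without proof; the intended argument is surely of this form, paralleling the proof of Proposition \ref{prop: stable vertex}.

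However, the final linearity step contains a genuine slip. You write that every basis vector $e_{i_1}\wedge\cdots\wedge e_{i_k}$ of $\bc_-(\mathbb{C}^n[Y^{\pm1}])^{\otimes k}$ can be written as $|\lambda\rangle_{k,\mt}$ for some partition $\lambda$. This is false: from $|\lambda\rangle_{k,\mt} = \bigwedge_{j=1}^k e_{-\lambda_j + j - 1 - \mt}$ (extending $\lambda$ by zeros), the indices must satisfy $i_j \leq j - 1 - \mt$, since $\lambda_j = j-1-\mt-i_j \geq 0$. Basis vectors with large positive $i_j$ (e.g.\ $i_k \gg k-1-\mt$) are not of this form, so Lemma \ref{lemma: phi star stabilization} cannot be applied to them directly.

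The fix is short. First push $w$ forward by $\varphi^{(\mt)}_{k',k}$ for $k'$ large enough that $k'-1-\mt > \max_j i_j$. Using Lemma \ref{lemma: q-wedge relation} to reorder, $\varphi^{(\mt)}_{k',k}(w)$ then expands as a finite linear combination of vectors $|\mu\rangle_{k',\mt}$ with coefficients independent of $k'$ once $k'$ is large, and with $|\mu|$ bounded by a constant depending only on $w$. Now Lemma \ref{lemma: phi star stabilization} applies term by term, and the composition property $\varphi^{(\mt)}_{N,k} = \varphi^{(\mt)}_{N,k'} \circ \varphi^{(\mt)}_{k',k}$ reduces the $1$-stabilization condition for $w$ to that for $\varphi^{(\mt)}_{k',k}(w)$. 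With this correction your argument is complete.
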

\begin{prop}
The operators $\tilde{\qF}_i^{(N)}$ and $\tilde{\qK}_i^{(N)}$ 1-stabilize. The operators $\tilde{\qE}_i^{(N)}$ weakly 1-stabilize. Moreover, for $i \neq \rt$ the operators $\qE_i^{(N)}$ stabilize. More explicitly, for any $w \in \bc_- \left(\mathbb{C}^n[Y^{\pm 1}] \right)^{\otimes k} $ and sufficiently large $N \not\equiv \mt+i+1 \pmod{n}$ we have 
\begin{equation}
    \varphi_{N+n, N}^{(\mt)} \circ \qE_i^{(N)} \circ \varphi_{N, k}^{(\mt)} (w)= \qE_i^{(N+n)} \circ \varphi_{N+n, k}^{(\mt)} (w)
\end{equation}
\end{prop}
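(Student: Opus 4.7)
The plan is to use the $\vsl$-coproduct
\begin{equation*}
\Delta(\qE_i) = \qE_i \otimes 1 + \qK_i^{-1} \otimes \qE_i, \quad \Delta(\qF_i) = \qF_i \otimes \qK_i + 1 \otimes \qF_i, \quad \Delta(\qK_i) = \qK_i \otimes \qK_i,
\end{equation*}
together with the formulas $\qE_i e_j = \delta_{i \equiv j} e_{j+1}$, $\qF_i e_j = \delta_{i \equiv j-1} e_{j-1}$, $\qK_i e_j = v^{\delta_{i \equiv j-1} - \delta_{i \equiv j}} e_j$, to compute how each generator acts on a wedge $w \wedge e_{N-\mt}$. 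Throughout, the key arithmetic observations are $\rt(N+1) \equiv \rt(N) + 1 \pmod n$, and that for $w = \varphi_{N,k}^{(\mt)}(w_0)$ with $N$ sufficiently large, $w$ ends with the wedge factor $e_{N-1-\mt}$, so any wedge of $w$ with $e_{N-1-\mt}$ or with an index already appearing in the tail of $w$ vanishes.

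For $\tilde{\qK}_i$, multiplicativity of $\qK_i$ on tensor factors yields $\qK_i^{(N+1)}(w \wedge e_{N-\mt}) = v^{\delta_{i, \rt(N)} - \delta_{i, \rt(N+1)}} \qK_i(w) \wedge e_{N-\mt}$, and the normalizing factors $v^{\delta_{i, \rt}}$ precisely absorb the difference, giving 1-stabilization. For $\tilde{\qF}_i$, the coproduct gives
\begin{equation*}
\qF_i^{(N+1)}(w \wedge e_{N-\mt}) = v^{\delta_{i, \rt(N)} - \delta_{i, \rt(N+1)}} \qF_i(w) \wedge e_{N-\mt} + \delta_{i, \rt(N)}\, w \wedge e_{N-\mt-1};
\end{equation*}
the correction term is nonzero only when $i = \rt(N)$, but then $w \wedge e_{N-\mt-1} = w \wedge e_{N-1-\mt} = 0$ by the duplicate-factor observation. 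Multiplying by $v^{\delta_{i, \rt(N+1)}}$ on the RHS and comparing with $v^{\delta_{i, \rt(N)}} \qF_i(w) \wedge e_{N-\mt}$ on the LHS proves 1-stabilization.

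For $\tilde{\qE}_i$, the coproduct gives
\begin{equation*}
\qE_i^{(N+1)}(w \wedge e_{N-\mt}) = \qE_i(w) \wedge e_{N-\mt} + \delta_{i, \rt(N+1)}\, \qK_i^{-1}(w) \wedge e_{N-\mt+1}.
\end{equation*}
The first term is $\varphi_{N+1,N}^{(\mt)} \qE_i^{(N)}(w)$, and the second is the obstruction to 1-stabilization. However, the next step of the inductive system wedges with $e_{N+1-\mt} = e_{N-\mt+1}$, producing a duplicate factor, so $\varphi_{N+2,N+1}^{(\mt)}$ annihilates the correction. Using $\varphi_{N+1}^{(\mt)} = \varphi_{N+2}^{(\mt)} \circ \varphi_{N+2,N+1}^{(\mt)}$, we conclude that $\varphi_{N+1}^{(\mt)}$ kills the correction; this gives weak 1-stabilization.

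Finally, for the honest $n$-stabilization of $\qE_i^{(N)}$ with $i \neq \rt$, I iterate the coproduct over the $n$ new factors $e_{N-\mt}, \ldots, e_{N+n-1-\mt}$. The result is $\qE_i(w) \wedge e_{N-\mt} \wedge \cdots \wedge e_{N+n-1-\mt}$ plus a sum, indexed by $j=0,\dots,n-1$, of terms where $\qE_i$ acts on the $j$-th new factor and produces $e_{N+j+1-\mt}$. Since the residues of $N-\mt, \ldots, N+n-1-\mt$ modulo $n$ cover each class exactly once, for each $i$ exactly one $j$, namely $j_i \equiv i - \rt(N) \pmod n$, contributes nontrivially. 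If $j_i < n-1$ then $e_{N+j_i+1-\mt}$ already appears later in the wedge, so the contribution vanishes; the sole surviving case $j_i = n-1$ corresponds exactly to $i \equiv \rt(N) \pmod n$. Hence for $i \neq \rt$ all correction terms vanish and $\qE_i^{(N)}$ $n$-stabilizes. The main obstacle is purely combinatorial bookkeeping of the $v$-powers from $\qK_i^{\pm 1}$ factors and of the residue-mod-$n$ conditions; the vanishing mechanism itself is the same duplicate-wedge-factor phenomenon in each case.
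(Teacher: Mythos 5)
The paper does not give its own proof of this proposition --- it is stated as standard material and referenced to \cite{KMS95, LT00} --- so there is nothing to compare against; but your argument is the natural and correct one: iterate the comultiplication across the newly adjoined tensor slots, observe that a correction term produces an adjacent repeated wedge factor, and invoke $e_m \wedge e_m = 0$ (Lemma~\ref{lemma: q-wedge relation} with $l=\mb$). All three claims follow exactly as you outline. One small arithmetic slip to fix: since $N+j-\mt \equiv \rt(N)+1+j \pmod n$, the condition $i\equiv N+j-\mt$ gives $j_i \equiv i - \rt(N) - 1 \pmod n$ rather than $j_i \equiv i - \rt(N)$; with the corrected formula, $j_i = n-1$ if and only if $i\equiv \rt(N)$, which is the conclusion you in fact state and use. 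Also note that for the $n$-stabilization of $\qE_i^{(N)}$ with $i\neq\rt$, the $v$-powers from the $\qK_i^{-1}$ factors never need to be tracked at all, since every correction term is annihilated outright by the duplicate-factor mechanism; the bookkeeping you flag at the end is vacuous in that case.
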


Denote the induced operators by $\hat{\qE}_i$, $\hat{\qK}_i$, and $\hat{\qF}_i$.
\begin{prop}[\cite{KMS95, LT00}] \label{prop: action of quatum affine}
The  formulas $\qE_i \mapsto \hat{\qE}_i$,  $\qK_i \mapsto \hat{\qK}_i$,  $\qF_i \mapsto \hat{\qF}_i$ determine an action of $\vsl$ on $\siw{\mt}\!\!\left(\mathbb{C}^n[Y^{\pm 1}] \right)$.
\end{prop}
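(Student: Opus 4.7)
\textbf{Proof plan for Proposition \ref{prop: action of quatum affine}.}
The strategy is to reduce the check of the defining relations of $\vsl$ on the semi-infinite wedge to the same relations at finite level $N$, and then pass to the inductive limit using the stabilization and weak 1-stabilization results from the preceding propositions. First I would recall that the coproduct-induced action of $\vsl$ on $\left(\mathbb{C}^n[Y^{\pm 1}]\right)^{\otimes N}$ commutes with the affine Hecke algebra $H^Y$ (quantum affine Schur--Weyl duality), so the operators $\qE_i^{(N)}, \qK_i^{(N)}, \qF_i^{(N)}$ descend to $\bc_-\left(\mathbb{C}^n[Y^{\pm 1}]\right)^{\otimes N}$ and satisfy the defining relations there.

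Next, I would verify that the $N$-dependent rescaling $\tilde{\qE}_i^{(N)}=\qE_i^{(N)}$, $\tilde{\qK}_i^{(N)}=v^{\delta_{i,\rt}}\qK_i^{(N)}$, $\tilde{\qF}_i^{(N)}=v^{\delta_{i,\rt}}\qF_i^{(N)}$ preserves all the defining relations. This is a short direct calculation relation by relation: the $\qK$-$\qK$ commutativity is unaffected; $\tilde{\qK}_i\tilde{\qE}_j\tilde{\qK}_i^{-1}=v^{(\alpha_i,\alpha_j)}\tilde{\qE}_j$ because the scalar $v^{\delta_{i,\rt}}$ cancels; $\tilde{\qK}_i\tilde{\qF}_j\tilde{\qK}_i^{-1}=v^{-(\alpha_i,\alpha_j)}\tilde{\qF}_j$ for the same reason; the commutator becomes
$[\tilde{\qE}_i,\tilde{\qF}_j]=v^{\delta_{j,\rt}}\delta_{ij}(\qK_i{-}\qK_i^{-1})/(v{-}v^{-1})=\delta_{ij}(\tilde{\qK}_i{-}\tilde{\qK}_i^{-1})/(v{-}v^{-1})$, using $\delta_{ij}v^{\delta_{j,\rt}}=\delta_{ij}v^{\delta_{i,\rt}}$; the Serre relations involve only one type of generator, so the overall scalar factors out and does not affect the vanishing. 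Thus $\tilde{\qE}_i^{(N)}, \tilde{\qK}_i^{(N)}, \tilde{\qF}_i^{(N)}$ also give a representation of $\vsl$ on $\bc_-\left(\mathbb{C}^n[Y^{\pm 1}]\right)^{\otimes N}$.

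The final step is to transfer each defining relation of $\vsl$, viewed as a universal polynomial identity $R(\qE_i,\qK_i,\qF_i)=0$, to the inductive limit. Since $\tilde{\qK}_i^{(N)}$ and $\tilde{\qF}_i^{(N)}$ 1-stabilize while $\tilde{\qE}_i^{(N)}$ weakly 1-stabilizes, compositions of such operators behave well: at the level of induced operators one has $\widehat{AB}=\hat A\hat B$, and for any vector $w\in\bc_-\left(\mathbb{C}^n[Y^{\pm 1}]\right)^{\otimes k}$ and sufficiently large $N$ (depending on $R$ and $w$),
\begin{equation*}
R(\hat{\qE}_i,\hat{\qK}_i,\hat{\qF}_i)\,\varphi^{(\mt)}_N(w)=\varphi^{(\mt+\delta)}_{N+\delta}\bigl(R(\tilde{\qE}_i^{(N)},\tilde{\qK}_i^{(N)},\tilde{\qF}_i^{(N)})\,w\bigr)=0.
\end{equation*}
Applying this to each of the Serre relations, the $[E,F]$ commutator, and the adjoint $K$-relations then establishes the proposition.

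The main obstacle is bookkeeping for the weak 1-stabilization of $\tilde{\qE}_i^{(N)}$ inside multi-letter words: for the cubic Serre relation one must repeatedly apply the composition lemma, tracking that after each application of a weakly 1-stabilizing operator the relevant ``sufficiently large $N$'' threshold increases by a bounded amount, so the argument still terminates uniformly on a given vector $w$. The analogous care is needed for the $[E,F]$ relation, where one side is merely weakly stable while the other is strictly stable, but consistency of the two induced operators on every $\varphi^{(\mt)}_N(w)$ is automatic once the equality holds at each finite $N$.
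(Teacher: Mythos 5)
Your plan has a genuine error at the finite-$N$ step, and that error propagates to the limit argument. You claim that the rescaled operators $\tilde{\qE}_i^{(N)}, \tilde{\qK}_i^{(N)}, \tilde{\qF}_i^{(N)}$ still satisfy the defining relations of $\vsl$. They do not. Checking the commutator for $i=j=\rt$:
\begin{equation*}
[\tilde{\qE}_\rt^{(N)},\tilde{\qF}_\rt^{(N)}]=v\,[\qE_\rt^{(N)},\qF_\rt^{(N)}]
=\frac{v\qK_\rt^{(N)}-v\,(\qK_\rt^{(N)})^{-1}}{v-v^{-1}},
\end{equation*}
whereas what you need is $\dfrac{\tilde{\qK}_\rt^{(N)}-(\tilde{\qK}_\rt^{(N)})^{-1}}{v-v^{-1}}=\dfrac{v\qK_\rt^{(N)}-v^{-1}(\qK_\rt^{(N)})^{-1}}{v-v^{-1}}$. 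Your manipulation $v^{\delta_{i,\rt}}\bigl(\qK_i-\qK_i^{-1}\bigr)=\tilde{\qK}_i-\tilde{\qK}_i^{-1}$ is invalid: the scalar $v^{\delta_{i,\rt}}$ multiplies both terms, but $\tilde{\qK}_i^{-1}=v^{-\delta_{i,\rt}}\qK_i^{-1}$, not $v^{\delta_{i,\rt}}\qK_i^{-1}$. The discrepancy is $-(\qK_\rt^{(N)})^{-1}$, a nonzero operator at every finite $N$. So the finite-level operators you propose to take the limit of do not form a representation of $\vsl$.

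This is not a bookkeeping issue; it is the heart of the matter, and the paper itself warns about exactly this mechanism. Because $\tilde{\qE}_\rt^{(N)}$ only \emph{weakly} 1-stabilizes while $\tilde{\qF}_\rt^{(N)}$ 1-stabilizes, the composition $\hat{\qF}_\rt\hat{\qE}_\rt$ (stable after weakly-stable, in the problematic order) need not equal the induced operator of $\tilde{\qF}_\rt^{(N)}\tilde{\qE}_\rt^{(N)}$. The paper's example with $b_1$ and $b_{-1}$ in Section~5.2 is precisely this phenomenon: $[b_1,b_{-1}]=0$ at finite $N$, yet $[B_1,B_{-1}]=[n]_v^+\neq 0$ for the induced operators. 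In other words, a relation that holds identically at finite $N$ can change in the limit, and conversely a relation that \emph{fails} at finite $N$ (like the one above) can come out correctly in the limit. A correct proof of the proposition must compute how the kernel of the inclusion maps $\varphi_{N+1,N}$ (appending $e_{N-\mt}$, which kills wedges already containing that vector) absorbs the $-(\qK_\rt^{(N)})^{-1}$ discrepancy; one cannot appeal to the transfer of polynomial identities from finite $N$ to the inductive limit, because for the $\qE_\rt,\qF_\rt$ commutator that transfer simply fails in both directions.
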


\paragraph{From $\mathfrak{sl}_n$ to $\mathfrak{gl}_n$}
Let $\qheis$ be the algebra generated by $B_k$ for $k \in \mathbb{Z}$ and central $c^{\pm 1}$ with the relation
\begin{equation} 
        [B_k, B_l] = k  \frac{c^{-2k}-1}{\fq^{2k} -1} \delta_{k+l,0}.
\end{equation}

\begin{Remark}
We abuse notation since $B_k$ was defined as an operator on $\siw{\mt}\!\!\left(\mathbb{C}^n[Y^{\pm 1}] \right)$ and $c$ is an element of $\DI$. 
\end{Remark}

Let $\qgl = \vsl \otimes \qheis$. The algebra $\qgl$ acts on $\siw{\mt}\!\!\left(\mathbb{C}^n[Y^{\pm 1}] \right)$ as follows. The action of $\vsl$ comes from Proposition \ref{prop: action of quatum affine}. The generators $B_k$ act as the operators with the same name defined in Proposition \ref{prop: Bk stabilization}. The central element $c$ acts as multiplication by $v^{-n}$. Denote the obtained representation by $F_{\mt}$.
\begin{prop}
The obtained representation $F_{\mt}$ is irreducible integrable level-one representation of $\qgl$. There are $n$ non-isomorphic classes of such representations $F_0, \dots, F_{n-1}$. The representations $F_{\mt}$ and $F_{\mt+nk}$ are isomorphic for any $k \in \mathbb{Z}$.
\end{prop}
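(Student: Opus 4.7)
The proposition packages three assertions: integrability at level one, irreducibility, and the classification up to isomorphism. My plan is to extract each from a combination of results already cited in Sections 5–6 together with a short direct calculation at the vacuum vector.

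First I would verify the level-one integrability of the $\vsl$-action. Local finiteness of $\hat{\qE}_i, \hat{\qF}_i$ on each $|\lambda\rangle_{\infty,\mt}$ follows from the $q$-wedge relations (Lemma \ref{lemma: q-wedge relation}) and the stabilization statements already used in Proposition \ref{prop: action of quatum affine}: on a semi-infinite wedge only finitely many positions $j$ satisfy $i_j \equiv i$ modulo $n$ in the non-frozen tail, so both generators act nilpotently on any fixed weight vector. To identify the level I would compute $\hat{\qK}_0\hat{\qK}_1\cdots \hat{\qK}_{n-1}$ on the vacuum $|\varnothing\rangle_{\infty,\mt}$: each $\hat{\qK}_i$ contributes the finite tensor product of weights $\fq^{\delta_{i\equiv j-1}-\delta_{i\equiv j}}$ on $e_j$, which telescope, and the normalization factor $v^{\delta_{i,\rt}}$ in $\tilde{\qK}_i^{(N)}$ is precisely designed so that the total central charge stabilizes to $v^1$, i.e.\ level one.

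Next I would prove irreducibility by producing a highest weight vector and identifying the isotypic decomposition as a $\qgl$-module. The vacuum $|\varnothing\rangle_{\infty,\mt}$ is annihilated by $\hat{\qE}_i$ for all $i$ (again using the wedge relations plus the stabilization, since shifting the first slot would collide with the frozen column) and by $B_k$ for $k>0$ (Proposition \ref{prop: Bk stabilization} and the computation in \eqref{eq: b1 example}). Its $\vsl$-weight is the fundamental weight $\Lambda_{\mt \bmod n}$, which one reads off from the residues of the tail $(-\mt, -\mt+1,\ldots)$ modulo $n$. Since integrable highest weight $\vsl$-modules at level one with this highest weight are irreducible (the standard Kac–Peterson/Lusztig result for quantum affine algebras), and since the Fock representation of $\qheis$ generated by the vacuum under $B_{-k}$, $k>0$, is also irreducible by the commutation relation, the tensor decomposition
\begin{equation*}
F_\mt \;\cong\; L(\Lambda_{\mt \bmod n}) \otimes \mathbb{C}[B_{-1},B_{-2},\dots]
\end{equation*}
identifies $F_\mt$ with an irreducible $\qgl$-module. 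The non-trivial input here is that the $\vsl$-submodule generated by $|\varnothing\rangle_{\infty,\mt}$ under $\hat{\qF}_i$'s exhausts the part of $F_\mt$ with trivial Heisenberg content; this is shown in KMS95 and LT00 and I would cite it rather than reprove it.

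Finally, the classification: the weight $\Lambda_{\mt \bmod n}$ depends only on $\mt \bmod n$, giving the $n$ distinct classes $F_0,\ldots,F_{n-1}$, and they are mutually non-isomorphic because level-one fundamental weights distinguish the corresponding $\vsl$-representations. To establish $F_\mt \cong F_{\mt+nk}$ I would build an explicit $\qgl$-equivariant intertwiner sending $|\lambda\rangle_{\infty,\mt}$ to $|\lambda\rangle_{\infty,\mt+nk}$; this is well-defined because a shift of the labels by $nk$ is a shift by $k$ full periods, preserves all the $q$-wedge relations and the residues modulo $n$, and commutes with the stable limits of $\tilde{\qE}_i^{(N)},\tilde{\qK}_i^{(N)},\tilde{\qF}_i^{(N)}$ and of $b_k$ (the stabilization statements are insensitive to a shift of $N$ by a multiple of $n$). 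The main subtlety is making sure the normalization factors $v^{\delta_{i,\rt}}$ introduced in the definition of $\tilde{\qK}_i^{(N)},\tilde{\qF}_i^{(N)}$ behave consistently under the shift $\mt \mapsto \mt+nk$, which they do precisely because $\rt$ depends only on $N-1-\mt \bmod n$.
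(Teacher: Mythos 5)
The paper offers no proof of this proposition: it is stated bare, following a result cited to \cite{KMS95, LT00}, and is evidently regarded as part of the well-known theory of level-one $q$-wedge modules from those references. Your reconstruction is a reasonable one, so let me compare and flag what deserves care.

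Your strategy — prove level-one integrability directly from stabilization, then get irreducibility by appealing to the factorization $F_{\mt}\cong \Fsl_{\mt\bmod n}\otimes\Fheis$, then classify by highest weight and build the shift isomorphism $|\lambda\rangle_{\infty,\mt}\mapsto|\lambda\rangle_{\infty,\mt+nk}$ — is legitimate and is essentially how the KMS95/LT00 theory proceeds. Note, however, that the tensor decomposition you invoke is precisely the content of the \emph{next} proposition in the paper, also stated without proof. This is not circular as long as you cite that decomposition from the references (which you do), but a reader should understand that your ``proof'' of irreducibility is really an unpacking of the factorization theorem rather than an independent argument. The shift-isomorphism argument is fine: a uniform relabelling $e_j\mapsto e_{j-nk}$ preserves all residues mod $n$, hence the wedge relations of Lemma~\ref{lemma: q-wedge relation}, the $\vsl$-action and the operators $b_k$, and $\rt\equiv N-1-\mt$ is unchanged when $\mt$ shifts by a multiple of $n$.

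One concrete bookkeeping point to verify: your claim that the highest weight of the vacuum is $\Lambda_{\mt\bmod n}$. Computing $\tilde{\qK}_i$ on $|\varnothing\rangle_{\infty,\mt}=e_{-\mt}\wedge e_{-\mt+1}\wedge\cdots$ via $\qK_i e_j=\fq^{\delta_{i\equiv j-1}-\delta_{i\equiv j}}e_j$, the tensor-product exponent telescopes to $\delta_{i\equiv -\mt-1}-\delta_{i\equiv N-1-\mt}$, and the rescaling $v^{\delta_{i,\rt}}$ with $\rt\equiv N-1-\mt$ kills the second delta, leaving $\tilde{\qK}_i|\varnothing\rangle_{\infty,\mt}=v^{\delta_{i\equiv -\mt-1}}|\varnothing\rangle_{\infty,\mt}$. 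So the affine fundamental weight is indexed by $-\mt-1\bmod n$, not by $\mt\bmod n$. This does not affect the conclusion (the weight still depends only on $\mt\bmod n$ and takes $n$ distinct values, giving the $n$ non-isomorphic classes), but the way you state the matching between $\mt$ and the fundamental weight should be reconciled with the paper's labelling conventions for $\Fsl_0,\dots,\Fsl_{n-1}$ and with the bosonization identification $F_{\mt}\cong F^a\otimes\mathbb{C}[\bar{Q}]e^{\bar{\Lambda}_{\mt}}$ used later in Appendix~\ref{subsection : Vertex operators}.
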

Let $\Fsl_0, \Fsl_1, \dots, \Fsl_{n-1}$ be the irreducible integrable level-one representation of $\vsl$. Let $\Fheis$ be Fock module of $\qheis$ for $c=v^{-n}$.
\begin{prop}
The representation $F_{\mt}$ is isomorphic to $\Fsl_{\mt} \otimes \Fheis$ as representations of $\qgl = \vsl \otimes \qheis$ for $\mt = 0$, $1$, $\dots$, $n-1$.
\end{prop}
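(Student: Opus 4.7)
The plan rests on the classification statement in the preceding proposition, which reduces the task to matching a discrete invariant. Since $\qgl = \vsl \otimes \qheis$ as an algebra, once the two factor actions commute on $F_\mt$ the module decomposes as a tensor product, and irreducibility together with integrability of level one leaves only finitely many possibilities to rule out.

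First I would verify that the actions of $\vsl$ and $\qheis$ on $F_\mt$ commute, so $\qgl = \vsl \otimes \qheis$ really does act. At finite $N$ this is elementary: each $Y_i$ acts on $\mathbb{C}^n[Y^{\pm 1}]$ by $e_{nj+a} \mapsto e_{n(j+1)+a}$, hence preserves residues modulo $n$, whereas $\qE_j, \qF_j, \qK_j$ act by the coproduct of maps depending only on residues modulo $n$. Therefore $[Y_i^{\pm 1}, \qE_j^{(N)}] = [Y_i^{\pm 1}, \qF_j^{(N)}] = [Y_i^{\pm 1}, \qK_j^{(N)}] = 0$, so $b_k = \sum_i Y_i^k$ commutes with the $\vsl$-action. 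Passing to the induced operators via Proposition \ref{prop: Bk stabilization} and the stabilization of $\tilde{\qE}_i^{(N)}, \tilde{\qF}_i^{(N)}, \tilde{\qK}_i^{(N)}$, the induced commutators $[\hat{B}_k, \hat{\qE}_i] = [\hat{B}_k, \hat{\qF}_i] = [\hat{B}_k, \hat{\qK}_i] = 0$ hold on $\siw{\mt}\!\!\left(\mathbb{C}^n[Y^{\pm 1}] \right)$.

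Next I would check that $\Fsl_\mt \otimes \Fheis$ is an irreducible level-one integrable $\qgl$-module. Irreducibility over $\vsl \otimes \qheis$ follows from irreducibility of each factor over its own algebra. The $\qE_i$ act only on the first factor, on which $\Fsl_\mt$ is integrable, so the full module is integrable. The central $c$ acts by $v^{-n}$, matching the level-one condition. Hence by the previous proposition $\Fsl_\mt \otimes \Fheis$ is isomorphic to exactly one $F_{\mt'}$ for some $\mt' \in \{0,\dots,n-1\}$. To pin down $\mt'$, I would compare vacuum vectors: $|\varnothing\rangle_{\infty, \mt} \in F_\mt$ is annihilated by every $\hat{\qE}_i$ (by Pauli exclusion for the wedge, since $\qE_i$ tries to produce $e_{k+1}$ which is already present) and by every $B_k$ with $k>0$, while the $\hat{\qK}_i$-eigenvalues read off the fundamental weight $\Lambda_{\mt \bmod n}$; the vector $|\varnothing\rangle_{\mathfrak{sl}} \otimes |\varnothing\rangle_{\mathsf{H}}$ in $\Fsl_\mt \otimes \Fheis$ carries the same set of eigenvalues. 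Irreducibility plus matching vacuum weights then forces $F_\mt \cong \Fsl_\mt \otimes \Fheis$.

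The main obstacle is step three, the bookkeeping of the $v$-twist $\tilde{\qK}_i^{(N)} = v^{\delta_{i, \rt}} \qK_i^{(N)}$ with $\rt \equiv N-1-\mt \pmod n$: one must check that in the limit the $\hat{\qK}_i$-eigenvalue on the vacuum reproduces $\Lambda_{\mt \bmod n}$ exactly, with the correct balance of the $v^{\delta_{i,\rt}}$ factors as the residue $\rt$ changes with $N \bmod n$. Once this bookkeeping is done, the rest of the argument is purely formal.
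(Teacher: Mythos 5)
The paper states this proposition without proof, deferring to the references [KMS95, LT00], so there is no proof in the paper to compare against. Your overall plan — commute the two factor actions, show $\Fsl_{\mt}\otimes\Fheis$ is irreducible integrable level-one, invoke the classification from the preceding proposition, and match labels by vacuum weight — is the standard and correct route. That said, two places in your sketch need more care beyond the bookkeeping you already flag.

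First, the ``Pauli exclusion'' argument that $\hat{\qE}_i$ kills the vacuum is not quite immediate. At finite $N$ the operator $\qE_i^{(N)}$ applied to $|\varnothing\rangle_{N,\mt}=e_{-\mt}\wedge\cdots\wedge e_{N-1-\mt}$ does have one surviving term, namely for $i=\rt$, giving a nonzero multiple of $e_{-\mt}\wedge\cdots\wedge e_{N-2-\mt}\wedge e_{N-\mt}$ (raising the top index slips past the occupied range). This term is annihilated only after applying $\varphi^{(\mt)}_{N+1,N}$, because wedging with $e_{N-\mt}$ creates a repeated index. So the vacuum is a highest-weight vector in the limit, but the mechanism is the inductive map, not a termwise exclusion at finite $N$; you should note this explicitly.

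Second, the commutation $[B_k,\hat{\qE}_i]=0$ for $k>0$ does not follow automatically from $[b_k,\qE_i^{(N)}]=0$ at finite $N$. Since $b_k$ for $k>0$ only \emph{weakly} stabilizes, the proposition about compositions in the paper (stabilizing $\circ$ weakly stabilizing $\Rightarrow$ weakly stabilizing, with the induced operator the composition) gives you that $b_k\qE_i^{(N)}$ induces $B_k\hat{\qE}_i$, but it does not immediately identify the induced operator of $\qE_i^{(N)}b_k$ with $\hat{\qE}_i B_k$, because the weakly stabilizing operator is applied first. One has to argue separately (as [KMS95, LT00] do) that the $\vsl$- and $\qheis$-actions commute on the semi-infinite wedge. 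The statement is true and standard, but your derivation of it is not self-contained as written.

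Finally, as you note, the determination of the precise label $\mt$ from the $\hat{\qK}_i$-eigenvalues on the vacuum, taking into account the $v^{\delta_{i,\rt}}$ corrections whose position $\rt$ changes with $N\bmod n$, is a genuine (though routine) computation still to be carried out. Until that is done, your argument only identifies $F_{\mt}$ with some $\Fsl_{\mt'}\otimes\Fheis$ rather than the specific one in the statement.
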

\subsection{Vertex operators} \label{subsection : Vertex operators}
Below we will study intertwining properties of $\hphi_k^*$. Its analog defines vertex operators for $\mathfrak{sl}_n$ \cite{FR92}. In this subsection we will recall basic properties of the vertex operators for  $\mathfrak{sl}_n$ and start to study a connection between vertex operators of  $\mathfrak{sl}_n$ and  $\hphi_k^*$. The connection will be made more precise in the next subsection.
\paragraph{Intertwining property} Let us define an operator by
\begin{align} 
    &\hphi^* \colon \siw{\mt}\!\!\left(\mathbb{C}^n[Y^{\pm 1}] \right) \rightarrow \mathbb{C}^n[Y^{\pm 1}] \hat{\otimes} \siw{\mt-1}\!\!\left(\mathbb{C}^n[Y^{\pm 1}] \right)\\
    &   \hphi^* w= \sum_{k \in \mathbb{Z}} e_{-k} \otimes \hphi_k^* w.  \label{def: eq phi hat without an index}
\end{align}
\begin{prop} \label{prop: appendix intertwiner}
$\hphi^*$ is an $\vsl$-intertwiner.
\end{prop}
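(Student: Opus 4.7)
The strategy is to establish the intertwining first at finite level, where it is essentially automatic, and then pass to the inductive limit.

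At finite $N$, as noted in Section 4.1, the formula $\Phi^*(w) = \sum_k e_{-k} \otimes \Phi^*_k(w)$ identifies the finite vertex operator with the tautological inclusion $\bc_- V^{\otimes N} \hookrightarrow V \otimes \bc_- V^{\otimes(N-1)}$ (here $V = \mathbb{C}^n[Y^{\pm 1}]$; the tail $e_{t_2}\otimes\cdots\otimes e_{t_N}$ lies in $\bc_- V^{\otimes(N-1)}$ because $T_2+v^{-1},\dots,T_{N-1}+v^{-1}$ already annihilate it). Both source and target carry the $\vsl$-action given by the iterated coproduct, and the finite antisymmetrizer commutes with $\vsl$, so the inclusion is automatically a $\vsl$-module map; coassociativity then yields the finite intertwining
\begin{equation*}
\Phi^* \circ X^{(N)} = \Delta(X) \circ \Phi^* \qquad (X \in \{\qE_i, \qK_i, \qF_i\}),
\end{equation*}
where on the right the coproduct is expressed via the one-tensor action on $V$ and the $(N{-}1)$-fold action on $\bc_- V^{\otimes(N-1)}$.

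Next I would absorb the various normalizations. Multiplying through by $[N]_v^+$ turns $\Phi^*$ into $\tilde{\Phi}^*$; the renormalizations $\tilde{\qK}_i^{(N)} = v^{\delta_{i,\rt}} \qK_i^{(N)}$, $\tilde{\qF}_i^{(N)} = v^{\delta_{i,\rt}} \qF_i^{(N)}$, $\tilde{\qE}_i^{(N)} = \qE_i^{(N)}$ (with $\rt \equiv N{-}1{-}\mt \pmod n$) are consistent between the source $\siw{\mt}$-system and the codomain $\siw{\mt-1}$-system precisely because $\rt(N,\mt) = \rt(N-1,\mt-1)$. One checks directly that the factor $v^{\delta_{i,\rt}}$ redistributes between the two sides so as to produce the renormalized finite identity
\begin{equation*}
\tilde{\Phi}^* \circ \tilde{X}^{(N,\mt)} = \Delta(X)^{\sim} \circ \tilde{\Phi}^*,
\end{equation*}
where $\Delta(X)^{\sim}$ is obtained from $\Delta(X)$ by replacing the $(N{-}1)$-fold factor by the corresponding tilde-normalization at level $\mt-1$.

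Finally, for $u = \varphi_N^{(\mt)}(w)$ with $w$ in a fixed $\bc_- V^{\otimes k}$ and $N$ large, I would apply $1 \otimes \varphi_{N-1}^{(\mt-1)}$ to both sides of the renormalized finite identity. The outer occurrences of $\tilde{\Phi}^*$ pass to $\hphi^*$ via the stabilization Proposition~\ref{prop: stable vertex}, while the inner occurrences of $\tilde{X}^{(N,\mt)}$ and $\tilde{X}^{(N-1,\mt-1)}$ pass to $\hat{X}$ via Proposition~\ref{prop: action of quatum affine}, producing $\hphi^* \circ \hat{X} = \Delta(X) \circ \hphi^*$. The only real obstacle is the $\qE_i$ case, for which $\tilde{\qE}_i^{(N-1,\mt-1)}$ merely \emph{weakly} 1-stabilizes when $i=\rt$; however, the error lies in $\ker \varphi^{(\mt-1)}_{N-1}$, which is annihilated by the outermost projection $1\otimes\varphi_{N-1}^{(\mt-1)}$, so weak 1-stabilization is exactly what is needed to conclude.
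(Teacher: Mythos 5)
Your overall strategy matches the paper's: the finite intertwining is automatic (the antisymmetrizer commutes with the coproduct action), and one passes to the inductive limit after the $\tilde{\qK}$, $\tilde{\qF}$, $\tilde{\qE}$ renormalizations, isolating $\qE_{\rt}$ as the only non-routine case. The reduction up to that point is sound.

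There is, however, a gap in how you close the $\qE_{\rt}$ case, and it is exactly the gap the paper's last two lines were designed to avoid. You flag only the \emph{codomain}-side operator $\tilde{\qE}_{\rt}^{(N-1,\mt-1)}$ as the obstacle, and argue its weak 1-stabilization error lies in $\ker\varphi_{N-1}^{(\mt-1)}$ and is killed by the outer projection; that part is fine. But the \emph{domain}-side operator $\tilde{\qE}_{\rt}^{(N,\mt)}$ is equally exceptional (since $\rt(N,\mt)=\rt(N-1,\mt-1)=\rt$), and it is the harder of the two. It acts \emph{before} $\tilde{\Phi}^*$ in the finite identity, so the error term it produces lies in $\ker\varphi_N^{(\mt)}$ and is then fed through $\tilde{\Phi}^*$ prior to the projection. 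The composition lemma in Section~\ref{subsection: limit} lets you pass a composition to the limit only when the \emph{inner} operator stabilizes strongly; when it only weakly stabilizes, the induced composition need not equal $\hphi_k^*\circ\hat{\qE}_\rt$. Your argument tacitly needs the inclusion $\tilde{\Phi}^*\bigl(\ker\varphi_N^{(\mt)}\bigr)\subset\ker\varphi_{N-1}^{(\mt-1)}$, which is neither stated nor obvious: $\tilde{\Phi}^*$ can remove precisely the high-index tensor factor that put an element in the kernel. This is the step the paper circumvents by instead reading the finite identity one step up, from level $N+1$ to level $N$: there the domain-side operator $\qE_{\rt}^{(N+1)}$ lies in the $(N+1,\mt)$-system where $\rt(N+1,\mt)=\rt+1\ne\rt$, and the codomain-side operator $\qE_{\rt}^{(N)}$ lies in the $(N,\mt-1)$-system where $\rt(N,\mt-1)=\rt+1\ne\rt$, so \emph{both} inner operators stabilize strongly and the limit passage is clean. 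That single shift is the content of the paper's remark "Notice that $\qE_{\rt}^{(N+1)}$ stabilizes," and it is what your proposal is missing.
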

Evidently, Proposition \ref{prop: appendix intertwiner} is equivalent to the following proposition.
\begin{prop} 
The following relations hold
\begin{align}
    \hphi_k^* \hat{\qK}_i =& v^{ \delta_{i+k \equiv -1}- \delta_{i+k \equiv 0}  } \hat{\qK}_i \hphi_k^* \label{eq: intertwiner K} \\
    \hphi_k^* \hat{\qE}_i =& \delta_{i+k \equiv -1} \hphi_{k+1}^* + v^{\delta_{i+k \equiv 0}  - \delta_{i+k \equiv -1}} \hat{\qE}_i \hphi_k^* \label{eq: intertwiner E}  \\
    \hphi_k^* \hat{\qF}_i =& \delta_{i+k \equiv 0} \hat{\qK}_i\hphi_{k-1}^* +   \hat{\qF}_i \hphi_k^* \label{eq: intertwiner F} 
\end{align}
\end{prop}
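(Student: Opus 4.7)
The plan is to establish the three intertwining identities by first proving their finite-$N$ analogues on $\bc_-\!\left(\mathbb{C}^n[Y^{\pm 1}]\right)^{\otimes N}$ — where they follow tautologically from the definition of $\Phi^*_k$ — and then passing to the inductive limit using the stabilization results of Section \ref{sec:wedge}. The three displayed relations together are equivalent to Proposition \ref{prop: appendix intertwiner}, so I will in fact prove the cleaner statement: that the finite-level map
\[
\Phi^{*,(N)}\colon \bc_-\!\left(\mathbb{C}^n[Y^{\pm 1}]\right)^{\otimes N} \to \mathbb{C}^n[Y^{\pm 1}]\otimes \bc_-\!\left(\mathbb{C}^n[Y^{\pm 1}]\right)^{\otimes (N-1)},\qquad w\mapsto \sum_{k\in\mathbb{Z}} e_{-k}\otimes \Phi^*_k(w),
\]
is a $\vsl$-intertwiner for the coproduct $\Delta$, and then rescale and take the limit.

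The key observation for the finite level is that, in the subspace picture of Section \ref{sec:wedge}, if $w=\sum_{t_1,\dots,t_N} y_{t_1,\dots,t_N}\, e_{t_1}\otimes\dots\otimes e_{t_N}$ lies in $\bc_-\!\left(\mathbb{C}^n[Y^{\pm 1}]\right)^{\otimes N}$, then $\Phi^{*,(N)}(w)=\sum_{t_1,\dots,t_N} y_{t_1,\dots,t_N}\, e_{t_1}\otimes (e_{t_2}\otimes\dots\otimes e_{t_N})$ is literally $w$ read through the canonical factorization $(\mathbb{C}^n[Y^{\pm 1}])^{\otimes N}=\mathbb{C}^n[Y^{\pm 1}]\otimes (\mathbb{C}^n[Y^{\pm 1}])^{\otimes (N-1)}$; it lands in the required subspace because $(T_i+v^{-1})w=0$ for $i=1,\dots,N-2$ forces the last $N-1$ slots to be antisymmetric. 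Since the affine Hecke action and the $\vsl$-action on $(\mathbb{C}^n[Y^{\pm 1}])^{\otimes N}$ commute, the antisymmetric subspace is $\vsl$-stable and the inclusion $\Phi^{*,(N)}$ is automatically $\vsl$-equivariant. Decoding this equivariance componentwise via $\Delta(\qK_i)=\qK_i\otimes\qK_i$, $\Delta(\qE_i)=\qE_i\otimes 1+\qK_i^{-1}\otimes\qE_i$, $\Delta(\qF_i)=\qF_i\otimes\qK_i+1\otimes\qF_i$, together with $\qE_i(e_{-k})=\delta_{i+k\equiv 0}\, e_{-k+1}$, $\qF_i(e_{-k})=\delta_{i+k\equiv -1}\, e_{-k-1}$, and $\qK_i(e_{-k})=v^{\delta_{i+k\equiv -1}-\delta_{i+k\equiv 0}}\, e_{-k}$, yields precisely the finite-level analogues of \eqref{eq: intertwiner K}--\eqref{eq: intertwiner F} for $\Phi^*_k,\;\qE_i^{(N)},\;\qK_i^{(N)},\;\qF_i^{(N)}$.

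To pass to the limit, multiply every finite relation by the scalar $[N]_v^+$, converting $\Phi^*_k$ into the stable-normalized $\tilde{\Phi}^*_k$; this scalar commutes with all $\vsl$-generators and appears with the same $[N]_v^+$ on both sides. Next I convert $\qK_i^{(N)},\qF_i^{(N)}$ to $\tilde{\qK}_i^{(N)},\tilde{\qF}_i^{(N)}$: the crucial verification is that the residue $\rt\equiv N-1-\mt\pmod n$ at level $(N,\mt)$ coincides with $\rt\equiv (N-1)-1-(\mt-1)\pmod n$ at level $(N-1,\mt-1)$ — the levels linked by $\Phi^{*,(N)}$ — so the prefactors $v^{\delta_{i,\rt}}$ cancel between source and target. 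The relations for $\qK_i$ and $\qF_i$ then follow directly from the stabilizations in Proposition \ref{prop: stable vertex} and its $\vsl$-counterpart just after Proposition \ref{prop: action of quatum affine}. For $\qE_i$, which only weakly stabilizes, one applies $\tilde{\qE}_i^{(N)}$ first and then passes through $\varphi^{(\mt-1)}_{N-1}$; since both sides of the finite identity lie in $\bc_-\!\left(\mathbb{C}^n[Y^{\pm 1}]\right)^{\otimes (N-1)}$ before $\varphi$ is applied, weak stabilization of $\tilde{\qE}_i$ is sufficient to conclude.

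The main potential obstacle is bookkeeping: one must track the simultaneous shift $(N,\mt)\mapsto(N-1,\mt-1)$ under $\Phi^*$, check that $\rt$ is invariant under this shift, and confirm that $\tilde{\Phi}^*_{k+1}$ on the right-hand side of the $\qE_i$-relation is normalized by the same $[N]_v^+$ as $\tilde{\Phi}^*_k$ on the left. Beyond these normalization checks, the proof is essentially a direct unpacking of the coproduct. The genuinely nontrivial intertwining property — the commutation of $\hat{\Phi}^*_k$ with the diagonal $v$-Heisenberg generators $B_j$, established separately in Theorem \ref{thm: Bj Phik star} — lies outside the scope of this proposition.
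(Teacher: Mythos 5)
Your approach is the same as the paper's: reduce to the evident $\vsl$-equivariance of the finite-level map $\Phi^{*,(N)}$, unpack it componentwise to obtain the finite analogues of \eqref{eq: intertwiner K}--\eqref{eq: intertwiner F}, multiply by $[N]_v^+$, check that $\rt$ is preserved under $(N,\mt)\mapsto(N-1,\mt-1)$, and pass to the inductive limit. Your treatment of the $\qK_i$ and $\qF_i$ relations, and of $\qE_i$ for $i\neq\rt$, is correct: all the finite operators involved stabilize (not just weakly), so the limit argument is routine.

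The gap is in the $\qE_{\rt}$ case, where you assert that \emph{weak} stabilization of $\tilde{\qE}_i^{(N)}$ is sufficient. The composition rule you would need — that the weak limit of $\Tphi_k^*\,\tilde{\qE}_\rt^{(N)}$ equals $\hphi_k^*\,\hat{\qE}_\rt$ — is exactly the one the paper's ``composition proposition'' does \emph{not} supply, because there the weakly stabilizing operator must be applied \emph{second}, not first. Here $\tilde{\qE}_\rt^{(N)}$ is applied first and $\Tphi_k^*$ second; since $\qE_\rt^{(N)}$ applied to a representative at level $N$ need only agree with the correct answer modulo $\ker\varphi_{N}^{(\mt)}$, and you have not shown that $\Tphi_k^*$ carries $\ker\varphi_{N}^{(\mt)}$ into $\ker\varphi_{N-1}^{(\mt-1)}$, you cannot conclude. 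The paper avoids this entirely by writing the finite intertwining identity one level up, with $\Phi_k^*\,\qE_\rt^{(N+1)}$ on the left: since the ``bad'' residue at level $(N+1,\mt)$ is $\rt+1\neq\rt$ and at level $(N,\mt-1)$ is also $\rt+1\neq\rt$, every operator on both sides \emph{strongly} stabilizes and the limit passes through with no weak-stabilization subtlety. Your write-up needs either that reindexing trick or a separate argument that $\Tphi_k^*$ preserves the kernels of the canonical maps $\varphi_N^{(\mt)}$.

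One minor slip: the antisymmetry of the last $N-1$ slots requires $(T_i+v^{-1})w=0$ for $i=2,\dots,N-1$, not $i=1,\dots,N-2$; this does not affect anything downstream. Your closing remark that the $B_j$-commutation is the genuinely nontrivial intertwining fact and lies outside this proposition is accurate — that is Theorem \ref{thm: Bj Phik star}.
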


\begin{proof}
Analogously to \eqref{def: eq phi hat without an index}, consider operator
\begin{align} 
    \Phi^* \colon \bc_- \left(\mathbb{C}^n[Y^{\pm 1}] \right)^{\otimes N} \rightarrow& \left(\mathbb{C}^n[Y^{\pm 1}] \right) \otimes \bc_- \left(\mathbb{C}^n[Y^{\pm 1}] \right)^{\otimes N-1}\\
    \Phi^* w=& \sum_{k \in \mathbb{Z}} e_{-k} \otimes \Phi_k^* w 
\end{align}
Evidently, the operator is an intertwiner. Equivalently, $\Phi_k^*$ satisfy the counterparts of \eqref{eq: intertwiner K}--\eqref{eq: intertwiner F}
\begin{align}
    \Phi_k^* \qK^{(N)}_i =& v^{\delta_{i+k \equiv -1} - \delta_{i+k \equiv 0} } \qK_i^{(N-1)} \Phi_k^* \label{eq: intertwiner K finite} \\
    \Phi_k^* \qE_i^{(N)} =& \delta_{i+k \equiv -1} \Phi_{k+1}^* + v^{\delta_{i+k \equiv 0} - \delta_{i+k \equiv -1} } \qE_i^{(N-1)} \Phi_k^* \label{eq: intertwiner E finite}  \\
    \Phi_k^* \qF_i^{(N)} =& \delta_{i+k \equiv 0} \qK_i^{(N-1)} \Phi_{k-1}^* +   \qF_i^{(N-1)} \Phi_k^* \label{eq: intertwiner F finite} 
\end{align}
Note that in the relations above we can replace  $\qK_i^{(N)}, \qE_i^{(N)}, \qF_i^{(N)}$ by  $\tilde{\qK}_i^{(N)}, \tilde{\qE}_i^{(N)}, \tilde{\qF}_i^{(N)}$ respectively. Since the operators $\tilde{\qK}_i^{(N)}$, $\tilde{\qF}_i^{(N)}$, and $\tilde{\qE}_j^{(N)}$ stabilize  for $j \neq \rt$, the corresponding relation hold for $\hphi_k^*$. It remains to check
\begin{equation} \label{eq: appendix: tricky intertw}
   \hphi_k^* \hat{\qE}_{\rt} = \delta_{\rt+k \equiv -1} \hphi_{k+1}^* + v^{\delta_{\rt+k \equiv 0} - \delta_{\rt+k \equiv -1}} \hat{\qE}_{\rt} \hphi_k^*  
\end{equation}
Notice that $\qE_{\rt}^{(N+1)}$ stabilizes. Hence \eqref{eq: appendix: tricky intertw} follows from
\begin{equation}
    \Phi_k^* \qE_{\rt}^{(N+1)} = \delta_{\rt+k \equiv -1} \Phi_{k+1}^* + v^{\delta_{\rt+k \equiv 0}  - \delta_{\rt+k \equiv -1}} \qE_{\rt}^{(N)} \Phi_k^* 
\end{equation}
\end{proof}

\paragraph{Vertex operators for $\vsl$}
Denote the highest vector of $\Fsl_{\mt}$ by $| \mt \rangle$. Let us also define $\Fsl_{\mt}$ for any $\mt \in \mathbb{Z}$ by $F_{\mt} = \Fsl_{\mt} \otimes \Fheis$. 

Vertex operators for $\vsl$ are defined as the intertwiners
\begin{align}
&\hphi^{*, sl} \colon \Fsl_{\mt}  \rightarrow \left(\mathbb{C}^n[Y^{\pm 1}] \right)  \hat{\otimes} \Fsl_{\mt-1} \label{eq: sl vetex map}
\end{align}
satisfying the following normalization condition
\begin{align}
&\hphi^{*, sl} | \mt \rangle = e_{-\mt} \otimes | \mt-1 \rangle + \sum_{k< \mt} e_{-k} \otimes w_k  \label{eq: sl vetex map norm}
\end{align}
for certain vectors $w_k \in F_{\mt-1}$.

\begin{prop}[ \cite{FR92}] \label{prop: vertex is unique}
There exists unique operator $\hphi^{*, sl}$ determined by \eqref{eq: sl vetex map}--\eqref{eq: sl vetex map norm}.
\end{prop}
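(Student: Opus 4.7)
The plan is to prove uniqueness from the irreducibility of $\Fsl_{\mt}$ and existence by restricting the wedge-model intertwiner $\hphi^*$ of Proposition \ref{prop: appendix intertwiner}.

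For uniqueness, since $\Fsl_{\mt}$ is irreducible as a $\vsl$-module and is cyclically generated by its highest-weight vector $|\mt\rangle$, any intertwiner $\hphi^{*,sl}$ is determined by $\hphi^{*,sl}|\mt\rangle$. Writing $\hphi^{*,sl}|\mt\rangle = \sum_{k\leq\mt} e_{-k}\otimes w_k$ with $w_{\mt}=|\mt-1\rangle$ prescribed by \eqref{eq: sl vetex map norm}, the relations $\qE_i|\mt\rangle=0$ combined with the intertwining condition give
$$\Delta(\qE_i)\Bigl(\sum\nolimits_k e_{-k}\otimes w_k\Bigr) = \sum\nolimits_k (\qE_i e_{-k})\otimes w_k + \sum\nolimits_k (\qK_i^{-1} e_{-k})\otimes \qE_i w_k = 0$$
for every $i=0,\dots,n-1$, while $\Delta(\qK_i)$ fixes the weight of each $w_k$. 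These equations form a triangular system that determines every $w_k$ with $k<\mt$ from the prescribed $w_{\mt}$: graded by the principal grading on $\Fsl_{\mt-1}$, the contribution $(\qE_i e_{-k})\otimes w_k$ adjusts the ``next'' component $w_{k+1}$, so $\qE_i w_k$ is pinned down by already-determined data and, since $\Fsl_{\mt-1}$ is irreducible, the vector $w_k$ itself is uniquely recoverable from its image under all the $\qE_i$.

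For existence, the wedge operator $\hphi^*$ defined by \eqref{def: eq phi hat without an index} is already a $\vsl$-intertwiner from $F_{\mt}$ to $\mathbb{C}^n[Y^{\pm 1}]\hat\otimes F_{\mt-1}$ by Proposition \ref{prop: appendix intertwiner}. Under the decomposition $F_{\mt}\cong \Fsl_{\mt}\otimes \Fheis$ as $\vsl\otimes\qheis$-modules, the commutation relations of Proposition \ref{prop: heis and vertex operators} between $B_j$ and $\hphi_k$ show that $\hphi^*$ factors into a bosonic exponential in the Heisenberg generators $B_k$ (acting only on the $\Fheis$-factor, as in the explicit formulas of Section \ref{subsec: example n=1}) times an operator that commutes with $\qheis$ and maps $\Fsl_{\mt}$ into $\mathbb{C}^n[Y^{\pm 1}]\hat\otimes \Fsl_{\mt-1}$. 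This second factor is the desired $\hphi^{*,sl}$; the normalization \eqref{eq: sl vetex map norm} is then inherited from the wedge identity $\hphi^*_{-\mt}|\varnothing\rangle_{\infty,\mt}$ of Remark \ref{remark: normailization of vertex operators} after dividing out the Heisenberg exponential evaluated at the vacuum.

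The main obstacle is the triangular solvability at the uniqueness stage: one must show that the consistency of the equations $\Delta(\qE_i)\hphi^{*,sl}|\mt\rangle = 0$ taken over all $i=0,\dots,n-1$ does not over-determine the $w_k$. Equivalently, one must verify that $\mathrm{Hom}_{\vsl}(\Fsl_{\mt},\, \mathbb{C}^n[Y^{\pm 1}]\hat\otimes \Fsl_{\mt-1})$ is at most one-dimensional. At level one, this follows from the decomposition of $\mathbb{C}^n[Y^{\pm 1}]\hat\otimes \Fsl_{\mt-1}$ into irreducible $\vsl$-summands, where $\Fsl_{\mt}$ appears with multiplicity one (this is the Frenkel--Reshetikhin classification of intertwiners invoked in \cite{FR92}), and the normalization condition \eqref{eq: sl vetex map norm} fixes the remaining scalar.
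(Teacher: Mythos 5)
The paper treats this proposition as a cited result from \cite{FR92} and gives no proof, so there is no internal argument to compare with. Your attempt at a self-contained proof has one genuine circularity and one unfinished but promising idea.

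\textbf{Circularity in the existence argument.} You extract $\hphi^{*,sl}$ from the wedge intertwiner by invoking the commutation relations of Proposition \ref{prop: heis and vertex operators} (specifically $[B_j,\hphi^*_k]$ for $j>0$) to factor $\hphi^*$ through the Heisenberg factor. But in this paper that $j>0$ relation is proved in Theorem \ref{thm: Bj Phik star}, whose proof runs through Proposition \ref{prop: factorization of vertex operator}, which rests on Lemma \ref{lemma: vertex operators grading}, which uses Proposition \ref{prop: vertex is unique} itself. So the factorization you want to appeal to is downstream of the proposition you are trying to prove. To get existence without circularity you would need an independent construction — e.g.\ the explicit bosonization of $\hphi^{*,sl}_{(n-1)}(z)$ from \cite{K94} that the paper records after this proposition, or the Frenkel–Reshetikhin classification directly. (Section \ref{subsec: example n=1} is the $n=1$ case and does not cover the general $\mathfrak{sl}_n$ vertex operator.)

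\textbf{Uniqueness: the triangular idea is sound but not closed.} Spelling out $\Delta(\qE_i)\hphi^{*,sl}|\mt\rangle=0$ gives $\qE_i w_j=-v\,w_{j+1}$ when $i\equiv -j-1 \pmod n$ and $\qE_i w_j=0$ otherwise, so the ambiguity in $w_j$ is a vector killed by every $\qE_i$, i.e.\ a multiple of the highest-weight vector of $\Fsl_{\mt-1}$. That multiple is then ruled out by comparing principal degrees: $\deg w_j = j + \deg|\mt\rangle < \deg|\mt-1\rangle$ for $j<\mt$. This degree step is what actually closes the argument; you gesture at the grading but do not use it, and instead fall back on citing the Frenkel–Reshetikhin multiplicity-one statement, which is the content of the proposition being proved. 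Once the degree comparison is carried out, the appeal to \cite{FR92} for uniqueness is unnecessary — which is fine, but then you should delete the circular appeal rather than keep it as the load-bearing step.
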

Also, we define operators $\hphi^{*, sl}_k$ and currents $ \hphi_{(\alpha)}^{*, sl}(z)$ by
\begin{align}
\hphi^{*, sl}w = &\sum_{k \in \mathbb{Z}} e_{-k} \otimes \hphi^{*, sl}_k w & \hphi_{(\alpha)}^{*, sl}(z) =& \sum_{k \in \mathbb{Z}} \Phi^{*, sl}_{-\alpha+nk} z^{-k}
\end{align}
Let us consider \emph{principal grading} on $F_{\mt}^{sl}$ given by $\deg |\mt \rangle = - \frac{\mt(\mt+1)}{2}$, $\deg \qE_i = 1$, $\deg \qF_i = -1$. Note that $\deg \hphi^{*, sl}_k = k$. 
\begin{lemma} \label{lemma: vertex operators grading}
For any intertwiner $\phi^{*, sl} = \sum e_{-k} \otimes \phi_k^{*, sl}  \colon \Fsl_{\mt}  \rightarrow \left(\mathbb{C}^n[Y^{\pm 1}] \right)  \hat{\otimes} \Fsl_{\mt-1}$ such that $\deg \phi_k^{*, sl} = k + \Delta$, we have $\phi_k^{*, sl} = \gamma \hat{\Phi}_{k+\Delta}^{*, sl}$ for certain $\gamma \in \mathbb{C}$
\end{lemma}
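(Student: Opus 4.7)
The plan is to reduce the statement to the uniqueness assertion of Proposition \ref{prop: vertex is unique} via a shift argument. The key observation is that $Y$ commutes with the $\vsl$-action on $\mathbb{C}^n[Y^{\pm 1}]$: from $\qE_i e_j = \delta_{i \equiv j} e_{j+1}$ and $Y e_j = e_{j+n}$ one computes $\qE_i (Y e_j) = \qE_i e_{j+n} = \delta_{i \equiv j} e_{j+n+1} = Y \qE_i e_j$, and likewise for $\qF_i$ and $\qK_i$. Consequently, for every $d \in \mathbb{Z}$ the operator $Y^d \otimes 1$ is a $\vsl$-endomorphism of $\mathbb{C}^n[Y^{\pm 1}] \hat{\otimes} \Fsl_{\mt-1}$, so $(Y^d \otimes 1) \circ \hphi^{*,sl}$ is again an intertwiner $\Fsl_\mt \to \mathbb{C}^n[Y^{\pm 1}] \hat{\otimes} \Fsl_{\mt-1}$. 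Using $Y^d e_{-k} = e_{-k+nd}$ and relabeling the summation, its mode decomposition reads
\[
  (Y^d \otimes 1) \circ \hphi^{*,sl} = \sum_{k \in \mathbb{Z}} e_{-k} \otimes \hphi^{*,sl}_{k+nd},
\]
so the $k$-th mode has principal degree $k + nd$, exactly matching the prescribed grading of $\phi^{*,sl}_k$ when $\Delta = nd$.

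Next I would argue that, assuming $\phi^{*,sl} \neq 0$, the grading hypothesis $\deg \phi^{*,sl}_k = k + \Delta$ forces $n \mid \Delta$. Indeed, the $\mathfrak{sl}_n$-analogue of the intertwining relation \eqref{eq: intertwiner E finite} expresses $\phi^{*,sl}_{k+1}$ in terms of the action of $\qE_i$ on $\phi^{*,sl}_k$ for the specific index $i \equiv -k-1 \pmod n$, and $\qE_i$ shifts principal degree by $1$. Closing up these relations around one cycle $k \mapsto k+n$ is compatible with a uniform degree shift $\Delta$ only when $\Delta \in n\mathbb{Z}$; otherwise all modes are forced to vanish and the lemma is trivial with $\gamma = 0$. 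Setting $d = \Delta/n$, both $\phi^{*,sl}$ and $(Y^d \otimes 1) \circ \hphi^{*,sl}$ are intertwiners whose mode-grading patterns agree.

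Finally I would invoke uniqueness. Since $\Fsl_\mt$ is irreducible and cyclically generated by $|\mt\rangle$, each intertwiner into $\mathbb{C}^n[Y^{\pm 1}] \hat{\otimes} \Fsl_{\mt-1}$ is determined by its value on $|\mt\rangle$, which must lie in the subspace of vectors annihilated by all $\qE_i$ that annihilate $|\mt\rangle$, with the prescribed $\qK_i$-eigenvalues and total principal degree. The operator $Y^d \otimes 1$ is an isomorphism between the unshifted and shifted such subspaces, so the one-dimensionality in the unshifted case (Proposition \ref{prop: vertex is unique}) transfers to our setting. This yields $\phi^{*,sl} = \gamma (Y^d \otimes 1) \circ \hphi^{*,sl}$ for some $\gamma \in \mathbb{C}$, and extracting the $k$-th mode gives $\phi^{*,sl}_k = \gamma \hphi^{*,sl}_{k+\Delta}$. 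The main delicate point is the one-dimensionality of the space of intertwiners in the shifted grading, but this is exactly what the $Y^d$-shift isomorphism reduces to the already-established unshifted case.
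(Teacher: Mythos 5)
Your overall strategy matches the paper's: reduce to the normalized case and invoke the uniqueness from Proposition \ref{prop: vertex is unique}. Making the shift explicit as composition with $Y^d\otimes 1$ --- legal because $Y$ commutes with the $\vsl$-action, as you correctly note --- is a clean way to account for what the paper leaves implicit when it passes from $\phi^{*,sl}$ to $\sum e_{-k}\otimes\phi_{k-\Delta}^{*,sl}$.

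However, the step where you argue that the grading hypothesis forces $n\mid\Delta$ is not correct as written. With $i\equiv -k-1\pmod n$, the $\mathfrak{sl}_n$-analogue of \eqref{eq: intertwiner E} reads $\phi_{k+1}^{*,sl}=\phi_k^{*,sl}\qE_i - v^{-1}\qE_i\phi_k^{*,sl}$, and since $\deg\qE_i=1$ both terms on the right have principal degree $\deg\phi_k^{*,sl}+1$, \emph{for every} $\Delta$. Going once around $k\mapsto k+n$ therefore raises the degree by exactly $n$, which is consistent with $\deg\phi_{k+n}^{*,sl}=(k+n)+\Delta$ regardless of the value of $\Delta$; the relations impose no degree constraint, so nothing is ``incompatible'' outside $n\mathbb{Z}$. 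What actually pins down $\Delta\bmod n$ (when the intertwiner is nonzero) is the \emph{weight}, not the degree. The paper's elementary route gets there directly: $\phi_{\mt-\Delta}^{*,sl}|\mt\rangle$ has principal degree $(\mt-\Delta)+\Delta+\deg|\mt\rangle=\deg|\mt-1\rangle$, which is the top degree of $\Fsl_{\mt-1}$, so that graded piece is spanned by $|\mt-1\rangle$, giving $\phi_{\mt-\Delta}^{*,sl}|\mt\rangle=\gamma|\mt-1\rangle$. Then the $\qK_i$-intertwining relation \eqref{eq: intertwiner K} shows this vector has weight $\Lambda_\mt-\operatorname{wt}(e_{\Delta-\mt})$, which can coincide with the weight $\Lambda_{\mt-1}$ of $|\mt-1\rangle$ only if $\operatorname{wt}(e_{\Delta-\mt})=\operatorname{wt}(e_{-\mt})$, i.e.\ $\Delta\equiv 0\pmod n$, because the weights of $e_0,\dots,e_{n-1}$ are pairwise distinct. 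Without this repair, your reduction to Proposition \ref{prop: vertex is unique} (which needs $d=\Delta/n\in\mathbb{Z}$) is not justified; with it, the rest of your argument goes through and is essentially the paper's.
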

\begin{proof}
Note that $\deg \phi_{\mt-\Delta}^{*, sl} | \mt \rangle = \deg | \mt -1 \rangle$. Since the subspace of degree $ \deg | \mt -1 \rangle$ is one-dimensional, we have $\phi_{\mt-\Delta}^{*, sl} | \mt \rangle = \gamma | \mt -1 \rangle$ for certain $\gamma \in \mathbb{C}$. Proposition \ref{prop: vertex is unique} implies $\sum e_{-k} \otimes \phi_{k-\Delta}^{*, sl} = \gamma \hphi^{*, sl}$.
\end{proof}
\begin{prop} \label{prop: factorization of vertex operator}
There exist operators $\Xi_d \curvearrowright \Fheis$ such that for $\Xi(z) = \sum_{d \in \mathbb{Z}} \Xi_{d} z^{-d}$ we have
\begin{equation} \label{eq: factorization of vertex operator}
    \hphi_{(\alpha)}^*(z) =   \hphi_{(\alpha)}^{*, \mathfrak{sl}_n}(z) \otimes \Xi (z) 
\end{equation}
\end{prop}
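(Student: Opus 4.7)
The plan is to extract the operator $\Xi(z)$ by applying the uniqueness theorem for $\mathfrak{sl}_n$-vertex operators (Lemma \ref{lemma: vertex operators grading}) to $\hphi^*$ fiberwise along the Heisenberg factor $\Fheis$. The key observation is that since $\qgl = \vsl \otimes \qheis$ acts on $F_\mt = \Fsl_\mt \otimes \Fheis$ with the two factors acting on different tensor slots, a $\vsl$-intertwiner from $F_\mt = \Fsl_\mt \otimes \Fheis$ to $\mathbb{C}^n[Y^{\pm 1}] \hat\otimes F_{\mt-1} = \mathbb{C}^n[Y^{\pm 1}] \hat\otimes \Fsl_{\mt-1} \otimes \Fheis$ is determined by a $\vsl$-intertwiner $\Fsl_\mt \to \mathbb{C}^n[Y^{\pm 1}] \hat\otimes \Fsl_{\mt-1}$ together with a linear operator on the Heisenberg factor. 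Since Proposition \ref{prop: vertex is unique} together with Lemma \ref{lemma: vertex operators grading} pins down the former uniquely in each principal-degree component, only a scalar freedom remains in each component, and these scalars are precisely the matrix entries of $\Xi(z)$.

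Concretely, I would first decompose $\Fheis = \bigoplus_d \Fheis[d]$ into homogeneous pieces with respect to the principal grading on $F_\mt = \Fsl_\mt \otimes \Fheis$ and fix a homogeneous basis $\{v_i\}$. For each ordered pair $(v_i, v_j)$, couple $\hphi^*$ with the dual covector $v_j^*$ on the target Heisenberg factor to obtain the map
\begin{equation*}
\Theta_{ij} \colon \Fsl_\mt \to \mathbb{C}^n[Y^{\pm 1}] \hat\otimes \Fsl_{\mt-1}, \qquad w \mapsto (\mathrm{id} \otimes \mathrm{id} \otimes \langle v_j^*, \cdot \rangle)\, \hphi^*(w \otimes v_i).
\end{equation*}
Because $\vsl$ acts trivially on the $\Fheis$-factors of the source and target, and because $\hphi^*$ is a $\vsl$-intertwiner by Proposition \ref{prop: appendix intertwiner}, each $\Theta_{ij}$ is itself a $\vsl$-intertwiner with the same intertwining pattern as $\hphi^{*, sl}$. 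Its $k$-th mode $(\Theta_{ij})_k$ has a principal degree shift that differs from that of $\hphi^{*, sl}_k$ only by the grading difference $\deg v_i - \deg v_j$, so Lemma \ref{lemma: vertex operators grading} forces $(\Theta_{ij})_k = \gamma_{ij}\, \hphi^{*, sl}_{k + \deg v_i - \deg v_j}$ for a scalar $\gamma_{ij}$ depending only on the pair $(i, j)$, not on $k$ or on the residue $\alpha = -k \bmod n$. Defining $\Xi_d$ to be the operator on $\Fheis$ with matrix entries $\gamma_{ij}$ whenever $\deg v_j - \deg v_i = d$ and zero otherwise, and assembling $\Xi(z) = \sum_d \Xi_d z^{-d}$, I would then sum over $(i,j)$ to reconstruct $\hphi^*$ and obtain the factorization $\hphi^*_{(\alpha)}(z) = \hphi^{*, \mathfrak{sl}_n}_{(\alpha)}(z) \otimes \Xi(z)$.

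The main obstacle is the bookkeeping. One must verify (i) that the gradings on $\Fsl_\mt$, $\Fheis$, and $\mathbb{C}^n[Y^{\pm 1}]$ match up so that the grading shift of $(\Theta_{ij})_k$ really differs from that of $\hphi^{*, sl}_k$ by exactly $\deg v_i - \deg v_j$ independently of $\alpha$; (ii) that the completion of $\mathbb{C}^n[Y^{\pm 1}] \hat\otimes F_{\mt-1}$ is controlled well enough that summing the modewise scalars over $(i,j)$ yields a well-defined current $\Xi(z)$ on $\Fheis$; and (iii) that the same scalar $\gamma_{ij}$ governs all residues $\alpha$, which is exactly what allows the $\mathfrak{sl}_n$-current on the right-hand side to absorb the full $\alpha$-dependence and $\Xi(z)$ to appear without an $\alpha$-index. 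Once these compatibilities are in place, the factorization follows essentially by construction.
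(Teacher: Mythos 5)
Your proposal is correct and follows essentially the same route as the paper's own proof: fix a homogeneous basis of $\Fheis$, decompose $\hphi^*_k$ accordingly (your matrix elements $\Theta_{ij}$ are precisely the paper's matrix-unit decomposition $\hphi_k^* = \sum_{d,\nu} \phi^{*,sl}_{k,d,\nu}\otimes\Xi_{d,\nu}$), invoke the intertwining property from Proposition \ref{prop: appendix intertwiner} together with the degree-wise uniqueness of Lemma \ref{lemma: vertex operators grading} to force each $\vsl$-component to be a scalar multiple of a shifted $\hphi^{*,sl}$-mode, and reassemble. The bookkeeping concerns you flag are real but are handled exactly as you suggest, and the paper's proof does not elaborate on them further either.
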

\begin{proof}
We can extend the grading from $F_{\mt}^{sl}$ to $F_{\mt}$ by $\deg B_k = kn$. Note that $\deg \hphi_k = k$. We can present $\hphi_k = \sum_{d, \nu} \phi^{*, sl}_{k, d, \nu}\otimes \Xi_{d, \nu}$ for linear independent operators $\Xi_{d, \nu}$ with $\deg \Xi_{d, \nu} = nd$ (e.g. take $\Xi_{d, \nu}$ to be matrix units for a homogeneous basis of $\Fheis$). Proposition \ref{prop: appendix intertwiner} and Lemma \ref{lemma: vertex operators grading} imply that $\phi^{*, sl}_{k, d, \nu} = \gamma_{d, \nu} \hphi_{k-nd}^{*, sl}$ for certain $\gamma_{d, \nu} \in \mathbb{C}$. Hence $\hphi_k = \sum_{d}  \hphi^{*, sl}_{k-nd}\otimes \left( \sum_{\nu}\gamma_{d, \nu}  \Xi_{d, \nu} \right)$.
\end{proof}

\paragraph{Bosonization} The operator $\hphi_{(n-1)}^{*, sl}(z)$ was calculated in \cite[Thm. 3.4]{K94}. Note that the parameter $q$ used in the $\emph{loc. cit.}$ corresponds to our parameter $v^{-1}$. To write the answer, we recall notation of $\emph{loc. cit.}$

Let $\bar{Q}$ and $\bar{P}$  be root and weight lattices for $\mathfrak{sl}_n$ respectively. We use notation $e^{\beta}$ for an element of group algebra $\mathbb{C}[\bar{P}]$, corresponding to $\beta \in \bar{P}$. Denote the fundamental weights of $\mathfrak{sl}_n$ by $\bar{\Lambda}_1, \dots, \bar{\Lambda}_{n-1}$. There is Heisenberg algebra in $\vsl$ generated by $a_j(k)$ for $k \in \mathbb{Z}$ and $j= 1, \dots, n-1$. Let $F^a$ be Fock module for the Heisenberg algebra. Then $F_{\mt}$ can be naturally identified with $F^a \otimes \mathbb{C}[\bar{Q}] e^{\bar{\Lambda}_{\mt}}$, and the action of $\vsl$ can be constructed explicitly, see \cite{FJ88} or \cite[Sect. 2.4]{K94}.

For $\alpha \in \bar{P}$, let us introduce operator $\partial_{\alpha} \left( w \otimes \beta \right)= (\alpha, \beta) w \otimes \beta$. There exists subalgebra $a_1^*(k)$ in the algebra generated by  $a_j(k)$. In the representation $F_{\mt}$, the operators satisfy

\begin{equation}
    [a_1^*(k), a_1^* (-k)] =  -\frac{[(n-1)k]_v}{k[nk]_v}
\end{equation}
\begin{prop}[\cite{K94}]
Vertex operator $\hphi_{(n-1)}^{*, sl}(z) \colon F_{\mt} \rightarrow F_{\mt-1}$ is given by the following formula
\begin{multline}
    \hphi_{(n-1)}^{*, sl}(z) = \exp \left( - \sum_{k=1}^{\infty} a_1^*(-k) v^{-\frac12 k} z^k \right) \exp \left( - \sum_{k=1}^{\infty}  a_1^*(k) v^{\frac32 k} z^{-k} \right)\\
    \times e^{- \bar{\Lambda}_1} \left( (-1)^{n-1} v^{-1} z \right)^{ - \partial_{\bar{\Lambda}_1} + \frac{n-\mt-1}{n}} v^{\mt} (-1)^{\mt n + \frac12 \mt (\mt+1)}
\end{multline}
\end{prop}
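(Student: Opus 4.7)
The plan is to verify the explicit bosonic formula by combining the uniqueness of vertex operators (a consequence of Proposition~\ref{prop: vertex is unique}) with a direct computation of intertwining relations in the Frenkel--Jing bosonization of $\vsl$ at level one. Concretely, let $\Theta(z)$ denote the operator on the right-hand side of the claimed formula for $\hphi_{(n-1)}^{*,sl}(z)$, and let $\Theta_k$ denote its modes so that $\Theta(z)=\sum_k \Theta_k z^{-k}$ with $\Theta_k \colon F_{\mt}^{sl}\to F_{\mt-1}^{sl}$ (noting that $e^{-\bar\Lambda_1}$ shifts the $\bar Q$-coset from $\bar\Lambda_{\mt}$ to $\bar\Lambda_{\mt-1}$, modulo $\bar Q$, for each $\mt$). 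The goal is to show $\Theta_k = \hphi_{(n-1),k}^{*,sl}$ for all $k\equiv \mt-1\bmod n$, i.e.\ after assembling modes, the currents agree.

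First I would check intertwining with the Heisenberg subalgebra generated by the $a_1^*(k)$: by direct computation using $[a_1^*(k),a_1^*(-l)]$ and the vertex structure, the product $\exp(-\sum a_1^*(-k)v^{-k/2}z^k)\exp(-\sum a_1^*(k)v^{3k/2}z^{-k})$ has the correct OPEs with the bosonized Chevalley generators $\qE_i$, $\qF_i$, $\qK_i$ of $\vsl$ (as written in \cite{FJ88} or \cite[Sect. 2.4]{K94}). The $e^{-\bar\Lambda_1}$ factor and the power $z^{-\partial_{\bar\Lambda_1}+(n-\mt-1)/n}$ are then precisely the lattice-algebra factors required so that commuting $\Theta(z)$ past $e^{\alpha_i}$ and past the lattice zero-modes produces the scalar factors matching the quantum coproduct, yielding the relations \eqref{eq: intertwiner K}--\eqref{eq: intertwiner F} after expansion in $z$. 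This is the technically heaviest step; it amounts to reproducing the standard Frenkel--Jing normal-ordering computation, which I would organize as one OPE verification for $\qE_{n-1}$ (where the delta-function shift term arises from the zero-mode), one for $\qE_i$ with $i\ne n-1$ (where only rescaling by a power of $v$ must appear), and symmetric checks for $\qF_i$ and $\qK_i$.

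Granting that $\Theta(z)$ is an $\vsl$-intertwiner of the form \eqref{eq: sl vetex map}, Proposition~\ref{prop: vertex is unique} reduces the problem to matching a single scalar: I need to identify the coefficient of $e_{-\mt}\otimes |\mt-1\rangle$ in $\Theta(z)|\mt\rangle$. This is a zero-mode computation. Acting with the two exponential factors on the highest weight vector $|\mt\rangle\in F^a\otimes \mathbb{C}[\bar Q]e^{\bar\Lambda_{\mt}}$, the annihilation exponential gives $1$, the creation exponential contributes in higher powers of $z$, and only the zero-mode piece $e^{-\bar\Lambda_1}((-1)^{n-1}v^{-1}z)^{-\partial_{\bar\Lambda_1}+(n-\mt-1)/n}v^{\mt}(-1)^{\mt n+\mt(\mt+1)/2}$ survives at the lowest $z$-power. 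Evaluating $\partial_{\bar\Lambda_1}$ on $e^{\bar\Lambda_{\mt}}$ (using $(\bar\Lambda_1,\bar\Lambda_{\mt})$ for $\mathfrak{sl}_n$), collecting the $z$-exponent and the sign/$v$-prefactors, I would check that the resulting power of $z$ is $z^{\mt-1}$, consistent with $\hphi^{*,sl}_k$ having degree $k$ under the principal grading, and that the numerical coefficient equals $1$, matching the normalization \eqref{eq: sl vetex map norm}.

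The main obstacle is the first step: producing the correct $v$-powers and delta-function contributions in the OPE of $\Theta(z)$ with $\qE_{n-1}(w)$, since this is where the lattice zero-mode operator $e^{-\bar\Lambda_1}$, the fractional exponent $(n-\mt-1)/n$, and the twist factor $(-1)^{n-1}v^{-1}$ must conspire to give precisely the $\delta_{i+k\equiv -1}\hphi^{*,sl}_{k+1}$ shift of \eqref{eq: intertwiner E}. All other relations are either diagonal (for $\qK_i$) or can be deduced from this one by symmetry of the Frenkel--Jing construction. Once that OPE is verified and the scalar in the normalization step is pinned down, Proposition~\ref{prop: vertex is unique} completes the identification.
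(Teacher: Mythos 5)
The paper does not actually prove this statement: it is quoted from \cite[Thm. 3.4]{K94} and used as an input, so you are supplying an argument the paper omits. Your overall strategy (verify intertwining in the Frenkel--Jing bosonization, then invoke Proposition~\ref{prop: vertex is unique} and the normalization \eqref{eq: sl vetex map norm}) is the standard route and is essentially how the cited result is established, so the plan is reasonable in outline.

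However, as written there is a genuine structural gap. The object $\Theta(z)$ you define is only the single ``extreme'' component $\hphi^{*,sl}_{(n-1)}(z)$ of the intertwiner, whose modes $\hphi^{*,sl}_k$ have $k\equiv 1\pmod n$ and hence pair only with the basis vectors $e_j$, $j\equiv n-1\pmod n$. But the relations \eqref{eq: intertwiner K}--\eqref{eq: intertwiner F} couple different components: the shift term $\delta_{i+k\equiv -1}\,\hphi^{*}_{k+1}$ lands in the component of residue $k+1$, which is \emph{not} the $(n-1)$ component. So you cannot ``yield the relations \eqref{eq: intertwiner K}--\eqref{eq: intertwiner F}'' by OPE computations with $\Theta(z)$ alone, and Proposition~\ref{prop: vertex is unique} only applies once you have the full operator $\sum_k e_{-k}\otimes\hphi^{*,sl}_k$. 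The same issue undermines your normalization step: the leading term $e_{-\mt}\otimes|\mt-1\rangle$ of \eqref{eq: sl vetex map norm} lies in the component of residue $-\mt\bmod n$, which coincides with the $(n-1)$ component only when $\mt\equiv 1\pmod n$; for general $\mt$ the coefficient of $e_{-\mt}\otimes|\mt-1\rangle$ in $\Theta(z)|\mt\rangle$ is identically zero, so it cannot pin the scalar. (Also, for the $(n-1)$ component the shift relation involves $\qE_{n-2}$, not $\qE_{n-1}$ as you state.) The repair is the standard one: define the remaining components $\hphi^{*,sl}_{(\alpha)}(z)$, $\alpha<n-1$, from $\Theta(z)$ by iterated $v$-deformed commutators with the $\qF_i$ (equivalently, screening contour integrals), verify the \emph{full} set of intertwining relations for the assembled operator, and only then apply uniqueness, tracking how the normalization constant propagates from the extreme component to the component of residue $-\mt$ that actually contains the vector $e_{-\mt}\otimes|\mt-1\rangle$. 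Without that construction and bookkeeping, the argument does not close.
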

Normal ordered product $:\!\hphi_{(n-1)}^{*, sl}(z_1) \hphi_{(n-1)}^{*, sl}(z_2)\!:$ is an operator from $F_{\mt}$ to $F_{\mt-2}$ defined by the following formula
\begin{multline}
:\!\hphi_{(n-1)}^{*, sl}(z_1) \hphi_{(n-1)}^{*, sl}(z_2)\!: =  \exp \left( - \sum_{k=1}^{\infty} a_1^*(-k) v^{-\frac12 k} (z_1^k+z_2^k) \right) \exp \left( - \sum_{k=1}^{\infty}  a_1^*(k) v^{\frac32 k} (z_1^{-k}+z_2^{-k}) \right)\\
    \times e^{- 2\bar{\Lambda}_1}  \prod_{j=1,2} \left( (-1)^{n-1} v^{-1} z_j \right)^{ - \partial_{\bar{\Lambda}_1} + (n-\mt_j -1)/n} v^{\mt_j} (-1)^{\mt_j n + \frac12 \mt_j (\mt_j+1)}, 
\end{multline}
here $\mt_j = \mt + j-2$. Note that the normal ordering is not symmetric, namely
\begin{equation}
z_1^{-\frac1n}:\!\hphi_{(n-1)}^{*, sl}(z_1) \hphi_{(n-1)}^{*, sl}(z_2)\!:\, = z_2^{-\frac1n}:\!\hphi_{(n-1)}^{*, sl}(z_2) \hphi_{(n-1)}^{*, sl}(z_1)\!:. \label{interchanging phi star}
\end{equation}
The following relations can be checked directly
\begin{equation}
    \hphi_{(n-1)}^{*, sl}(z_1) \hphi_{(n-1)}^{*, sl}(z_2) = \left((-1)^{n-1} v^{-1} z_1 \right)^{\frac{n-1}{n}} \frac{(v^{2}z_2/z_1, v^{2n})_{\infty}}{(v^{2n}z_2/z_1, v^{2n})_{\infty}} :\!\hphi_{(n-1)}^{*, sl}(z_1) \hphi_{(n-1)}^{*, sl}(z_2)\!:. \label{phi star normal ordering}
\end{equation}

\subsection{Factorization of the vertex operator}  \label{appendix: relations}
We continue to study connection between vertex operators for $\mathfrak{sl}_n$ and $\hphi_{(\alpha)}^*(z)$. This subsection is devoted to a proof of Theorem \ref{thm: Bj Phik star}. The theorem is used for the proof of Proposition \ref{prop: heis and vertex operators}.

\begin{thm} \label{thm: Bj Phik star}
The following holds
\begin{equation}
    \hphi_{(\alpha)}^*(z) =   \hphi_{(\alpha)}^{*, \mathfrak{sl}_n}(z) \otimes  \exp \left( -\sum_{j>0} \frac{v^{2jn}}{j [n]_{v^j}^+} B_{-j} z^{j}\right)   \exp \left( \sum_{j>0} \frac{1}{j [n]_{v^j}^+} B_j z^{-j}\right).
\end{equation}
\end{thm}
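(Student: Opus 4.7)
The plan is to exploit Proposition \ref{prop: factorization of vertex operator}, which reduces the theorem to identifying the operator-valued series $\Xi(z)$ on the Heisenberg Fock module $F^{\mathsf{H}}$. Since $F^{\mathsf{H}}$ is an irreducible cyclic module and $\Xi(z)$ preserves the Heisenberg charge, $\Xi(z)$ admits a normal-ordered factorization $\Xi(z) = V^-(z) V^+(z)$ with $V^-(z) = \exp(-\sum_{j>0} \alpha_j B_{-j} z^j)$ and $V^+(z) = \exp(\sum_{j>0} \beta_j B_j z^{-j})$. The theorem is then equivalent to showing $\beta_j = 1/(j[n]^+_{v^j})$ and $\alpha_j = v^{2jn}/(j[n]^+_{v^j})$.

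To determine $\beta_j$, I would use the commutator with $B_{-j}$. The finite-level identity $[b_{-j}, \Phi^*_k] = -\Phi^*_{k-nj}$ for $j>0$, which is the $j<0$ case of \eqref{eq: commutator bj and the vertexes}, stabilizes by Proposition \ref{prop: Bk stabilization} to give $[B_{-j}, \hphi^*_{(\alpha)}(z)] = -z^{-j} \hphi^*_{(\alpha)}(z)$. Since $B_{-j}$ lies in the Heisenberg tensor factor of $\qgl = \vsl \otimes \qheis$, it commutes with $\hphi^{*, \mathfrak{sl}_n}_{(\alpha)}(z)$, so $[B_{-j}, \Xi(z)] = -z^{-j}\Xi(z)$. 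Together with $[B_{-j}, B_j] = -j[n]^+_{v^j}$, a routine manipulation of the exponentials forces $\beta_j = 1/(j[n]^+_{v^j})$.

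For the coefficients $\alpha_j$, I would compute the vacuum two-point function of $\hphi^*_{(n-1)}(z)$ on $F_0$. Using the factorization and the elementary Wick identity $V^+(z_1) V^-(z_2) = \exp\bigl(-\sum_j \alpha_j \beta_j \cdot j[n]^+_{v^j}(z_2/z_1)^j\bigr) V^-(z_2) V^+(z_1)$, combined with $V^+(z_2)|\varnothing\rangle^{\mathsf{H}} = |\varnothing\rangle^{\mathsf{H}}$ and the dual identity, the Heisenberg contribution $\langle \varnothing^{\mathsf{H}} | \Xi(z_1) \Xi(z_2) | \varnothing^{\mathsf{H}} \rangle$ equals $\exp\bigl(-\sum_j \alpha_j (z_2/z_1)^j\bigr)$. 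On the other hand, the full two-point function $\langle \varnothing, n{-}2 | \hphi^*_{(n-1)}(z_1) \hphi^*_{(n-1)}(z_2) | \varnothing, 0 \rangle_\infty$ can be extracted from the semi-infinite wedge realization (by computing $\Phi^*_{k_1}\Phi^*_{k_2}$ at finite $N$ and passing to the limit via Lemma \ref{lemma: phi star stabilization}); dividing by the explicitly bosonized $\vsl$ part coming from \eqref{phi star normal ordering} yields $\alpha_j = v^{2jn}/(j[n]^+_{v^j})$.

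The hard part will be the last step: the semi-infinite wedge evaluation of a two-point function. A potentially cleaner route is to compute $\hphi^*_{(n-1)}(z)|\varnothing, 0\rangle_\infty$ directly, using Lemma \ref{lemma: phi star stabilization} and the fact that $\Phi^*_k|\varnothing\rangle_{N,0}$ vanishes unless $-k \in \{0, 1, \dots, N-1\}$, to obtain an explicit semi-infinite wedge expression. Matching this against the product of the $\vsl$-bosonized $\hphi^{*, \mathfrak{sl}_n}_{(n-1)}(z)|\varnothing^{sl}\rangle$ and the expected $V^-(z)|\varnothing^{\mathsf{H}}\rangle$ then allows one to read off the $\alpha_j$ without going through a two-variable correlator.
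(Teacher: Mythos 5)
The crux of the theorem — and the place your proposal has a genuine gap — is in the very first step: you assert that, because $\Fheis$ is irreducible and $\Xi(z)$ preserves charge, $\Xi(z)$ "admits a normal-ordered factorization $\Xi(z) = V^-(z)V^+(z)$" with both factors exponentials of Heisenberg modes. That is not automatic. A charge-preserving operator on a Fock module need not have this form (e.g.\ $1 + B_{-1}B_1$ preserves charge but is not such a product), and Proposition~\ref{prop: factorization of vertex operator} only tells you $\Xi(z)$ is \emph{some} operator-valued series on $\Fheis$. The paper is careful on exactly this point: it first peels off the positive-mode exponential using $[B_{-j}, \hphi^*_k] = -\hphi^*_{k-nj}$ (same move as your $\beta_j$ computation, which is correct), leaving a factor $\Xi_-(z)$ that commutes with all $B_{-j}$ but is \emph{a priori} an arbitrary $\mathbb{C}[[z]]$-combination of monomials $B_{-\mu_1}\cdots B_{-\mu_r}$. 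Establishing that this remaining factor is a single exponential is the hard content, and the paper gets it from the bilinear exchange relation for $\hphi^*_{(\alpha)}$ (Proposition~\ref{prop: interchanging Phi alpha hat}, fed through the explicit normal-ordering \eqref{phi star normal ordering}), which forces the normalized $\tilde{\Xi}_-(z)$ to be a constant. Your proposal never invokes Lemma~\ref{lemma: bilinear relation without hat} or the exchange relation at all, so this key structural input is missing.

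Your alternative "cleaner route" — computing $\hphi^*_{(n-1)}(z)|\varnothing,0\rangle_\infty$ in the wedge realization and matching against the bosonized $\vsl$-part times the conjectured $V^-(z)|\varnothing^{\mathsf{H}}\rangle$ — could in principle close the gap, because $\Xi_-(z)$ (once you have shown it commutes with all $B_{-j}$, which does follow from $[B_{-j},\hphi^*_{(\alpha)}(z)]=-z^{-j}\hphi^*_{(\alpha)}(z)$) is determined by its action on the Heisenberg vacuum, so a successful match does prove the exponential form rather than merely being consistent with it. But this is a genuinely different route from the paper's: it replaces a short structural argument (exchange relation $\Rightarrow$ $\tilde{\Xi}_-$ constant) with a non-trivial explicit wedge-basis computation that you acknowledge as "the hard part" and do not carry out. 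As it stands, the argument is incomplete; to make it rigorous you would need either to actually perform the wedge-side evaluation and the comparison with Koyama's bosonization, or to import the exchange-relation argument.
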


To prove the theorem we need certain preparations. Let us define
\begin{equation}
\Phi_{(\alpha)}^*(z) = \sum_{k \in \mathbb{Z}} \Phi^*_{-\alpha+nk} z^{-k}
\end{equation}

\begin{lemma} \label{lemma: bilinear relation without hat}
The following relation holds
\begin{equation}
  (v^2 z_1 -  z_2)  \Phi_{(\alpha)}^* (z_1) \Phi_{(\alpha)}^* (z_2)     = (v^2 z_2 - z_1) \Phi_{(\alpha)}^* (z_2) \Phi_{(\alpha)}^* (z_1) 
\end{equation}
\end{lemma}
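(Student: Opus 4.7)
The plan is to prove the lemma by extracting coefficients, reducing it to a combinatorial identity on the entries $y_{t_1,\dots,t_N}$ of $w$, and then deducing that identity from the antisymmetry $(T_1+v^{-1})w=0$.

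First I would translate the claimed operator identity into coefficients. Writing an arbitrary $w\in\bc_-(\mathbb{C}^n[Y^{\pm 1}])^{\otimes N}$ as $w=\sum y_{t_1,\dots,t_N}\,e_{t_1}\otimes\cdots\otimes e_{t_N}$, the definition of $\Phi^*_k$ immediately gives
\begin{equation*}
\Phi^*_{(\alpha)}(z_1)\Phi^*_{(\alpha)}(z_2)\,w
=\sum_{k_1,k_2}z_1^{-k_1}z_2^{-k_2}\sum_{t_3,\dots,t_N} y_{\alpha-nk_2,\,\alpha-nk_1,\,t_3,\dots,t_N}\,e_{t_3}\otimes\cdots\otimes e_{t_N}.
\end{equation*}
Reading off the coefficient of $z_1^{-k_1}z_2^{-k_2}e_{t_3}\otimes\cdots\otimes e_{t_N}$ on both sides of the claimed relation and setting $a=\alpha-nk_1$, $b=\alpha-nk_2$, the lemma becomes equivalent to the scalar identity
\begin{equation*}
v^2\,y_{b,\,a-n} - y_{b-n,\,a} \;=\; v^2\,y_{a,\,b-n} - y_{a-n,\,b}
\end{equation*}
for all $a,b\in\alpha+n\mathbb{Z}$ (with $t_3,\dots,t_N$ suppressed). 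Notice this identity lives entirely in the first two tensor slots, so only the $s=0$ formulas \eqref{eq: T1}--\eqref{eq: T2} for the action of $T_1$ will be relevant.

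The identity is then extracted from $(T_1+v^{-1})w=0$. Reading this equation at the coefficient of $e_{l_1}\otimes e_{l_2}\otimes e_{t_3}\otimes\cdots$ for $l_1<l_2$ with $l_2-l_1=nk$, $k\geq 1$, the formulas \eqref{eq: T1}--\eqref{eq: T2} (combined over all source pairs whose $T_1$-expansion produces $e_{l_1}\otimes e_{l_2}$) yield, after multiplying through by $v$,
\begin{equation*}
v^2\,y_{l_1,l_2}+y_{l_2,l_1}+(v^2-1)\sum_{m\geq 1}\bigl[y_{l_1-nm,\,l_2+nm}-y_{l_2+nm,\,l_1-nm}\bigr]=0,
\end{equation*}
with the analogous relation at $l_1=l_2$ yielding $(v^2+1)y_{l_1,l_1}+(v^2-1)\sum_{m\geq 1}[y_{l_1-nm,l_1+nm}-y_{l_1+nm,l_1-nm}]=0$. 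Applying this family of relations at $(l_1,l_2)=(\min(a,b)-n,\max(a,b))$, at $(\min(a,b),\max(a,b)-n)$, and (inductively) at the shifted positions $(l_1-nj,l_2+nj)$ that appear in the tails, one obtains a telescoping cancellation: the infinite tail of the leading Hecke relation is converted, via the deeper Hecke relations, into exactly the combination $v^2 y_{b,a-n}-y_{b-n,a}-v^2 y_{a,b-n}+y_{a-n,b}$, which is therefore forced to vanish.

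The main technical obstacle is verifying this telescoping uniformly. As a prototype, in the degenerate case $a=b+n$ the lemma collapses to $(v^2+1)y_{b,b}=v^2 y_{b+n,b-n}+y_{b-n,b+n}$, and this follows from combining the $(b,b)$ Hecke relation with the $(b-nj,b+nj)$ Hecke relations for all $j\geq 1$: summing the latter family with appropriate signs produces precisely the infinite sum appearing in the former, leaving only the desired two-term expression on the right. The general case $a\neq b$ follows the same pattern but requires keeping track of two interleaved telescopes (one shifting the first argument, one shifting the second). Since the passage from the coefficient identity back to the generating function identity of the lemma is automatic once the scalar identity is verified at every pair $(a,b)$, this completes the proof.
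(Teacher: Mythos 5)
Your proof is correct and follows essentially the same route as the paper's: both derive from the $T_1$-antisymmetry of $w\in\bc_-(\mathbb{C}^n[Y^{\pm1}])^{\otimes N}$ the family of relations (your displayed identity on $y_{l_1,l_2}$, which is precisely the coefficient form of the paper's relation
\begin{equation*}
  v \Phi^*_{-l} \Phi^*_{-k} - (v - v^{-1}) \sum_{j\geq 1} \Phi^*_{-k +nj} \Phi^*_{-l - nj} + (v-v^{-1}) \sum_{j\geq 1} \Phi^*_{-l-nj} \Phi^*_{-k+ nj}  = - v^{-1} \Phi^*_{-k} \Phi^*_{-l}
\end{equation*}
applied to $w$), and then subtract it at the shifted pair to kill the infinite tails. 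One small over-complication in your description: no ``inductive'' application at deeper shifted positions is required, since subtracting the single pair of relations at $(b,a-n)$ and $(b-n,a)$ already cancels the entire tails in one step — exactly the manipulation the paper performs between its two displayed coefficient equations for $e_l\otimes e_k$ and $e_{l+n}\otimes e_{k-n}$.
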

\begin{proof}
Consider operator 
\begin{align}
\left( 1 \otimes  \Phi^* \right) \circ \Phi^* \colon \bc_- \left(\mathbb{C}^n[Y^{\pm 1}] \right)^{\otimes N} \rightarrow& \left(\mathbb{C}^n[Y^{\pm 1}] \right) \otimes \left(\mathbb{C}^n[Y^{\pm 1}] \right) \otimes \bc_- \left(\mathbb{C}^n[Y^{\pm 1}] \right)^{\otimes N-2} \\
  \left( 1 \otimes  \Phi^* \right) \circ \Phi^*  w=& \sum_{k, l \in \mathbb{Z}} e_{l} \otimes e_{k} \otimes \Phi_{-k}^* \Phi_{-l}^* w
\end{align} 
Also, consider operator 
\begin{equation}
T_{12} \curvearrowright \left(\mathbb{C}^n[Y^{\pm 1}] \right) \otimes \left(\mathbb{C}^n[Y^{\pm 1}] \right) \otimes \bc_- \left(\mathbb{C}^n[Y^{\pm 1}] \right)^{\otimes N-2}
\end{equation}
induced from action of $T$ on first two tensor multiples. Recall that $T$ is given by \eqref{eq: T1}--\eqref{eq: T4}. The basic property of anti-symmetrizer \eqref{eq: basic property of (anti)symmetrizer} implies 
\begin{equation} \label{eq: T otimes 1 T anti symmetrizer}
T_{12} \circ \left( 1 \otimes  \Phi^* \right) \circ \Phi^*  = -v^{-1} \left( 1 \otimes  \Phi^* \right) \circ \Phi^*.
\end{equation}
Let $l>k$ and $l \equiv k \bmod{n}$. Consider in \eqref{eq: T otimes 1 T anti symmetrizer} the coefficients in front of $e_l \otimes e_k$ and $e_{l+n}\otimes e_{k-n}$
\begin{equation}
  v \Phi^*_{-l} \Phi^*_{-k} - (v - v^{-1}) \sum_{j=1}^{\infty} \Phi^*_{-k +nj} \Phi^*_{-l - nj} + (v-v^{-1}) \sum_{j=1}^{\infty} \Phi^*_{-l-nj} \Phi^*_{-k+ nj}  = - v^{-1} \Phi^*_{-k} \Phi^*_{-l}
\end{equation}
\begin{equation}
  v \Phi^*_{-l-n} \Phi^*_{-k+n} - (v - v^{-1}) \sum_{j=2}^{\infty} \Phi^*_{-k +nj} \Phi^*_{-l - nj} + (v-v^{-1}) \sum_{j=2}^{\infty} \Phi^*_{-l-nj} \Phi^*_{-k+ nj}  = - v^{-1} \Phi^*_{-k+n} \Phi^*_{-l-n}
\end{equation}
Hence
\begin{multline}
  v \left( \Phi^*_{-l} \Phi^*_{-k} - \Phi^*_{-l-n} \Phi^*_{-k+n} \right) - (v - v^{-1})  \Phi^*_{-k +n} \Phi^*_{-l - n} + (v-v^{-1})  \Phi^*_{-l-n} \Phi^*_{-k+ n}   \\ = - v^{-1} \left( \Phi^*_{-k} \Phi^*_{-l} - \Phi^*_{-k+n} \Phi^*_{-l-n} \right)
\end{multline}
Equivalently
\begin{equation}
  v  \Phi^*_{-l} \Phi^*_{-k} - v^{-1} \Phi^*_{-l-n} \Phi^*_{-k+n}     = - v^{-1}  \Phi^*_{-k} \Phi^*_{-l} +  v \Phi^*_{-k+n} \Phi^*_{-l-n}
\end{equation}
Substituting $l-n$ instead of $l$ and multiplying by $v$, we obtain
\begin{equation} \label{lemma phi star a symmetric}
  v^2  \Phi^*_{-l+n} \Phi^*_{-k} - \Phi^*_{-l} \Phi^*_{-k+n}     =  v^2 \Phi^*_{-k+n} \Phi^*_{-l} -  \Phi^*_{-k} \Phi^*_{-l+n} 
\end{equation}
To finish the proof we notice that \eqref{lemma phi star a symmetric} is symmetric on $l$ and $k$.
\end{proof}

\begin{prop} \label{prop: interchanging Phi alpha hat}
The following holds
\begin{equation} \label{eq: interchanging Phi alpha hat}
  (v^2 z_1 -  z_2)  \hphi_{(\alpha)}^* (z_1) \hphi_{(\alpha)}^* (z_2)     = (v^2 z_2 - z_1) \hphi_{(\alpha)}^* (z_2) \hphi_{(\alpha)}^* (z_1) 
\end{equation}
\end{prop}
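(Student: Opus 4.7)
The plan is to deduce \eqref{eq: interchanging Phi alpha hat} from its finite-$N$ counterpart (Lemma~\ref{lemma: bilinear relation without hat}) by taking the inductive limit. The key observation is that Lemma~\ref{lemma: bilinear relation without hat} is a quadratic relation in $\Phi^*$, so when we rescale to the hatted operators via $\tilde{\Phi}^*_k = [N]_v^+ \Phi^*_k$, the two sides pick up the same normalization constant, which cancels. The limit $N\to\infty$ can then be taken coefficient-wise in $z_1,z_2$ using Proposition~\ref{prop: stable vertex}.

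More concretely, I would first write the finite-$N$ identity mode by mode, e.g.\ the form obtained as \eqref{lemma phi star a symmetric} in the proof of Lemma~\ref{lemma: bilinear relation without hat}:
\begin{equation*}
v^2 \Phi^*_{-l+n}\Phi^*_{-k} - \Phi^*_{-l}\Phi^*_{-k+n} \;=\; v^2 \Phi^*_{-k+n}\Phi^*_{-l} - \Phi^*_{-k}\Phi^*_{-l+n}
\end{equation*}
as operators $\bc_-(\mathbb{C}^n[Y^{\pm1}])^{\otimes N}\to \bc_-(\mathbb{C}^n[Y^{\pm1}])^{\otimes(N-2)}$. For any product $\Phi^*_a\Phi^*_b$ appearing here, the outer operator acts on $\bc_-(\mathbb{C}^n[Y^{\pm1}])^{\otimes(N-1)}$, so
\begin{equation*}
\tilde{\Phi}^*_a\tilde{\Phi}^*_b \;=\; [N-1]_v^+ \, [N]_v^+ \; \Phi^*_a \Phi^*_b.
\end{equation*}
Multiplying the finite-$N$ mode identity by $[N-1]_v^+[N]_v^+$ therefore gives the same identity with $\Phi^*$ replaced everywhere by $\tilde{\Phi}^*$. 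Reassembling the modes into the current $\tilde{\Phi}^*_{(\alpha)}(z)=[N]_v^+\Phi^*_{(\alpha)}(z)$, we obtain
\begin{equation*}
(v^2 z_1-z_2)\,\tilde{\Phi}^*_{(\alpha)}(z_1)\tilde{\Phi}^*_{(\alpha)}(z_2) \;=\; (v^2 z_2-z_1)\,\tilde{\Phi}^*_{(\alpha)}(z_2)\tilde{\Phi}^*_{(\alpha)}(z_1)
\end{equation*}
at every finite~$N$ (with the understanding that the two $\tilde{\Phi}^*$ factors are normalized according to their respective source spaces).

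To finish, I would pass to the inductive limit. By Proposition~\ref{prop: stable vertex}, the sequence $\tilde{\Phi}^*_k$ stabilizes to $\hphi^*_k$; by the composition-of-stabilizations principle (the proposition right after Definition~\ref{defin: stabilizes}), the compositions $\tilde{\Phi}^*_a\tilde{\Phi}^*_b$ stabilize to $\hphi^*_a\hphi^*_b$. Thus for any $w\in\siw{\mt}$, choosing $N$ large enough and a lift $w_N$ with $\varphi^{(\mt)}_N w_N=w$, applying $\varphi^{(\mt-2)}_{N-2}$ to each side of the $\tilde{\Phi}^*$-identity and using stabilization yields the mode identity for $\hphi^*$, which is precisely the coefficient of $z_1^{-a}z_2^{-b}$ in \eqref{eq: interchanging Phi alpha hat}.

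The argument is essentially bookkeeping; the only mild subtlety is making sure that for a fixed test vector $w$ and any fixed mode coefficient in $z_1,z_2$, the stabilization parameter $M$ in Definition~\ref{defin: stabilizes} can be chosen uniformly for both sides. This is automatic because only finitely many modes $\tilde{\Phi}^*_{k'}\tilde{\Phi}^*_{l'}$ produce nonzero contributions to that coefficient (each side of the identity has four explicit terms), and each individually stabilizes. I do not foresee any conceptual obstacle beyond this.
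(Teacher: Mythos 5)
Your proposal is correct and follows the same route as the paper: pass from Lemma~\ref{lemma: bilinear relation without hat} to the limit via stabilization of the vertex operators. The paper's one-line proof says ``$\Phi_k^*$ stabilize,'' which is slightly imprecise (by Proposition~\ref{prop: stable vertex} it is $\tilde{\Phi}^*_k=[N]_v^+\Phi^*_k$ that stabilizes); you correctly observe that the relation is quadratic in $\Phi^*$, so multiplying by the two normalization factors $[N-1]_v^+[N]_v^+$ converts it to a relation in $\tilde{\Phi}^*$ without changing its content, after which the stabilization argument goes through coefficient-wise.
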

\begin{proof}
Follows from Lemma \ref{lemma: bilinear relation without hat} since the operators $\Phi_k^*$ stabilize. 
\end{proof}

\begin{Remark}
    Proposition \ref{prop: interchanging Phi alpha hat} can be generalized. Namely, one can write interchanging relation for $\Phi_{\alpha}^*(z_1)$ and $\Phi_{\beta}^*(z_2)$. The result is \emph{$R$-matrix relation} \cite[eq.(2.17)]{DO94} \cite[eq.(6.31)]{JM95}. We will neither formulate nor use the relation. 
\end{Remark}

\begin{proof}[Proof of Theorem \ref{thm: Bj Phik star}]
Let us substitute \eqref{eq: factorization of vertex operator} to \eqref{eq: interchanging Phi alpha hat} for $\alpha = n-1$. Using relation \eqref{phi star normal ordering}, we obtain
\begin{multline} 
    (v^2 z_1- z_2) z_1^{\frac{n-1}{n}} \frac{(v^{2} z_2/z_1, v^{2n})_{\infty}}{(v^{2n}z_2/z_1, v^{2n})_{\infty}} :\!\hphi_{(n-1)}^{*, sl}(z_1) \hphi_{(n-1)}^{*, sl}(z_2)\!: \otimes \, \Xi(z_1) \Xi(z_2)  \\=  (v^2 z_2- z_1) z_2^{\frac{n-1}{n}}  \frac{(v^{2} z_1/z_2, v^{2n})_{\infty}}{(v^{2n}z_1/z_2, v^{2n})_{\infty}}  :\!\hphi_{(n-1)}^{*, sl}(z_2) \hphi_{(n-1)}^{*, sl}(z_1)\!: \otimes \, \Xi(z_2) \Xi(z_1) 
\end{multline}
Using \eqref{interchanging phi star}, one can see that
\begin{equation} \label{eq: bilinear relation on Xi}
    (v^2 z_1- z_2) z_1 \frac{(v^{2} z_2/z_1, v^{2n})_{\infty}}{(v^{2n}z_2/z_1, v^{2n})_{\infty}}  \Xi(z_1) \Xi(z_2)  =  (v^2 z_2- z_1) z_2  \frac{(v^{2} z_1/z_2, v^{2n})_{\infty}}{(v^{2n}z_1/z_2, v^{2n})_{\infty}}  \Xi(z_2) \Xi(z_1) 
\end{equation}
The relation $[B_{-j}, \hat{\Phi}^*_k] = -\hphi^*_{k-nj}$ implies
\begin{equation} \label{eq: Xi as Xi minus and exponent}
    \Xi(z) =    \Xi_- (z) \exp \left( \sum_{j>0} \frac{1}{j [n]_{v^j}^+} B_j z_1^{-j}\right),
\end{equation}
here $\Xi_-(z)$ is a formal power series, the coefficients are operators on $\Fheis$, and $[B_{-j}, \Xi_-(z)]=0$. Equivalently, $\Xi_-(z) = \sum_{\mu} \alpha_{\mu_1, \dots, \mu_j} B_{-\mu_1} \cdots B_{- \mu_j} z^{| \mu|}$. Denote  
\begin{equation}
\Xi_- \left[B_{-k}+ z_1^{-k} \right](z_2) = \sum_{\mu} \alpha_{\mu_1, \dots, \mu_j} \left(B_{-\mu_1} + z_1^{-\mu_1}\right) \cdots \left(B_{-\mu_j} + z_1^{-\mu_j} \right) z_2^{| \mu|}.
\end{equation}
Substituting  \eqref{eq: Xi as Xi minus and exponent} to \eqref{eq: bilinear relation on Xi}, we obtain
\begin{multline} 
  (v^2 z_1- z_2) z_1  \frac{(v^{2} z_2/z_1, v^{2n})_{\infty}}{(v^{2n}z_2/z_1, v^{2n})_{\infty}}  \Xi_-(z_1) \Xi_-\left[B_{-k} + z_1^{-k} \right](z_2)  \\=  (v^2 z_2- z_1) z_2   \frac{(v^{2} z_1/z_2, v^{2n})_{\infty}}{(v^{2n}z_1/z_2, v^{2n})_{\infty}}  \Xi_-(z_2) \Xi_- \left[B_{-k} + z_2^{-k} \right](z_1) 
\end{multline}
Consider expansion of the LHS in $z_2$. Note that only non-negative degrees in $z_2$ appear. Analogously, expansion of the RHS in $z_1$ has only non-negative degrees. Hence we can divide by $(v^2 z_1 - z_2)(v^2 z_2 -z_1)$ and obtain
\begin{equation} \label{eq: equation Xi minus with two elliptic function}
   \frac{(v^{2n+2} z_2/z_1, v^{2n})_{\infty}}{(v^{2n}z_2/z_1, v^{2n})_{\infty}}  \Xi_-(z_1) \Xi_-\left[B_{-k} + z_1^{-k} \right](z_2)  =   \frac{(v^{2n+2} z_1/z_2, v^{2n})_{\infty}}{(v^{2n}z_1/z_2, v^{2n})_{\infty}}  \Xi_-(z_2) \Xi_- \left[B_{-k} + z_2^{-k} \right](z_1) 
\end{equation}
Let us define
\begin{equation} \label{eq: notation xi tilde}
    \tilde{\Xi}_-(z) = \Xi_-(z) \times \exp \left( \sum_{j>0} \frac{v^{2jn}}{j [n]_{v^j}^+} B_{-j} z^{j}\right)
\end{equation}
We can substitute \eqref{eq: notation xi tilde} to \eqref{eq: equation Xi minus with two elliptic function}. Note that the exponent from \eqref{eq: notation xi tilde} is an invertible series. Since \eqref{eq: equation Xi minus with two elliptic function} has only positive degree in both $z_1$ and $z_2$, we can multiply both sides by the inverse to the exponents. We obtain
\begin{equation} \label{eq: tilde Xi bilinear relation}
    \tilde{\Xi}_-(z_1) \tilde{\Xi}_-\left[B_{-k} + z_1^{-k} \right](z_2)  =   \tilde{\Xi}_-(z_2) \tilde{\Xi}_- \left[B_{-k} + z_2^{-k} \right](z_1) 
\end{equation}
It is legitimate to divide by $\tilde{\Xi}_-(z_1) \tilde{\Xi}_-(z_2) $. \emph{A priori}, the result is a series in $z_1$ and $z_2$ with coefficients in rational function in $B_{-j}$. We obtain
\begin{equation}
   \frac{\tilde{\Xi}_-\left[B_{-k} + z_1^{-k} \right](z_2)}{ \tilde{\Xi}_-(z_2) }  =  \frac{\tilde{\Xi}_- \left[B_{-k} + z_2^{-k} \right](z_1)}{ \tilde{\Xi}_-(z_1) }
\end{equation}
On the RHS we have only positive powers of $z_1$ and negative powers of $z_2$, and vise versa for the LHS. Hence, the expression is a constant. Therefore $\tilde{\Xi}(z)$ is a constant.  Normalization condition \eqref{eq: sl vetex map norm} implies $\tilde{\Xi}(z)=1$.
\end{proof}
\bibliographystyle{alpha}
	\bibliography{bibtex}

		\noindent \textsc{Landau Institute for Theoretical Physics, Chernogolovka, Russia,\\
	Center for Advanced Studies, Skoltech, Moscow, Russia,\\
	National Research University Higher School of Economics, Moscow, Russia}

\emph{E-mail}:\,\,\textbf{mbersht@gmail.com}\\

\noindent\textsc{Center for Advanced Studies, Skoltech, Moscow, Russia,\\
	National Research University Higher School of Economics, Moscow, Russia}

\emph{E-mail}:\,\,\textbf{roma-gonin@yandex.ru}

\end{document}